%
%
%
\documentclass{amsproc}


\usepackage{stmaryrd}
\usepackage{amssymb}
\usepackage{graphicx}
\usepackage{amsmath}
\usepackage{mathrsfs}
\usepackage{amsxtra}
\usepackage{titletoc}
\usepackage[all,cmtip]{xy}
\usepackage{pictexwd}
\usepackage[german,english]{babel}
\usepackage{graphicx} 


\theoremstyle{definition}

\newtheorem{thm}{Theorem}[section]
\newtheorem{cor}[thm]{Corollary}
\newtheorem{lem}[thm]{Lemma}
\newtheorem{prop}[thm]{Proposition}
\theoremstyle{definition}
\newtheorem{defn}[thm]{Definition}
\theoremstyle{remark}
\newtheorem{rem}[thm]{Remark}
\numberwithin{equation}{section}
\newtheorem{eg}[thm]{Example}

\theoremstyle{remark}

\numberwithin{equation}{section}



\newcommand{\To}{\longrightarrow}

\newcommand{\AAA}{\mathbb{A}}
\newcommand{\PPP}{\mathbb P}
\newcommand{\NNN}{\mathbb N}
\newcommand{\ZZZ}{\mathbb Z}
\newcommand{\QQQ}{\mathbb Q}
\newcommand{\RRR}{\mathbb R}
\newcommand{\CCC}{\mathbb C}
\newcommand{\FFF}{\mathbb F}
\newcommand{\K}{\mathbb K}
\newcommand{\LL}{\mathbb L}
\newcommand{\F}{\mathcal{F}}
\newcommand{\too}{\rightarrow}
\newcommand{\ignore}{}
\newcommand{\recognize}{}

\newcommand{\wt}{\widetilde}
\newcommand{\wh}{\widehat}
\newcommand{\Spec}{\mathrm{Spec}}
\newcommand{\Proj}{\mathrm{Proj}}
\newcommand{\Rees}{\mathrm{Rees}}
\newcommand{\Quot}{\mathrm{Quot}}
\newcommand{\inv}{^{-1}}
\newcommand{\calo}{{\mathcal O}}
\newcommand{\mm}{{\mathit m}}
\newcommand{\isom}{\simeq}
\newcommand{\inin}{\mathrm{in}}
\newcommand{\invv}{\mathrm{inv}}
\newcommand{\ord}{\mathrm{ord}}
\newcommand{\ttop}{\mathrm{top}}
\newcommand{\Sing}{\mathrm{Sing}}
\newcommand{\Reg}{\mathrm{Reg}}
\newcommand\hint{\!\!{}^\triangleright\,}
\newcommand\challenge{\!\!{}^+\,}
\newcommand\examples{\Large Examples}


\long\def\ignore#1\recognize{}

\def\comment#1{}



\begin{document}


\title{Blowups and Resolution}


\author{Herwig Hauser }
\maketitle


\hfill \textit{To the memory of Sheeram Abhyankar,} 

 \hfill \textit{with great respect.}\\

\noindent This manuscript originates from a series of lectures the author\footnote {Supported in part by the Austrian Science Fund FWF within the projects P-21461 and P-25652.} gave at the Clay Summer School on Resolution of Singularities at Obergurgl in the Tyrolean Alps in June 2012. A hundred young and ambitious students gathered for four weeks to hear and learn about resolution of singularities. Their interest and dedication became essential for the success of the school.

The reader of this article is ideally an algebraist or geometer having a rudimentary acquaintance with the main results and techniques in resolution of singularities. The purpose is to provide  quick and concrete information about specific topics in the field. As such, the article is modelled like a dictionary and not particularly suited to be read from the beginning to the end (except for those who like to read dictionaries). To facilitate the understanding of selected portions of the text without reading the whole earlier material, a certain repetition of definitions and assertions has been accepted.

Background information on the historic development and the motivation behind the various constructions can be found in the cited literature, especially in \cite{Obergurgl_Book, Ha_BAMS_1, Ha_BAMS_2, FH_BAMS, Cutkosky_Book, Kollar_Book, Lipman_Introduction}. Complete proofs of several more technical results appear in \cite{EH}.

All statements are formulated for algebraic varieties and morphism between them. They are mostly also valid, with the appropriate modifications, for schemes.  In certain cases, the respective statements for schemes are indicated separately.

Each chapter concludes with a broad selection of examples, ranging from computational exercises to suggestions for additional material which could not be covered in the text. Some more challenging problems are marked with a superscript $\,\,\challenge$. The examples should be especially useful for people planning to give a graduate course on the resolution of singularities. Occasionally the examples repeat or specialize statements which have appeared in the text and which are worth to be done personally before looking at the given proof. In the appendix, hints and answers to a selection of examples marked by a superscript $\,\, \hint$ are given.

Several results appear without proof, due to lack of time and energy of the author. Precise references are given whenever possible. The various survey articles contain complementary bibliography.

The Clay Mathematics Institute chose resolution of singularities as the topic of the 2012 summer school. It has been a particular pleasure to cooperate in this endeavour with its research director David Ellwood, whose enthusiasm and interpretation of the school largely coincided with the approach of the organizers, thus creating a wonderful working atmosphere. His sensitiveness of how to plan and realize the event has been exceptional.

The CMI director Jim Carlson and the CMI secretary Julie Feskoe supported very efficiently the preparation and realization of the school. 

Xudong Zheng provided a preliminary write-up of the lectures, Stefan Perlega and Eleonore Faber completed several missing details in preliminary drafts of the manuscript. Faber also produced the two visualizations. The discussion of the examples in the appendix was worked out by Perlega and Valerie Roitner. Anonymous referees helped substantially with their remarks and criticism to eliminate deficiencies of the exposition. Barbara Beeton from the AMS took care of the TeX-layout. All this was very helpful.

\goodbreak


\tableofcontents
\goodbreak



\section{Lecture I: A First Example of Resolution}

\noindent Let $X$ be the zeroset in affine three space $\AAA^3$ of the polynomial 
\[
f = 27 x^2y^3z^2 + (x^2 + y^3 - z^2)^3
\]
over a field $\K$ of characteristic different from $2$ and $3$. This is an algebraic surface, called {\em Camelia}, with possibly singular points and curves, and certain symmetries. For instance, the origin $0$ is singular on $X$, and $X$ is symmetric with respect to the automorphisms of $\AAA^3$ given by replacing $x$ by $-x$ or $z$ by $-z$, and also by replacing $y$ by $-y$ while interchanging $x$ with $z$. Sending $x$, $y$ and $z$ to $t^3x$ , $t^2y$ and $t^3z$ for $t\in \K$ also preserves $X$. See figure 1 for a plot of the real points of $X$. The intersections of $X$ with the three coordinate hyperplanes of $\AAA^3$ are given as the zerosets of the equations\goodbreak
\[
x=(y^3 - z^2)^3 = 0,\]
\[ 
y=(x^2 - z^2)^3 = 0,\]
\[
z=(x^2 + y^3)^3 = 0.
\]
These intersections are plane curves: two perpendicular cusps lying in the $xy$- and $yz$-plane, respectively the union of the two diagonals in the $xz$-plane. The singular locus $\Sing(X)$ of $X$ is given as the zeroset of the partial derivatives of $f$ inside $X$. This yields for $\Sing(X)$ the additional equations 
\[
x\cdot [9y^3z^2 + (x^2 + y^3 - z^2)^2]=0,
\]
\[
y^2\cdot[9x^2z^2 + (x^2 + y^3 - z^2)^2]=0,
\]
\[
z\cdot[9x^2y^3 - (x^2 + y^3 - z^2)^2]=0.
\]
Combining these equations with $f=0$, it results that the singular locus of $X$ has six irreducible components, defined respectively by 
\[
x=y^3-z^2=0,
\]
\[
z=x^2+y^3=0,
\]
\[
y=x+z=0,
\]
\[
y=x-z=0,
\]
\[
x^2-y^3=x+\sqrt{-1}\cdot z=0,
\]
\[
x^2-y^3=x-\sqrt{-1}\cdot z=0.
\]
The first four components of $\Sing(X)$ coincide with the four curves given by the three coordinate hyperplane sections of $X$. The last two components are plane cusps in the hyperplanes given by $x\pm\sqrt{-1}\cdot z=0$. At points $a\neq 0$ on the first two singular components of $\Sing(X)$,  the intersections of $X$ with a plane through $a$ and transversal to the component are again cuspidal curves.


\begin{figure}[h]
\centering
\includegraphics[scale=0.2]{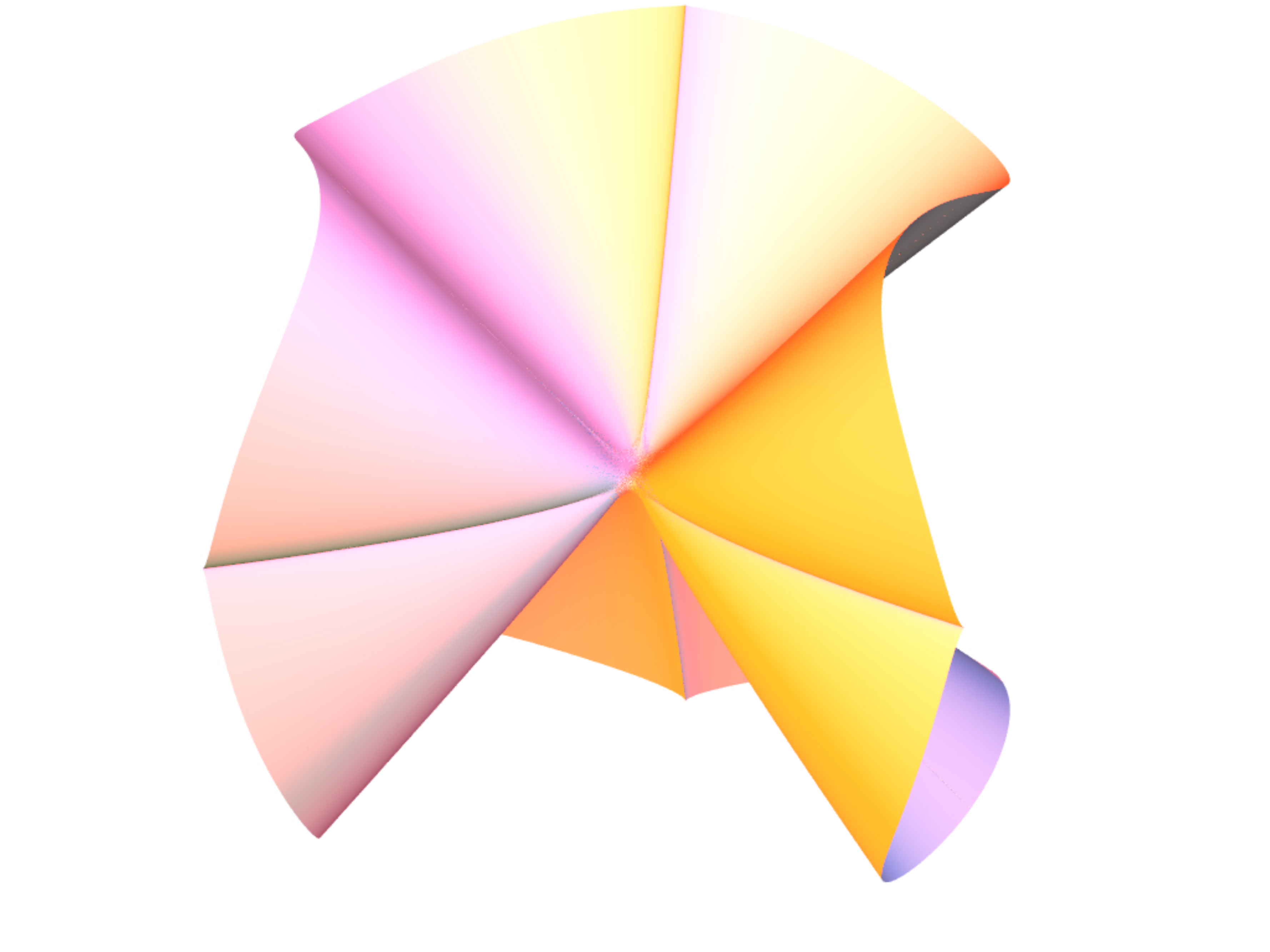}
\vskip-.5cm\caption{The surface \textit{Camelia}: $27 x^2y^3z^2 + (x^2 + y^3 - z^2)^3=0$.}
\end{figure}

\goodbreak

Consider now the surface $Y$ in $\AAA^4$ which is given as the cartesian product $C\times C$ of the plane cusp $C: x^2-y^3=0$ in $\AAA^2$ with itself. It is defined by the equations $x^2-y^3=z^2-w^3=0$. The singular locus $\Sing(Y)$ is the union of the two cusps $C_1=C\times 0$ and $C_2=0\times C$ defined by $x^2-y^3 = z=w=0$, respectively $x=y=z^2-w^3=0$. The surface $Y$ admits the parametrization 
\[
\gamma:\AAA^2\too\AAA^4, (s,t)\mapsto (s^3,s^2,t^3,t^2).
\]
The image of $\gamma$ is $Y$. The composition of $\gamma$ with the linear projection 
\[
\pi: \AAA^4 \too \AAA^3, (x, y, z,w) \mapsto (x, -y + w, z)
\]
yields the map
\[
\delta=\pi\circ \gamma:\AAA^2\too\AAA^3, (s,t)\mapsto (s^3,-s^2+t^2,t^3).
\]
Replacing in the polynomial $f$ of $X$ the variables $x$, $y$, $z$ by $s^3$, $-s^2+t^2$, $t^3$ gives $0$. This shows that the image of $\delta$ lies in $X$. As $X$ is irreducible of dimension $2$ and $\delta$ has  rank $2$ outside $0$ the image of $\delta$ is dense in (and actually equal to) $X$. Therefore the image of $Y$ under $\pi$ is dense in $X$: \comment{[actually, $\pi(Y)$ should equal $X$]} This interprets $X$ as a contraction of $Y$ by means of the projection $\pi$ from $\AAA^4$ to $\AAA^3$. The two surfaces $X$ and $Y$ are not isomorphic because, for instance, their singular loci have a different number of components. The simple geometry of $Y$ as a cartesian product of two plane curves is scrambled up when projecting it down to $X$.

The point blowup of $Y$ in the origin produces a surface $Y_1$ whose singular locus has two components. They map to the two components $C_1$ and $C_2$ of $\Sing(Y)$ and are regular and transversal to each other. The blowup $X_1$ of $X$ at $0$ will still be the image of $Y_1$ under a suitable projection. The four singular components of $\Sing(X)$ will become regular in $X_1$ and will either meet pairwise transversally or not at all. The two regular components of $\Sing(X)$ will remain regular in $X_1$ but will no longer meet each other, cf.÷ figure 2.


\begin{figure}[h]
\centering
\includegraphics[scale=0.2]{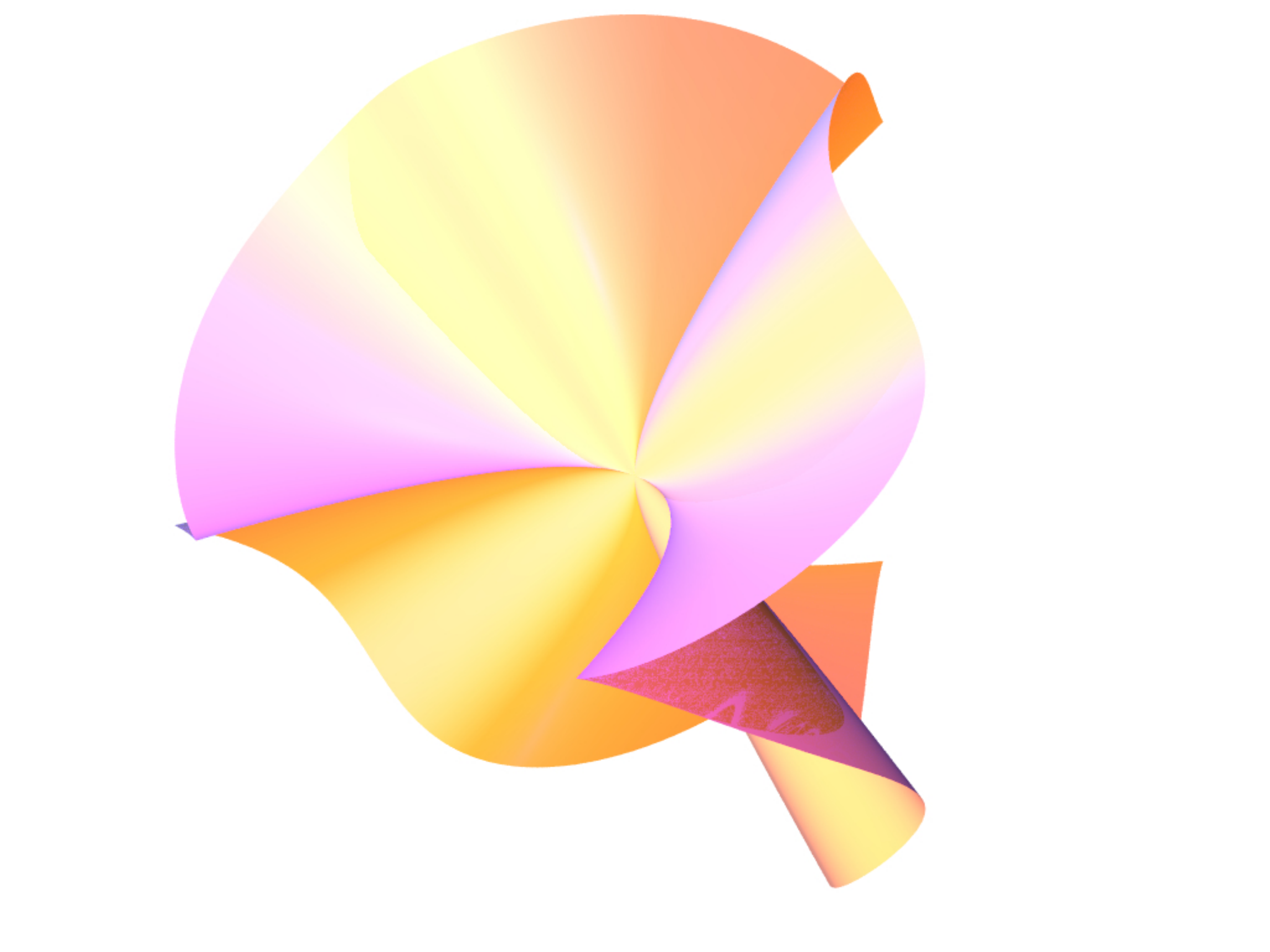}
\vskip-.5cm\caption{The surface $X_1$ obtained from \textit{Camelia} by a point blowup.}
\end{figure}


The point blowup of $Y_1$ in the intersection point of the two curves of $\Sing(Y_1)$ separates the two curves and yields a surface $Y_2$ whose singular locus consists of two disjoint regular curves. Blowing up these separately yields a regular surface $Y_3$ and thus resolves the singularities of $Y$. The resolution of the singularities of $X$ is more complicated, see the examples below. \comment{[Determine the resolution of $X$]} 

\goodbreak

\bigskip
\noindent\textit{\examples}


\begin{eg} \label{1.1}$\hint$ Show that the surface $X$ defined in $\AAA^3$ by $27x^2y^3z^2+(x^2+y^3-z^2)^3=0$ is the image of the cartesian product $Y$ of the cusp $C: x^3-y^2=0$ with itself under the projection from $\AAA^4$ to $\AAA^3$ given by $(x,y,z,w)\mapsto (x, -y + w, z)$. \end{eg}


\begin{eg} \label{1.2}$\hint$  Find additional symmetries of $X$ aside from those mentioned in the text.   
\end{eg}


\begin{eg} \label{1.3}$\hint$  Produce a real visualization of the surface obtained from \textit{Camelia }by replacing in the equation $z$ by $\sqrt{-1}\cdot z$.  Determine the components of the singular locus. \end{eg}


\begin{eg} \label{1.4}$\hint$ Consider at the point $a=(0, 1,1)$ of $X$ the plane $P:2y+3z=5$ through $a$. It is transversal to the component of the singular locus of $X$ passing through $a$ (i.e., this component is regular at $a$ and its tangent line at $a$ does not lie in $P$). Determine the singularity of $X\cap P$ at $a$. The normal vector $(0,2,3)$ to $P$ is the tangent vector at $t=1$ of the parametrization $(0,t^2,t^3)$ of the component of $\Sing(X)$ defined by $x=y^3-z^2$.
\end{eg}


\begin{eg} \label{1.5}  The point $a$ on $X$ with coordinates $(1, 1,\sqrt{-1})$ is a singular point of $X$, and a non-singular point of the curve of $\Sing(X)$ passing through it. A plane transversal to $\Sing(X)$ at $a$ is given e.g.÷ by 
\[
P: 3x+2y+3\sqrt{-1}\cdot z=9.
\]
Determine the singularity of $X\cap P$ at $a$. The normal vector $(3,2,3\sqrt{-1})$ to $P$ is the tangent vector at $s=1$ of the parametrization $(s^3,s^2, \sqrt{-1}\cdot s^3)$ of the  curve of $\Sing(X)$ through $a$ defined by $x^2-y^3=x-\sqrt{-1}\cdot z=0$.
\end{eg}


\begin{eg} \label{1.6}$\hint$ 
Blow up $X$ and $Y$ in the origin, describe exactly the geometry of the transforms $X_1$ and $Y_1$ and produce visualizations of $X$ and $X_1$ over $\RRR$ in all coordinate charts. For $Y_1$ the equations are in the $x$-chart $1- xy^3=z^3x - w^2=0$ and in the $y$-chart $x^2 -y=z^3y - w^2 =0$.
\end{eg}


\begin{eg} \label{1.7} 
Blow up $X_1$ and $Y_1$ along their singular loci. Compute a resolution of $X_1$ and compare it with the resolution of $Y_1$.
\end{eg}

\goodbreak

\section{Lecture II: Varieties and Schemes}

\noindent The following summary of basic concepts of algebraic geometry shall clarify the terminology used in later sections. Detailed definitions are available in  \cite{Mumford, Shafarevich, Hartshorne, Eisenbud_Harris, Liu, Kemper, Goertz_Wedhorn, EGA2, Zariski_Samuel, Nagata, Matsumura, Atiyah_MacDonald}.\\


\noindent\textit{\Large Varieties}


\begin{defn} Write $\AAA^n=\AAA^n_\K$ for the \textit{affine $n$-space} over some field $\K$. The points of $\AAA^n$ are identified with $n$-tuples $a=(a_1,\ldots, a_n)$ of elements $a_i$ of $\K$. The space $\AAA^n$ is equipped with the \textit{Zariski topology}: the closed sets are the \textit{algebraic subsets} of $\AAA^n$, i.e.,  the zerosets $V(I)=\{a\in\AAA^n,\, f(a)=0$ for all $f\in I\}$ of ideals $I$ of $\K[x_1, \dots, x_n]$.  

To $\AAA^n$ one associates its \textit{coordinate ring}, given as the polynomial ring $\K[\AAA^n]=\K[x_1, \dots, x_n]$ in $n$ variables over $\K$.  \end{defn} 


\begin{defn} A \textit{polynomial map} $f: \AAA^n \too \AAA^m$ between affine spaces is given by a vector $f = (f_1, \dots, f_m)$ of polynomials $f_i=f_i(x) \in \K[x_1, \dots, x_n]$. It induces a $\K$-algebra homomorphism $f^*:\K[y_1,\ldots,y_m]\too\K[x_1,\ldots,x_n]$ sending $y_i$ to $f_i$ and a polynomial $h=h(y)$ to $h\circ f=h(f(x))$. A \textit{rational map} $f: \AAA^n \too \AAA^m$ is given by a vector $f = (f_1, \dots, f_m)$ of elements $f_i \in \K(x_1, \dots, x_n)=\Quot(\K[x_1, \dots, x_n])$ in the quotient field of $\K[x_1, \dots, x_n]$. Albeit the terminology, a rational map need not define a set-theoretic map on whole affine space $\AAA^n$. It does this only on the open subset $U$ which is the complement of the union of the zerosets of the denominators of the elements $f_i$. The induced map $f_{\vert U}: U\to \AAA^m$ is then called a \textit{regular map} on $U$. \end{defn}


\begin{defn} An \textit{affine (algebraic) variety} $X$ is a subset of an affine space $\AAA^n$ which is defined as the zeroset $V(I)$ of a radical ideal $I$ of $\K[x_1,\ldots,x_n]$ and equipped with the topology induced by the Zariski topology of $\AAA^n$. It is thus a closed subset of $\AAA^n$. In this text, the ideal $I$ need not be prime, hence $X$ is not required to be irreducible. The ideal $I_X$ of $\K[x_1,\ldots,x_n]$ of all polynomials  $f$ vanishing on $X$ is the largest ideal such that $X=V(I_X)$. If the field $\K$ is algebraically closed and $X=V(I)$ is defined by the radical ideal $I$, the ideal $I_X$ coincides with $I$ by Hilbert's Nullstellensatz.

The \textit{(affine) coordinate ring of $X$} is the factor ring $\K[X]= \K[x_1,\ldots,x_n]/I_X$. As $I_X$ is radical, $\K[X]$ is reduced, i.e., has no nilpotent elements. If $X$ is irreducible, $I$ is a prime ideal and $\K[X]$ is an integral domain. To each point $a=(a_1,\ldots,a_n)$ of $X$ one associates the maximal ideal $\mm_{X,a}$ of $\K[X]$ generated by the residue classes of the polynomials $x_1-a_1,\ldots, x_n-a_n$. If the field $\K$ is algebraically closed, this defines, by Hilbert's Nullstellensatz, a bijection of the points of $X$ and the maximal ideals of $\K[X]$. 

The \textit{function field} of an irreducible variety $X$ is the quotient field $\K(X)=\Quot(\K[X])$. 

The \textit{local ring} of an affine variety $X$ at a point $a$ is the localization $\calo_{X,a}=\K[X]_{\mm_{X,a}}$ of $\K[X]$ at the maximal ideal $\mm_{X,a}$ of $\K[X]$ associated to $a$ in $X$. It is isomorphic to the factor ring $\calo_{\AAA^n,a}/I_a$ of the local ring $\calo_{\AAA^n,a}$ of $\AAA^n$ at $a$ by the ideal $I_a$ generated by the ideal $I$ defining $X$ in $\calo_{\AAA^n,a}$. If $Z\subset X$ is an irreducible subvariety defined by the prime ideal $p_Z$, the local ring $\calo_{X,Z}$ of $\calo_X$ along $Z$ is defined as the localization $\K[X]_{p_Z}$. The local ring $\calo_{X,a}$ gives rise to the \textit{germ} $(X,a)$ of $X$ at $a$, see below. By abuse of notation, the (unique) maximal ideal $\mm_{X,a}\cdot \calo_{X,a}$ of $\calo_{X,a}$ is also denoted by $\mm_{X,a}$. The factor ring $\kappa_a=\calo_{X,a}/\mm_{X,a}$ is called the \textit{residue field} of $X$ at $a$.\end{defn}


\begin{defn} A \textit{principal open subset} of an affine variety $X$ is the complement in $X$ of the zeroset $V(g)$ of a single non-zero divisor $g$ of $\K[X]$. Principal open subsets are thus dense in $X$. They form a basis of the Zariski-topology.  A \textit{quasi-affine variety} is an open subset of an affine variety. A \textit{(closed) subvariety} of an affine variety $X$ is a subset $Y$ of $X$ which is defined as the zeroset $Y=V(J)$ of an ideal $J$ of $\K[X]$. It is thus a closed subset of $X$.
\comment{[Define Zariski closures of varieties and extensions of ideals defined in principal open sets.]}\end{defn}


\begin{defn} \label{220} Write $\PPP^n=\PPP^n_\K$ for the \textit{projective $n$-space} over some field $\K$. The points of $\PPP^n$ are identified with equivalence classes of $(n+1)$-tuples $a=(a_0,\ldots, a_n)$ of elements $a_i$ of $\K$, where $a\sim b$ if $a=\lambda \cdot b$ for some non-zero $\lambda\in\K$. Points are given by their \textit{projective coordinates} $(a_0:\ldots:a_n)$, with $a_i\in \K$ and not all zero. Projective space $\PPP^n$ is equipped with the Zariski topology whose closed sets are the algebraic subsets of $\PPP^n$, i.e., the zerosets $V(I)=\{a\in\PPP^n, f(a_0,\ldots,a_n)=0$ for all $f\in I\}$ of homogeneous ideals $I$ of $\K[x_0, \dots, x_n]$ different from the ideal generated by $x_0,\ldots,x_n$. As $I$ is generated by homogeneous polynomials, the definition of $V(I)$ does not depend on the choice of the affine representatives $(a_0,\ldots,a_n)\in\AAA^{n+1}$ of the points $a$. Projective space is covered by the affine open subsets $U_i\isom\AAA^n$ formed by the points whose $i$-th 
projective coordinate does not vanish ($i=0,\ldots, n$). 

The \textit{homogeneous coordinate ring} of $\PPP^n$ is the graded polynomial ring $\K[\PPP^n]=\K[x_0, \dots, x_n]$ in $n+1$ variables over $\K$, the grading being given by the degree. \end{defn} 


\begin{defn} A \textit{projective algebraic variety} $X$ is a subset of projective space $\PPP^n$ defined as the zeroset $V(I)$ of a homogeneous radical ideal $I$ of $\K[x_0,\ldots,x_n]$ and equipped with the topology induced by the Zariski topology of $\PPP^n$. It is thus a closed algebraic subset of $\PPP^n$. The ideal $I_X$ of $\K[x_0,\ldots,x_n]$ of all homogeneous polynomials $f$ which vanish at all (affine representatives of) points of $X$ is the largest ideal such that $X=V(I_X)$. If the field $\K$ is algebraically closed and $X=V(I)$ is defined by the radical ideal $I$, the ideal $I_X$ coincides with $I$ by Hilbert's Nullstellensatz.

The \textit{homogeneous coordinate ring of $X$} is the graded factor ring $\K[X]= \K[x_0,\ldots,x_n]/I_X$ equipped with the grading given by degree. \comment{[The \textit{function field} of an irreducible projective variety $X$ is the degree zero part of the quotient field $\K(X)=\Quot(\K[X])$?]}\end{defn}


\begin{defn}  A \textit{principal open subset} of a projective variety $X$ is the complement of the zeroset $V(g)$ of a single homogeneous non-zero divisor $g$ of $\K[X]$.  A \textit{quasi-projective variety} is an open subset of a projective variety. A \textit{(closed) subvariety} of a projective variety $X$ is a subset $Y$ of $X$ which is defined as the zeroset $Y=V(J)$ of a homogeneous ideal $J$ of $\K[X]$. It is thus a closed algebraic subset of $X$.\end{defn}


\begin{rem} Abstract algebraic varieties are obtained by gluing affine algebraic varieties along principal open subsets, cf.÷ \cite{Mumford} I, \S3, \S4, \cite{Shafarevich} V, \S3. This allows to develop the category of algebraic varieties with the usual constructions therein. All subsequent definitions could be formulated for abstract algebraic varieties, but will only be developed in the affine or quasi-affine case to keep things simple.\end{rem}


\begin{defn}  Let $X$ and $Y$ be two affine or quasi-affine algebraic varieties $X\subset \AAA^n$ and $Y\subset\AAA^m$. A \textit{regular map} or \textit{(regular) morphism} from $X$ to $Y$ is a map $f:X\too \AAA^m$ sending $X$ into $Y$ with components rational functions $f_i\in\K(x_1,\ldots,x_n)$ whose denominators do not vanish on $X$. If $X$ and $Y$ are affine varieties, a morphism induces a $\K$-algebra homomorphism $f^*:\K[Y]\too \K[X]$ between the coordinate rings, which, in turn, determines $f$. Over algebraically closed fields, a morphism between affine varieties is the restriction to $X$ of a polynomial map $f:\AAA^n\too \AAA^m$ sending $X$ into $Y$, i.e., such that $f^*(I_Y)\subset I_X$, \cite{Hartshorne}, chap.÷ I, Thm.÷ 3.2, p.÷ 17.   
\end{defn}


\begin{defn}  A \textit{rational map} $f:X\too Y$ between affine varieties is a morphism $f:U\too Y$ on some dense open subset $U$ of $X$. One then says that $f$ is \textit{defined} on $U$. Albeit the terminology, it need not induce a set-theoretic map on whole $X$. For irreducible varieties, a rational map is given by a $\K$-algebra homomorphism $\alpha_f: \K(Y)\too \K(X)$ of the function fields. A \textit{birational map} $f:X\too Y$ is a regular map $f:U\too Y$ on some dense open subset $U$ of $X$ such that $V=f(U)$ is open in $Y$ and such that $f_{\vert U}:U\too V$ is a regular isomorphism, i.e., admits an inverse morphism. In this case $U$ and $V$ are called \textit{biregularly isomorphic}, and $X$ and $Y$ are \textit{birationally isomorphic}. For irreducible varieties, a birational map is given by an isomorphism $\alpha_f: \K(Y)\too \K(X)$ of the function fields. A \textit{birational morphism} $f:X\too Y$ is a birational map which is defined on whole $X$, i.e., a morphism $f:X\too Y$ which admits a 
rational inverse map defined on a dense open subset $V$ of $Y$.
\end{defn}


\begin{defn}  A morphism  $f:X\too Y$ between algebraic varieties is called \textit{separated} if the diagonal $\Delta\subset X\times_YX$ is closed in the fibre product $X\times_YX=\{(a,b)\in X\times X,\, f(a)=f(b)\}$.
A morphism $f:X\too Y$ between varieties is \textit{proper} if it is separated and universally closed, i.e., if for any variety $Z$ and morphism $Z\too Y$ the induced morphism $g:X\times_Y Z\too Z$ is closed. Closed immersions and compositions of proper morphisms are proper. \comment{[For schemes, the morphism $f:X\too Y$ has to be in addition of finite type \cite{EGA} II, 5.4.1]}\end{defn}


\begin{defn} The \textit{germ of a variety $X$ at a point $a$}, denoted by $(X,a)$, is the equivalence class of open neighbourhoods $U$ of $a$ in $X$ where two neighbourhoods of $a$ are said to be equivalent if they coincide on a (possibly smaller) neighbourhood of $a$. To a germ $(X,a)$ one associates the local ring $\calo_{X,a}$ of $X$ at $a$, i.e., the localization $\calo_{X,a}=\K[X]_{\mm_{X,a}}$ of the coordinate ring $\K[X]$ of $X$ at the maximal ideal $\mm_{X,a}$ of $\K[X]$ defining $a$ in $X$. \end{defn}


\comment{[Localization is an exact functor, i.e., $S^{-1}R$ is a flat $R$-module for any multiplicatively closed subset $S$ of $R$ \cite{Atiyah_MacDonald} Cor.~3.6,~p.~40, .]}


\begin{defn} Let $X$ and $Y$ be two varieties, and let $a$ and $b$ be points of $X$ and $Y$. The \textit{germ of a morphism} $f:(X,a)\too (Y,b)$  at $a$ is the equivalence class of a morphism $\wt f:U\too Y$ defined on an open neighbourhood $U$ of $a$ in $X$ and sending $a$ to $b$; here, two morphisms defined on neighbourhoods of $a$ in $X$ are said to be equivalent if they coincide on a (possibly smaller) neighbourhood of $a$. The morphism $\wt f:U\too Y$ is called a \textit{representative} of $f$ on $U$. Equivalently, the germ of a morphism is given by a local $\K$-algebra homomorphism $\alpha_f=f^*:\calo_{Y,b}\to \calo_{X,a}$. \end{defn}


\begin{defn} The \textit{tangent space} ${\rm T}_aX$ to a variety $X$ at a point $a$ is defined as the $\K$-vector space 
\[
{\rm T}_aX= (\mm_{X,a}/\mm_{X,a}^2)^*=\rm{Hom}(\mm_{X,a}/\mm_{X,a}^2,\K)
\]
with $\mm_{X,a}\subset\K[X]$ the maximal ideal of $a$. Equivalently, one may take the maximal ideal of the local ring $\calo_{X,a}$. The tangent space of $X$ at $a$ thus only depends on the germ of $X$ at $a$. The \textit{tangent map} ${\rm T}_af: {\rm T}_aX\too {\rm T}_bY$ of a morphism $f:X\too Y$ sending a point $a\in X$ to $b\in Y$ or of a germ $f:(X,a)\too (Y,b)$ is defined as the linear map induced naturally by $f^*:\calo_{Y,b}\too \calo_{X,a}$.\end{defn}


\begin{defn} The \textit{embedding dimension} $\rm{embdim}_aX$ of a variety $X$ at a point $a$ is defined as the $\K$-dimension of ${\rm T}_aX$.\end{defn}

\bigskip
\noindent\textit{\Large Schemes}


\begin{defn} An \textit{affine scheme $X$} is a commutative ring $R$ with $1$, called the \textit{coordinate ring} or \textit{ring of global sections} of $X$. The set $\Spec(R)$, also denoted by $X$ and called the \textit{spectrum} of $R$, is defined as the set of prime ideals of $R$. Here, $R$ does not count as a prime ideal, but $0$ does if it is prime, i.e., if $R$ is an integral domain. A \textit{point} of $X$ is an element of $\Spec(R)$. In this way, $R$ is the underlying algebraic structure of an affine scheme, whereas $X=\Spec(R)$ is the associated geometric object. To be more precise, one would have to define $X$ as the pair consisting of the coordinate ring $R$ and the spectrum $\Spec(R)$.

The spectrum is equipped with the \textit{Zariski topology}: the closed sets $V(I)$ are formed by the prime ideals containing a given ideal $I$ of $R$. It also comes with a sheaf of rings $\calo_X$, the \textit{structure sheaf of $X$}, whose stalks at points of $X$ are the localizations of $R$ at the respective prime ideals \cite{Mumford, Hartshorne, Shafarevich}. For affine schemes, the sheaf $\calo_X$ is completely determined by the ring $R$. In particular, to define affine schemes it is not mandatory to introduce sheaves or locally ringed spaces. 

An affine scheme is \textit{of finite type over some field $\K$} if its coordinate ring $R$ is a finitely generated $\K$-algebra, i.e., a factor ring $\K[x_1,\ldots,x_n]/I$ of a polynomial ring by some ideal $I$. If $I$ is radical, the scheme is called \textit{reduced}. Over algebraically closed fields, affine varieties can be identified with reduced affine schemes of finite type over $\K$ \cite{Hartshorne}, chap.÷ II, Prop.÷ 2.6, p.÷ 78.\end{defn}


\begin{defn} As a scheme, \textit{affine $n$-space} over some field $\K$ is defined as the scheme $\AAA^n=\AAA^n_\K$ given by the polynomial ring $\K[x_1, \dots, x_n]$ over $\K$ in $n$ variables. Its underlying topological space $\AAA^n=\Spec(\K[x_1, \dots, x_n])$ consists of the prime ideals of $\K[x_1, \dots, x_n]$. The points of $\AAA^n$ when considered as a scheme therefore correspond to the irreducible subvarieties of $\AAA^n$ when considered as an affine variety. A point is \textit{closed} if the respective prime ideal is a maximal ideal. Over algebraically closed fields, the closed points of $\AAA^n$ as a scheme correspond, by Hilbert's Nullstellensatz, to the points of $\AAA^n$ as a variety.\end{defn} 


\begin{defn} Let $X$ be an affine scheme of coordinate ring $R$. A \textit{closed subscheme} $Y$ of $X$ is a factor ring $S=R/I$ of $R$ by some ideal $I$ of $R$, together with the canonical homomorphism $R\too R/I$. The set $Y=\Spec(R/I)$ of prime ideals of $R/I$ can be identified with the closed subset $V(I)$ of $X=\Spec(R)$ of prime ideals of $R$ containing $I$. The homomorphism $R\too R/I$ induces an injective continuous map $Y\too X$ between the underlying topological spaces. In this way, the Zariski topology of $Y$ as a scheme coincides with the topology induced by the Zariski topology of $X$.

A \textit{point} $a$ of $X$ is a prime ideal $I$ of $R$, considered as an element of $\Spec(R)$. To a point one may associate a closed subscheme of $X$ via the spectrum of the factor ring $R/I$. The point $a$ is called \textit{closed} if $I$ is a maximal ideal of $R$. 

A \textit{principal open subset} of $X$ is an affine scheme $U$ defined by the ring of fractions $R_g$ of $R$ with respect to the multiplicatively closed set $\{1,g,g^2,\ldots\}$ for some non-zero divisor $g\in R$. The natural ring homomorphism $R\too R_g$ sending $h$ to $h/1$ interprets $U=\Spec(R_g)$ as an open subset of $X=\Spec(R)$. Principal open subsets form a basis of the Zariski topology of $X$. Arbitary open subsets of $X$ need not admit an interpretation as affine schemes.\end{defn}


\begin{rem} Abstract schemes are obtained by gluing affine schemes along principal open subsets \cite{Mumford} II, \S1, \S2, \cite{Hartshorne} II.2, \cite{Shafarevich} V, \S3. Here, two principal open subsets will be identified or patched together if their respective coordinate rings are isomorphic. This works out properly because the passage from a ring $R$ to its ring of fractions $R_g$ satisfies two key algebraic properties: An element $h$ of $R$ is determined by its images in $R_g$, for all $g$, and, given elements $h_g$ in the rings $R_g$ whose images in $R_{gg'}$ coincide for all $g$ and $g'$, there exists an element $h$ in $R$ with image $h_g$ in $R_g$, for all $g$. This allows in particular to equip arbitrary open subsets of (affine) schemes $X$ with a natural structure of a scheme, which will then be called \textit{open subscheme} of $X$.

The gluing of affine schemes along principal open sets can also be formulated on the sheaf-theoretic level, even though, again, this is not mandatory. For local considerations, one can mostly restrict to the case of affine schemes.   \end{rem}


\begin{defn} A \textit{morphism $f:X\too Y$} between affine schemes $X=\Spec(R)$ and $Y=\Spec(S)$ is a (unitary) ring homomorphism $\alpha=\alpha_f:S\too R$. It induces a continuous map $\Spec(R)\too \Spec(S)$ between the underlying topological spaces by sending a prime ideal $I$ of $R$ to the prime ideal $\alpha\inv(I)$ of $S$. Morphisms between arbitary schemes are defined by choosing coverings by affine schemes and defining the morphism locally subject to the obvious conditions on the overlaps of the patches. A \textit{rational map} $f:X\too Y$ between schemes is a morphism $f:U\too Y$ defined on a dense open subscheme $U$ of $X$. It need not be defined on whole $X$. A rational map is \textit{birational} if the morphism $f:U\too Y$ admits on a dense open subscheme $V$ of $Y$ an inverse morphism $V\too U$, i.e., if $f_{\vert U}:U\too V$ is an isomorphism. A \textit{birational morphism $f:X\too Y$} is a morphism $f:X\too Y$ which is also a birational map, i.e., induces an isomorphism $U\too V$ of dense open 
subschemes. In contrast to birational maps, a birational morphism is defined on whole $X$, while its inverse map is only defined on a dense open subset of $Y$.
\end{defn}


\begin{defn} Let $R=\bigoplus_{i=0}^\infty R_i$ be a graded ring. The set $X=\Proj(R)$ of homogeneous prime ideals of $R$ not containing the \textit{irrelevant ideal} $M=\oplus_{i\geq 1}^\infty R_i$ is equipped with a topology, the \textit{Zariski topology}, and with a sheaf of rings $\calo_X$, the \textit{structure sheaf} of $X$ \cite{Mumford, Hartshorne, Shafarevich}. It thus becomes a scheme. Typically, $R$ is generated as an $R_0$-algebra by the homogeneous elements $g\in R_1$ of degree $1$. An open covering of $X$ is then given by the affine schemes $X_g = \Spec(R^\circ_g)$, where $R^\circ_g$ denotes, for any non-zero divisor $g\in R_1$, the subring of elements of degree $0$ in the ring of fractions $R_g$. \comment {[Does $\Proj(R)=\Spec(R)$ hold for trivially graded rings?]}\end{defn} 


\comment{[Add affine covering of $\Proj(R)$ defined by the rings $R_g^\circ$ of degree zero elements of the ring of fractions $R_g$, with $g$ varying in $R_1$.]}


\begin{rem} A graded ring homomorphism $S\too R$ induces a morphism of schemes $\Proj(R)\too \Proj(S)$.\end{rem}


\begin{defn} Equip the polynomial ring $\K[x_0,\ldots , x_n]$ with the natural grading given by the degree. The scheme $\Proj(\K[x_0,\ldots , x_n])$ is called $n$-dimensio\-nal \textit{projective space} over $\K$, denoted by $\PPP^n=\PPP^n_\K$. It is is covered by the affine open subschemes $U_i\isom\AAA^n$ which are defined through the principal open sets associated to the rings of fractions $\K[x_0,\ldots , x_n]_{x_i}$ ($i=0,\ldots, n$).\end{defn}


\begin{defn} A morphism $f:X\too Y$ is called \textit{projective} if it factors, for some $k$, into a closed embedding $X\hookrightarrow Y\times \PPP^k$ followed by the projection $Y\times\PPP^k\too Y$ onto the first factor \cite{Hartshorne} II.4, p.÷ 103.\end{defn}

\bigskip
\noindent\textit{\Large Formal germs}


\begin{defn} Let $R$ be a ring and $I$ an ideal of $R$. The powers $I^k$ of $I$ induce natural homomorphisms $R/I^{k+1}\too R/I^k$. The \textit{$I$-adic completion} of $R$ is the inverse limit $\wh R=\varprojlim R/I^k$, together with the canonical homomorphism $R\too \wh R$. If $R$ is a local ring with maximal ideal $\mm$, the $\mm$-adic completion $\wh R$ is called the \textit{completion} of $R$. For an $R$-module $M$, one defines the \textit{$I$-adic completion} $\wh M$ of $M$ as the inverse limit $\wh M=\varprojlim M/I^k\cdot M$.\end{defn}


\begin{lem} \label{191} Let $R$ be a noetherian ring with prime ideal $I$ and $I$-adic completion $\wh R$.  

\begin{enumerate}


\item If $J$ is another ideal of $R$ with $(I\cap J)$-adic completion $\wh J$, then $\wh{J}=J\cdot \wh{R}$ and $\wh{R/J}\isom \wh{R}/\wh{J}$.
 

\item If $J_1$, $J_2$ are two ideals of $R$ and $J=J_1\cdot J_2$, then $\wh{J} = \wh{J_1}\cdot \wh{J_2}$.


\item If $R$ is a local ring with maximal ideal $\mm$, the completion $\wh R$ is a noetherian local ring with maximal ideal $\wh \mm=\mm\cdot \wh R$, and $\wh{\mm}\cap R=\mm$.


\item If $J$ is an arbitrary ideal of a local ring $R$, then  $\wh J\cap R=\bigcap_{i\geq0}(J+\mm^i)$. 


\item Passing to the $I$-adic completion defines an exact functor on finitely generated $R$-modules.


\item $\wh R$ is a faithfully flat $R$-algebra.


\item If $M$ a finitely generated $R$-module, then $\wh M=M\otimes_R\wh R$.


\comment{[(8) The completion of a radical ideal is again radical, but the completion of a prime ideal need not be prime, cf.÷ \cite{Zariski_Samuel}?]}


\end{enumerate}
\end{lem}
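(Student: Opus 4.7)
The plan is to build up from item (5), which is the analytic heart of the lemma, and then to derive (6) and (7) as near-formal consequences, from which the remaining items (1)--(4) follow by direct manipulation. For (5), I would take a short exact sequence $0 \too M' \too M \too M'' \too 0$ of finitely generated $R$-modules and apply the Artin--Rees lemma to the induced filtration $\{M' \cap I^k M\}_k$ on $M'$. Artin--Rees shows this filtration is $I$-stable, hence cofinal with the $I$-adic filtration $\{I^k M'\}_k$, so passing to inverse limits along the resulting triangular diagram of short exact sequences and invoking Mittag--Leffler (whose hypothesis holds because the left-hand systems have surjective transition maps after a shift) yields exactness of $0 \too \wh{M'} \too \wh{M} \too \wh{M''} \too 0$.

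For (7), I fix a finite presentation $R^m \too R^n \too M \too 0$. Completion trivially commutes with finite direct sums, so $\wh{R^n} = \wh{R}^n = R^n \otimes_R \wh{R}$. Comparing the right-exact sequence obtained by tensoring with $\wh{R}$ to the one obtained by completing (using (5)) forces the natural map $M \otimes_R \wh{R} \too \wh{M}$ to be an isomorphism. Item (6) then follows: the functor $\cdot \otimes_R \wh{R}$ agrees with $I$-adic completion on finitely generated modules by (7), hence is exact by (5), and since flatness may be tested on finitely generated modules we conclude that $\wh{R}$ is $R$-flat; faithful flatness in the local case is automatic because $\mm\wh{R}$ is a proper ideal (the quotient $\wh{R}/\mm\wh{R} = \wh{R/\mm} = R/\mm$ is nonzero). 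Applying (7) to the inclusion $J \hookrightarrow R$ together with flatness yields (1) in the form $\wh{J} = J \otimes_R \wh{R} = J\wh{R}$, while completing the exact sequence $0 \too J \too R \too R/J \too 0$ via (5) gives the second half $\wh{R/J} \cong \wh{R}/\wh{J}$. Item (2) is then immediate: $\wh{J_1 J_2} = J_1 J_2 \wh{R} = (J_1 \wh{R})(J_2 \wh{R}) = \wh{J_1}\wh{J_2}$.

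For (3), the equality $\wh{\mm} = \mm\wh{R}$ is just (1), and any $r \in \wh{R} \setminus \wh{\mm}$ has a unit image in $R/\mm$ whose lift $u \in \wh{R}$ can be promoted to an inverse of $r$ via the geometric series $u \cdot \sum_{k \geq 0} (1 - ru)^k$, which converges in the $\wh{\mm}$-adic topology since $1 - ru \in \wh{\mm}$; hence $\wh{\mm}$ is the unique maximal ideal, and $\wh{\mm} \cap R = \mm$ follows because this intersection is a proper ideal of $R$ containing $\mm$. Noetherianness of $\wh{R}$ is the real technical step: one identifies the associated graded ring $\mathrm{gr}_{\wh{\mm}}(\wh{R}) \cong \mathrm{gr}_{\mm}(R)$, which is a finitely generated algebra over the field $R/\mm$ and hence noetherian, and then lifts generators of a given ideal through the $\wh{\mm}$-adic filtration by a standard inductive argument (as in Atiyah--Macdonald, Thms.\ 10.25--10.26). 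Finally, (4) is a direct unravelling: by (1) and (3), $r \in \wh{J} \cap R$ holds iff the image of $r$ in each $\wh{R}/\wh{\mm}^k = R/\mm^k$ lies in the image of $\wh{J}$, namely in $(J + \mm^k)/\mm^k$, which is equivalent to $r \in \bigcap_k (J + \mm^k)$. The main obstacle is the noetherianness assertion in (3); everything else is a formal consequence of Artin--Rees together with flatness once (5) is in hand.
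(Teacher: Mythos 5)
Your argument is correct in substance, but it takes a genuinely different route from the paper's. The paper's proof is essentially a chain of citations: items (5)--(7) are quoted from \cite{Atiyah_MacDonald} and \cite{Matsumura}; (3) and (4) are obtained by combining \cite{Atiyah_MacDonald} with the observation that $\wh J\cap R$ equals the closure $\bigcap_{i\geq0}(J+\mm^i)$ of $J$; and (1) is reduced, via Zariski--Samuel's theorem on completions of submodules \cite{Zariski_Samuel}, to checking that $J$ is closed, the closure being computed as $\bigcap_{i\geq0}(J+(I\cap J)^i)=J$. You instead rebuild the whole chain self-contained from Artin--Rees, and in the reverse logical order: exactness (5) first, then $\wh M\isom M\otimes_R\wh R$ (7), flatness (6), and only then (1), (2), (4) as formal consequences, with noetherianity in (3) handled through $\mathrm{gr}_{\wh\mm}(\wh R)\isom\mathrm{gr}_{\mm}(R)$ as in Atiyah--Macdonald 10.25--10.26. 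What your route buys is a transparent dependence structure in which everything except noetherianity of $\wh R$ is formal once Artin--Rees and Mittag--Leffler deliver (5); what the paper's route buys is brevity, and in (1) and (4) a direct topological argument (closure of a closed submodule) that never invokes flatness. Note also that your restriction of the faithfulness claim in (6) to the local case is not a shortcut but a necessity: for $R=\K[x]$, $I=(x)$, the completion $\K[[x]]$ is flat but not faithfully flat over $R$, since the maximal ideal $(x-1)$ extends to the unit ideal; the theorem of Matsumura that the paper cites indeed assumes $I$ contained in the Jacobson radical, so your version is the one that is actually true.

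Two small points should be made explicit. First, in (4) your equivalence ``$r\in\wh J\cap R$ if and only if the image of $r$ in each $\wh R/\wh\mm^k\isom R/\mm^k$ lies in $(J+\mm^k)/\mm^k$'' is trivial only in the forward direction; the converse is precisely the assertion that $\wh J$ is closed in $\wh R$, i.e.\ that $\bigcap_{k\geq0}(\wh J+\wh\mm^k)=\wh J$, which you should justify by Krull's intersection theorem applied in the noetherian local ring $\wh R$ (available by your (3)) to the module $\wh R/\wh J$ -- a one-line fix, but the statement of (4) is exactly this closedness, so it cannot be left implicit. Second, you silently read the ``$(I\cap J)$-adic completion'' of item (1) as the $I$-adic completion of the module $J$. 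The literal $(I\cap J)$-adic filtration on $J$ is coarser in general: for $R=\K[x,y]$, $I=(x)$, $J=(y)$ one has $(I\cap J)^k=(x^ky^k)$, which is not cofinal with the induced filtration $J\cap I^k=(x^ky)$. Your reading -- equivalent, by Artin--Rees, to the topology induced on $J$ by the $I$-adic topology of $R$, which is the topology the paper's Zariski--Samuel citation actually concerns -- is the defensible one, but a sentence acknowledging and justifying the substitution would make your proof watertight against the statement as worded.
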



\begin{proof}{} (1) By \cite{Zariski_Samuel}~VIII,~Thm.~6,~Cor.~2,~p.~258, it suffices to verify that $J$ is closed in the $(I\cap J)$-adic topology. The closure $\overline{J}$ of $J$ equals $\overline{J}=\bigcap_{i\geq0}(J+(I\cap J)^i)=J$ by \cite{Zariski_Samuel}~VIII,~Lemma 1,~p.~253.

(2) By definition, $\wh{J}=J_1\cdot J_2\cdot \wh R=J_1\cdot \wh R\cdot J_2\cdot \wh R=\wh J_1\cdot \wh J_2$.

(3), (4) The ring $\wh R$ is noetherian and local by \cite{Atiyah_MacDonald}~Prop.~10.26,~p.~113,  and Prop.~10.16,~p.~109. The ideal $\wh I\cap R$ equals the closure $\overline{I}$ of $I$ in the $\mm$-adic topology. Thus, $\wh I\cap R=\bigcap_{i\geq0}(I+\mm^i)$ and $\wh \mm\cap R=\mm$. 

(5) -- (7) \cite{Atiyah_MacDonald} Prop.~10.12, p.~108, Prop.~10.14, p.~109, Prop.~10.14, p.~109, and \cite{Matsumura}, Thm.~8.14, p.~62.
\end{proof}


\begin{defn} Let $X$ be a variety and let $a$ be a point of $X$. The local ring $\calo_{X,a}$ of $X$ at $a$ is equipped with the $\mm_{X,a}$-adic topology whose basis of neighbourhoods of $0$ is given by the powers $\mm_{X,a}^k$ of the maximal ideal $\mm_{X,a}$ of $\calo_{X,a}$. The induced completion $\wh\calo_{X,a}$ of $\calo_{X,a}$ is called the \textit{complete local ring of $X$ at $a$} \cite{Nagata} II, \cite{Zariski_Samuel} VIII, \S1, \S2. The scheme defined by $\wh\calo_{X,a}$ is called the \textit{formal neighbourhood} or \textit{formal germ} of $X$ at $a$, denoted by $(\wh X,a)$. If $X=\AAA^n_\K$ is the affine $n$-space over $\K$, the complete local ring $\wh \calo_{\AAA^n,a}$ is isomorphic as a local $\K$-algebra to the formal power series ring $\K[[x_1,\ldots,x_n]]$ in $n$ variables over $\K$. The natural ring homomorphism $\calo_{X,a}\too \wh\calo_{X,a}$ defines a morphism $(\wh X,a)\too (X,a)$ in the category of schemes from the formal neighbourhood to the germ of $X$ at $a$. A \textit{formal 
subvariety} $(\wh Y,a)$ of $(\wh X,a)$, also called a \textit{formal local subvariety} of $X$ at $a$, is (the scheme defined by) a factor ring $\wh\calo_{X,a}/I$ for an ideal $I$ of $\wh\calo_{X,a}$. 

As $\mm_{X,a}/\mm_{X,a}^2\isom \wh \mm_{X,a}/\wh \mm_{X,a}^2$, one defines the tangent space ${\rm T}_a(\wh X,a)$ of the formal germ as the tangent space ${\rm T}_a(X)$ of the variety at $a$.

A map $f:(\wh X,a)\too (\wh Y,b)$ between two formal germs is defined as a local algebra homomorphism $\alpha_f:\wh\calo_{Y,b}\too \wh\calo_{X,a}$. It is 
also called a \textit{formal map} between $X$ and $Y$ at $a$. It induces in a natural way a linear map, the \textit{tangent map}, ${\rm T}_af: {\rm T}_a X\too {\rm T}_bY$ between the tangent spaces.

Two varieties $X$ and $Y$ are \textit{formally isomorphic} at points $a\in X$, respectively $b\in Y$, if the complete local rings $\wh\calo_{X,a}$ and $\wh\calo_{Y,b}$ are isomorphic. If $X\subset \AAA^n$ and $Y\subset \AAA^n$ are subvarieties of the same affine space $\AAA^n$ over $\K$, and $a=b=0$, this is equivalent to saying that there is a local algebra automorphism of $\calo_{\AAA^n,0}\isom \K[[x_1,\ldots,x_n]]$ sending the completed ideals $\wh I$ and $\wh J$ of $X$ and $Y$ onto each other. Such an automorphism is also called a \textit{formal coordinate change} of $\AAA^n$ at $0$. It is given by a vector of $n$ formal power series without constant term whose Jacobian matrix of partial derivatives is invertible when evaluated at $0$. \end{defn}


\begin{rem} The inverse function theorem does not hold for algebraic varieties and regular maps between them, but it holds in the category of formal germs and formal  maps: A formal map $f:(\wh X,a)\too (\wh Y,b)$ is an isomorphism if and only if its tangent map ${\rm T}_af: {\rm T}_aX\too {\rm T}_bY$ is a linear isomorphism.
\end{rem}


\begin{defn} {} A morphism $f:X\too Y$ between varieties is called \textit{\'etale} if for all $a\in X$ the induced maps of formal germs $f:(\wh X,a)\too (\wh Y,f(a))$ are isomorphisms, or, equivalently, if all tangent maps ${\rm T}_af$ of $f$ are isomorphisms, \cite{Hartshorne}, chap.÷ III, ex.÷ 10.3 and 10.4, p.÷ 275. \end{defn}


\begin{defn} A morphism $f:X\too Y$ between varieties is called \textit{smooth} if for all $a\in X$ the induced maps of formal germs $f:(\wh X,a)\too (\wh Y,f(a))$ are submersions, i.e., if the tangent maps ${\rm T}_af$ of $f$ are surjective,  \cite{Hartshorne}, chap.÷ III, Prop.÷ 10.4, p.÷ 270. \end{defn}


\begin{rem} The category of formal germs and formal maps admits the usual concepts and constructions as e.g.÷ the decomposition of a variety in irreducible components, intersections of germs, inverse images, or fibre products. Similarly, when working of $\Bbb R$, $\Bbb C$ or any complete valued field $\K$, one can develop, based on rings of convergent power series, the category of analytic varieties and analytic spaces, respectively of their germs, and analytic maps between them \cite{DeJong_Pfister}.
\end{rem}




\comment{[Add: Krull-dimension, graded rings, Taylor expansion]\\}
\goodbreak


\bigskip
\noindent\textit{\examples}


\begin{eg} \label{2.32} Compare the algebraic varieties $X$ satisfying $(X,a) \isom (\AAA^d,0)$ for all points $a \in X$ (where $\isom$ stands for biregularly isomorphic germs) with those where the isomorphism is just formal, $(\wh X,a) \isom (\wh \AAA^d,0)$. Varieties with the first property are called {\it plain}, cf.÷ def.÷ \ref{515}. \end{eg}


\begin{eg} \label{2.33} It can be shown that any complex regular (def.÷ \ref{147}) and rational surface (i.e., a surface which is birationally isomorphic to the affine plane $\AAA^2$ over $\CCC$) is plain,  \cite{BHSV} Prop.÷ 3.2. Show directly that $X$ defined in $\AAA^3$ by $x - (x^2 + z^2)y=0$ is plain by exhibiting a local isomorphism (i.e., isomorphism of germs) of $X$ at $0$ with $\AAA^2$ at $0$. Is there an algorithm to construct such a local isomorphism for any complex regular and rational surface? \end{eg}


\begin{eg} \label{2.34}$\hint$ Let $X$ be an algebraic variety and $a \in X$ be a point. What is the difference between the concept of regular system of parameters (def.÷ \ref{146}) in $\calo_{X,a}$ and $\widehat \calo_{X,a}$?\end{eg}


\begin{eg} \label{2.35} The formal neighbourhood $(\wh \AAA^n, a)$ of affine space $\AAA^n$ at a point $a=(a_1,\ldots,a_n)$ is given by the formal power series ring $\wh\calo_{\AAA^n,a}\isom\K[[x_1-a_1,\ldots, x_n-a_n]]$. If $X\subset \AAA^n$ is an affine algebraic variety defined by the ideal $I$ of $\K[x_1,\ldots,x_n]$, the formal neighbourhood $(\wh X,a)$ is given by the factor ring $\wh\calo_{X,a}=\wh\calo_{\AAA^n,a}/\wh I\isom\K[[x_1-a_1,\ldots, x_n-a_n]]/\wh I$, where $\wh I= I\cdot \wh\calo_{\AAA^n,a}$ denotes the extension of $I$ to $\wh \calo_{\AAA^n,a}$.\end{eg}


\begin{eg} \label{2.36} The map $f: \AAA^1 \too \AAA^2$ given by $t \mapsto  (t^2,t^3)$ is a regular morphism which induces a birational isomorphism onto the curve $X$ in $\AAA^2$ defined by $x^3=y^2$. The inverse $(x,y)\mapsto y/x$ is a rational map on $X$ and regular on $X\setminus \{0\}$. \end{eg}


\begin{eg} \label{2.37} The map $f: \AAA^1 \too \AAA^2$ given by $t \mapsto  (t^2-1, t(t^2-1))$ is a regular morphism which induces a birational isomorphism onto the curve $X$ in $\AAA^2$ defined by $x^2+x^3=y^2$. The inverse $(x,y)\mapsto y/x$ is a rational map on $X$ and regular on $X\setminus \{0\}$. The germ $(X,0)$ of $X$ at $0$ is not isomorphic to the germ $(Y,0)$ of the union $Y$ of the two diagonals in $\AAA^2$ defined by $x^2=y^2$. The formal germs $(\wh X,0)$ and $(\wh Y,0)$ are isomorphic via the map $(x,y)\mapsto (x\sqrt{1+x},y)$.\end{eg}


\begin{eg} \label{2.38}$\hint$ The map $f: \AAA^2 \too \AAA^2$ given by $(x, y) \mapsto (xy, y)$ is a birational morphism with inverse the rational map $(x, y) \mapsto (\frac{x}{y}, y)$. The inverse  defines a regular morphism outside the $x$-axis.  \end{eg}


\begin{eg} \label{2.39} The maps $\varphi_{ij}: \AAA^{n+1} \too \AAA^{n+1}$ given by
\[
(x_0,\ldots,x_n) \mapsto (\frac{x_0}{x_j},\ldots,\frac{x_{i-1}}{x_j}, 1,\frac{x_{i+1}}{x_j},\ldots,\frac{x_n}{x_j}).
\]
are birational maps for each $i,j=0,\ldots,n$. They are the transition maps between the affine charts $U_j=\PPP^n\setminus V(x_j)\isom \AAA^n$ of projective space $\PPP^n$.\end{eg}


\begin{eg} \label{2.40}$\hint$ The maps $\varphi_{ij}: \AAA^n \too \AAA^n$ given by 
\[
(x_1,\ldots,x_n) \mapsto \Big(\frac{x_1}{x_j},\ldots,\frac{x_{i-1}}{x_j},\frac{1}{x_j},\frac{x_{i+1}}{x_j},\ldots,\frac{x_{j-1}}{x_j},x_ix_j,\frac{x_{j+1}}{x_j},\ldots,\frac{x_n}{x_j}\Big)
\]
are birational maps for $i,j=1,\ldots,n$. They are the transition maps between the affine charts of the blowup $\wt\AAA^n\subset \AAA^n\times\PPP^{n-1}$ of $\AAA^n$ at the origin (cf.÷ Lecture IV) .\end{eg}


\begin{eg} \label{2.41}$\hint$ Assume that the characteristic of the ground field is different from $2$. The elliptic curve $X$ defined in $\AAA^2$ by $y^2=x^3-x$ is formally isomorphic at each point $a$ of $X$ to $(\wh \AAA^1,0)$, whereas the germs $(X,a)$ are not biregularly isomorphic to $(\AAA^1,0)$. \end{eg}


\begin{eg} \label{2.42}$\hint$ The projection $(x,y)\mapsto x$ from the hyperbola $X$ defined in $\AAA^2$ by $xy=1$ to the $x$-axis $\AAA^1=\AAA^1\times\{0\}\subset\AAA^2$ has open image $\AAA^1\setminus\{0\}$. In particular, it is not proper.\end{eg}


\begin{eg} \label{2.43}$\hint$ The curves $X$ and $Y$ defined in $\AAA^2$ by $x^3=y^2$, respectively $x^5=y^2$ are not formally isomorphic to each other at $0$, whereas the curve $Z$ defined in $\AAA^2$ by $x^3+x^5=y^2$ is formally isomorphic to $X$ at $0$.\end{eg}


\section{Lecture III: Singularities}

\noindent Let $X$ be an affine algebraic variety defined over a field $\K$. Analog concepts to the ones given below can be defined for abstract varieties and schemes.


\begin{defn} \label{145} A noetherian local ring $R$ with maximal ideal $\mm$ is called a \textit{regular ring} if $\mm$ can be generated by $n$ elements, where $n$ is the Krull dimension of $R$. The number $n$ is then given as the vector space dimension of $\mm/\mm^2$ over the residue field $R/\mm$. A noetherian local ring $R$ is regular if and only if its completion $\wh R$ is regular, \cite{Atiyah_MacDonald} p.÷ 123.\end{defn}


\begin{defn} \label{146} Let $R$ be a noetherian regular local ring with maximal ideal $\mm$.  A minimal set of generators $x_1, \dots, x_n$ of $\mm$ is called a \textit{regular system of parameters} or \textit{local coordinate system} of $R$. \end{defn}


\begin{rem} A regular system of parameters of a local ring $R$ is also a regular system of parameters of its completion $\wh R$, but not conversely, cf.÷ ex.÷ \ref{3.27}.\end{rem}


\begin{defn} \label{147} A point $a$ of $X$ is a \textit{regular} or \textit{non-singular point} of $X$ if the local ring $\calo_{X, a}$ of $X$ at $a$ is a regular ring. Equivalently, the $m_{X,a}$-adic completion $\wh\calo_{X, a}$ of $\calo_{X, a}$ is isomorphic to the completion $\wh\calo_{\AAA^d, 0}\isom\K[[x_1,\ldots,x_d]]$ of the local ring of some affine space $\AAA^d$ at $0$. Otherwise $a$ is called a \textit{singular point} or a \textit{singularity} of $X$. The set of singular points of $X$ is denoted by $\Sing(X)$, its complement by $\Reg(X)$. The variety $X$ is \textit{regular} or \textit{non-singular} if all its points are regular. Over perfect fields regular varieties are the same as \textit{smooth} varieties, \cite{Cutkosky_Book, Liu}.\end{defn}

           
\begin{prop} A subvariety $X$ of a regular variety $W$ is regular at $a$ if and only if there are local coordinates $x_1,\ldots,x_n$ of $W$ at $a$ so that $X$ can be defined locally at $a$ by $x_1= \dots=x_k=0$ where $k$ is the codimension of $X$ in $W$ at $a$.\end{prop}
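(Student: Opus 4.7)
The plan is to handle the two implications separately. Throughout, let $n = \dim \calo_{W,a}$, let $I \subset \calo_{W,a}$ be the ideal cutting out $X$ locally at $a$, so that $\calo_{X,a} = \calo_{W,a}/I$, and let $\kappa$ denote the residue field.

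The ``if'' direction is quick. If $x_1,\ldots,x_n$ is a regular system of parameters of $\calo_{W,a}$ and $X$ is defined near $a$ by $x_1 = \cdots = x_k = 0$, then $\calo_{X,a} \simeq \calo_{W,a}/(x_1,\ldots,x_k)$, and the images of $x_{k+1},\ldots,x_n$ generate its maximal ideal. An iterated application of the fact that killing one parameter in a regular local ring drops the Krull dimension by exactly one shows that this quotient has dimension $n-k$, hence is regular by Definition~\ref{145}, making $X$ regular at $a$ in the sense of Definition~\ref{147}, of codimension~$k$.

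For the ``only if'' direction, I would construct the coordinate system via the conormal exact sequence of $\kappa$-vector spaces
\[
(I+\mm_{W,a}^2)/\mm_{W,a}^2 \longrightarrow \mm_{W,a}/\mm_{W,a}^2 \longrightarrow \mm_{X,a}/\mm_{X,a}^2 \longrightarrow 0.
\]
The middle term has dimension $n$ (since $W$ is regular) and the rightmost has dimension $n-k$ (since $\calo_{X,a}$ is regular of dimension $n-k$), so the image of the left map has dimension exactly $k$. I pick $f_1,\ldots,f_k \in I$ whose classes in $\mm_{W,a}/\mm_{W,a}^2$ are linearly independent and complete them to a $\kappa$-basis $f_1,\ldots,f_n$. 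By Nakayama's lemma these lift to generators of $\mm_{W,a}$, and since $\calo_{W,a}$ is regular of dimension $n$ they form a regular system of parameters, with the first $k$ lying in $I$.

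The crux, and the real obstacle, will be upgrading the inclusion $(f_1,\ldots,f_k) \subseteq I$ to an equality. The plan is a dimension argument: by iteratively quotienting $\calo_{W,a}$ by one parameter at a time, the ring $\calo_{W,a}/(f_1,\ldots,f_k)$ is a regular local ring of dimension $n-k$, hence an integral domain. The induced surjection $\calo_{W,a}/(f_1,\ldots,f_k) \twoheadrightarrow \calo_{W,a}/I = \calo_{X,a}$ is then a surjection of noetherian local domains of the same Krull dimension $n-k$; in such a situation the kernel must vanish, because any nonzero prime in the catenary domain $\calo_{W,a}/(f_1,\ldots,f_k)$ would have positive height and strictly decrease the dimension of the quotient. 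This forces $I = (f_1,\ldots,f_k)$, giving the desired local defining equations. The delicate ingredient is exactly the fact that a regular local ring is a domain, without which the height-versus-dimension argument would collapse.
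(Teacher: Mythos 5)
Your proof is correct. Note, though, that the paper does not prove this proposition at all: it simply cites \cite{DeJong_Pfister}, Cor.\ 4.3.20, so you have supplied the standard self-contained argument that the citation hides. Both halves check out: the ``if'' direction by iterated reduction modulo elements of $\mm_{W,a}\setminus\mm_{W,a}^2$ (at each stage the image of the next parameter stays outside the square of the maximal ideal because $x_1,\ldots,x_n$ is a basis of $\mm_{W,a}/\mm_{W,a}^2$, so regularity and the dimension count propagate), and the ``only if'' direction via the conormal sequence, Nakayama, and the fact that a regular local ring is a domain. Two small remarks. First, in the ``if'' direction, if one reads ``defined by $x_1=\cdots=x_k=0$'' set-theoretically rather than ideal-theoretically, one should add that $(x_1,\ldots,x_k)$ is prime (its quotient being regular, hence a domain), so it is automatically radical and coincides with the full local ideal of $X$; your identification $\calo_{X,a}\simeq\calo_{W,a}/(x_1,\ldots,x_k)$ then stands. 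Second, your appeal to catenarity is more than you need: for any nonzero prime $\mathfrak p$ in a noetherian local domain $A$, concatenating a chain below $\mathfrak p$ with one above it gives $\dim A/\mathfrak p+\mathrm{ht}\,\mathfrak p\leq\dim A$, so $\dim A/\mathfrak p<\dim A$ without invoking that regular local rings are (universally) catenary; the only genuinely nontrivial input is the domain property, which you correctly flagged. Finally, since the proposition asserts defining equations ``locally at $a$'' rather than merely an equality of ideals in $\calo_{W,a}$, one routine spreading-out sentence is worth adding: the generators $f_1,\ldots,f_k$ and the finitely many relations expressing $I=(f_1,\ldots,f_k)$ in the local ring all hold on some open neighbourhood of $a$ by noetherianity, which yields the statement for the germ $(X,a)$ as intended by the paper's definition of local coordinates as a regular system of parameters of $\calo_{W,a}$ (def.\ \ref{146}).
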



\begin{proof} \cite{DeJong_Pfister} Cor.÷ 4.3.20, p.÷ 155.\end{proof}


\begin{rem} Affine and projective space are regular at each of their points. The germ of a variety at a regular point need not be biregularly isomorphic to the germ of an affine space at $0$. However, it is formally isomorphic, i.e., after passage to the formal germ, cf.÷ ex.÷ \ref{3.28}.\end{rem}


\begin{prop} \label{468} Assume that the field $\K$ is perfect. Let $X$ be a hypersurface defined in a regular variety $W$ by the square-free equation $f=0$. The point $a\in X$ is singular if and only if all partial derivatives $\partial_{x_1}f,\ldots,  \partial_{x_n}f$ of vanish at $a$.\end{prop}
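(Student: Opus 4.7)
My plan is to split the statement into two independent equivalences. Setting $R = \calo_{W,a}$ and $\mm = \mm_{W,a}$, I will first prove that $X$ is regular at $a$ if and only if $f \notin \mm^{2}$, by a purely local-algebraic argument involving Krull's principal ideal theorem and a comparison of dimension with embedding dimension. Then I will prove that $f \notin \mm^{2}$ if and only if some partial derivative $\partial_{x_i} f$ fails to vanish at $a$, which is where Cohen's structure theorem, and hence the perfectness hypothesis, enters.

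For the first equivalence, I observe that $\calo_{X,a} = R/(f)$ with maximal ideal $\mm/(f)$. Since $R$ is regular local of dimension $n$ (as $W$ is regular at $a$) and $f \in \mm$ is a nonzero non-unit, Krull's Hauptidealsatz yields $\dim \calo_{X,a} = n - 1$. I then compute the embedding dimension
\[
\dim_{\kappa(a)} \mm/(\mm^{2} + (f)) = \begin{cases} n & \text{if } f \in \mm^{2},\\ n-1 & \text{if } f \notin \mm^{2}, \end{cases}
\]
using that $x_1,\ldots,x_n$ are a basis of $\mm/\mm^{2}$ over $\kappa(a)$. Regularity of $\calo_{X,a}$ is the coincidence of Krull dimension and embedding dimension, which occurs exactly when $f \notin \mm^{2}$.

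For the second equivalence, I pass to the $\mm$-adic completion $\wh R$. Combining Lemma~\ref{191}(2),(4) with Krull's intersection theorem gives $\wh{\mm}^{\,2} \cap R = \mm^{2}$, so $f \in \mm^{2}$ if and only if $f \in \wh{\mm}^{\,2}$. Since $\K$ is perfect, the residue field extension $\kappa(a)/\K$ is separable, and Cohen's structure theorem provides a coefficient field in $\wh R$ isomorphic to $\kappa(a)$, yielding an identification $\wh R \cong \kappa(a)[[x_1,\ldots,x_n]]$ sending the regular system of parameters to the formal variables. The Taylor expansion $f = \sum_{|\alpha| \geq 1} c_\alpha x^\alpha$ (the constant term vanishing because $a \in X$) then satisfies $c_{e_i} = \partial_{x_i} f(a)$, so $f \in \wh{\mm}^{\,2}$ if and only if every such coefficient vanishes. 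Combining the two reductions gives the proposition.

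The main obstacle of the argument lies precisely in invoking Cohen's theorem to realize $\wh R$ as a formal power series ring over its residue field, for which separability of $\kappa(a)/\K$, and hence perfectness of $\K$, is essential. Without it, the Jacobian criterion genuinely fails: the classical counterexample $f = x^{p} - t$ over $\K = \FFF_p(t)$ defines a regular point whose partial derivative $\partial_x f$ vanishes identically.
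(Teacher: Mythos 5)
Your proof is correct, and it is worth noting at the outset that the paper itself gives no argument for Prop.~\ref{468}: its proof is a bare pointer to \cite{Zariski_Simple} Thm.~7, \cite{Hartshorne} I.5 and \cite{DeJong_Pfister} 4.3, so your write-up is a self-contained substitute rather than a variant of the paper's route. Your first equivalence is the classical dimension count (Hauptidealsatz gives $\dim\calo_{X,a}=n-1$, then compare with the embedding dimension), essentially Hartshorne's I.5.1 but valid over any field; this is also where the square-free hypothesis is silently consumed, and you should say so explicitly: square-freeness makes $(f)$ a radical ideal, which is what justifies the identification $\calo_{X,a}=R/(f)$ that your computation starts from. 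Your second equivalence is pitched at the right generality, namely closed points whose residue field $\kappa(a)$ is a finite extension of $\K$, and that is indeed the only place where perfectness can matter: under the paper's literal convention that points are tuples over $\K$, the first-order Taylor expansion identifies the linear coefficients with $\partial_{x_i}f(a)$ in any characteristic, and your counterexample $f=x^p-t$ has no $\K$-rational points at all. Two points in the Cohen step deserve to be made explicit. First, you need the coefficient field $L\isom\kappa(a)$ to contain $\K$, i.e., a $\K$-algebra section of $\wh R\too\kappa(a)$; it is precisely the separability of $\kappa(a)/\K$ that produces such a section by Hensel lifting, and for inseparable residue extensions no coefficient field over $\K$ need exist --- this, rather than the bare existence statement of Cohen's theorem (which holds for any equicharacteristic complete local ring), is the actual failure mode in your closing example. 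Second, the identity $c_{e_i}=\partial_{x_i}f(a)$ holds only for the adapted parameters $u_i=x_i-\wt a_i$ with $\wt a_i\in L$ the coefficient-field lifts of the coordinates of $a$, not for an arbitrary regular system of parameters (only the simultaneous vanishing of all linear coefficients, i.e., $f\in\wh\mm^2$, is intrinsic); with this choice, polynomial Taylor expansion over $L$ gives $f=\sum_i\partial_{x_i}f(\wt a)\,u_i$ modulo $\wh\mm^2$, and the constant term $f(\wt a)$ vanishes because the residue map is injective on $L$. With these two clauses inserted your argument is complete, and your reduction $\wh\mm^2\cap R=\mm^2$ via Lemma~\ref{191}(2),(4) is fine as stated; the whole second step is equivalent to the usual proof via the conormal sequence, where separability enters as $\Omega_{\kappa(a)/\K}=0$, making $\mm_a/\mm_a^2\too\bigoplus_i\kappa(a)\,dx_i$ an isomorphism --- the form in which Zariski's Thm.~7 is usually established.
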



\begin{proof} \cite{Zariski_Simple} Thm.÷ 7, \cite{Hartshorne} I.5, \cite{DeJong_Pfister} 4.3.\end{proof}


\begin{defn} The characterization of singularities by the vanishing of the partial derivatives as in Prop.÷ \ref{468} is known as the \textit{Jacobian criterion for smoothness}.\end{defn}


\begin{rem} A similar statement holds for irreducible varieties which are not hypersurfaces, using instead of the partial derivatives the $k\times k$-minors  of the Jacobian matrix of a system of equations of $X$ in $W$, with $k$ the codimension of $X$ in $W$ at $a$ \cite{DeJong_Pfister}, \cite{Cutkosky_Book} pp.÷ 7-8, \cite{Liu} pp.÷ 128 and 142. The choice of the system of defining polynomials of the variety is significant: The ideal generated by them has to be radical, otherwise the concept of singularity has to be developed scheme-theoretically, allowing non-reduced schemes. Over non-perfect fields, the criterion of the proposition does not hold \cite{Zariski_Simple}.\end{rem}


\begin{cor} The singular locus $\Sing(X)$ of $X$ is closed in $X$.\end{cor}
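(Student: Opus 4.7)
The plan is to reduce the statement to a polynomial (and hence closed) condition by invoking the Jacobian criterion, Proposition \ref{468}, together with its generalization mentioned in the subsequent remark. Since closedness is a local property on $X$, I may work in an affine chart and assume $X \subset \AAA^n$ is an affine variety, cut out by a radical ideal $I = (f_1, \ldots, f_r)$ of $\K[x_1, \ldots, x_n]$.

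First I would treat the case where $X$ is irreducible, so that $I$ is prime. Let $c$ be the codimension of $X$ in $\AAA^n$. The generalized Jacobian criterion asserts that a point $a \in X$ is singular precisely when the rank of the Jacobian matrix $J(a) = (\partial_{x_j} f_i (a))_{i,j}$ is strictly less than $c$, equivalently, when every $c \times c$ minor of $J$ vanishes at $a$. Since each such minor is itself a polynomial in $x_1, \ldots, x_n$, the vanishing locus of these finitely many polynomials is Zariski closed in $\AAA^n$, and its intersection with $X$ is closed in $X$. This intersection is, by the criterion, exactly $\Sing(X)$.

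For a reducible $X$, write $X = X_1 \cup \cdots \cup X_s$ as the (finite) decomposition into irreducible components. A point $a \in X$ fails to be regular either because it lies on two distinct components $X_i \cap X_j$, or because it is singular on the unique component containing it. The pairwise intersections $X_i \cap X_j$ for $i \neq j$ are closed in $X$, and by the irreducible case each $\Sing(X_i)$ is closed in $X_i$, hence closed in $X$. Therefore
\[
\Sing(X) \;=\; \bigcup_{i} \Sing(X_i) \;\cup\; \bigcup_{i \neq j} (X_i \cap X_j),
\]
a finite union of closed subsets of $X$, and thus closed.

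The main subtlety I anticipate is the hypothesis on the base field. Proposition \ref{468} is stated under the assumption that $\K$ is perfect, and the remark following it notes that over non-perfect fields the criterion can fail. In that setting one should either argue via base change to the perfect closure, or appeal directly to the fact that the locus where a finitely generated algebra over a field fails to be geometrically regular is closed, a consequence of excellence. Apart from this point, the argument is a straightforward unpacking of the Jacobian criterion, and the closedness of $\Sing(X)$ is essentially built into the criterion once one recognizes that minors of a matrix of polynomials are polynomials.
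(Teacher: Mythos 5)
Your proof is correct and takes essentially the same route the paper intends: the corollary is presented there as an immediate consequence of the Jacobian criterion (Prop.~\ref{468}) together with the remark on $k\times k$ minors of the Jacobian matrix, which is exactly the mechanism you unpack into an explicit argument. Your additional care with the reducible case, via $\Sing(X)=\bigcup_i \Sing(X_i)\cup\bigcup_{i\neq j}(X_i\cap X_j)$, and your flag about non-perfect fields --- where regularity and smoothness diverge and closedness of the non-regular locus instead follows from excellence of finitely generated algebras over a field --- correctly fill in details the paper leaves implicit.
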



\begin{defn} \label{515} A point $a$ of $X$ is a \textit{plain point} of $X$ if the local ring $\calo_{X, a}$ is isomorphic to the local ring $\calo_{\AAA^d, 0}\isom\K[x_1,\ldots,x_d]_{(x_1,\ldots,x_d)}$ of some affine space $\AAA^d$ at $0$ (with $d$ the dimension of $X$ at $a$). Equivalently, there exists an open neighbourhood $U$ of $a$ in $X$ which is biregularly isomorphic to an open subset $V$ of some affine space $\AAA^d$. The variety $X$ is \textit{plain} if all its points are plain. \end{defn}


\begin{thm} (Bodn\'ar-Hauser-Schicho-Villamayor) The blowup of a plain variety defined over an infinite field along a regular center is again plain.\end{thm}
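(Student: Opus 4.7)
The plan is to verify plainness at each point of the blowup $\tilde X = \mathrm{Bl}_Z X$. Since the blowup morphism $\pi\colon \tilde X\to X$ restricts to an isomorphism over $X\setminus Z$, plainness at points of $\tilde X$ not lying on the exceptional divisor $E=\pi\inv(Z)$ is inherited directly from that of $X$. It therefore suffices to establish plainness at an arbitrary point $a'\in E$ lying over some point $a\in Z$.

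Using the plainness of $X$ at $a$, replace a neighborhood of $a$ in $X$ by an open subset $V$ of $\AAA^d_\K$, where $d=\dim_a X$; the center $Z$ then corresponds to a regular closed subvariety $Z'\subset V$ of codimension $k$. After shrinking $V$, the ideal $I_{Z'}$ is generated by a regular sequence $f_1,\ldots,f_k\in\calo(V)$, and the blowup of $V$ along $Z'$ is covered by the affine charts $V_i$, $i=1,\ldots,k$, each sitting inside $V\times\AAA^{k-1}$ as the closed subvariety cut out by the equations $f_j - y_j f_i = 0$ for $j\neq i$, which are linear in the new variables $y_j$. The crucial step is to use the infiniteness of $\K$ to select a regular sequence generating $I_{Z'}$ and to perform a generic linear change of the ambient coordinates on $\AAA^d$ so that in each chart $V_i$ the $k-1$ equations $f_j - y_j f_i = 0$ can be solved polynomially for $k-1$ of the original coordinates of $\AAA^d$. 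The chart $V_i$ then becomes biregularly isomorphic to an open subset of $\AAA^d$, the coordinates being the remaining $d-k+1$ original ones together with the $k-1$ new variables $y_j$; the standard computation in Example \ref{2.40} is the prototype.

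The main obstacle lies precisely in the polynomial elimination step just described. Formally, and more generally étale-locally, any regular closed subvariety can be rectified to a coordinate subspace, so that the blowup charts become Zariski open in affine space; however, the Zariski-local and biregular realization of such a straightening is delicate and can fail over finite fields. Over an infinite field one applies Bertini-type general-position moves — generic linear combinations of the $f_i$ together with a generic linear change of coordinates on $\AAA^d$ — to force the requisite Jacobian minors to be units on a Zariski open neighborhood of $a$, thereby permitting the polynomial elimination. Granted this, each chart $V_i$ provides a plain neighborhood of the corresponding points of $E$, and in particular one containing $a'$, so that $\tilde X$ is plain at $a'$. This general-position construction is the technical heart of the Bodn\'ar-Hauser-Schicho-Villamayor argument and is the only place where the infinite-field hypothesis is actually needed.
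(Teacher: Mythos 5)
Your reductions are all fine: plainness off the exceptional divisor, the passage to an open subset $V$ of $\AAA^d$ with a regular center $Z'$ that is locally a complete intersection, and the chart description of the blowup inside $V\times\AAA^{k-1}$ by the equations $f_j-y_jf_i=0$. But the heart of your argument --- that generic linear combinations of the $f_i$ plus a generic \emph{linear} change of coordinates on $\AAA^d$ ``force the requisite Jacobian minors to be units\dots thereby permitting the polynomial elimination'' --- is a non sequitur, and it sits exactly where the theorem lives. A unit Jacobian minor gives solvability of $f_j-y_jf_i=0$ for a coordinate $x_\ell$ only in the formal or \'etale category, by the inverse function theorem for formal germs; it never gives a Zariski-local regular solution. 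Already $y^2+y+x=0$ has unit $y$-derivative at the origin but admits no regular solution $y=r(x)$ on any Zariski neighbourhood. To eliminate $x_\ell$ polynomially you need the equations to be \emph{linear} in $x_\ell$ with unit coefficient, as in your prototype ex.~\ref{2.40}, where the center is a coordinate subspace and $f_j=x_j$; a linear change of coordinates does not lower the degree of the $f_j$ in any variable, so genericity alone can never produce this linearity (take $Z$ a regular elliptic curve in $\AAA^3$: no linear move makes its natural generators linear in a chosen variable, yet the theorem asserts the blowup is plain). If unit Jacobians sufficed, every variety would be plain at its regular points and the theorem would be trivial --- the gap is precisely the plain-versus-formally-plain distinction stressed in ex.~\ref{2.32}, \ref{2.41} and \ref{3.28}.

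Note that the paper itself gives no proof, only the citation \cite{BHSV}, Thm.~4.3, so the comparison is with the argument there. What actually fills your gap is \emph{constructing} the linearity before blowing up, rather than assuming it: after a generic linear projection to a hyperplane (this is where the infinite field is genuinely used), $Z$ maps, Zariski-locally at $a$, birationally onto its image, and by Zariski's main theorem the omitted coordinate becomes a regular function $q/s$ on a shrunk neighbourhood of that image. This yields generators of $I_{Z'}$ of the special form $s(x')\,y-q(x')$, linear in $y$ with coefficient a unit near $a$, together with generators not involving $y$. In the charts built from such generators the defining equation is linear in $y$ with unit coefficient, so the chart is literally a graph over the remaining original coordinates and the chart variables, hence an open subset of $\AAA^d$; the directions in the fiber of $E$ over $a$ not covered this way are reached by changing the basis of generators, e.g.\ $f\mapsto f+cg$. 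In short, your sketch identifies the right obstacle but dissolves it with a false implication; the correct mechanism replaces ``unit Jacobian minor'' by ``generator linear in the eliminated variable with unit coefficient,'' and producing such generators via generic projection and Zariski's main theorem is the actual technical heart of \cite{BHSV}.
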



\begin{proof} \cite{BHSV} Thm.÷ 4.3.\end{proof}


\begin{defn} An irreducible variety $X$ is \textit{rational} if it has a dense open subset $U$ which is biregularly isomorphic to a dense open subset $V$ of some affine space $\AAA^d$. Equivalently, the function field $\K(X)=\Quot(\K[X])$ is isomorphic to the field of rational functions $K(x_1,\ldots,x_d)$ of $\AAA^d$.\end{defn}


\begin{rem} A plain complex variety is smooth and rational. The converse is true for curves and surfaces, and unknown in arbitrary dimension \cite{BHSV}.\end{rem}


\begin{defn} A point $a$ is a \textit{normal crossings point} of $X$ if the formal neighbourhood $(\wh X,a)$ is isomorphic to the formal neighbourhood $(\wh Y,0)$ of a union $Y$ of coordinate subspaces of $\AAA^n$ at $0$. The point $a$ is a \textit{simple normal crossings point} of $X$ if it is a normal crossings point and all components of $X$ passing through $a$ are regular at $a$. The variety $X$ has \textit{normal crossings}, respectively \textit{simple normal crossings}, if the property holds at all of its points. \comment{[Should we require that the components are globally regular?]} \end{defn}


\begin{rem} In the case of schemes, a normal crossings scheme may be non-reduced in which case the components of the scheme $Y$ are equipped with multiplicities. Equivalently, $Y$ is defined locally at $0$ in $\AAA^n$ up to a formal isomorphism by a monomial ideal of $\K[x_1,\ldots,x_n]$. \end{rem}


\begin{prop} A point $a$ is a normal crossings point of a subvariety $X$ of a regular ambient variety $W$ if and only if there exists a regular system of parameters $x_1, \dots, x_n$ of $\calo_{W,a}$ so that the germ $(X, a)$ is defined in $(W,a)$ by a radical monomial ideal in $x_1, \dots, x_n$. This is equivalent to saying that each component of $(X, a)$ is defined by a subset of the coordinates. \end{prop}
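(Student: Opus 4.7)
My plan is to split the proposition into a monomial-ideal fact and a lifting statement for formal rings. The clause ``equivalently, each component of $(X,a)$ is defined by a subset of the coordinates'' is handled by the standard primary decomposition of a radical monomial ideal $I\subset\K[[x_1,\dots,x_n]]$ as an intersection $\bigcap_\alpha \mathfrak{p}_\alpha$ of primes each generated by a subset of the $x_i$. The implication ``$\Leftarrow$'' is then routine: if $I_X$ is a radical monomial ideal in some regular parameters $x_1,\dots,x_n$ of $\calo_{W,a}$, then by Lemma~\ref{191} its extension to $\wh\calo_{W,a}\cong\K[[x_1,\dots,x_n]]$ is generated by the same monomials and remains radical, so $(\wh X,a)$ is identified with the germ at $0$ of the union of the corresponding coordinate subspaces.

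For ``$\Rightarrow$'', I would first reduce to the case where the ambient dimension matches the embedding dimension of $X$ at $a$. Let $N=\dim_a W$ and let $e$ be this embedding dimension; after dropping superfluous variables from the coordinate-subspace model $Y$, I may assume $Y\subset\AAA^e$. Because the surjection $\mm_{W,a}/\mm_{W,a}^2\twoheadrightarrow\mm_{X,a}/\mm_{X,a}^2$ has an $(N-e)$-dimensional kernel, I can choose $x_{e+1},\dots,x_N\in I_X\cap\mm_{W,a}$ whose residues span it and extend them to a regular system of parameters $x_1,\dots,x_N$ of $\calo_{W,a}$. Modding out by $(x_{e+1},\dots,x_N)$ in the completion then presents $\wh\calo_{X,a}$ as $\K[[x_1,\dots,x_e]]/\bar I'$ with $\bar I'\subseteq\mm^2$, while the hypothesis supplies an abstract isomorphism of this quotient with $\K[[y_1,\dots,y_e]]/J$ for a radical monomial ideal $J\subseteq\mm^2$ (the containment $J\subseteq\mm^2$ is forced because the quotient has embedding dimension $e$).

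The crux is then to lift the abstract isomorphism $\varphi\colon\K[[x]]/\bar I'\xrightarrow{\sim}\K[[y]]/J$ to a formal coordinate change $\Phi\colon\K[[x]]\to\K[[y]]$ carrying $\bar I'$ onto $J$. I would pick arbitrary lifts $\Phi(x_i)\in\K[[y]]$ of $\varphi(x_i\bmod\bar I')$; because $\bar I',J\subseteq\mm^2$, the tangent map of $\Phi$ coincides with the tangent map of $\varphi$ and is therefore a $\K$-linear isomorphism, so the formal inverse function theorem upgrades $\Phi$ to an isomorphism of formal power series rings. Running the same construction with $\varphi^{-1}$ gives $\Phi(\bar I')=J$. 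Substituting $\Phi^{-1}(y_1),\dots,\Phi^{-1}(y_e)$ for $x_1,\dots,x_e$ and retaining $x_{e+1},\dots,x_N$ then produces a regular system of parameters of $\wh\calo_{W,a}$ in which $\wh I_X$ equals the radical monomial ideal $J+(x_{e+1},\dots,x_N)$.

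The hard part is precisely this lifting step, and it only goes through after the reduction to $n=e$: otherwise $J$ could contain a linear form and the candidate ambient map would have a non-surjective tangent map. A secondary subtlety is that the regular parameters so produced lie a priori in $\wh\calo_{W,a}$ rather than in $\calo_{W,a}$ as literally written; this is intrinsic to normal crossings being defined formally, but if one insists on algebraic parameters one can feed the monomial equation for $\wh I_X$ into Artin approximation to replace the formal coordinates by algebraic ones realizing the same monomial presentation.
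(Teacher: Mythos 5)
The paper states this proposition without any proof or citation, so there is no in-text argument to compare against; judged on its own, your main argument is correct and is the standard one: the backward direction and the component clause via the decomposition of a squarefree monomial ideal as an intersection of coordinate primes (with Lemma~\ref{191} handling the passage to the completion), and the forward direction by first reducing to the embedding dimension $e$ -- choosing $x_{e+1},\dots,x_N\in I_X$ spanning the kernel of $\mm_{W,a}/\mm_{W,a}^2\too \mm_{X,a}/\mm_{X,a}^2$ -- and then lifting the abstract isomorphism $\varphi$ by the formal inverse function theorem, which the paper records as a remark in Lecture II. You correctly identify why the reduction is indispensable (a linear generator of $J$ would ruin surjectivity of the tangent map). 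One step deserves a sentence more than you give it: from $\Phi(\bar{I}')\subseteq J$ and $\Psi(J)\subseteq \bar{I}'$, the equality $\Phi(\bar{I}')=J$ requires either noting that the automorphism $\theta=\Phi\circ\Psi$ of $\K[[y]]$ satisfies $\theta(J)\subseteq J$ and hence $\theta(J)=J$ by noetherianity (the chain $J\subseteq\theta^{-1}(J)\subseteq\theta^{-2}(J)\subseteq\cdots$ stabilizes), or, more directly, that $\Phi$ induces on the quotients exactly $\varphi$, whose injectivity gives $\Phi^{-1}(J)=\bar{I}'$ at once; with the direct argument the second lift $\Psi$ is not needed at all.

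Your closing claim, however, is wrong and should be deleted: Artin approximation cannot ``replace the formal coordinates by algebraic ones'' realizing the monomial presentation in $\calo_{W,a}$. Approximation produces solutions in the henselization of $\calo_{W,a}$ (equivalently, \'etale-locally, or in algebraic power series), never in the Zariski local ring itself, and the strengthened statement genuinely fails there: by the paper's own ex.~\ref{3.30}, the nodal cubic $x^3+x^2-y^2=0$ has a normal crossings point at $0$, yet $x^3+x^2-y^2$ is irreducible in $\K[x,y]_{(x,y)}$, so its ideal cannot equal $(uv)$ for any regular parameter system $u,v$ of $\calo_{\AAA^2,0}$ -- the two branches only separate after completion (or henselization, where $\sqrt{1+x}$ becomes available). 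So the proposition, whose printed wording refers to $\calo_{W,a}$, can only be correct when read with a regular system of parameters of $\wh\calo_{W,a}$ and the completed ideal $\wh{I}_{X,a}$, consistently with the paper's purely formal definition of normal crossings; that formal version is exactly what your main argument proves, and no approximation step can or need bridge the gap to the Zariski-local statement.
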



\begin{rem} In the case of schemes, the monomial ideal need not be radical.\end{rem}


\comment{[A point $a$ of a variety $X$ is an \textit{algebraic normal crossings point} if the local ring $\calo_{X,a}$ is isomorphic to the local ring $\calo_{N,0}$ of a union $N$ of coordinate subspaces of $\AAA^n$ at $0$. An elliptic curve is regular, hence all its points are simple normal crossings points but none of them is an algebraic normal crossings point. An algebraic normal crossings point of $X$ is a simple normal crossings point of $X$, but not conversely.]}


\begin{defn} Two subvarieties $X$ and $Y$ of a regular ambient variety $W$ meet \textit{transversally} at a point $a$ of $W$ if they are regular at $a$ and if the union $X\cup Y$ has normal crossings at $a$.\end{defn}


\begin{rem} The definition differs from the corresponding notion in differential geometry, where it is required that the tangent spaces of the two varieties at intersection points sum up to the tangent space of the ambient variety at the respective point. In the present text, an inclusion $Y\subset X$ of two regular varieties is considered as being transversal, and also any two coordinate subspaces in $\AAA^n$ meet transversally. In the case of schemes, the union $X\cup Y$ has to be defined by the product of the defining ideals, not their intersection.\end{rem}


\begin{defn} A variety $X$ is a \textit{cartesian product}, if there exist positive dimensional varieties $Y$ and $Z$ such that $X$ is biregularly isomorphic to $Y\times Z$. Analogous definitions hold for germs $(X,a)$ and formal germs $(\wh X,a)$ (the cartesian product of formal germs has to be taken in the category of complete local rings).\end{defn}


\begin{defn} A variety $X$ is \textit{(formally) a cylinder over a subvariety $Z\subset X$} at a point $a$ of $Z$ if the formal neighborhood $(\wh X,a)$ is isomorphic to a cartesian product $(\wh Y,a)\times (\wh Z,a)$ for some (positive-dimensional) subvariety $Y$ of $X$ which is regular at $a$. One also says that $X$ is \textit{trivial} or a \textit{cylinder along $Y$ at $a$ with transversal section $Z$}.\end{defn}


\begin{defn} Let $X$ and $F$ be varieties, and let $0$ be a distinguished point on $F$. The set $S$ of points $a$ of $X$ where the formal neighborhood $(\wh X,a)$ is isomorphic to $(\wh F,0)$ is  called the \textit{triviality locus of $X$ of singularity type $(\wh F,0)$}.\end{defn}


\begin{thm} (Ephraim, Hauser-M\"uller)  For any complex variety $F$ and point $0$ on $F$, the triviality locus $S$ of $X$ of singularity type $(\wh F,0)$ is locally closed and regular in $X$,  and $X$ is a cylinder along $S$. Any subvariety $Z$ of $X$ such that $(\wh X, a)\isom (\wh S, a)\times (\wh Z,a)$ is unique up to formal isomorphism at $a$. \end{thm}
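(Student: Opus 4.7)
The plan hinges on Ephraim's cancellation theorem for complex formal germs: if two complete local $\CCC$-algebras become isomorphic after formally adjoining the same number of smooth variables, they were already isomorphic. I will take this as a black box. Granted it, the uniqueness assertion is immediate: given two decompositions $(\wh X,a)\isom(\wh S,a)\times(\wh Z_i,a)$ for $i=1,2$ with $(\wh S,a)$ smooth of dimension $\dim_a S$, cancel the smooth factor to conclude $(\wh Z_1,a)\isom(\wh Z_2,a)$.

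For the main statement I would proceed as follows. First, to every $a\in X$ associate the largest integer $k(a)$ such that $(\wh X,a)$ decomposes as $(\wh \AAA^{k(a)},0)\times(\wh Y_a,a)$. By cancellation the \emph{core} $(\wh Y_a,a)$ is unique up to formal isomorphism and admits no further smooth factor. Decompose $(\wh F,0)\isom(\wh \AAA^{k_0},0)\times(\wh F_0,0)$ analogously; then $a\in S$ if and only if $k(a)=k_0$ and $(\wh Y_a,a)\isom(\wh F_0,0)$.

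Next, fix $a\in S$ and choose a splitting $(\wh X,a)\isom(\wh \AAA^{k_0},0)\times(\wh F_0,0)$, letting $S^{\mathrm{loc}}_a\subset(\wh X,a)$ be the regular formal subvariety corresponding to $(\wh \AAA^{k_0},0)\times\{0\}$. The product structure forces the singularity type to remain $(\wh F,0)$ along $S^{\mathrm{loc}}_a$, so $S^{\mathrm{loc}}_a\subset S$. For the converse take a nearby $b\in S$; by Step 1 we have $k(b)=k_0$ and a core isomorphic to $(\wh F_0,0)$. Comparing the splittings at $a$ and $b$ through a formal isomorphism, any tangent direction to $S$ at $b$ not lying in the smooth factor at $a$ could be combined with the smooth factor at $a$ to enlarge it, contradicting the maximality $k(a)=k_0$. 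Hence $b\in S^{\mathrm{loc}}_a$, so $S$ coincides locally with the regular formal subvariety $S^{\mathrm{loc}}_a$. This yields at once the regularity of $S$ and the cylindrical decomposition $(\wh X,a)\isom(\wh S,a)\times(\wh F_0,0)$; local closedness follows because near each $a\in S$ the set $S$ is cut out in an open neighborhood by a regular system of parameters transversal to $S^{\mathrm{loc}}_a$.

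The main obstacle is the reverse inclusion $S\subset S^{\mathrm{loc}}_a$ above: geometrically it says that the maximal smooth factors at different points of $S$ glue into a single regular subvariety, with no twisting. Exactly here Ephraim's cancellation theorem does the essential work by pinning down the tangent directions of $S$; without it, a nearby point of the same singularity type could \emph{a priori} carry its smooth factor along a different tangent direction, destroying both the regularity of $S$ and the existence of a genuine cylindrical structure. The remaining steps, while delicate, are bookkeeping around this rigidity.
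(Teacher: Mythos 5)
Your uniqueness argument is fine: since $(\wh S,a)\isom(\wh\AAA^{k_0},0)$, cancellation of smooth factors (Ephraim; the general cancellation is exactly the Hauser--M\"uller theorem cited right after this statement) gives uniqueness of $(\wh Z,a)$, and your characterization of $S$ via cores is correct. But the heart of the theorem --- $S$ locally closed and regular, $X$ a cylinder along $S$ --- is not reached, and the gap is concentrated in your Step 3. The set $S^{\mathrm{loc}}_a$ is a \emph{formal} subvariety, defined by a product decomposition of the single complete local ring $\wh\calo_{X,a}$; it has no points other than $a$, and a formal decomposition at $a$ carries no information whatsoever about the germs $(\wh X,b)$ at actual nearby points $b\in X$. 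So the assertion ``the product structure forces the singularity type to remain $(\wh F,0)$ along $S^{\mathrm{loc}}_a$, hence $S^{\mathrm{loc}}_a\subset S$'' is unjustified: it silently upgrades the formal splitting to a convergent product structure on a genuine neighbourhood of $a$. That upgrade is precisely the analytic content of the cited theorems, which the paper does not reprove but delegates to Ephraim and Hauser--M\"uller: there one works with the coherent module of derivations of $\calo_X$, uses the fact (Rossi) that a complex germ splits off $(\CCC^k,0)$ if and only if $k$ vector fields on $X$ are linearly independent at the point, \emph{integrates} these holomorphic vector fields to obtain the cylinder on an honest neighbourhood, and invokes Artin approximation to pass between formal and analytic isomorphism. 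Cancellation --- the only black box you allow yourself --- compares factors of one fixed germ at one point and cannot substitute for this: it has no mechanism for propagating information from $a$ to nearby points.

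The same gap resurfaces in your converse inclusion. ``Comparing the splittings at $a$ and $b$ through a formal isomorphism'' is not available: the germs $(\wh X,a)$ and $(\wh X,b)$ are abstractly isomorphic, but no isomorphism relates how they sit inside $X$, which is what the argument needs. The claimed contradiction --- a tangent direction to $S$ at $b$ outside the smooth factor at $a$ could be ``combined'' with it to enlarge $k(a)$ --- is exactly the integrability/semicontinuity statement that the vector-field argument establishes, and as written it is also circular, since it uses tangent spaces of $S$ before $S$ is known to be a manifold. Similarly, local closedness does not follow from choosing parameters transversal to $S^{\mathrm{loc}}_a$: one must show that inside the transversal slice the only nearby point of singularity type $(\wh F,0)$ is $a$ itself, which again requires maximality of $k_0$ at \emph{nearby} points together with semicontinuity of the rank of the evaluated derivation module. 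A useful sanity check that some genuinely analytic input is unavoidable: the theorem is asserted only for complex varieties, and the paper flags the positive-characteristic analogue as an open challenge (ex.~3.36); a proof by formal cancellation bookkeeping alone, as you propose, would be characteristic-free and hence prove too much.
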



\begin{proof} \cite{Ephraim} Thm.÷ 2.1, \cite{HM_Trivial_Locus} Thm.÷ 1. \end{proof}


\begin{cor} The singular locus $\Sing(X)$ and the non-normal crossings locus of a variety $X$ are closed subvarieties.\end{cor}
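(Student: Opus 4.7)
The plan is to derive both closedness assertions from the preceding theorem by writing the singular locus and the non-normal crossings locus as complements of unions of triviality loci, each of which is locally closed. The extra ingredient in each case is to upgrade ``locally closed'' to ``open'' using a density or cylinder argument.

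For $\Sing(X)$, I would first reduce to an irreducible component of pure dimension $d$, since a finite union of closed subsets is closed. On such a component, the regular locus $\Reg(X)$ coincides with the triviality locus of singularity type $(\wh \AAA^d,0)$, because $a \in \Reg(X)$ is equivalent to $\wh\calo_{X,a}\isom \K[[x_1,\ldots,x_d]]$. By the Ephraim--Hauser--M\"uller theorem this triviality locus is locally closed in $X$. Since $\Reg(X)$ meets the generic point of the irreducible $X$, it is dense; and a dense locally closed subset of an irreducible topological space is open. Hence $\Reg(X)$ is open, and $\Sing(X)$ is closed.

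For the non-normal crossings locus, my strategy is to show the normal crossings locus $\mathrm{NC}(X)$ is open around each of its points. Given $a\in \mathrm{NC}(X)$, the formal germ $(\wh X,a)$ is isomorphic to $(\wh Y,0)$ with $Y=\bigcup Y_i$ a union of coordinate subspaces of some $\AAA^n$. Applying the theorem with this $F=Y$ yields a neighborhood $U$ of $a$ along which $X$ is a cylinder along the triviality locus $S$, with a transversal formal germ $(\wh Z,0)$ satisfying $(\wh Y,0)\isom (\wh S,a)\times (\wh Z,0)$; after splitting off the regular cylinder direction, $Z$ itself is again a union of coordinate subspaces of some smaller $\AAA^{n'}$. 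The cylinder property then gives, for any $c\in U$, a decomposition $(\wh X,c)\isom (\wh S,b)\times (\wh Z,z)$ with $z$ a point of $Z$ near the origin. Because the germ of a union of coordinate subspaces at any nearby point is again a (possibly smaller) union of coordinate subspaces, $(\wh X,c)$ is a product of a regular germ with a normal crossings germ, hence itself normal crossings. Thus $U\subset \mathrm{NC}(X)$, which proves openness and hence closedness of the complement.

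The main obstacle I anticipate is making precise that the cylinder structure propagates from the formal statement at $a$ to a Zariski-open neighborhood of $a$ in $X$: the theorem delivers a local product decomposition of formal germs point by point on $S$, and I need to use this family consistently to identify nearby points $c\in U$ as pairs $(b,z)$ with $b\in S$ and $z$ close to $0$ in $Z$. Once this local product coordinate system is set up --- for instance by invoking the uniqueness of $Z$ up to formal isomorphism to coherently glue transversal slices along $S$ --- the remainder, in particular the invariance of normal crossings under moving to nearby points of a coordinate-subspace union, is a direct unwinding of definitions.
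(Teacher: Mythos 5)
Your argument for $\Sing(X)$ is essentially sound, and it is a genuinely different route from the paper's: the key observation that a locally closed dense subset of an irreducible space is open (it is open in its closure, which is everything) correctly upgrades the Ephraim--Hauser--M\"uller theorem for the regular locus. Two caveats, though. First, density of $\Reg(X)$ on an irreducible variety is not free: the varieties here have only closed points, so ``meets the generic point'' is not an argument, and the standard proof of density (generic smoothness via the Jacobian criterion) is exactly the machinery with which Mumford proves closedness of $\Sing(X)$ outright -- which is the paper's actual proof: it does not derive the corollary from the triviality theorem at all, but cites \cite{Mumford} III.4, Prop.~3 for $\Sing(X)$ and \cite{Bodnar_Tests} (equations for the non-normal-crossings locus) for the second assertion. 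Second, the triviality theorem is stated only for complex varieties, so your derivation at best covers $\K=\CCC$, while the corollary is for arbitrary varieties. Also, your reduction to components needs the small extra remarks that a point on two components is singular and that away from the pairwise intersections $X$ agrees locally with a single component.

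The non-normal-crossings half has a genuine gap, at precisely the step you flag and then defer. The cylinder statement is pointwise and formal: it gives $(\wh X,b)\isom(\wh S,b)\times(\wh Z,b)$ only at points $b\in S$, and says nothing about germs at points $c\notin S$. There is no Zariski (or even formal) neighbourhood $U$ whose points get identified with pairs $(b,z)$ -- in the formal category there is no neighbourhood to speak of -- and ``gluing transversal slices along $S$ via uniqueness of $Z$'' still only concerns points of $S$. The example $X: xyz=0$ at $a=0$ makes this stark: the triviality locus of the type of $a$ is $S=\{a\}$, the cylinder decomposition is vacuous, and your $U$ sees no point other than $a$, whereas the content of the claim is exactly about the nearby points on the axes and planes, whose types differ from that of $a$. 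Note also that your density trick from the first half cannot be recycled: the normal crossings locus is a finite union of triviality loci, of which only the regular one is dense, and a dense constructible set need not be open. What is missing is a device converting the formal isomorphism $(\wh X,a)\isom(\wh Y,0)$ into information about an honest neighbourhood -- for instance Artin approximation, which produces a common \'etale neighbourhood of $(X,a)$ and $(Y,0)$; since every point of a union of coordinate subspaces is a normal crossings point and \'etale morphisms are open, openness of the normal crossings locus follows. (Alternatively, one exhibits equations for the non-normal-crossings locus, as in the cited work of Bodn\'ar.) The manuscript source in fact contains the author's own warning that the triviality theorem only makes the complements locally closed, that this does not suffice for openness, and that the statement is therefore not truly a corollary of that theorem -- which is exactly the obstruction your proposal runs into.
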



\begin{proof} \cite{Mumford} III.4, Prop.÷ 3, p.÷ 170, \cite{Bodnar_Tests}. \end{proof}

\comment{[By the theorem, the respective complements are locally closed and regular. But this does not suffice to show that they are open, so this is not a corollary of the theorem.]}


\begin{thm} (Hauser-M\"uller)  Let $X$, $Y$ and $Z$ be complex varieties with points $a$, $b$ and $c$ on them, respectively. The formal germs of $X\times Z$ at  $(a,c)$ and $Y\times Z$ at $(b,c)$ are isomorphic if and only if $(\wh X,a)$ and $(\wh Y,b)$ are isomorphic.\end{thm}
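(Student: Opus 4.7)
The ``if'' direction is immediate: a formal isomorphism $\wh\calo_{Y,b}\to\wh\calo_{X,a}$ extends, via completed tensor product over $\CCC$ with the identity of $\wh\calo_{Z,c}$, to an isomorphism of the complete local rings at $(b,c)$ and $(a,c)$. For the converse the plan is to exploit the uniqueness assertion in the preceding Ephraim--Hauser--M\"uller (EHM) theorem. That theorem associates to each complex formal germ $(\wh V,p)$ a canonical maximal-cylinder decomposition
\[
(\wh V, p) \;\cong\; (\wh R_V, p)\,\times\,(\wh V^{\sharp}, p),
\]
in which $R_V$ is a regular factor of largest possible dimension and the residual \emph{pure} factor $V^{\sharp}$, admitting no further nontrivial regular cylinder factor, is unique up to formal isomorphism.

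I would apply this to $(\wh X,a)$, $(\wh Y,b)$ and $(\wh Z,c)$, obtaining regular parts $R_X,R_Y,R_Z$ of dimensions $r_X,r_Y,r_Z$ and pure parts $X^{\sharp},Y^{\sharp},Z^{\sharp}$. Feeding these into the given isomorphism $(\wh{X\times Z},(a,c))\cong(\wh{Y\times Z},(b,c))$ yields
\[
(\wh R_X\times\wh R_Z)\times(\wh X^{\sharp}\times\wh Z^{\sharp}) \;\cong\; (\wh R_Y\times\wh R_Z)\times(\wh Y^{\sharp}\times\wh Z^{\sharp}).
\]
The crucial technical step is a lemma asserting that \emph{the product of two pure complex formal germs is pure}. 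Granting it, each side above is the canonical regular/pure decomposition of the common germ supplied by EHM, and the uniqueness clause forces $r_X+r_Z=r_Y+r_Z$ (whence $\wh R_X\cong\wh R_Y$, being regular of equal dimension over $\CCC$) together with $(\wh X^{\sharp}\times\wh Z^{\sharp})\cong(\wh Y^{\sharp}\times\wh Z^{\sharp})$.

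The obstacle therefore splits into two pieces, and this is where the bulk of the work lies. First, the purity lemma: any regular direction $(\wh\AAA^1,0)$ inside $\wh X^{\sharp}\times\wh Z^{\sharp}$ should, after applying EHM uniqueness to each factor and projecting, produce a regular direction in $\wh X^{\sharp}$ or $\wh Z^{\sharp}$, contradicting purity; the delicacy is that a formal automorphism of a product need not respect the product structure, so a genuine rigidity argument for pure factors is required, in the spirit of Ephraim's original cancellation results. Second, the residual cancellation $(\wh X^{\sharp}\times\wh Z^{\sharp})\cong(\wh Y^{\sharp}\times\wh Z^{\sharp})\Rightarrow\wh X^{\sharp}\cong\wh Y^{\sharp}$ must be handled among pure germs; I would attempt this by induction on the additive invariant $\mathrm{embdim}_p V-\dim_p V$, using EHM uniqueness at each step and reducing the base cases (zero-dimensional or hypersurface germs) to direct normal forms of formal power series. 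Reassembling $\wh R_X\cong\wh R_Y$ with $\wh X^{\sharp}\cong\wh Y^{\sharp}$ then yields $(\wh X,a)\cong(\wh Y,b)$, completing the proof.
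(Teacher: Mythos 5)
Your reduction via the Ephraim--Hauser--M\"uller decomposition is sound as far as it goes: the ``if'' direction is indeed immediate, and splitting off the maximal regular factors, \emph{granted} your purity lemma, correctly reduces the statement to the implication $(\wh X^\sharp\times \wh Z^\sharp)\isom(\wh Y^\sharp\times \wh Z^\sharp)\Rightarrow \wh X^\sharp\isom \wh Y^\sharp$ for pure germs. But at that point nothing has actually been proven: cancellation for pure germs is not a tractable special case of the theorem, it \emph{is} the theorem --- the passage to pure germs is the easy, formal part --- and both load-bearing steps of your plan are left open. The purity lemma is itself a cancellation-type statement: deciding that every regular direction inside $\wh X^\sharp\times\wh Z^\sharp$ comes from one of the factors amounts to showing that the triviality locus of a product is the product of the triviality loci, which already requires comparing germs of $X\times Z$ at nearby points, i.e., statements of the very kind being proved; as you note yourself, a formal automorphism of a product need not respect the product structure, and the rigidity needed to repair this is precisely the hard content. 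Worse, the proposed induction on $\mathrm{embdim}_pV-\dim_pV$ has no descent mechanism: this invariant is \emph{additive} on products, an isomorphism of products produces no pair of isomorphic germs of strictly smaller invariant to which an inductive hypothesis could be applied, and the proposed base cases are hopeless as ``normal form'' problems --- zero-dimensional germs are arbitrary Artinian local $\CCC$-algebras, for which no classification exists and for which cancellation is already a genuinely nontrivial assertion, and likewise for hypersurface germs.

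Note also that the paper offers no proof to compare with: it only cites \cite{HM_Cancellation}, Thm.\ 1. The argument there is of a different and substantially deeper nature: after the reduction built on Ephraim's theory of isosingular loci (the part your plan reproduces), cancellation for germs without regular factors is obtained from rigidity results for the automorphisms of the product germ, which allow one to align the two product decompositions --- exactly the ingredient you flag with ``a genuine rigidity argument for pure factors is required'' but do not supply. As it stands, your proposal is an accurate map of where the difficulty lies, together with a correct treatment of the trivial direction, but not a proof of the theorem.
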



\begin{proof} \cite{HM_Cancellation}, Thm.÷ 1. \end{proof}
\goodbreak


\bigskip
\noindent\textit{\examples}


\begin{eg} \label{3.27}$\hint$ The element $x\sqrt{1+x}$ is a regular parameter of the completion $\K[[x]]$ of  the local ring $K[x]_{(x)}$ which does not stem from a regular parameter of $K[x]_{(x)}$. \end{eg}


\begin{eg} \label{3.28}$\hint$ The elements $y^2-x^3-x$ and $y$ form a regular parameter system of $\calo_{\AAA^2,0}$ but the zeroset $X$ of $y^2=x^3-x$ is not locally at $0$ isomorphic to $\AAA^1$. However, $(\wh X,0)$ is formally isomorphic to $(\wh \AAA^1,0)$. \end{eg}


\begin{eg} \label{3.29} The set of plain points of a variety $X$ is Zariski open (cf.÷ def.÷ \ref{515}).\end{eg}


\begin{eg} \label{3.30} Let $X$ be the plane cubic curve defined by $x^3 + x^2 -y^2 =0$ in $\AAA^2$. The origin $0 \in X$ is a normal crossings point, but $X$ is not locally at $0$ biregularly isomorphic to the union of the two diagonals of $\AAA^2$ defined by $x=\pm y$.  \end{eg}


\begin{eg} \label{3.31}$\hint$ Let $X$ be the complex surface in $\AAA^3_\CCC$ defined by $x^2 - y^2z = 0$, the \textit{Whitney umbrella} or \textit{pinch point singularity}. The singular locus is the $z$-axis. The origin is not a normal crossings point of $X$. For $a \neq 0$ a point on the $z$-axis, $a$ is a normal crossings point but not a simple normal crossings point of $X$. The formal neighbourhoods of $X$ at points $a\neq 0$ on the $z$-axis are isomorphic to each other, since they are isomorphic to the formal neighbourhood at $0$ of the union of two transversal planes in $\AAA^3$.   \end{eg}


\begin{eg} \label{3.32} Prove that the non-normal crossings locus of a variety is closed. Try to find equations for it \cite{Bodnar_Tests}. \end{eg}


\begin{eg} \label{3.33}$\challenge$ Find a local invariant that measures reasonably the distance of a point $a$ of $X$ from being a normal crossings point. \end{eg}


\begin{eg} \label{3.34}$\hint$ Do the varieties defined by the following equations have normal crossings, respectively simple normal crossings, at the origin? Vary the ground field. 

(a) $x^2+y^2=0$, 

(b) $x^2-y^2=0$, 

(c) $x^2+y^2+z^2=0$,

(d) $x^2+y^2+z^2+w^2=0$,

(e) $xy(x-y)=0$,

(f) $xy(x^2-y)=0$,

(g) $(x-y)z(z-x)=0$.\end{eg} 


\begin{eg} \label{3.35}$\hint$ Visualize the zeroset of $(x-y^2)(x-z)z=0$ in $\AAA^3_\RRR$. \end{eg}


\begin{eg} \label{3.36}$\challenge$ Formulate and then prove the theorem of local analytic triviality in positive characteristic, cf.÷ Thm.÷ 1 of \cite{HM_Trivial_Locus}.\end{eg}


\begin{eg} \label{3.37} Find a coordinate free description of normal crossings singularities. Find an algorithm that tests for normal crossings \cite{Faber_Thesis, Faber_NC}. \end{eg}


\begin{eg} \label{3.38}$\hint$ The singular locus of a cartesian product $X\times Y$ is the union of $\Sing(X)\times Y$ and $X\times \Sing(Y)$.\end{eg}


\begin{eg} \label{3.39}$\hint$ Call a finite union $X=\bigcup X_i$ of regular varieties {\it mikado} if all possible intersections of the components $X_i$ are (scheme-theoretically) regular. The intersections are defined by the sums of the ideals, not taking their radical. Find the simplest example of a variety  which is not mikado but for which all pairwise intersections are non-singular.  \end{eg}


\begin{eg} \label{3.40} Interpret the family given by taking the germs of a variety $X$ at varying points  $a\in X$ as the germs of the fibers of a morphism of varieties, equipped with a section.\end{eg}


\section{Lecture IV: Blowups}

\noindent  Blowups, also known as monoidal transformations, can be introduced in several ways. The respective equivalences will be proven in the second half of this section. All varietes are reduced but not necessarily irreducible, and subvarieties are closed if not mentioned differently. Schemes will be noetherian but not necessarily of finite type over a field. To ease the exposition they are often assumed to be affine, i.e., of the form $X=\Spec(R)$ for some ring $R$. Points of varieties are closed, points of schemes can also be non-closed. The coordinate ring of an affine variety $X$ is denoted by $\K[X]$ and  the structure sheaf of a scheme $X$ by $\calo_X$, with local rings $\calo_{X,a}$ at points $a\in X$.  

References providing additional material on blowups are, among many others, \cite{Hironaka_Annals}, chap.÷ III, and \cite{Eisenbud_Harris}, chap.÷ IV.2.


\begin{defn} A subvariety $Z$ of a variety $X$ is called a \textit{hypersurface in $X$} if the codimension of $Z$ in $X$ at any point $a$ of $Z$ is $1$,
\[
\dim_aZ = \dim_aX -1.
\]
\end{defn}


\begin{rem} In the case where $X$ is non-singular and irreducible, a hypersurface is locally defined at any point $a$ by a single non-trivial equation, i.e., an equation given by a non-zero and non-invertible element $h$ of $\calo_{X, a}$. \comment{[which is also a non-zero divisor since $\calo_{X, a}$ is an integral domain.]} This need not be the case for singular varieties, see ex.÷ \ref{4.21}. Hypersurfaces are a particular case of \textit{effective Weil divisors} \cite{Hartshorne} chap.÷ II, Rmk.÷ 6.17.1, p.÷ 145.\end{rem}


\begin{defn} A subvariety $Z$ of an irreducible variety $X$ is called a \textit{Cartier divisor} in $X$ at a point $a\in Z$ if $Z$ can be defined locally at $a$ by a single equation $h=0$ for some non-zero element $h\in \mathcal{O}_{X, a}$. If $X$ is not assumed to be irreducible, $h$ is required to be a non-zero divisor of $\mathcal{O}_{X, a}$. This excludes the possibility that $Z$ is a component or a union of components of $X$. The subvariety $Z$ is called a \textit{Cartier divisor} in $X$ if it is a Cartier divisor at each of its points. The empty subvariety is considered as a Cartier divisor. A (non-empty) Cartier divisor is a hypersurface in $X$, but not conversely, and its complement $X\setminus Z$ is dense in $X$. \comment{[A hypersurface $Z$ is Cartier in $X$ if and only if $Z$ is rare in $X$, i.e., if $X\setminus Z$ is dense in $X$? The direction ''Cartier implies $X\setminus Z$ dense in $X$'' is shown in \cite{Eisenbud_Harris} Lemma IV-19, p.÷ 167, and should be easy, since $Z$ cannot be a 
component of $X$. The inverse implication should be false, since the line in the cone is rare.]} Cartier divisors are, in a certain sense, the largest closed and properly contained subvarieties of $X$. If $Z$ is Cartier in $X$, the ideal $I$ defining $Z$ in $X$ is called \textit{locally principal} \cite{Hartshorne} chap.÷ II, Prop.÷ 6.13, p.÷ 144, \cite{Eisenbud_Harris} III.2.5, p.÷ 117. \end{defn}


\begin{defn}\label{blowup1} (Blowup via universal property) Let $Z$ be a (closed) subvariety of a variety $X$. A variety $\wt X$ together with a morphism $\pi: \wt {X} \too X$ is called a \textit{blowup of $X$ with center $Z$ of $X$}, or a \textit{blowup of $X$ along $Z$}, if the inverse image $E =\pi\inv(Z)$ of $Z$ is a Cartier divisor in $\wt X$ and $\pi$ is universal with respect to this property: For any morphism $\tau: X' \too X$ such that $\tau^{-1}(Z)$ is a Cartier divisor in $X'$, there exists a unique morphism $\sigma: X' \too \wt {X}$ so that $\tau$ factors through $\sigma$, say $\tau = \pi\circ \sigma$, 
%
%
\[
\xymatrix@R=2pc@C=3pc{
X'  \ar@{-->}[r]^{ \exists_1\, \sigma}  \ar[rd]^\tau & \wt X \ar[d]^{\pi} \\
 & X }
 \]
The morphism $\pi$ is also called the {\it blowup map}. The subvariety $E$ of $\wt X$ is a Cartier divisor, in particular a hypersurface, and called the \textit{exceptional divisor} or \textit{exceptional locus} of the blowup. One says that $\pi$ \textit{contracts $E$ to $Z$}.
\end{defn}

\begin {rem} By the universal property, a blowup of $X$ along $Z$, if it exists, is unique up to unique isomorphism. It is therefore called \textit{the} blowup of $X$ along $Z$. If $Z$ is already a Cartier divisor in $X$, then $\wt X=X$ and $\pi$ is the identity by the universal property. In particular, this is the case when $X$ is non-singular and $Z$ is a hypersurface in $X$. \end{rem}


\begin{defn} The \textit{Rees algebra} of an ideal $I$ of a commutative ring $R$ is the graded $R$-algebra
\[
\Rees(I) = \bigoplus_{i = 0}^{\infty}I^i = \bigoplus_{i =
0}^{\infty}I^i\cdot t^i\subset R[t],
\]
where $I^i$ denotes the $i$-fold power of $I$, with $I^0$ set equal to $R$. The variable $t$ is given degree $1$ so that the elements of $I^i\cdot t^i$ have degree $i$. Write $\wt R$ for $\Rees(I)$ when $I$ is clear from the context. The Rees algebra of $I$ is generated by elements of degree $1$, and $R$ embeds naturally into $\wt R$ by sending an element $g$ of $R$ to the degree $0$ element $g\cdot t^0$. If $I$ is finitely generated by elements $g_1,\ldots,g_k\in R$, then $\wt R=R[g_1t,\ldots,g_kt]$.  
The Rees algebras of the zero-ideal $I=0$ and of the whole ring $I=R$ equal $R$, respectively $R[t]$. If $I$ is a principal ideal generated by a non-zero divisor of $R$, the schemes $\Proj(\wt R)$ and $\Spec(R)$ are isomorphic. The Rees algebras of an ideal $I$ and its $k$-th power $I^k$ are isomorphic as graded $R$-algebras, for any $k\geq 1$.\end{defn}


\begin{defn} (Blowup via Rees algebra) Let $X=\Spec(R)$ be an affine scheme and let $Z=\Spec(R/I)$ be a closed subscheme of $X$ defined by an ideal $I$ of $R$. Denote by $\wt R=\Rees(I)$ the Rees algebra of $I$ over $R$, equipped with the induced grading. The \textit{blowup of $X$ along $Z$} is the scheme $\wt {X} =\Proj(\wt R)$ together with the morphism $\pi: \wt X \too X$ given by the natural ring homomorphism $R\too\wt R$. The subscheme $E=\pi\inv(Z)$ of $\wt X$ is called the \textit{exceptional divisor} of the blowup.\end{defn}


\begin{defn} (Blowup via secants)  Let $W = \AAA^n$ be affine space over $\K$ (taken as a variety) and let $p$ be a fixed point of $\AAA^n$. Equip $\AAA^n$ with a vector space structure by identifying it with its tangent space ${\mathrm T}_p\AAA^n$. \comment{[Otherwise it is not clear what a line in $\AAA^n$ is.]} For a point $a \in \AAA^n$ different from $p$, denote by $g(a)$  the secant line in $\AAA^n$ through $p$ and $a$, considered as an element of projective space $\PPP^{n - 1}=\PPP({\mathrm T}_p\AAA^n)$. The morphism
\[
\gamma: \Bbb{A}^n \setminus \{p\}\too\PPP^{n - 1}, a \mapsto g(a),
\]
is well defined. The Zariski closure $\wt {X}$ of the graph $\Gamma$ of $\gamma$ inside $\Bbb{A}^n \times \PPP^{n - 1}$ together with the restriction $\pi: \wt {X} \too X$ of the projection map $\Bbb{A}^n \times \PPP^{n-1} \too \AAA^n$ is the \textit{point blowup of $\AAA^n$ with center $p$}.\end{defn}

\comment{[The secant definition of point blowups can be done for $X$ any smooth variety, see Griffiths-Harris.]} 


\begin{defn}\label{blowup4} (Blowup via closure of graph) Let $X$ be an affine variety with coordinate ring $\K[X]$ and let $Z = \textrm{V}(I)$ be a subvariety of $X$ defined by an ideal $I$ of $\K[X]$ generated by elements $g_1, \dots, g_k$. The morphism
\[
\gamma: X\setminus Z \too \PPP^{k-1}, a \mapsto (g_1(a): \dots: g_k(a)),
\]
is well defined. The Zariski closure $\wt {X}$ of the graph $\Gamma$ of $\gamma$
inside $X \times \PPP^{k-1}$ together with the restriction $\pi: \wt {X} \too X$
of the projection map $X \times \PPP^{k-1}\too X$ is the \textit{blowup of $X$ along  
$Z$}. It does not depend, up to isomorphism over $X$, on the choice of the generators $g_i$ of $I$.\end{defn}


\begin{defn}\label{blowup5} (Blowup via equations) Let $X$ be an affine variety with coordinate ring $\K[X]$ and let $Z = \textrm{V}(I)$ be a subvariety of $X$ defined by an ideal $I$ of $\K[X]$ generated by elements $g_1, \dots, g_k$. Assume that $g_1, \dots, g_k$ form a regular sequence in $\K[X]$.\comment{[It seems sufficient to require that the relations between $g_1, \dots, g_k$ are linear modulo the trivial relations.]} Let $(u_1:\ldots :u_k)$ be projective coordinates on $\PPP^{k-1}$. The sub\-variety $\wt {X}$ of $X \times \PPP^{k-1}$ defined by the equations 
\[
u_i\cdot g_j - u_j\cdot g_i =0, \hskip 1cm i,j = 1, \dots, k, 
\]
together with the restriction $\pi: \wt {X} \too X$ of the projection map $X \times \PPP^{k-1}\too X$ is the \textit{blowup of $X$ along $Z$}. It does not depend, up to isomorphism over $X$, on the choice of the generators $g_i$ of $I$.\end{defn}


\begin{rem} This is a special case of the preceding definition of blowup as the closure of a graph. If $g_1, \dots, g_k$ do not form a regular sequence, the subvariety $\wt {X}$ of $X \times \PPP^{k-1}$ may require more equations, see ex.÷ \ref{4.43}. \end{rem}


\begin{defn}\label{blowup6} (Blowup via affine charts) Let $W = \Bbb{A}^n$ be affine space over $\K$ with chosen coordinates $x_1,\ldots, x_n$. Let $Z\subset W$ be a coordinate subspace defined by equations $x_j=0$ for $j$ in some subset $J\subset \{1, \dots n\}$. Set $U_j=\AAA^n$ for $j\in J$, and glue two affine charts $U_j$ and $U_\ell$ via the transition maps 
%
\[
\hskip 3cm x_i\mapsto x_i/x_j,\hskip .4cm \textrm{ if } i \in J\setminus \{j,\ell\},
\]
\[
x_j\mapsto 1/x_\ell,
\]
\[
x_\ell\mapsto x_jx_\ell,
\]
\[
\hskip 1.85cm x_i\mapsto x_i,\hskip 1cm \textrm{ if } i\not \in J.
\]
%
This yields a variety $\wt W$. Define a morphism $\pi:\wt W\too W$ by the chart expressions $\pi_j:\AAA^n \too\AAA^n$ for $j\in J$ as follows,
\[
x_i \mapsto x_i,
\hskip 1.3cm \textrm{ if } i \not\in J\setminus \{j\}, 
\]
\[
x_i \mapsto x_ix_j, \hskip 1cm \textrm{ if } i \in J\setminus \{j\}.
\]
The variety $\wt W$ together with the morphism $\pi:\wt W\too W$  is the \textit{blowup of $W=\AAA^n$ along $Z$}. The map $\pi_j:\AAA^n \too \AAA^n$ is called the \textit{$j$-th affine chart} of the blowup map, or the \textit{$x_j$-chart}. The maps $\pi_j$ depend on the choice of coordinates in $\AAA^n$, whereas the blowup map $\pi:\wt W\too W$ only depends on $W$ and $Z$.\end{defn}


\begin{defn}\label{blowup7} (Blowup via ring extensions)  Let $I$ be a non-zero ideal in a noetherian integral domain $R$, generated by non-zero elements $g_1, \dots, g_k$ of $R$. The \textit{blowup of $R$ in $I$} is given by the ring extensions 
\[
R \hookrightarrow R_j = R\left[\frac{g_1}{g_j}, \dots,
\frac{g_k}{g_j}\right], \hskip 1cm  j = 1, \dots, k,
\]
inside the rings $R_{g_j}=R[\frac{1}{g_j}]$, where the rings $R_j$ are glued pairwise via the natural inclusions $R_{g_j}, R_{g_\ell}\subset R_{g_jg_{\ell}}$, for $j,\ell=1,\ldots, k$. The blowup does not depend, up to isomorphism over $X$, on the choice of the generators $g_i$ of $I$.\end{defn}


\begin{rem} The seven concepts of blowup given in the preceding definitions are all essentially equivalent. It will be convenient to prove the equivalence in the language of schemes, though the substance of the proofs is inherent to varieties. \end{rem}



\begin{defn} (Local blowup via germs)  Let $\pi: X'\too X$ be the blowup of $X$ along $Z$, and let $a'$ be a point of $X'$ mapping to the point $a\in X$. The \textit{local blowup of $X$ along $Z$ at $a'$} is the morphism of germs $\pi: (X',a')\too (X,a)$. It is given by the dual homomorphism of local rings $\pi^*:\calo_{X,a}\too \calo_{X',a'}$. This terminology is also used for the completions of the local rings giving rise to a morphism of formal neighborhoods $\pi: (\wh X',a')\too (\wh X,a)$ with dual homomorphism $\pi^*:\wh \calo_{X,a}\too \wh\calo_{X',a'}$. 
\end{defn}


\begin{defn} (Local blowup via localizations of rings)  Let $R=(R,\mm)$ be a local ring and let $I$ be an ideal of $R$ with Rees algebra $\wt R=\bigoplus_{i=0}^\infty I^i$. Any non-zero element $g$ of $I$ defines a homogeneous element of degree $1$ in $\wt R$, also denoted by $g$. The ring of quotients $\wt R_g$ inherits from $\wt R$ the structure of a graded ring. The set of degree $0$ elements of $\wt R_g$ forms a ring, denoted by $R[Ig\inv]$, and consists of fractions $f/g^\ell$ with $f\in I^\ell$ and $\ell\in\NNN$.
The localization $R[Ig^{-1}]_p$ of $R[Ig^{-1}]$ at any prime ideal $p$ of $R[Ig^{-1}]$ containing the maximal ideal $\mm$ of $R$ together with the natural ring homomorphism $\alpha: R\too R[Ig^{-1}]_p$ is called the \textit{local blowup} of $R$ with center $I$ associated to $g$ and $p$. \end{defn}


\begin{rem} The same definition can be made for non-local rings $R$, giving rise to a local blowup $R_q\too R_q[Ig^{-1}]_p$ with respect to the localization $R_q$ of $R$ at the prime ideal $q=p\cap R$ of $R$.\end{rem}


\begin{thm} \comment{[$\Proj(\wt R)$ versus universal property]} Let $X=\Spec(R)$ be an affine scheme and $Z$ a closed subscheme defined by an ideal $I$ of $R$. The blowup $\pi:\wt X\too X$ of $X$ along $Z$ when defined as $\Proj(\wt R)$ of the Rees algebra $\wt R$ of $I$ satisfies the universal property of blowups.\end{thm}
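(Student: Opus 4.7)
The plan is to verify the two conditions of the universal property directly on the standard affine chart cover of $\wt X=\Proj(\wt R)$. Fix generators $g_1,\dots,g_k$ of $I$; since $\wt R$ is generated over $R$ in degree $1$ by the $g_jt$, the sets $D_+(g_jt)$ give an affine open cover of $\wt X$ with $D_+(g_jt)=\Spec(\wt R_{(g_j)})$, where $\wt R_{(g_j)}$ denotes the degree-zero part of the homogeneous localization $\wt R_{g_jt}$. Its elements are fractions $a/g_j^n$ with $a\in I^n$, and one has the natural chain $R\to R[g_1/g_j,\dots,g_k/g_j]\subseteq \wt R_{(g_j)}$ realising the ``ring extension'' description of the blowup on this chart.

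First I would check that $E=\pi\inv(Z)$ is Cartier. On the chart $\wt R_{(g_j)}$, the extended ideal $I\cdot\wt R_{(g_j)}$ contains $g_j$, while every other generator satisfies $g_i=(g_i/g_j)\cdot g_j$, so $I\cdot\wt R_{(g_j)}=(g_j)$. To see $g_j$ is a non-zero divisor on this chart, suppose $g_j\cdot(a/g_j^n)=0$ in $\wt R_{(g_j)}\subseteq \wt R_{g_jt}$. Lifting to $\wt R\subseteq R[t]$, this gives $g_j^{m+1}\cdot a=0$ in $R$ for some $m\geq 0$, and the same relation forces $a/g_j^n=0$ in $\wt R_{(g_j)}$. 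Thus $E$ is locally principal with non-zero divisor generator, i.e.\ Cartier.

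Next, for the universal property, let $\tau:X'\to X$ be such that $\tau\inv(Z)$ is Cartier on $X'$; so $J:=I\cdot\calo_{X'}$ is locally principal and generated by non-zero divisors. Cover $X'$ by affines $U=\Spec(S)$ on which $J|_U=(h)$ with $h$ a non-zero divisor, and write $\tau^*(g_i)=u_i\cdot h$ with $u_i\in S$ uniquely determined by the regularity of $h$. From $h\cdot(u_1,\dots,u_k)\cdot S=(g_1,\dots,g_k)\cdot S=h\cdot S$ and the cancellation property of $h$ we get $(u_1,\dots,u_k)=S$, so $U=\bigcup_j U_j$ with $U_j=\Spec(S_{u_j})$. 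On $U_j$ define $\sigma^*\colon\wt R_{(g_j)}\to S_{u_j}$ by $g_i/g_j\mapsto u_i/u_j$; compatibility with the defining syzygies of $\wt R_{(g_j)}$ is automatic, since any relation $\sum a_ig_i=0$ in $R$ pulls back to $h\cdot\sum\tau^*(a_i)u_i=0$ in $S$, and then the non-zero divisor $h$ can be cancelled to give $\sum\tau^*(a_i)u_i=0$. These local morphisms patch across the overlaps $U_j\cap U_\ell$ and across different choices of $h$ on $X'$ because replacing $h$ by a unit multiple scales all the $u_i$ by the same unit, leaving the ratios $u_i/u_j$ invariant. This produces a global $\sigma:X'\to\wt X$ with $\pi\circ\sigma=\tau$. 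Uniqueness follows from the observation that any factorising $\sigma$ must satisfy $\sigma^*(g_j)=\tau^*(g_j)$, which together with the Cartier hypothesis forces the local data $u_i$ and hence $\sigma^*$ on each chart.

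The main delicate point throughout is the careful handling of zero divisors: one must show $g_j$ is a non-zero divisor in $\wt R_{(g_j)}$ even though it may be a zero divisor in $R$, and one must exploit the non-zero divisor hypothesis on the Cartier target to both uniquely extract the coordinates $u_i$ and cancel $h$ in the syzygy verification. Once these points are secured, the compatibility and patching steps are routine, and the argument runs uniformly for any noetherian affine scheme, independent of reducedness or irreducibility assumptions.
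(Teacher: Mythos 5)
Your overall strategy is sound and runs structurally parallel to the paper's own proof: the paper likewise reduces to a ring-theoretic statement, defines the factorizing map by unique division by a non-zero divisor, and verifies relations by cancelling that non-zero divisor. The differences are organizational. The paper works stalk-locally: its step (c) first finds a \emph{single} element $f\in I$ whose image generates $\beta(I)\cdot S$, using that in the local ring $S$ the relation $\sum s_it_i=1$ forces some $t_i$ to be a unit; its step (d) then defines $\delta:R[Ig\inv]\too S$ on all fractions $h_\ell/g^\ell$ at once and localizes. You instead work chart-globally: you keep all generators $g_1,\dots,g_k$, extract the coordinates $u_i$ by division by $h$, replace the paper's unit trick by the observation $(u_1,\dots,u_k)=S$, and glue over the cover $U_j=\Spec(S_{u_j})$. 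Your explicit verification that $E=\pi\inv(Z)$ is Cartier on $\Proj(\wt R)$ (including that $g_j$ is a non-zero divisor in the chart ring even when it is a zero divisor in $R$) is a genuine plus: the paper's proof addresses only the factorization and leaves the Cartier property of $E$ unproven.

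There is, however, one real gap. You assert that compatibility of $g_i/g_j\mapsto u_i/u_j$ ``with the defining syzygies of $\wt R_{(g_j)}$ is automatic'' because linear relations $\sum a_ig_i=0$ are respected. But the kernel of $R[T_1,\dots,T_k]\too R[g_1/g_j,\dots,g_k/g_j]$, $T_i\mapsto g_i/g_j$, is \emph{not} generated by linear syzygies unless $g_1,\dots,g_k$ behave like a regular sequence --- this is precisely the point of def.~\ref{blowup5} and ex.~\ref{4.43} in the paper. For instance, for $I=(x^2,xy,y^2)$ in $\K[x,y]$ the quadratic relation $g_1g_3=g_2^2$ does not lie in the ideal generated by the linear syzygies $yT_1-xT_2$, $yT_2-xT_3$, as one sees by setting $x=y=0$. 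In addition there are torsion relations: $a/g_j^n=a'/g_j^{n'}$ in $R_{g_j}$ only forces $g_j^m(ag_j^{n'}-a'g_j^n)=0$ in $R$ for some $m$. So your well-definedness check, as written, is incomplete. Fortunately the repair uses exactly your cancellation mechanism and nothing more: any homogeneous relation $F(g_1,\dots,g_k)=0$ of degree $n$ pulls back to $h^n\cdot \bar F(u_1,\dots,u_k)=0$, and cancelling $h^n$ gives $\bar F(u)=0$ in $S$; a torsion relation yields $u_j^m(bu_j^{n'}-b'u_j^n)=0$ after cancelling $h$, which dies in $S_{u_j}$. Alternatively, and closest to the paper's step (d), define $\sigma^*$ directly on all fractions: for $a\in I^n$ one has $\tau^*(a)\in (h^n)$, so $\tau^*(a)=b\,h^n$ for a unique $b$, and one sets $\sigma^*(a/g_j^n)=b/u_j^n$, with well-definedness checked by the same cancellation. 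With this correction your argument is complete and, in exchange for the extra care, avoids the paper's reduction to local rings altogether.
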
 


\begin{proof} \comment{[Cf.÷ \cite{Eisenbud_Harris} IV, Prop.÷ 18]} (a) It suffices to prove the universal property on the affine charts of an open covering of $\Proj(\wt R)$, since on overlaps the local patches will agree by their uniqueness. \comment{[This requires a special case of the base change property of blowups, namely compatability with localization, which, however, is immediate from the construction.]} This in turn reduces the proof to the local situation.

(b) Let $\beta:R\to S$ be a homomorphism of local rings such that $\beta(I)\cdot S$ is a principal ideal of $S$ generated by a non-zero divisor, for some ideal $I$ of $R$. It suffices to show that there is a unique homomorphism of local rings $\gamma:R'\to S$ such that $R'$ equals a localization $R'=R[Ig^{-1}]_p$ for some non-zero element $g$ of $I$ and a prime ideal $p$ of $R[Ig^{-1}]$ containing the maximal ideal of $R$, and such that the diagram\\
\centerline{
\xymatrix{
R \ar[d]^\beta \ar[r]^\alpha & R' \ar[ld]^\gamma\\
S}
}
commutes, where $\alpha:R\too R'$ denotes the local blowup of $R$ with center $I$ specified by the choice of $g$ and $p$. The proof of the local statement goes in two steps.

(c) There exists an element $f\in I$ such that $\beta(f)$ generates $\beta(I)\cdot S$: Let $h\in S$ be a non-zero divisor generating $\beta(I)\cdot S$. Write $h=\sum_{i=1}^n \beta(f_i)s_i$ with elements $f_1,\ldots,f_n\in I$ and $s_1,\ldots,s_n\in S$. Write $\beta(f_i)=t_ih$ with elements $t_i\in S$. Thus, $h=\big(\sum_{i=1}^n s_it_i\big)h$. Since $h$ is a non-zero divisor, the sum $\sum_{i=1}^n s_it_i$ equals $1$. In particular,  there is an index $i$ for which $t_i$ does not belong to the maximal ideal of $S$. This implies that this $t_i$ is invertible in $S$, so that $h=t_i^{-1}\beta(f_i)$. In particular, for this $i$, the element $\beta(f_i)$ generates $\beta(I)\cdot S$.

(d) Let $f\in I$ be as in (c). By assumption, the image $\beta(f)$ is a non-zero divisor in $S$.  For every $\ell\geq0$ and every $h_\ell\in I^\ell$, there is an element $a_\ell\in S$ such that $\beta(h_\ell)=a_\ell\beta(f)^\ell$. Since $\beta(f)$ is a non-zero divisor, $a_\ell$ is unique with this property. For an arbitrary element $\sum_{\ell=0}^n h_\ell/g^\ell$ of $R[Ig^{-1}]$ with $h_\ell\in I^\ell$ set $\delta(\sum_{\ell=0}^n h_\ell/g^\ell)=\sum_{\ell=0}^n a_\ell$. This defines a ring homomorphism $\delta:R[Ig^{-1}]\too S$ that restricts to $\beta$ on $R$. By definition, $\delta$ is unique with this property. Let $p\subset R[Ig\inv]$ be the inverse image under $\delta$ of the maximal ideal of $S$. This is a prime ideal of $R[Ig\inv]$ which contains the maximal ideal of $R$. By the universal property of localization, $\delta$ induces a homomorphism of local rings $\gamma: R[Ig^{-1}]_p\to S$ that restricts to $\beta$ on $R$, i.e., satisfies $\gamma\circ\alpha=\beta$. By construction, $\gamma$ is unique. \end{proof}


\begin{thm} \comment{[$\Proj(\wt R)$ versus affine charts and ring extensions]} Let $X=\Spec(R)$ be an affine scheme and let $Z$ be a closed subscheme defined by an ideal $I$ of $R$. The blowup of $X$ along $Z$ defined by $\Proj(\wt R)$ can be covered by affine charts as described in def.÷ \ref{blowup5}. \end{thm}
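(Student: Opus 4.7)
The plan is to exhibit the standard affine cover of $\Proj(\wt R)$ coming from its degree-one generators and to identify each chart with the corresponding local patch of the equation-theoretic blowup. Since $I=(g_1,\ldots,g_k)$, the Rees algebra $\wt R=R[g_1t,\ldots,g_kt]$ is generated as an $R$-algebra by the degree-one elements $g_jt$, so the homogeneous prime ideals of $\wt R$ not containing the irrelevant ideal avoid at least one $g_jt$. Hence the principal opens $D_+(g_jt)=\Spec\bigl((\wt R_{g_jt})_0\bigr)$ for $j=1,\ldots,k$ form an affine open cover of $\Proj(\wt R)$, where the subscript denotes the degree-zero subring of the homogeneous localization.

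Next I would compute the degree-zero part explicitly. A homogeneous element of degree zero in $\wt R_{g_jt}$ has the form $(h_\ell t^\ell)/(g_jt)^\ell=h_\ell/g_j^\ell$ with $h_\ell\in I^\ell$; since $I^\ell$ is generated by degree-$\ell$ monomials in $g_1,\ldots,g_k$, every such fraction is a polynomial in the quotients $g_i/g_j$. This yields
\[
(\wt R_{g_jt})_0 \;=\; R\!\left[\tfrac{g_1}{g_j},\ldots,\tfrac{g_k}{g_j}\right] \;\subset\; R_{g_j},
\]
which is exactly the $j$-th ring extension appearing in def.~\ref{blowup7}. The overlap $D_+(g_jt)\cap D_+(g_\ell t)$ corresponds to the common localization $R[g_1/g_j,\ldots,g_k/g_j]_{g_\ell/g_j}=R[g_1/g_\ell,\ldots,g_k/g_\ell]_{g_j/g_\ell}$ inside $R_{g_jg_\ell}$, so the gluing data match.

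Finally I would match this to the equation-theoretic picture of def.~\ref{blowup5}. On the $j$-th chart of $X\times\PPP^{k-1}$, set $u_j=1$ and use the remaining $u_i$ as affine coordinates; the equations $u_ig_j-u_jg_i=0$ become $g_i=u_ig_j$, so this chart of the subvariety cut out in $X\times\PPP^{k-1}$ has coordinate ring
\[
R[u_1,\ldots,\widehat{u_j},\ldots,u_k]/(u_ig_j-g_i)_{i\neq j}.
\]
The map $u_i\mapsto g_i/g_j$ then lands in $R[g_1/g_j,\ldots,g_k/g_j]=(\wt R_{g_jt})_0$ and is visibly surjective. Under the regular-sequence hypothesis imposed in def.~\ref{blowup5}, the kernel of the graded surjection $R[u_1,\ldots,u_k]\twoheadrightarrow\wt R$, $u_i\mapsto g_it$, is generated by the ``trivial'' Koszul relations $u_ig_j-u_jg_i$, so the map on chart rings is an isomorphism.

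The main obstacle is precisely this last step: proving that for a regular sequence the homogeneous kernel is generated in degree two by the $u_ig_j-u_jg_i$. I would reduce it to the standard statement that for a regular sequence $g_1,\ldots,g_k$ in $R$ the first syzygy module of the $g_i$ is generated by the Koszul syzygies $g_je_i-g_ie_j$; induction on degree together with this fact then shows that any homogeneous relation $\sum_\alpha r_\alpha u^\alpha$ of degree $n$ mapping to zero lies in the ideal generated by the $u_ig_j-u_jg_i$. Once this is in hand, the identification of charts is automatic, and the theorem follows. When the $g_i$ are not a regular sequence the chart rings $R[g_1/g_j,\ldots,g_k/g_j]$ are still the correct ones, but additional equations beyond the $u_ig_j-u_jg_i$ are needed to cut out $\wt X$ inside $X\times\PPP^{k-1}$, as noted in the remark following def.~\ref{blowup5}.
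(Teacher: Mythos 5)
Your proposal follows essentially the same route as the paper: cover $\Proj(\wt R)$ by the principal opens $D_+(g_jt)$ attached to its degree-one generators and identify each degree-zero localization $(\wt R_{g_jt})_0$ with the ring extension $R[g_1/g_j,\ldots,g_k/g_j]\subset R_{g_j}$, which is exactly the cover by the charts $\Spec(R[Ig_j\inv])$ that the paper exhibits. You in fact go further than the printed proof, which leaves the chart expression ``to be inserted later'' and treats the regular-sequence/equations presentation only in the following theorem with the one-line remark that the only linear relations are the trivial ones; your explicit reduction to the fact that the first syzygies of a regular sequence are Koszul (so the kernel of $R[u_1,\ldots,u_k]\twoheadrightarrow\wt R$ is generated by the $u_ig_j-u_jg_i$, i.e.\ the ideal is of linear type) supplies precisely the justification the paper glosses over.
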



\begin{proof} For $g\in I$ denote by $R[Ig\inv]\subset R_g$ the subring of the ring of quotients $R_g$ generated by homogeneous elements of degree $0$ of the form $h/g^\ell$ with $h\in I^\ell$ and $\ell\in\NNN$. This gives an injective ring homomorphism $R[Ig\inv]\too \wt R_g$. Let now $g_1,\ldots , g_k$ be generators of $I$. Then $X=\Proj(\wt R)$ is covered by the principal open sets $\Spec(R[I g_i\inv])=\Spec(R[g_1/g_i,\ldots ,g_k/g_i])$. The chart expression $\Spec(R[g_1/g_i,\ldots ,g_k/g_i])\too\Spec(R)$ of the blowup map $\pi:\wt X\too X$ follows now by computation. \comment{[Computation to be inserted later]}\end{proof}


\begin{thm} \comment{[$\Proj(\wt R)$ vs closure of graph and equations]} Let $X$ be an affine variety with coordinate ring $R=\K[X]$ and let $Z$ be a closed subvariety defined by the ideal $I$ of $R$ with generators $g_1,\ldots, g_k$. The blowup of $X$ along $Z$ defined by $\Proj(\wt R)$ of the Rees algebra $\wt R$ of $R$ equals the closure of the graph $\Gamma$ of $\gamma: X\setminus Z \too\PPP^{k-1}, a \mapsto (g_1(a): \dots: g_k(a))$.  If $g_1,\ldots,g_k$ form a regular sequence, the closure is defined as a subvariety of $X\times\PPP^{k-1}$ by equations as indicated in def.÷ \ref{blowup5}.\end{thm}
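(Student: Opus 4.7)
The plan is to break the statement into three tasks: embed $\Proj(\wt R)$ as a closed subscheme of $X\times\PPP^{k-1}$, identify it with the closure of the graph $\Gamma$, and finally derive the explicit equations from def.~\ref{blowup5} under the regular sequence hypothesis.

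First, the generators $g_1,\ldots,g_k$ of $I$ yield a surjection of graded $R$-algebras
\[
\varphi: R[u_1,\ldots,u_k]\too \wt R, \quad u_i\mapsto g_i\cdot t,
\]
where the $u_i$ are placed in degree $1$ and the degree-zero piece is $R$ on both sides. Applying $\Proj$ then produces a closed embedding $\Proj(\wt R)\hookrightarrow \Proj(R[u_1,\ldots,u_k])=X\times\PPP^{k-1}$ that is compatible with the projection to $X$, because $R\too \wt R$ factors through $R[u_1,\ldots,u_k]$.

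Second, I would work in the affine cover of $\Proj(\wt R)$ by the charts $\Spec(R[Ig_j\inv])=\Spec(R[g_1/g_j,\ldots,g_k/g_j])$ supplied by the previous theorem. On the $j$-th chart the coordinate $u_i$ reads, modulo $\ker\varphi$, as $g_i/g_j$, so the set-theoretic image in $X\times\PPP^{k-1}$ over $\{g_j\ne 0\}$ is exactly the graph of $\gamma:X\setminus Z\too\PPP^{k-1}$. Gluing over all $j$ gives $\pi\inv(X\setminus Z)=\Gamma$. To pass from this to $\Proj(\wt R)=\overline\Gamma$ inside $X\times\PPP^{k-1}$, I would use that the embedding is closed and $\pi$ projective, hence proper with closed image, combined with density of $\Gamma$ in $\Proj(\wt R)$. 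The density rests on the fact that on the $j$-th chart, $E=\pi\inv(Z)$ is cut out by the non-zero divisor $g_j$, so is a Cartier divisor and has dense complement.

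Third, in the regular sequence case the task is to show that $\ker\varphi$ is generated as an ideal of $R[u_1,\ldots,u_k]$ by the bilinear expressions $u_ig_j-u_jg_i$. These manifestly lie in $\ker\varphi$. To show they suffice, I would identify the degree-$\ell$ piece of $\ker\varphi$ with the module of syzygies between the degree-$\ell$ monomials in the $g_i$ and invoke the exactness of the Koszul complex on a regular sequence, which states that every syzygy is an $R$-linear combination of the basic ones $g_je_i-g_ie_j$. Alternatively, induction on $k$ works, using that $g_k$ is a non-zero divisor modulo $(g_1,\ldots,g_{k-1})$ to lift syzygies one step at a time. The main obstacle lies exactly in this third step: Parts one and two are formal once the affine-chart description from the previous theorem is available, combining gradings, $\Proj$, localization and properness in a routine fashion. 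Isolating the linear Koszul relations as a complete set of generators of $\ker\varphi$, by contrast, genuinely uses the regular sequence hypothesis; without it extra syzygies appear in higher degree and $\wt X\subset X\times\PPP^{k-1}$ really does require additional equations, as already flagged in the remark following def.~\ref{blowup5}.
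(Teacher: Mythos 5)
Your proposal is correct and follows essentially the same route as the paper: both identify $\Proj(\wt R)$ with the closure of the graph by means of the affine charts $\Spec(R[Ig_j\inv])=\Spec(R[g_1/g_j,\ldots,g_k/g_j])$ supplied by the preceding theorem, and both settle the regular-sequence case by the fact that a regular sequence admits only the trivial (Koszul) linear relations, so that the kernel of $R[u_1,\ldots,u_k]\too\wt R$ is generated by the $u_ig_j-u_jg_i$. The one point to watch is that exactness of the Koszul complex by itself controls the linear syzygies of $g_1,\ldots,g_k$, not directly the syzygies among the degree-$\ell$ monomials $g^\alpha$; your fallback induction on $k$ (the standard proof that such ideals are of linear type) is the right way to close that step, and is in fact more detailed than the paper's own one-line justification.
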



\begin{proof} (a) Let $U_j\subset \PPP^{k-1}$ be the affine chart given by $u_j\neq 0$, with isomorphism $U_j\isom\AAA^{k-1}, (u_1:\ldots:u_k)\mapsto (u_1/u_j,\ldots ,u_k/u_j)$. The $j$-th chart expression of $\gamma$ equals $a\mapsto (g_1(a)/g_j(a),\ldots,g_k(a)/g_j(a))$ and is defined on the principal open set $g_j\neq 0$ of $X$. The closure of the graph of $\gamma$ in $U_j$ is therefore given by $\Spec(R[g_1/g_i,\ldots ,g_k/g_i])$. The preceding theorem then establishes the required equality.

(b) If $g_1,\ldots,g_k$ form a regular sequence, their only linear relations over $R$ are the trivial ones, so that
\[
R[Ig_i\inv]=R[g_1/g_i,\ldots g_k/g_i]\isom R[t_1,\ldots , t_k]/ (g_it_j-g_j, j=1,\ldots,k).
\]
The $i$-th chart expression of $\pi:\wt X\too X$ is then given by the ring inclusion $R\too R[t_1,\ldots , t_k]/ (g_it_j-g_j, j=1,\ldots,k)$.  \end{proof}


\noindent\textit{\examples}


\comment{[Varieties $X$ for which all hypersurfaces can be defined locally by a single equation are those for which all local rings are unique factorization domains.]}


\begin{eg} \label{4.21} For the cone $X$ in $\AAA^3$ of equation $x^2 + y^2 = z^2$, the line $Y$ in $\AAA^3$ defined by $x = y - z = 0$ is a hypersurface of $X$ at each point. It is a Cartier divisor at any point $a \in Y \setminus \{0\}$ but it is not a Cartier divisor at $0$. \comment{[What about characteristic 2? Prove that $Y$ is not Cartier at 0.]} The double line $Y'$ in $\AAA^3$ defined by $x^2 = y - z = 0$ is a Cartier divisor of $X$ since it can be defined in $X$ by $y - z = 0$.  The subvariety $Y''$ in $\AAA^3$ consisting of the two lines defined by $x=y^2-z^2=0$ is a Cartier divisor of $X$ since it can be defined in $X$ by $x=0$.\end{eg}


\begin{eg} \label{4.22} For the surface $X: x^2y-z^2 =0$ in $\AAA^3$, the subvariety $Y$ defined by $y^2-xz=x^3-yz=0$ is the singular curve parametrized by $t \mapsto (t^3 , t^4 , t^5)$. It is everywhere a Cartier divisor except at $0$: there, the local ring of $Y$ in $X$ is $\K[x,y,z]_{(x,y,z)}/(x^2y-z^2,y^2-xz,x^3-yz)$, which defines a singular cubic. It is of codimension $1$ in $X$ but not a complete intersection (i.e., cannot be defined in $\AAA^3$, as the codimension would suggest, by two but only by three equations). Thus it is not a Cartier divisor. At any other point $a=(t^3,t^4,t^5)$, $t \neq 0$ of $Y$, one has $\calo_{Y,a}=S/(x^2y-z^2,y^2/x-z,x^3-yz)=S/(x^2y-z^2,y^2/x-z)$ with $S=\K[x,y,z]_{(x-t^3,y-t^4,z-t^5)}$ the localization of $\K[x,y,z]$ at $a$. Hence $Y$ is a Cartier divisor there. \end{eg}


\begin{eg} \label{4.23} Let $Z$ be one of the axes of the cross $X:xy=0$ in $\AAA^2$, e.g. the $x$-axis. At any point $a$ on the $y$-axis except the origin, $Z$ is Cartier: one has $\calo_{X,a}= \K[x,y]_{(x,y-a)}/(xy) =\K[x,y]_{(x,y-a)}/(x)$ and the element $h = y$ defining $Z$ is a unit in this local ring. At the origin $0$ of $\AAA^2$ the
local ring of $X$ is $\K[x,y]_{(x,y)}/(xy)$ and $Z$ is locally defined by $h = y$ which is a zero-divisor in $\calo_{X,0}$. Thus $Z$ is not Cartier in $X$ at $0$.\end{eg}


\begin{eg} \label{4.24} Let $Z$ be the origin of $X=\AAA^2$. Then $Z$ is not a hypersurface and hence not Cartier in $X$. \end{eg}


\begin{eg} \label{4.25}$\hint$ The (reduced) origin $Z=\{0\}$ in $X = V (xy, x^2)\subset\AAA^2$ is not a Cartier divisor in $X$.
\end{eg}


\begin{eg} \label{4.26}$\hint$ Are $Z =V(x^2)$ in $\AAA^1$ and $Z =V(x^2y)$ in $X =\AAA^2$ Cartier divisors? \end{eg}


\begin{eg} \label{4.27}$\hint$ Let $Z=V(x^2,y)$ be the origin of $\AAA^2$ with non-reduced  structure given by the ideal $(x^2,y)$. Then $Z$ is not a Cartier divisor in $X=V(xy, x^2)\subset \AAA^2$. \end{eg}



\begin{eg} \label{4.28} Let $X = \AAA^1$ be the affine line with coordinate ring $R=\K[x]$ and let $Z$ be the origin of $\AAA^1$. The Rees algebra $\wt R = \K[x, xt] \subset \K[x, t]$ with respect to the ideal defining $Z$ is isomorphic, as a graded ring, to a polynomial ring $\K[u, v]$ in two variables with $\deg u = 0$ and $\deg v = 1$. Therefore, the point blowup $\wt X$ of $X$ is isomorphic to $\AAA^1 \times \PPP^0 = \AAA^1$, and $\pi : \wt X \too X$ is the identity.  

More generally, let $X = \AAA^n$ be $n$-dimensional affine space with coordinate ring $R=\K[x_1,\ldots,x_n]$ and let $Z\subset X$ be a hypersurface defined by some non-zero $g \in \K[x_1,\ldots,x_n]$. The Rees algebra $\wt R = \K[x_1,\ldots,x_n,gt] \subset \K[x_1,\ldots,x_n,t]$ with respect the ideal defining $Z$ is isomorphic, as a graded ring, to a polynomial ring $\K[u_1,\ldots,u_n,v]$ in $n+1$ variables with $\deg u_i = 0$ and $\deg v = 1$. Therefore the blowup $\wt X$ of $X$ along $Z$ is isomorphic to $\AAA^n \times \PPP^0 = \AAA^n$, and $\pi : \wt X \too X$ is the identity. \end{eg}


\begin{eg} \label{4.29} Let $X = \Spec(R)$ be an affine scheme and $Z\subset X$ be defined by some non-zero-divisor $g \subset R$. The Rees algebra $\wt R = R[gt] \subset R[t]$ with respect to the ideal defining $Z$ is isomorphic, as a graded ring, to $R[v]$ with $\deg r = 0$ for $r \in R$ and $\deg v = 1$. Therefore the blowup $\wt X$ of $X$ along $Z$ is isomorphic to $X\times \PPP^0 =X$, and $\pi:\wt X \too X$ is the identity.
\end{eg}


\begin{eg} \label{4.30} Take $X=V(xy) \subset \AAA^2$ with coordinate ring $R=K[x,y]/(xy )$. Let $Z$ be defined in $X$ by $y=0$. The Rees algebra of $R$ with respect to the ideal defining $Z$ equals $\wt R = R[yt] \cong K[x, y, u]/(xy, xu )$ with $\deg x = \deg y = 0$ and $\deg u = 1$. \end{eg}


\begin{eg} \label{4.31} Let $X = \Spec(R)$ be an affine scheme and let $Z\subset X$ be defined by some zero-divisor $g \neq 0$ in $R$, say $h \cdot g = 0$ for some non-zero $h\in R$. The Rees algebra $\wt R=R[gt] \subset R[t]$ with respect to the ideal defining $Z$ is isomorphic, as a graded ring, to $R[v]/(h\cdot v)$ with $\deg r=0$ for $r\in R$ and $\deg v=1$. Therefore, the blowup $\wt X$ of $X$ along $Z$ equals the closed subvariety of $X\times \PPP^0 =X$ defined by $h\cdot v=0$, and $\pi:\wt X \too X$ is the inclusion map. \comment{[?]} \end{eg}


\begin{eg} \label{4.32} Take in the situation of the preceding example $X=V(xy)\subset \AAA^2$ and $g=y$, $h=x$. Then $R=\K[x,y]/(xy)$ and $\wt R=R[yt]\isom \K[x,y,u]/(xy, xu)$ with $\deg\, x=\deg\, y=0$ and $\deg\, u =1$.\end{eg}


\comment{[? Take $X=\Spec (\K[x]/(x^5 ))$ and let $Z$ be defined in $X$ by $g=x$. The Rees algebra of $R=\K[x]/(x^5 )$ with respect to the ideal defining $Z$ equals $\wt R=\K[x,u]/(x^5, u^5 )$.]}


\begin{eg} \label{4.33} Let $X = \Spec(R)$ be an affine scheme and let $Z\subset X$ be defined by some nilpotent element $g \neq 0$ in $R$, say $g^k = 0$ for some $k\geq 1$. The Rees algebra $\wt R=R[gt] \subset R[t]$ with respect to the ideal defining $Z$ is isomorphic, as a graded ring, to $R[v]/(v^k )$ with $\deg r=0$ for $r \in R$ and $\deg v = 1$. The blowup $\wt X$ of $X$ is the closed subvariety of $X \times \PPP^0=X$ defined by $v^k=0$. \end{eg}


\begin{eg} \label{4.34} Let $X = \AAA^2$ and let $Z$ be defined in $X$ by $I = (x,y)$. The Rees algebra $\wt R = \K[x,y,xt,yt] \subset \K[x,y,t]$ of $R=\K[x,y]$ with respect to the ideal defining $Z$ is isomorphic, as a graded ring, to the factor ring $\K[x,y,u,v]/(xv - yu)$, with $\deg x = \deg y = 0$ and $\deg v = \deg u = 1$. It follows that the blowup $\wt X$ of $X$ along $Z$ embeds naturally as the closed and regular subvariety of $\AAA^2 \times \PPP^1$ defined by $xv - yu = 0$, the morphism $\pi : \wt X\too X$ being given by the restriction to $\wt X$ of the first projection $\AAA^2 \times \PPP^1 \too \AAA^2$. \end{eg}


\begin{eg} \label{4.35} Let $X = \AAA^2$ and let $Z$ be defined in $X$ by $I = (x, y^2)$. The Rees algebra $\wt R = \K[x, y, xt, y^2t] \subset \K[x, y, t]$ of $R=\K[x,y]$ with respect to the ideal defining $Z$ is isomorphic, as a graded ring, to the factor ring $\K[x,y,u,v]/(xv - y^2u )$, with $\deg x = \deg y = 0$ and $\deg u = \deg v = 1$. It follows that the blowup $\wt X$ of $X$ along $Z$ embeds naturally as the closed and singular subvariety of $\AAA^2 \times \PPP^1$ defined by $xv - y^2u = 0$, the morphism $\pi: \wt X \too X$ being given by the restriction to $\wt X$ of the first projection $\AAA^2 \times \PPP^1\too \AAA^2$. \end{eg}


\begin{eg} \label{4.36} Let $X = \AAA^3$ and let $Z$ be defined in $X$ by $I = (xy,z)$. The Rees algebra $\wt R = \K[x,y,z,xyt,zt] \subset \K[x,y,z,t]$ of $R=\K[x,y,z]$ with respect to the ideal defining $Z$ is isomorphic, as a graded ring, to $\K[u,v,w,r,s]/(uvs - wr)$, with $\deg u = \deg v = \deg w = 0$ and $\deg r = \deg s = 1$. It follows that the blowup $\wt X$ of $X$ along $Z$ embeds naturally as the closed and singular subvariety of $\AAA^3 \times \PPP^1$ defined by $uvs - wr = 0$, the morphism $\pi : \wt X \too X$ being given by the restriction to $\wt X$ of the first projection $\AAA^3 \times \PPP^1 \too \AAA^3$.
\end{eg}

\begin{eg} \label{4.37} Let $X = \AAA^1_\ZZZ$ be the affine line over the integers and let $Z$ be defined in $X$ by $I = (x,p )$ for a prime $p \in \ZZZ$. The Rees algebra $\wt R = \ZZZ[x,xt,pt] \subset \ZZZ[x,t]$ of $R= \ZZZ[x]$ with respect to the ideal defining $Z$ is isomorphic, as a graded ring, to $\ZZZ[u, v, w]/ (xw - pv)$, with $\deg u = 0$ and $\deg v = \deg w = 1$. It follows that the blowup $\wt X$ of $X$ along $Z$ embeds naturally as the closed and regular subvariety of $\AAA^1_\ZZZ \times \PPP^1_\ZZZ$ defined by $xw - pv$, the morphism $\pi: \wt X \too X$ being given by the restriction to $\wt X$ of the first projection $\AAA^1_\ZZZ \times \PPP^1_\ZZZ\too \AAA^1_\ZZZ$.
\end{eg}


\begin{eg} \label{4.38}$\hint$ Let $X = \AAA^2_{\ZZZ_{20}}$ be the affine plane over the ring $\ZZZ_{20} = \ZZZ/20\ZZZ$, and let $Z$ be defined in $X$ by $I = (x,2y)$. The Rees algebra $\wt R = \ZZZ_{20}[x,y,xt,2yt] \subset \ZZZ_{20}[x,y,t]$ of $R= \ZZZ_{20}[x]$ with respect to $I$ is isomorphic, as a graded ring, to $\ZZZ_{20}[u, v, w, z]/(xz - 2vw)$, with $\deg u = \deg v = 0$ and $\deg w = \deg z = 1$. \end{eg}

\begin{eg} \label{4.39} Let $R = \ZZZ_6$ with $\ZZZ_6 = \ZZZ/6\ZZZ$, and let $Z$ be defined in $X$ by $I = (2)$. The Rees algebra $\wt R = \ZZZ_6[2t] \subset \ZZZ_6[t]$ of $R $ with respect to $I$ is isomorphic, as a graded ring, to $\ZZZ_6\oplus u\cdot \ZZZ_3[u]$ with $\deg u = 1$.\end{eg}


\begin{eg} \label{4.40} Let $R = \K[[x, y]]$ be a formal power series ring in two variables $x$ and $y$, and let  $Z$ be defined in $X$ by $I$ be the ideal generated by $e^x-1$ and $\ln(y+1)$. The Rees algebra $\wt R$ of $R$ with respect to $I$ equals $R[(e^x -1)t, \ln(y+1)t] = \K[[x, y]][(e^x -1)t, \ln(y+1)t] \cong \K[[x, y]][u, v]/(\ln(y+1)u - (e^x -1)v)$ with $\deg  u= \deg v=1$.\end{eg}


\begin{eg} \label{4.41} Let $X=\AAA^n$. The point blowup $\wt X \subset X \times \PPP^{n-1}$ of $X$ at the origin is defined by the ideal $(u_i x_j - u_j x_i, i,j=1, \ldots, n)$ in $\K[x_1, \ldots, x_n, u_1, \ldots, u_n]$. This is a graded ring, where $\deg x_i=0$ and $\deg u_i=1$ for all $i=1, \ldots, n$. The ring
\[
\K[x_1, \ldots, x_n, u_1, \ldots, u_n]/ (u_i x_j - u_j x_i, i,j=1, \ldots, n)
\]
is isomorphic, as a graded ring, to $\wt R=\K[x_1, \ldots, x_n, x_1t, \ldots, x_nt] \subset \K[x_1, \ldots, x_n, t]$ with $\deg x_i=0$, $\deg t=1$. The ring $\wt R$ is the Rees-algebra of the ideal $(x_1, \ldots, x_n)$ of $\K[x_1, \ldots, x_n]$. Thus $\wt X$ is isomorphic to $\Proj\big(\bigoplus_{d \geq 0} (x_1, \ldots, x_n)^d\big)$. \end{eg}


\begin{eg} \label{4.42} Let $1\leq k\leq n$. The $i$-th affine chart of the blowup $\wt \AAA^n$ of $\AAA^n$ in the ideal $I=(x_1,\ldots,x_k)$ is isomorphic to $\AAA^n$, for $i=1,\ldots,k$, via the ring isomorphism
\[
\K[x_1,\ldots,x_n]\isom 
\K[x_1,\ldots,x_n,t_1,\ldots,\wh t_i,\ldots t_k]/ (x_it_j-x_j, j=1,\ldots,k, j\neq i),
\]
where $x_j\mapsto t_j$ for $j=1,\ldots,k$, $j\neq i$, respectively $x_j\mapsto x_j$ for $j=k+1,\ldots, n$ and $j=i$. The inverse map is given by $x_j\mapsto x_i x_j$ for $j=1,\ldots,k$, $j\neq i$, respectively $x_j\mapsto x_j$ for $j=k+1,\ldots,n$ and $j=i$, respectively $t_j\mapsto x_j$ for $j=1,\ldots,k$, $j\neq i$.\end{eg} 

\begin{eg} \label{4.43} Let $X=\AAA^2$ be the affine plane and let $g_1=x^2, g_2=xy, g_3=y^3$ generate the ideal $I \subset \K[x,y]$. The $g_i$ do not form a regular sequence. The subvariety of $\AAA^2 \times \PPP^2$ defined by the equations $g_i u_j - g_j u_i=0$ is singular, but the blowup of $\AAA^2$ in $I$ is regular.\end{eg}


\ignore

\begin{eg} \label{}  The Zariski-closure of the map
\[
\gamma: U=\AAA^n \setminus \{0\} \too \PPP^{n-1}: (a_1, \ldots, a_n) \mapsto (a_1: \ldots : a_n)
\]
is the point blowup of $X$ at the origin. In terms of rings the above map reads as
\[
\calo_U^n \too \calo_U: (f_1, \ldots, f_n) \mapsto \sum f_i x_i,
\]
that is, we have a map of the open subset $U$ of $X$ into $\Proj^{n-1}_U$. Let $Y$ be the variety obtained by gluing the affine charts $R_i=R[\frac{x_1}{x_i}, \ldots, \frac{x_n}{x_i},x_i]$. Then $Y$ is the closure of the graph of $\alpha_{(x_1, \ldots, x_n)}$: the open subset $(\Spec R_i)_{x_i} \subset Y$ is the graph of the map
\[
\alpha_{(x_1, \ldots, x_n)}|_{(\Spec R_i)_{x_i}}: (\Spec R_i)_{x_i} \too (\Proj_R^{n-1})_{x_i}=\Spec R[\frac{x_1}{x_i}, \ldots, \frac{x_n}{x_i}].
\]
Since the $R_i$ are dense in $Y$, the assertion follows.
\end{eg}

\recognize


\begin{eg} \label{4.44} Compute the following blowups:

(a) $\AAA^2$ in the center $(x,y)(x,y^2)$,

(b) $\AAA^3$ in the centers $(x,yz)$ and $(x,yz)(x,y)(x,z)$,

(c) $\AAA^3$ in the center $(x^2+y^2-1,z)$. 

(d) The plane curve $x^2=y$ in the origin.

Use affine charts and ring extensions to determine at which points the resulting varieties are regular or singular. \end{eg}


\begin{eg} \label{4.45} Blow up $\AAA^3$ in $0$ and compute the inverse image of $x^2+y^2=z^2$.  
\end{eg}

\begin{eg} \label{4.46}$\hint$ Blow up $\AAA^2$ in the point $(0,1)$. What is the inverse image of  the lines $x+y=0$ and $x+y=1$? \end{eg}


\begin{eg} \label{4.47}$\hint$ Compute the two chart transition maps for the blowup of $\AAA^3$ along the $z$-axis.  \end{eg}


\begin{eg} \label{4.48}$\hint$ Blow up the cone $X$ defined by $x^2+y^2=z^2$ in one of its lines.  \end{eg}


\begin{eg} \label{4.49}$\hint$ Blow up $\AAA^3$ in the circle $x^2+(y+2)^2-1=z=0$ and in the elliptic curve $y^2-x^3-x=z=0$. \end{eg}


\begin{eg} \label{4.50} Interpret the blowup of $\AAA^2$ in the ideal $(x,y^2)(x,y)$ as a composition of blowups in regular centers. \end{eg}


\begin{eg} \label{4.51} Show that the blowup of $\AAA^n$ along a coordinate subspace $Z$ equals the cartesian product of the point-blowup in a transversal subspace $V$ of $\AAA^n$ of complementary dimension (with respect to $Z$) with the identity map on $Z$. 
\end{eg}


\begin{eg} \label{4.52}$\hint$ Show that the ideals $(x_1,\ldots,x_n)$ and $(x_1,\ldots,x_n)^m$ define the same blowup of $\AAA^n$ when taken as center. \comment{[{\it Hint}: Think before you start to compute, or, better, don't compute.]} \end{eg}


\begin{eg} \label{4.53} Let $E$ be a normal crossings subvariety of $\AAA^n$ and let $Z$ be a subvariety of $\AAA^n$ such that $E\cup Z$ also has normal crossings (the union being defined by the product of ideals). Show that the inverse image of $E$ under the blowup of $\AAA^n$ along $Z$ has again normal crossings. Show by an example that the assumption on $Z$ cannot be dropped in general. \end{eg}


\begin{eg} \label{4.54} Draw a real picture of the blowup of $\AAA^2$ at the origin.\end{eg}


\begin{eg} \label{4.55} Show that the blowup $\widetilde \AAA^n$ of $\AAA^n$ with center a point is non-singular.\end{eg}


\begin{eg} \label{4.56} Describe the geometric construction via secants of the blowup $\wt\AAA^3$ of $\AAA^3$ with center $0$. What is $\pi^{-1}(0)$? For a chosen affine chart $W'$ of $\widetilde\AAA^3$, consider all cylinders $Y$ over a circle in $W'$ centered at the origin and parallel to a coordinate axis. What is the image of $Y$ under $\pi$ in $\AAA^3$?\end{eg}


\begin{eg} \label{4.57}$\hint$ Compute the blowup of the Whitney umbrella $X=V(x^2-y^2z) \subset \AAA^3$ with center one of the three coordinate axes, respectively the origin $0$. \end{eg}


\begin{eg} \label{4.58} Determine the locus of points of the Whitney umbrella $X=V(x^2-y^2z)$ where the singularities are normal crossings, respectively simple normal crossings. Blow up the complements of these loci and compare with the preceding example \cite{Kollar_Book} ex.÷ 3.6.1, p.÷ 123, \cite{BDSMP} Thm.÷ 3.4.\end{eg}


\begin{eg} \label{4.59}  Let $Z$ be a regular center in $\AAA^n$, with induced blowup $\pi:\wt\AAA^n\too  \AAA^n$, and let $a'$ be a point of $\wt\AAA^n$ mapping to a point $a\in Z$. Show that it is possible to choose local formal coordinates at $a$, i.e., a regular parameter system of $\wh \calo_{\AAA^n,a}$, so that the center is a coordinate subspace, and so that $a'$ is the origin of one of the affine charts of $\wt\AAA^n$. Is this also possible with a regular parameter system of the local ring $\calo_{\AAA^n,a}$?\end{eg}


\begin{eg} \label{4.60}  Let $X=V(f)$ be a hypersurface in $\AAA^n$, defined by $f\in\K[x_1,\ldots,x_n]$,  and let $Z$ be a regular closed subvariety which is contained in the locus $S$ of points of $X$ where $f$ attains its maximal order. Let $\pi:\widetilde{\AAA}^n\to\AAA^n$ be the blowup along $Z$ and let $X^s=V(f^s)$ be the strict transform of $X$, defined as the Zariski closure of $\pi\inv(X\setminus Z)$ in $\widetilde{\AAA}^n$. Show that for points $a\in Z$ and $a'\in E=\pi^{-1}(Z)$ with $\pi(a')=a$ the inequality $\ord_{a'}f'\leq\ord_a f$ holds. Do the same for subvarieties of $\AAA^n$ defined by arbitrary ideals. \end{eg}


\begin{eg} \label{4.61} Find an example of a variety $X$ for which the dimension of the singular locus increases under the blowup of a closed regular center $Z$ that is contained in the top locus of $X$  \cite{Ha_Obstacles} ex.÷ 9.\end{eg}


\begin{eg} \label{4.62} Let $\pi:(\wt\AAA^n,a')\too (\AAA^n,a)$ be the local blowup of $\AAA^n$ with center the point $a$, considered at a point $a'\in E$. Let $x_1,\ldots ,x_n$ be given local coordinates at $a$. Determine the coordinate changes in $(\AAA^n,a)$ which make the chart expressions of $\pi$ monomial (i.e., each component of a chart expression is a monomial in the coordinates).\end{eg}


\begin{eg} \label{4.63} Let $x_1,\ldots ,x_n$ be local coordinates at $a$ so that the local blowup $\pi:(\wt\AAA^n,a')\too (\AAA^n,a)$ is monomial with respect to them. Determine the formal automorphisms of $(\wt\AAA^n,a')$ which commute with the local blowup.\end{eg}


\begin{eg} \label{4.64}$\hint$ Compute the blowup of $X=\Spec(\ZZZ[x])$ in the ideals $(x,p)$ and $(px,pq)$ where $p$ and $q$ are primes.\end{eg}


\comment{[From D.÷ Westra Consider the surface $X$ in $\AAA^3$ defined by $x^2-x^4+y^2z^2= 0$. This surface has a non-trivial discrete symmetry group. Blow up $\AAA^3$ in the ideal $(x,yz)$ defining the singular locus of $X$. Let $(u:v)$ be projective coordinates on $\PPP^1$ and  consider the subvariety $\wt\AAA^3$ of $\AAA^3\times\PPP^1$ defined by $xv-yzu=0$. There are two affine charts covering $\wt\AAA^3$. The coordinate rings are $R = \K[y,z,u/v]$ and $S = \K[x,y,z,v/u]/(yz - xv/u)$.  The affine charts with these corresponding coordinate rings will be referred to as the $R$-chart and the $S$-chart respectively. The blowup map for the $R$-chart is given by the monomial ring morphism [wrong formula] $\pi_R:(x,yz z) \mapsto(yzu/v,y,z)$ and the blowup map for the $S$-chart is given by monomial ring homomorphism $\pi_S:(x,y, z) \mapsto  (x,y,z)$,  where the images now lie in the ring $S$.
The inverse image of $X$ in the $R$-chart is defined by
\[
x^2y^2(y^2z^2u^2/v^2-1-u^2/v^2)=0,
\]
and in the $S$-chart by
\[
x^2(x^2-1-u^2/v^2)=0
\]
The inverse image of $X$ in the $S$-chart is the surface in $\AAA^4$ defined by the two equations $xv/u-yz=0$ and $x^2(x^2-1-u^2/v^2)=0$. In the $R$-chart the exceptional divisor is the union of two coordinate planes, in the $S$-chart it is one coordinate plane.
The surface $X$ is resolved in one step, but the ambient variety has become singular: Namely, the $S$-chart contains a singularity at the origin, defined by $yz - xv/u=0$. This point does not lie on the inverse image of X.]}


\begin{eg} \label{4.65} {} Let $R= \K[x, y, z]$ be the polynomial
ring in three variables and let $I$ be the ideal $(x, yz )$ of $R$. The blowup of $R$ along $I$  corresponds to the ring extensions:
\[
R  \hookrightarrow R\left[\frac{yz}{x}\right] \cong \K[s, t, u, v]/(sv - tu), 
\]
\[
R \hookrightarrow R\left[\frac{x}{yz}\right] \cong \K[s, t, u, v]/(s - tuv).
\]
The first ring extension defines a singular variety while the second one defines a non-singular one. Let $W = \AAA^3$ and $Z = V(I) \subset W$ be the affine space and the subvariety defined by $I$. The blowup of $R$ along $I$ coincides with the blowup $W'$ of $W$ along $Z$.

(a) Compute the chart expressions of the blowup maps.

(b) Determine the exceptional divisor. 

(c) Apply one more blowup to $W'$ to get a non-singular variety $W''$. 

(d) Express the composition of the two blowups as a single blowup in a properly chosen ideal.

(e) Blow up $\AAA^3$ along the three coordinate axes. Show that  the resulting variety is non-singular. 

(f) Show the same for the blowup of $\AAA^n$ along the $n$ coordinate axes.\end{eg}


\begin{eg} \label{4.66} The zeroset in $\AAA^3$ of the non-reduced ideal $(x,yz)(x,y)(x,z) = (x^3, x^2y,x^2z, xyz,y^2z)$ is the union of the $y$- and the $z$-axis. Taken as center, the resulting blowup of $\AAA^3$ equals the composition of two blowups: The first blowup has center the ideal $(x,yz)$ in $\AAA^3$, giving a three-fold $W_1$ in a regular four-dimensional ambient variety with one singular point of local equation $xy=zw$. The second blowup is the point blowup of $W_1$ with center this singular point \cite{Ha_Excellent} Prop.÷ 3.5, \cite{Faber_Westra, Levine}.\end{eg}


\begin{eg} \label{4.67}$\challenge$ Consider the blowup $\wt \AAA^n$ of $\AAA^n$ in a monomial ideal $I$ of $\K[x_,\ldots,x_n]$. Show that $\wt \AAA^n$ may be singular. What types of singularities will occur? Find a natural saturation procedure $I\rightsquigarrow \overline I$ so that the blowup of $\AAA^n$ in $\overline I$ is regular and equal to a (natural) resolution of the singularities of $\wt \AAA^n$ \cite{Faber_Westra}. \end{eg}


\begin{eg} \label{4.68} The \textit{Nash modification} of a subvariety $X$ of $\AAA^n$ is the closure of the graph of the map which associates to each non-singular point its tangent space, taken as an element of the Grassmanian of $d$-dimensional linear subspaces of $\K^n$, where $d=\dim(X)$. For a hypersurface $X$ defined in $\AAA^n$ by $f=0$, the Nash modification coincides with the blowup of $X$ in the Jacobian ideal of $f$ generated by the partial derivatives of $f$.\end{eg}

\ignore 
\recognize

\section{Lecture V: Properties of Blowup}


\begin{prop} \label{basechange} Let $\pi: \wt {X} \too X$ be the blowup of $X$ along a subvariety $Z$, and let $\varphi: Y\too X$ be a morphism, the base change. Denote by $p:\wt {X} \times_X Y\too Y$ the projection from the fibre product to the second factor. Let $S = \varphi^{-1}(Z)\subset Y$ be the inverse image of $Z$ under $\varphi$, and let $\wt {Y}$ be the Zariski closure of $p^{-1}(Y\setminus S)$ in $\wt {X} \times_X Y$. The restriction $\tau: \wt Y\too Y$ of $p$ to $\wt Y$ equals the blowup of $Y$ along $S$. 


\centerline{
\xymatrix{
F~ \ar@{^{(}->}[r] \ar[rd] & \wt{Y}~ \ar@{^{(}->}[r] \ar[rd]^\tau & \wt{X}\times_X Y \ar[r]^q \ar[d]^p & \wt{X} \ar[d]^\pi \\
& S~ \ar@{^{(}->}[r] & Y \ar[r]^\varphi & X}
}
\end{prop}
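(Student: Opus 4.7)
The plan is to verify that $\tau: \wt{Y} \to Y$ satisfies the universal property of Definition \ref{blowup1}, applied to the center $S$. By the uniqueness up to unique isomorphism of blowups, this identifies $\tau$ with the blowup of $Y$ along $S$.

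First I would show that $\tau\inv(S)$ is a Cartier divisor in $\wt Y$. The exceptional divisor $E = \pi\inv(Z)$ is Cartier in $\wt X$ by definition of the blowup, so locally on $\wt X$ it is cut out by a single non-zero divisor $h$. Pulling back along $q: \wt X \times_X Y \to \wt X$, the element $q^*(h)$ cuts out $q\inv(E) = (\pi q)\inv(Z) = p\inv(\varphi\inv(Z)) = p\inv(S)$ in the fibre product. Restricting to $\wt Y$, the image $\bar h$ of $q^*(h)$ cuts out $\tau\inv(S) = p\inv(S) \cap \wt Y$. By construction $\wt Y$ is the closure of $p\inv(Y \setminus S)$, so the open subset $\tau\inv(Y \setminus S) \subset \wt Y$ is dense; on it $\bar h$ is invertible, hence $\bar h$ does not vanish identically on any irreducible component of $\wt Y$, and is therefore a non-zero divisor in $\calo_{\wt Y}$. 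This shows that $\tau\inv(S)$ is Cartier.

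Next I would verify the universal property. Let $\sigma: Y' \to Y$ be a morphism with $\sigma\inv(S) \subset Y'$ a Cartier divisor. Then $\varphi \circ \sigma: Y' \to X$ satisfies $(\varphi \circ \sigma)\inv(Z) = \sigma\inv(\varphi\inv(Z)) = \sigma\inv(S)$, which is Cartier in $Y'$. By the universal property of the blowup $\pi: \wt X \to X$, there is a unique morphism $\rho: Y' \to \wt X$ with $\pi \circ \rho = \varphi \circ \sigma$. The universal property of the fibre product, applied to $\rho$ and $\sigma$, produces a unique $\mu: Y' \to \wt X \times_X Y$ with $q \circ \mu = \rho$ and $p \circ \mu = \sigma$. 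Since $\sigma\inv(S)$ is Cartier, it is nowhere dense in $Y'$, so $Y' \setminus \sigma\inv(S)$ is dense in $Y'$; on this dense open subset $\mu$ factors through $p\inv(Y \setminus S)$. As $\wt Y$ is closed, the image of $\mu$ lies in $\wt Y$, and $\mu$ factors uniquely through the inclusion $\iota: \wt Y \hookrightarrow \wt X \times_X Y$ as $\mu = \iota \circ \sigma'$. The equation $\tau \circ \sigma' = p \circ \iota \circ \sigma' = p \circ \mu = \sigma$ gives the required factorization, and the uniqueness of $\sigma'$ follows from the uniqueness of $\mu$ together with the injectivity of $\iota$.

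The main obstacle is the Cartier property for $\tau\inv(S)$: without passing to the closure, one could have irreducible components of $\wt X \times_X Y$ supported entirely over $S$, and the restriction of $\bar h$ to such a component would be identically zero, not a non-zero divisor. Taking the Zariski closure of $p\inv(Y \setminus S)$ is precisely what removes these embedded or extraneous components and forces $\bar h$ to be a non-zero divisor on every irreducible component of $\wt Y$. In the scheme-theoretic setting with possibly non-reduced $Y'$, the closure argument must be replaced by a direct computation with Rees algebras, showing that $\Proj$ of the Rees algebra of $\varphi\inv(I)\cdot \calo_Y$ modulo its $\varphi\inv(I)$-torsion reproduces the strict transform construction; cf.\ \cite{Hartshorne} II.7.
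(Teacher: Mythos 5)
Your proof is correct and takes essentially the same approach as the paper's: you establish that $\tau^{-1}(S)$ is a Cartier divisor by pulling back a local equation of $E$ along $q$ and using that the closure $\wt Y$ of $p^{-1}(Y\setminus S)$ contains no components lying over $S$ (the paper phrases this via associated primes, you via non-vanishing on components of the reduced variety), and you then verify the universal property through exactly the same chain --- universal property of the blowup $\pi$, universal property of the fibre product, and density of the complement of the Cartier divisor $\sigma^{-1}(S)$ to force the induced map to land in $\wt Y$. Your closing caveat about the non-reduced scheme-theoretic case is sound and consistent with the paper's restriction to varieties.
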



\begin{proof} \comment{[cf.÷ \cite{Eisenbud_Harris} Prop.÷ IV-21, p.÷ 168]} The assertion is best proven via the universal property of blowups. To show that $F=\tau^{-1}(S)$ is a Cartier divisor in $\wt Y$, consider the projection $q:\wt X\times_X Y\to \wt X$ onto the first factor. By the commutativity of the diagram, 
\[
F=p^{-1}(S)=p^{-1}\circ\varphi^{-1}(Z)=q^{-1}\circ\pi^{-1}(Z)=q^{-1}(E).
\]
As $E$ is a Cartier divisor in $\wt X$ and $q$ is a projection, $F$ is locally defined by a principal ideal. The associated primes of $\wt Y$ are the associated primes of $Y$ not containing the ideal of $S$. Thus, the local defining equation of $E$ in $\wt X$ cannot pull back to a zero divisor on $\wt Y$. This proves that $F$ is a Cartier divisor. 

To show that $\tau:\wt Y\to Y$ fulfills the universal property, let $\psi:Y'\to Y$ be a morphism such that $\psi^{-1}(S)$ is a Cartier divisor in $Y'$. This results in the following diagram.


\centerline{
\xymatrix{
&& \wt X \ar[rd]^\pi & \\
Y' \ar@/^1pc/[rru]^\rho \ar[r]^\sigma \ar@/_1pc/[rrd]^\psi & \wt X\times_X Y \ar[ru]^q \ar[rd]^p && X \\
&& Y \ar[ru]^\varphi}
}


Since $\psi^{-1}(S)=\psi\inv(\varphi\inv(Z))=(\varphi\circ\psi)^{-1}(Z)$, there exists by the universal property of the blowup $\pi: \wt X\too X$ a unique map $\rho:Y'\to \wt X$ such that $\varphi\circ\psi=\pi\circ\rho$. By the universal property of fibre products, there exists a unique map $\sigma:Y'\to\wt X\times_X Y$ such that $q\circ \sigma=\rho$ and $p\circ\sigma=\psi$.

It remains to show that $\sigma(Y')$ lies in $\wt Y\subset \wt X\times_X Y$. Since $\psi^{-1}(S)$ is a Cartier divisor in $Y'$, its complement $Y'\setminus \psi^{-1}(S)$ is dense in $Y'$. From 
\[
Y'\setminus \psi^{-1}(S)=\psi^{-1}(Y\setminus S)=(p\circ\sigma)\inv (Y\setminus S)=\sigma\inv(p\inv(Y\setminus S))
\]
follows that $\sigma(Y'\setminus \psi^{-1}(S))\subset p\inv(Y\setminus S)$.  But $\wt Y$ is the closure of $p\inv(Y\setminus S)$ in $\wt X\times_X Y$, so that $\sigma(Y')\subset \wt Y$ as required.\end{proof}


\begin{cor}
(a) Let $\pi: X' \too X$ be the blowup of $X$ along a subvariety $Z$, and let $Y$ be a closed subvariety of $X$. Denote by $Y'$ the Zariski closure of $\pi\inv(Y\setminus Z)$ in $X'$, i.e., the \textit {strict transform} of $Y$ under $\pi$, cf.÷ def.÷ \ref{stricttransform}. The restriction $\tau:Y'\too Y$ of $\pi$ to $Y'$ is the blowup of $Y$ along $Y\cap Z$. In particular, if $Z\subset Y$, then $\tau$ is the blowup $\wt Y$ of $Y$ along $Z$.


(b) Let $U \subset X$ be an open subvariety, and let $Z \subset X$ be a closed
subvariety, so that $U \cap Z$ is closed in $U$. Let $\pi: X' \too X$ be the blowup of $X$ along $Z$. The blowup of $U$ along $U \cap Z$ equals the restriction of $\pi$ to $U'=\pi\inv(U)$.


(c) Let $a\in X$ be a point. Write $(X, a)$ for the germ of $X$ at $a$, and $(\wh X, a)$ for the formal neighbourhood. There are natural maps
\[
(X, a) \too X \hskip .5cm \textrm{and} \hskip .5cm (\wh X, a) \too X 
\]
corresponding to the localization and completion homomorphisms $\calo_X \too \calo_{X, a}\too \wh\calo_{X, a}$. Take a point $a'$ above $a$ in the blowup $X'$ of $X$ along a subvariety $Z$ containing $a$. This gives local blowups of germs and formal neighborhoods 
\[
\pi_{a'}: (X', a') \too(X, a), \hskip 1cm 
\wh \pi_{a'}: (\wh X', a') \too (\wh X, a).
\]
The blowup of a local ring is not local in general; to get a local blowup one needs to localize also on $X'$.


(d) If $X_1\too X$ is an isomorphism between varieties sending a subvariety $Z_1$ to $Z$, the blowup $X_1'$ of $X_1$ along $Z_1$ is canonically isomorphic to the blowup $X'$ of $X$ along $Z$. This also holds for local isomorphisms. 


(e) If $X= Z\times Y$ is a cartesian product of two varieties, and $a$ is a given point of $Y$, the blowup $\pi:X'\too X$ of $X$ along $Z\times \{a\}$ is isomorphic to the cartesian product $\mathrm{Id}_Z\times \tau:Z\times Y'\too Z\times Y$ of the identity on $Z$ with the blowup $\tau: Y'\too Y$ of $Y$ in $a$. 
\end{cor}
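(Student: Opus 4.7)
The strategy is to derive every part as an application of Proposition \ref{basechange} by choosing an appropriate base change morphism $\varphi:Y\to X$, together with a short verification that the Zariski-closure operation in the proposition simplifies in each case.

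For part (a), I would take $\varphi:Y\hookrightarrow X$ to be the closed immersion. Then $\varphi^{-1}(Z)=Y\cap Z$ and $\wt X\times_X Y=\pi^{-1}(Y)$ set-theoretically, so the closure $\wt Y$ of $p^{-1}(Y\setminus S)$ in $\wt X\times_X Y$ is exactly the strict transform of $Y$ in the sense of def.~\ref{stricttransform}. The proposition then identifies $\tau:\wt Y\to Y$ as the blowup of $Y$ along $Y\cap Z$. When $Z\subset Y$, we have $Y\cap Z=Z$, which gives the second claim. For part (b), take $\varphi:U\hookrightarrow X$ the open immersion; then $p^{-1}(U\setminus (U\cap Z))=\pi^{-1}(U)\setminus\pi^{-1}(Z)$ is already dense in $\pi^{-1}(U)$ (the exceptional divisor of $\pi$ has dense complement), so no nontrivial closure is needed and the proposition yields $\pi\inv(U)\to U$ as the blowup.

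For part (c), the same base-change philosophy applies with $\varphi:\Spec(\calo_{X,a})\to X$ or $\varphi:\Spec(\wh\calo_{X,a})\to X$, which are flat by Lemma~\ref{191}. Flatness ensures that the fibre product commutes with the Rees algebra construction so that the blowup of the (completed) local ring along the extended ideal is the base change of the global blowup; to pass from a local ring to a truly local blowup of a chosen point $a'$ over $a$, one must in addition localize the chart ring at the prime corresponding to $a'$, as already noted in the remark following the definition of local blowup. Part (d) is an immediate consequence of the universal property of def.~\ref{blowup1}: an isomorphism $X_1\to X$ identifies subvarieties with their images and takes any test morphism contracting $Z_1$ to a Cartier divisor to one contracting $Z$, so the two universal objects coincide up to unique isomorphism; for local isomorphisms one applies this on an open cover together with part (b).

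For part (e), I would apply base change to the projection $\varphi:X=Z\times Y\to Y$, taking the center to be $\{a\}\subset Y$. Then $\varphi^{-1}(\{a\})=Z\times\{a\}=S$, and
\[
\wt Y\times_Y X=\wt Y\times_Y(Z\times Y)\cong Z\times \wt Y,
\]
with $p:Z\times\wt Y\to Z\times Y$ equal to $\mathrm{Id}_Z\times\tau$. Since the exceptional divisor $E\subset\wt Y$ has dense complement, $p^{-1}(X\setminus S)=Z\times(\wt Y\setminus E)$ is dense in $Z\times\wt Y$, so the Zariski-closure in the statement of Proposition~\ref{basechange} is all of $Z\times\wt Y$, and the proposition identifies $\mathrm{Id}_Z\times\tau$ as the blowup of $X$ along $Z\times\{a\}$.

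The only genuine subtlety I expect is in part (c): formal neighbourhoods are not varieties in the strict sense of the text, so Proposition~\ref{basechange} must be invoked in its scheme-theoretic form and one must check that the flatness of completion (Lemma~\ref{191}(6)) guarantees that forming the Rees algebra commutes with the base change $\calo_{X,a}\to\wh\calo_{X,a}$; the remaining parts are essentially formal consequences of the universal property and of the explicit compatibility of closures with open and closed immersions.
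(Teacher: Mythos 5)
Your proposal is correct and takes essentially the route the paper intends: the statement is presented there as a direct corollary of Prop.~\ref{basechange} (with (d) resting on the universal property of def.~\ref{blowup1}), the base change being a closed immersion in (a), an open immersion in (b), the flat localization and completion maps in (c), and the projection in (e). The density arguments for the Zariski closures and the flat-base-change compatibility of the Rees algebra that you spell out are exactly the routine verifications the paper leaves implicit.
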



\begin{prop} \label{199} Let $\pi:X'\too X$ be the blowup of $X$ along a regular subvariety $Z$ with exceptional divisor $E$. Let $Y$ be a subvariety of $X$, and $Y^*=\pi\inv(Y)$ its preimage under $\pi$. Let $Y'$ be the Zariski closure of $\pi\inv(Y\setminus Z)$ in $X'$. If $Y$ is transversal to $Z$, i.e., $Y\cup Z$ has normal crossings at all points of the intersection $Y\cap Z$, also $Y^*$ has normal crossings at all points of $Y^*\cap E$. In particular, if $Y$ is regular and transversal to $Z$, also $Y'$ is regular and transversal to $E$.\end{prop}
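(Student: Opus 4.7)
The plan is to reduce the assertion to a local, monomial computation in suitably chosen coordinates. Since $\pi$ is an isomorphism over $X\setminus Z$, the only points where anything needs to be checked are those of $Y^*\cap E$, and these lie above $Y\cap Z$. I would therefore localize at a point $a\in Y\cap Z$ and work formally in $\wh\calo_{X,a}$ -- the setting in which normal crossings is defined. This localization is justified by the compatibility of blowups with restriction to open subvarieties from the base-change corollary following Proposition~\ref{basechange}.

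The central step is to produce a single regular system of parameters $x_1,\ldots,x_n$ of $\wh\calo_{X,a}$ such that $Z=V(x_1,\ldots,x_k)$ and, simultaneously, $(Y,a)$ is defined by a radical monomial ideal in $x_1,\ldots,x_n$. Monomial coordinates adapted to $Y\cup Z$ are supplied by the proposition characterizing normal crossings points in a regular ambient variety. The additional fact that $Z$ itself becomes one of the coordinate subspaces of the same frame uses the regularity -- and hence local irreducibility -- of $Z$: $Z$ must be contained in one coordinate component of $Y\cup Z$, and after a linear adjustment of the coordinates transversal to that component (which leaves the monomial structure of $I(Y\cup Z)$ untouched) it can be arranged to coincide with it.

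With this normal form in hand, I would apply the affine chart description of the blowup from def.~\ref{blowup6}: $\pi$ is covered by $k$ monomial chart maps $\pi_j$, $j=1,\ldots,k$, given by $x_i\mapsto x_ix_j$ for $i\in\{1,\ldots,k\}\setminus\{j\}$ and $x_i\mapsto x_i$ otherwise, with exceptional divisor $E=\{x_j=0\}$ in the $j$-th chart. The pullback of a radical monomial ideal under such a substitution is again a monomial ideal. Hence at every point $a'\in Y^*\cap E$ above $a$ the total transform $Y^*$ is defined by a monomial ideal in a regular system of parameters of $\wh\calo_{X',a'}$, and by the same characterization of normal crossings points this shows that $Y^*$ has normal crossings at $a'$.

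For the refined assertion, regularity of $Y$ (with $Y\not\subset Z$) makes $Y$ one of the coordinate components of $Y\cup Z$, say $Y=V(x_i,\,i\in I)$. In the $x_j$-chart, the pullback of $I(Y)$ factors an appropriate power of the exceptional variable $x_j$; removing that factor yields the strict transform $Y'=V(x_i,\,i\in I')$ for a suitable index set $I'$ not containing $j$, which is a regular coordinate subspace. The union $Y'\cup E$ is then a radical monomial variety with $E=\{x_j=0\}$ as an additional coordinate component, so $Y'$ meets $E$ transversally. The only step that demands genuine care is the construction, in paragraph two, of a coordinate system adapted simultaneously to $Z$ and to $Y\cup Z$; once that normal form is secured, the remainder of the argument is merely the observation that a monomial substitution applied to a monomial produces a monomial.
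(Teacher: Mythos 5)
Your overall route is exactly the paper's: reduce to the formal local rings, produce coordinates in which $Z$ is a coordinate subspace and $Y\cup Z$ is monomial, and then conclude by the monomial chart computation (this is what the paper's one-line proof delegates to Prop.~\ref{111}). But the whole difficulty of Prop.~\ref{199} is concentrated in your second paragraph, and the argument given there has a genuine gap. First, a \emph{linear} adjustment cannot suffice: already for $Y=V(x)$ and $Z=V(x,y-z^2)$ in $\AAA^3$ one needs the nonlinear change $y\mapsto y-z^2$; this is harmless since you work formally, but it is symptomatic. Second, and seriously: under your set-theoretic reading of the hypothesis, the simultaneous normal form you assert need not exist at all, and in fact the proposition itself fails. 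Take $X=\AAA^3$, $Y=V(xy)$ and $Z=V(x,\,y-z^2)\subset V(x)$. Then $Z\subset Y$, so $Y\cup Z=Y$ has normal crossings at every point of $Y\cap Z=Z$, and $Z$ is regular. Yet no regular parameter system $v_1,v_2,v_3$ of $\wh\calo_{X,0}$ makes $I(Y)$ monomial with $Z$ a coordinate subspace: monomiality of the radical ideal $I(Y)$ forces its two components to be coordinate hyperplanes $V(v_i)$, $V(v_j)$ with $v_i=x\cdot(\mathrm{unit})$, $v_j=y\cdot(\mathrm{unit})$, while $Z=V(v_k,v_l)$ forces $v_k,v_l\in(x,y-z^2)$; hence at least three of the differentials $dv_1(0),dv_2(0),dv_3(0)$ lie in the two-dimensional span of $dx(0),dy(0)$, a contradiction (and $\{k,l\}=\{i,j\}$ is impossible since $(x,y)\neq(x,y-z^2)$). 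Moreover the conclusion fails here: in the chart with coordinates $x,u=y-z^2,z$ and chart map $(x,u,z)\mapsto(x,xu,z)$, the total transform of $Y$ is $V\bigl(x(xu+z^2)\bigr)$, and $xu+z^2$ is an irreducible quadric cone, singular at the chart origin, which lies on $Y^*\cap E$; so $Y^*$ is not a normal crossings variety there.

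The resolution of this tension is the remark following the definition of transversality in Lecture III: the union $Y\cup Z$ must be taken scheme-theoretically, defined by the \emph{product} $I(Y)\cdot I(Z)$, not by the set-theoretic union. With that reading the example above violates the hypothesis, and the hypothesis does yield your normal form: if $I(Y)\cdot I(Z)$ is formally monomializable, every associated prime -- in particular the prime $I(Z)$, which is associated to the product even when $Z$ sits inside a component of $Y$ -- is generated by a subset of the coordinates, which is precisely the simultaneous adaptation your paragraph two tries to conjure by a linear move. This is also what Prop.~\ref{111} packages in the paper's own proof. Granting the normal form, your chart computation is correct and is the paper's, up to one glossed point: at a non-origin point $a'$ of a chart, the pulled-back generators are monomial only in coordinates centered at the chart origin; one must still translate, $x_i\mapsto x_i+a_i'$, and absorb the variables with $a_i'\neq 0$ into units (cf.~conditions (6) and (8) of Prop.~\ref{111}) to see that all components of $Y^*$ through $a'$ are coordinate subspaces in a single frame at $a'$. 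That step is routine, but it belongs in the proof.
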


\begin{proof} Having normal crossings is defined locally at each point through the completions of local rings. The assertion is proven by a computation in local coordinates for which the blowup is monomial, cf.÷ Prop.÷ \ref{111} below.\end{proof}


\begin{prop} \label{111} Let $W$ be a regular variety of dimension $n$ with a regular subvariety $Z$ of codimension $k$. Let $\pi:W'\too W$ denote the blowup of $W$ along $Z$, with exceptional divisor $E$. Let $V$ be a regular hypersurface in $W$ containing $Z$, let $D$ be a (not necessarily reduced) normal crossings divisor in $W$ having normal crossings with $V$. Let $a$ be a point of $V\cap Z$ and let $a'\in E$ be a point lying above $a$. There exist local coordinates $x_1,\ldots, x_n$ of $W$ at $a$ such that

(1) $a$ has components $a=(0,\ldots,0)$.

 (2) $V$ is defined in $W$ by $x_{n-k+1}=0$.

(3) $Z$ is defined in $W$ by $x_{n-k+1}=\ldots =x_n=0$.

(4) $D\cap V$ is defined in $V$ locally at $a$ by a monomial $x_1^{q_{1}}\cdots x_n^{q_n}$, for some $q=(q_1,\ldots,q_n)\in \NNN^n$ with $q_{n-k+1}=0$.

(5) The point $a'$ lies in the $x_n$-chart of $W'$. The chart expression of $\pi$ in the $x_n$-chart is of the form 
\[
\hskip .9cmx_i\mapsto x_i \hskip 2.4cm \mathrm{for} \hskip .1cm i\leq n-k \hskip .1cm\mathrm{and} \hskip .1cm i=n,
\]
\[
\hskip 1.2cm x_i\mapsto x_ix_n \hskip 2cm\mathrm{for} \hskip .1cm  n-k+1\leq i\leq n-1.
\]

(6) In the induced coordinates of the $x_n$-chart, the point $a'$ has components $a'=(0,\ldots,0,a'_{n-k+2},\ldots, a'_{n-k+d},0,\ldots,0)$ with non-zero entries $a_j'\in\K$ for $n-k+2\leq j\leq n-k+d$, where $d$ is the number of components of $D$ whose strict transforms do not pass through $a'$. 

(7) The strict transform (def.÷ \ref{stricttransform}) $V^s$ of $V$ in $W'$ is given in the induced coordinates locally at $a'$ by $x_{n-k+1}=0$.

(8) The local coordinate change $\varphi$ in $W$ at $a$ given by $\varphi(x_i)= x_i+a_i'\cdot x_n$ makes the local blowup $\pi:(W',a')\too (W,a)$ monomial. It preserves the defining ideals of $Z$ and $V$ in $W$.

(9) If condition (4) is not imposed, the coordinates $x_1,\ldots,x_n$ at $a$ can be chosen with (1) to (3) and so that $a'$ is the origin of the $x_n$-chart.

\end{prop}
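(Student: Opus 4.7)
The plan is to proceed in four stages, aligned with the nine conditions: first construct a regular system of parameters at $a$ realising (1)--(4), then identify the affine chart of the blowup containing $a'$ to establish (5), next read off the coordinates of $a'$ to obtain (6) and (7), and finally verify the monomialisation statement (8) together with the simplified variant (9). Items (5)--(9) will follow by bookkeeping once the first stage is in place.

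The first stage is the heart of the matter. Exploiting the flag $Z\subset V\subset W$, I will first choose a regular parameter of $\calo_{W,a}$ generating the ideal of $V$ and name it $x_{n-k+1}$, which immediately yields (2). Working inside the regular hypersurface $V$, where $Z$ has codimension $k-1$ and $D\cap V$ is a normal crossings divisor (each component of $D$ not containing $V$ meets $V$ transversally), I then produce a regular system of parameters of $\calo_{V,a}$ comprising free parameters $x_1|_V,\ldots,x_{n-k}|_V$ followed by parameters $x_{n-k+2}|_V,\ldots,x_n|_V$ cutting out $Z$, arranged so that $D\cap V$ is expressed as a monomial $\prod x_i^{q_i}$ with $q_{n-k+1}=0$. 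Lifting each of these parameters to an element of $\calo_{W,a}$ and combining with $x_{n-k+1}$ yields coordinates satisfying (1), (3) and (4).

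Once $Z$ is the coordinate subspace $\{x_{n-k+1}=\cdots=x_n=0\}$, the blowup $\pi$ is covered by the $k$ standard affine charts of Definition \ref{blowup6}. A brief calculation shows that the strict transform $V^s$ of $V=\{x_{n-k+1}=0\}$ misses the $x_{n-k+1}$-chart entirely: there $\pi^*x_{n-k+1}=x_{n-k+1}$ is the exceptional equation, so the total transform of $V$ coincides with $E$ in that chart. Therefore $a'\in V^s\cap E$ lies in some $x_j$-chart with $j\in\{n-k+2,\ldots,n\}$; permuting so that this index becomes $n$ preserves (1)--(4) and establishes (5) and (7). Reading off the chart coordinates then gives $x_i(a')=0$ for $i\le n-k$, for $i=n-k+1$ and for $i=n$, while only $x_{n-k+2},\ldots,x_{n-1}$ may be nonzero; a further permutation of these indices pushes the nonzero entries to the front, producing (6). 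The count $d$ is identified via the observation that a coordinate component $\{x_i=0\}$ of $D$ with $i\in\{n-k+2,\ldots,n-1\}$ has strict transform $\{x_i=0\}$ in the $x_n$-chart, avoiding $a'$ exactly when $a'_i\ne 0$.

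Finally, the substitution $\varphi(x_i)=x_i+a'_i x_n$ (with the convention $a'_{n-k+1}=a'_n=0$) fixes $x_n$ and $x_{n-k+1}$ and hence preserves both the ideal $(x_{n-k+1})$ of $V$ and the ideal $(x_{n-k+1},\ldots,x_n)$ of $Z$, since the correction terms lie in both. A direct computation of the $x_n$-chart expressions in the new coordinates yields the desired monomial form with $a'$ translated to the chart origin, establishing (8). Condition (9) is the content of the first two stages with (4) dropped: realise (1)--(3) by the flag argument alone, locate $a'$ in a chart by permutation, then translate by $\varphi$. The main obstacle is the simultaneous realisation of (3) and (4) in the first stage: making $D\cap V$ monomial and making $Z$ a coordinate subspace of $V$ are each straightforward in isolation, but their compatibility rests on the positioning of $Z$ relative to the normal crossings stratification of $D\cap V$ and on a judicious choice of lifts from $V$ to $W$.
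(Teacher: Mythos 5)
The paper itself gives no argument for Prop.\ \ref{111} (it defers to \cite{Ha_Wild}), so your proposal must stand on its own — and it does not, because the step you yourself call ``the main obstacle'' is exactly where the content lies, and you leave it unresolved. The simultaneous realisation of (3) and (4) does not follow from the stated hypotheses by any ``judicious choice of lifts'': take $W=\AAA^3$ with coordinates $(x,y,z)$, $V=\{z=0\}$, $D=\{x=0\}$ (so $D\cup V$ has normal crossings) and $Z=\{z=x-y^2=0\}$, a regular curve inside $V$. Inside $V$ the curves $Z$ and $D\cap V$ are tangent at $0$: any $x_3$ with $(x_2,x_3)=I_Z$ satisfies $x_3\equiv v\,(x-y^2)$ on $V$ for a unit $v$, so its differential at $0$ is proportional to $dx$, and $D\cap V$ (a reduced irreducible germ) can only be monomial if it equals $\{x_1=0\}$ with $x_1|_V=w\,x$ — but then $d(x_1|_V)$ and $d(x_3|_V)$ are dependent at $0$ and the $x_i$ are not coordinates. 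So with the hypotheses as literally stated, (3)+(4) can fail. What rescues the proposition is the standing convention of the paper that centers are transversal to the existing divisors, i.e.\ that $Z\cup V\cup D$ has normal crossings at $a$; under that hypothesis, conditions (1)--(4) hold essentially by the definition of normal crossings (one parameter system in which $V$ is a coordinate hyperplane, $Z$ a coordinate subspace inside it, and $D$ monomial). Your proof must invoke this extra transversality explicitly; acknowledging the obstacle is not bridging it.

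Two further slips. First, you write ``Therefore $a'\in V^s\cap E$'', but $a'$ is only assumed to lie in $E$ above $a$: the fibre $E\cap\pi^{-1}(a)\cong\PPP^{k-1}$ meets $V^s$ in a $\PPP^{k-2}$, so a general $a'$ is not on $V^s$. Conditions (6)--(7) force $a'\in V^s$ (the $(n-k+1)$-st induced coordinate of $a'$ vanishes in (6)), so this is a tacit hypothesis of items (1)--(8) — consistent with (9), which drops (4)--(7) and, as you correctly argue, works for arbitrary $a'$ — and it must be flagged, since it cannot be derived. Second, for (6) your permutations are not enough: if $a'$ has a nonzero induced coordinate at a middle index $i$ with $q_i=0$, no permutation matches the nonzero entries with components of $D$; you must re-choose the free middle coordinates adapted to $a'$ via $x_i\mapsto x_i-a'_i\,x_n$ (legitimate, as $x_n\in I_Z$ and $x_i$ is not pinned by $D$ or $V$), and you must treat the case where the chart variable $x_n$ itself cuts a component of $D$: that component's strict transform misses the entire $x_n$-chart, hence avoids $a'$ and is counted by $d$ without producing a nonzero entry — note that the index range $n-k+2,\ldots,n-k+d$ in (6) has length $d-1$, not $d$, which is precisely this bookkeeping and which your one-line identification of $d$ does not account for. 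Your treatments of (8) and (9) — the unipotent translation $\varphi$ fixing $I_V$ and $I_Z$ because the correction terms $a'_i x_n$ lie in both — are correct and match the standard computation (cf.\ ex.\ \ref{4.62}).
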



\begin{proof} \cite{Ha_Wild}. \comment{[To be inserted later]}\end{proof}


\begin{thm} Any projective birational morphism $\pi: X'\too X$ is a blowup of $X$ in an ideal $I$.\end{thm}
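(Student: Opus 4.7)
The strategy is to realize $X'$ as $\Proj_X$ of a graded $\calo_X$-algebra arising from a relatively very ample line bundle, and then, using the birationality of $\pi$, to identify that graded algebra with the Rees algebra of an ideal $I\subset\calo_X$. The question being local on the target, I would first reduce to the case where $X=\Spec(R)$ is affine and (by decomposing into irreducible components) integral and noetherian; thus $\pi$ is an isomorphism over a dense open $U\subset X$.

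First, I would unpack projectivity: $\pi$ factors as a closed immersion $X'\hookrightarrow X\times\PPP^k$ followed by the projection, so $\mathcal{L}=\calo_{\PPP^k}(1)|_{X'}$ is a line bundle on $X'$ that is very ample relative to $\pi$. The relative version of Serre's theorem then yields, for $d\gg 0$, that the sheaves $\mathcal{F}_m\coloneqq\pi_*\mathcal{L}^{md}$ are coherent on $X$, the adjunction maps $\pi^*\mathcal{F}_m\to\mathcal{L}^{md}$ are surjective, and one has the canonical identification
\[
X'\;\cong\;\Proj_X\Bigl(\bigoplus_{m\geq 0}\mathcal{F}_m\Bigr),
\]
with $\mathcal{L}^d$ corresponding to the twisting sheaf $\calo(1)$ of this Proj.

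Second, I would use birationality to turn $\bigoplus_m\mathcal{F}_m$ into a Rees algebra. After shrinking $U$, the restriction of $\mathcal{L}^d$ to $\pi^{-1}(U)\cong U$ is trivial with generator $s$. Pushing forward, $\mathcal{F}_m|_U$ is freely generated by $s^m$, and the inclusions $\mathcal{F}_m\hookrightarrow j_*(\mathcal{F}_m|_U)$ (with $j\colon U\hookrightarrow X$) embed each $\mathcal{F}_m$ as a fractional ideal sheaf $J_m\subset K(X)$, with $J_m\cdot J_n\subset J_{m+n}$. Replacing $d$ by a large multiple so that the graded algebra is generated in degree $1$ (which is possible by the generation part of Serre's theorem), and then multiplying the generator $s$ by a common denominator (exists because the $J_m$ are coherent), we may assume $J_1\subset\calo_X$ is an honest ideal sheaf $I$, and by degree-$1$ generation that $J_m=I^m$ for all $m\geq 1$, so
\[
\bigoplus_{m\geq 0}\mathcal{F}_m\;\cong\;\bigoplus_{m\geq 0}I^m\;=\;\Rees(I).
\]
Thus $X'\cong\Proj_X(\Rees(I))$, which is by definition the blowup of $X$ along $V(I)$.

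The main obstacle is the manipulation in this last step: after passing to a Veronese to achieve generation in degree $1$, one must check that the replacement does not affect the Proj, and one must verify that the embedding of the graded algebra into the function field produces a bona fide ideal, not merely a fractional one. A further technicality is the reduction to the integral affine case when $X$ is reducible, where birationality has to be interpreted component-wise, with $I$ then built by glueing the local ideals so constructed; noetherianity of $X$ ensures that the glueing yields a coherent ideal sheaf.
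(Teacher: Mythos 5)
Your argument is, in substance, the proof of Hartshorne II, Thm.\ 7.17 --- which is precisely what the paper cites in lieu of a proof --- so the core is sound: relative Serre to get $X'\isom\Proj_X\big(\bigoplus_m \pi_*\mathcal{L}^{md}\big)$, a Veronese replacement to force generation in degree one, the identification of each $\pi_*\mathcal{L}^{md}$ with a coherent fractional ideal $J_m\subset K(X)$ via birationality, and the conclusion $J_m=J_1^m$, so that after clearing denominators the algebra becomes $\Rees(I)$. For $X$ affine and integral this is a complete and correct proof, since a single non-zero $r\in\K[X]$ with $rJ_1\subset\calo_X$ exists and replacing $J_1$ by $rJ_1$ scales the graded algebra by an isomorphism in each degree, leaving the $\Proj$ unchanged.

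The genuine gap is your opening sentence, ``the question being local on the target,'' together with the closing claim that the local ideals glue. Blowups restrict to open subsets, but the property of \emph{being} a blowup does not obviously globalize from local data, because the ideal $I$ realizing $\pi$ is far from unique: by Moody's criterion quoted in ex.\ \ref{5.11}, two ideals $I$ and $J$ define the same blowup whenever $JK=I^k$ and $IL=J^\ell$ for suitable fractional ideals $K,L$. Your local $I$ depends on the choices of $d$, of the trivializing section $s$, and of the denominator $r$; on overlaps of an affine cover the resulting ideals agree only up to such equivalence, not on the nose, so ``glueing the local ideals'' does not yield an ideal sheaf, and noetherianity does nothing to remove this obstruction. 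Hartshorne's proof avoids gluing altogether by exploiting quasi-projectivity of $X$: run your first two steps globally, regard $M=\pi_*\mathcal{L}^{d}$ as a coherent fractional ideal sheaf, choose $n\gg 0$ and a non-zero global section of the denominator ideal twisted by the ample $\calo_X(n)$, which gives a map $M\otimes\calo_X(-n)\too\calo_X$ whose image is an honest coherent ideal $I$, and then invoke the invariance of $\Proj$ under twisting a graded $\calo_X$-algebra by an invertible sheaf (Hartshorne II, 7.9) to get $\Proj(\Rees(I))\isom\Proj\big(\bigoplus_m M^m\big)\isom X'$. The reducible case has the same flavor of difficulty (one must work in the total ring of fractions, and the non-canonicity of $I$ again blocks componentwise gluing), so you should either restrict the statement to integral affine or quasi-projective $X$, or replace the local-to-global reduction by this global twisting argument.
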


\begin{proof} \cite{Hartshorne}, chap.÷ II,  Thm.÷ 7.17.\end{proof}


\comment{[\noindent ADDENDA:\\
Eleonore, chap.÷ 4\\
Normalization
Blowup is a projective space over X and exceptional divisor very ample \cite{Goertz_Wedhorn}
Prop.÷ 13.96, p.÷ 411.\\
If $X$ is integral and $Z$ is non-empty, then $X'$ is integral and $\pi:X'\too X$ is birational, proper, surjective \cite{Goertz_Wedhorn} Cor.÷ 13.97.\\
Flatness, see Liu, Prop.÷ 1.12, p.÷ 322\\
Characterize projective morphisms which are blowups by Grauert's theorem. Include projectivized normal bundle as defining the exeptional locus.\\
Composition of blowups is a blowup \cite{ Bodnar_Centers}. \\
Weak factorization theorem [Morelli, W\l odarczyk, Abramovich, Karu, Matsuki].\\
Weighted blowup, toric modifications, Nash modifications [Spivakovsky, Gonzales-Sprinberg], F-Blowups [Yasuda], alterations, Dulac, Atiyah.]}


\bigskip
\noindent\textit{\examples}
 

\begin{eg} \label{5.6} Let $X$ be a regular subvariety of $\AAA^n$ and $Z$ a regular closed subvariety which is transversal to $X$. Show that the blowup $X'$ of $X$ along $Z$ is again a regular variety (this is a special case of Prop.÷ \ref{199}).\end{eg}


\begin{eg} \label{5.7}$\challenge$ Prove that plain varieties remain plain under blowup in regular centers \cite{BHSV} Thm.÷  4.3.\end{eg}


\begin{eg} \label{5.8} $\challenge$ Is any rational and regular variety plain?\end{eg}


\begin{eg} \label{5.9} Consider the blowup $\pi:W'\to W$ of a regular variety $W$ along a closed subvariety $Z$. Show that, for any chosen definition of blowup, the exceptional divisor $E=\pi^{-1}(Z)$ is a hypersurface in $W'$.\end{eg}


\begin{eg} \label{5.10} The composition of two blowups $W''\too W'$ and $W'\too W$ is a blowup of $W$ in a suitable center \cite{Bodnar_Centers}. \end{eg}


\begin{eg} \label{5.11} A \textit{fractional ideal} $I$ over an integral domain $R$ is an $R$-sub\-module of $\Quot(R)$ such that $rI \subset R$ for some non-zero element $r\in R$.  Blowups can be defined via $\Proj$ also for centers which are fractional ideals \cite{EGA2}. Let $I$ and $J$ be two (ordinary) non-zero ideals of $R$. The blowup of $R$ along $I$ is isomorphic to the blowup of $R$ along $J$ if and only if there exist positive integers $k$, $\ell$ and fractional ideals $K$, $L$ over $R$ such that $JK = I^k$ and $IL = J^\ell$ \cite{Moody} Cor.÷ 2.\end{eg}




\begin{eg} \label{5.12}$\hint$ Determine the equations in $X \times \PPP^{2}$ of the blowup of $X=\AAA^3$ along the image $Z$ of the monomial curve $(t^3,t^4,t^5)$ of equations $g_1=y^2-xz$, $g_2=yz-x^3$, $g_3=z^2-x^2y$.\end{eg}


\begin{eg} \label{5.13}$\hint$ The blowup of the cone $X=V(x^2-yz)$ in $\AAA^3$ along the $z$-axis $Z=V(x,y)$ is an isomorphism locally at all points outside $0$, but not globally on $X$.
\end{eg}


\begin{eg} \label{5.14}$\hint$ Blow up the non-reduced point $X=V(x^2)$ in $\AAA^2$ in the (reduced) origin $Z=0$. \end{eg}


\begin{eg} \label{5.15}$\hint$ Blow up the subscheme $X=V(x^2, xy)$ of $\AAA^2$ in the reduced origin.
\end{eg}


\begin{eg} \label{5.16} Blow up the subvariety $X=V(xz,yz)$ of $\AAA^3$ first in the origin, then in the $x$-axis, and determine the points where the resulting morphisms are local isomorphisms.
\end{eg}


\begin{eg} \label{5.17}$\hint$ The blowups of $\AAA^3$ along the union of the $x$- with the $y$-axis, respectively along the cusp, with ideals $(xy,z)$ and $(x^3-y^2,z)$, are singular.
\end{eg}

\ignore 
\recognize

\section{Lecture VI: Transforms of Ideals and Varieties under Blowup}

\noindent Throughout this section, $\pi: W' \too W$ denotes the blowup of a variety $W$ along a closed subvariety $Z$, with exceptional divisor $E=\pi\inv(Z)$ defined by the principal ideal $I_E$ of $\calo_{W'}$. Let $\pi^*:\calo_W\too \calo_{W'}$ be the dual homomorphism of $\pi$. Let $X \subset W$ be a closed subvariety, and let $I$ be an ideal on $W$. Several of the subsequent definitions and results can be extended to the case of arbitrary birational morphisms, taking for $Z$ the complement of the open subset of $W$ where the inverse map of $\pi$ is defined.


\begin{defn} \label{totaltransform}The inverse image $X^*=\pi^{-1}(X)$ of $X$ and the extension $I^*=\pi^*(I)=I\cdot \calo_{W'}$ of $I$ are called the \textit{total transform} of $X$ and $I$ under $\pi$. For $f\in \calo_W$, denote by $f^*$ its image $\pi^*(f)$ in $\calo_{W'}$. The ideal $I^*$ is generated by all transforms $f^*$ for $f$ varying in $I$. If $I$ is the ideal defining $X$ in $W$, the ideal $I^*$ defines $X^*$ in $W'$. In particular, the total transform of the center $Z$ equals the exceptional divisor $E$, and $W^*=W'$. In the category of schemes, the total transform will in general be non-reduced when considered as a subscheme of $W'$.\end{defn}


\begin{defn}\label{stricttransform} The Zariski closure of $\pi^{-1}(X\setminus Z)$ in $W'$ is called the \textit{strict transform} of $X$ under $\pi$ and denoted by $X^s$, also known as the \textit{proper} or \textit{birational transform}. The strict transform is a closed subvariety of the total transform $X^*$. The difference $X^*\setminus X^s$ is contained in the exceptional locus $E$. If $Z\subset X$ is contained in $X$, the strict transform $X^s$ of $X$ in $W'$ equals the blowup $\wt X$ of $X$ along $Z$, cf.÷ Prop.÷ \ref{basechange} and its corollary. If the center $Z$ coincides with $X$, the strict transform $X^s$ is empty. 

Let $I_U$ denote the restriction of an ideal $I$ to the open set $U=W\setminus Z$.  Set $U'=\pi^{-1}(U)\subset W'$ and let $\tau:U'\to U$ be the restriction of $\pi$ to $U'$. The \textit{strict transform} $I^s$ of the ideal $I$ is defined as $\tau^*(I_U)\cap \calo_{W'}$. If the ideal $I$ defines $X$ in $W$, the ideal $I^s$ defines $X^s$ in $W'$. It equals the union of colon ideals 
\[
I^s =\bigcup_{i \geq 0}\, (I^* : I_E^i).
\]
Let $h$ be an element of $\calo_{W',a'}$ defining $E$ locally in $W'$ at a point $a'$. Then, locally at $a'$,
\[
I^s = (f^s,\, f\in I),
\]
where the strict transform $f^s$ of $f$ is defined at $a'$ and up to multiplication by invertible elements in $\calo_{W',a'}$ through $f^* = h^k\cdot f^s$ with maximal exponent $k$. The value of $k$ is the order of $f^*$ along $E$, cf.÷ def.÷ \ref{844}. By abuse of notation this is written as $f^s =h^{-k}\cdot f^*=h^{-\ord_Zf}\cdot f^*$.  \end{defn}


\comment{[We use here that the symbolic and the ordinary powers of $I_E$ coincide since $I_E$ is principal, otherwise we would have to pass to the localization $\calo_{W',E}$ of $\calo_{W'}$ along $E$. It also has to be shown that the order of $I^*$ along $E$ coincides with the order of $I$ along $Z$, which is not a Cartier divisor.]}


\begin{lem} Let $f:R\to S$ be a ring homomorphism, $I$ an ideal of $R$ and $s$ an element of $R$, with induced ring homomorphism $f_s:R_s\too S_{f(s)}$. Let $I^e=f(I)\cdot S$ and $(I\cdot R_s)^e=f_s(I\cdot R_s)\cdot S_{f(s)}$ denote the respective extensions of ideals. Then $\bigcup_{i\geq0} (I^e:f(s)^i)=(I\cdot R_s)^e\cap S$.\end{lem}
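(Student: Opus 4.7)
The plan is to unwind both sides in terms of elements of $S$ and $S_{f(s)}$ and to prove the two inclusions directly, treating $(I\cdot R_s)^e\cap S$ as the preimage of $(I\cdot R_s)^e$ under the localization map $\lambda:S\to S_{f(s)}$, which need not be injective when $f(s)$ is a zero divisor. First I would observe that $(I\cdot R_s)^e$, viewed as an ideal of $S_{f(s)}$, is the same as $I^e\cdot S_{f(s)}$: indeed, $I\cdot R_s$ is generated by $\{x/1:x\in I\}\cup\{x/s^k:x\in I,k\geq 0\}$, and after pushing forward to $S_{f(s)}$ and multiplying by units $f(s)^k$ this collapses to the $S_{f(s)}$-module generated by the image of $I$.

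For the inclusion $\bigcup_{i\geq0}(I^e:f(s)^i)\subseteq (I\cdot R_s)^e\cap S$: take $t\in S$ with $f(s)^i t\in I^e$ for some $i\geq 0$. In $S_{f(s)}$ we then have
\[
\lambda(t)=\frac{t}{1}=\frac{f(s)^i t}{f(s)^i}\in I^e\cdot S_{f(s)}=(I\cdot R_s)^e,
\]
so $t\in (I\cdot R_s)^e\cap S$. This direction uses only that $f(s)$ becomes a unit in $S_{f(s)}$.

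For the reverse inclusion, take $t\in S$ with $\lambda(t)\in (I\cdot R_s)^e=I^e\cdot S_{f(s)}$. Then I can write
\[
\frac{t}{1}=\frac{u}{f(s)^N}\quad\text{in }S_{f(s)},
\]
for some $u\in I^e$ and $N\geq 0$, by clearing denominators inside a finite sum of generators. By the definition of equality in the localization, there exists $m\geq 0$ such that
\[
f(s)^m\bigl(f(s)^N t-u\bigr)=0\quad\text{in }S,
\]
hence $f(s)^{m+N}t=f(s)^m u\in I^e$, since $I^e$ is an ideal of $S$. Therefore $t\in (I^e:f(s)^{m+N})\subseteq\bigcup_{i\geq 0}(I^e:f(s)^i)$.

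The only delicate point, and the one I would flag as the main obstacle, is the handling of possibly non-injective localization: one cannot naively cancel $f(s)^N$ in $S$, so the extra factor $f(s)^m$ from the equivalence relation is essential and is exactly what accounts for the union over all exponents $i$ on the left hand side. Everything else is a bookkeeping exercise in the definitions of extension and localization of ideals.
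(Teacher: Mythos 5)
Your proof is correct and takes essentially the same route as the paper's: both unwind the two sides elementwise via the identification $(I\cdot R_s)^e = I^e\cdot S_{f(s)}$ inside $S_{f(s)}$. If anything, you are more careful than the paper, whose one-line argument (``rewrite this as $u=\sum_j a_j f(x_j/s^i)$ \dots this just means $u\in (I\cdot R_s)^e$'') tacitly treats the rewriting as reversible; your extra factor $f(s)^m$ from the equivalence relation in the localization is exactly what justifies the reverse inclusion when $S\to S_{f(s)}$ is not injective.
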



\begin{proof} Let $u\in S$. Then $u\in \bigcup_{i\geq 0} (I^e:f(s)^i)$ if and only if $uf(s)^i\in I^e$ for some $i\geq 0$, say $uf(s)^i=\sum_j a_jf(x_j)$ for elements $x_j\in I$ and $a_j\in S$. Rewrite this as $u=\sum_j a_j f(\frac{x_j}{s^i})$. This just means that
$u\in (I\cdot R_s)^e$. \end{proof}


\begin{rem} If $I$ is generated locally by elements $f_1, \dots, f_k$ of $\calo_W$,
then $I^s$ contains the ideal generated by the strict transforms $f_1^s, \dots, f_k^s$ of $f_1, \dots, f_k$, but the inclusion can be strict, see the examples below. \end{rem}


\begin{defn} Let $\K[x]=\K[x_1,\ldots ,x_n]$ be the polynomial ring over $\K$, considered with the natural grading given by the degree. Denote by $\inin(g)$ the homogeneous form of lowest degree of a non-zero polynomial $g$ of $\K[x]$, called the \textit{initial form} of $g$. Set $\inin(0)=0$. For a non-zero ideal $I$, denote by $\inin(I)$ the ideal generated by all initial forms $\inin(g)$ of elements $g$ of $I$, called the \textit{initial ideal} of $I$. Elements $g_1,\ldots,g_k$ of an ideal $I$ of $\K[x]$ are a \textit{Macaulay basis} of $I$ if their initial forms $\inin(g_1),\ldots,\inin(g_k)$ generate $\inin(I)$. In \cite{Hironaka_Annals} III.1, def.÷ 3, p.÷ 208, such a basis was called a \textit{standard basis}, which is now used for a slightly more specific concept, see Rem.÷ \ref{279} below. By noetherianity of $\K[x]$, any ideal possesses a Macaulay basis.\end{defn}


\begin{prop}(Hironaka) \label{6.6} The strict transform of an ideal under blowup in a regular center is generated by the strict transforms of the elements of a Macaulay basis of the ideal. \end{prop}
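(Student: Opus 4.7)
The argument is local on $W'$. Fix a point $a'\in E$ above $a\in Z$, let $c=\mathrm{codim}_W Z$, and apply Prop.~\ref{111} to choose a regular parameter system $x_1,\ldots,x_n$ of $\calo_{W,a}$ such that $Z$ is cut out by $x_{n-c+1}=\cdots=x_n=0$ and the local blowup becomes monomial in the $x_n$-chart: $x_i\mapsto x_i$ for $i\le n-c$ or $i=n$, and $x_i\mapsto x_ix_n$ for $n-c+1\le i\le n-1$, so that $E$ is defined by $x_n=0$. For $g\in\calo_{W,a}$ of order $d=\ord_Z g$, the substitution yields $g^*=x_n^d\cdot g^s$, where $g^s$ restricts on $E$ to the dehomogenization $\inin_Z(g)(x_{n-c+1},\ldots,x_{n-1},1)$ of the initial form. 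This reduces the computation of strict transforms to a purely monomial manipulation.

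The heart of the proof is a standard-basis division lemma for a Macaulay basis $g_1,\ldots,g_k$ with orders $d_i=\ord_Z g_i$: every $f\in I$ with $\ord_Z f=d$ admits an expansion $f=\sum_{i=1}^k h_ig_i$ in the $I_Z$-adic completion $\wh{\calo}_{W,a}$ with $\ord_Z h_i\ge d-d_i$. One proves this by iteration on the order: since $\inin_Z(f)\in\inin_Z(I)=(\inin_Z(g_1),\ldots,\inin_Z(g_k))$ inside the graded ring $\mathrm{gr}_{I_Z}\calo_{W,a}$, write $\inin_Z(f)=\sum\overline{h}_i\inin_Z(g_i)$ with each $\overline h_i$ homogeneous of degree $d-d_i$, lift to $h_i^{(0)}\in\calo_{W,a}$, and observe that $f-\sum h_i^{(0)}g_i$ has order strictly greater than $d$ along $Z$. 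Repeating and summing produces a series that converges in the $I_Z$-adic topology.

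Granting this, both inclusions are easy. The inclusion $(g_1^s,\ldots,g_k^s)\subseteq I^s$ is immediate from $g_i\in I$. For the reverse, take any $f\in I$ and expand $f=\sum h_ig_i$ as above; applying $\pi^*$ gives $f^*=\sum h_i^*\,g_i^*$, and the monomiality of the chart yields the key inequality $\ord_{x_n}(h_i^*)\ge\ord_Z(h_i)\ge d-d_i$ (since each substituted variable contributes its $Z$-exponent to the $x_n$-exponent), so $h_i^*=x_n^{d-d_i}\wt h_i$ for some $\wt h_i$ in the completion. Combined with $g_i^*=x_n^{d_i}g_i^s$ this gives $f^*=x_n^d\sum\wt h_i\,g_i^s$, whence $f^s=\sum\wt h_i\,g_i^s$. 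The identity $I^s\cdot\wh{\calo}_{W',a'}=(g_1^s,\ldots,g_k^s)\cdot\wh{\calo}_{W',a'}$ then descends to $\calo_{W',a'}$ by faithful flatness of completion (Lemma~\ref{191}(6)). Since $a'\in E$ was arbitrary and $\pi$ is an isomorphism off $E$, the identity is global.

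The principal obstacle is the division lemma: the Macaulay basis property is a statement about initial forms in the associated graded ring, and converting it into an actual factorization in the local ring requires an order-wise induction plus passage to the $I_Z$-adic completion. One must also verify, in the monomial chart, that $\ord_{x_n}(h^*)\ge\ord_Z(h)$; this follows at once from the explicit form of $\pi$, but could fail for badly coordinatized chart expressions, so the normalizing role of Prop.~\ref{111} is essential throughout.
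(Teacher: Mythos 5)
Your route is genuinely different from the paper's. The paper's proof is a two-line reduction citing Hironaka: by the Grauert--Hironaka--Galligo division theorem, nested ideals of $\K[[x]]$ with equal initial ideals coincide, so everything is delegated to the commutation $\inin(I^s)=(\inin(I))^s$ and to the cited results of \cite{Hironaka_Annals}. You instead carry out the underlying division explicitly: normalize the chart by Prop.~\ref{111}, expand $f=\sum h_ig_i$ with $\ord_Z h_i\geq \ord_Z f-d_i$ by order-wise iteration, push the expansion through the monomial substitution, and descend by faithful flatness. In the case where the center $Z$ is the point $a$ itself, your argument is correct and complete, and in fact more self-contained than the paper's sketch: the convergence of the iteration in the completion, the inequality $\ord_{x_n}(h^*)\geq\ord_Z(h)$ (valid because $I_Z$ is a complete intersection, so symbolic and ordinary powers agree and $\ord_Zh\geq e$ means $h\in I_Z^e$), and the faithfully flat descent via Lemma~\ref{191}(6) are all sound.

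For positive-dimensional centers, however, there is a genuine gap at the first step of your division lemma, where you write $\inin_Z(I)=(\inin_Z(g_1),\ldots,\inin_Z(g_k))$. The paper defines a Macaulay basis by the condition that the initial forms \emph{for the filtration by degree at the point} (the $\mm_a$-adic filtration) generate $\inin(I)$; your lemma needs the same statement for the $I_Z$-adic filtration in $\mathrm{gr}_{I_Z}(\calo_{W,a})$, and the pointwise property does not imply the $Z$-wise one. Concretely, take $W=\AAA^3$, $Z=V(x,y)$ the $z$-axis, and $I=(zx+y^2,\,x^2)$. Since $I$ is homogeneous, $\inin(I)=I$ and the two generators form a Macaulay basis at $0$ in the paper's sense. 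But $xy^2=x(zx+y^2)-z\cdot x^2\in I$ has $\inin_Z(xy^2)=\bar x\bar y^2$, whereas $(\inin_Z(g_1),\inin_Z(g_2))=(z\bar x,\,\bar x^2)$: in any combination $z\bar x\cdot q_2+\bar x^2\cdot q_1$ the coefficient of $\bar x\bar y^2$ lies in $z\cdot\calo_{Z,0}$, hence is never a unit, so $\bar x\bar y^2$ is not in that ideal and your lemma fails. The failure propagates all the way to the conclusion: in the $y$-chart of the blowup of $Z$ one finds $g_1^s=zx+y$ and $g_2^s=x^2$, while $(xy^2)^s=x$, and $x\notin(zx+y,\,x^2)$ at the chart origin (mod $zx+y$ the relation $x=Bx^2$ is impossible in the domain $\K[x,z]_{(x,z)}$). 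So for such centers even the proposition requires a compatibility of $I$ with $Z$ beyond the pointwise Macaulay condition --- note $\ord_ZI=1<2=\ord_0I$ here, so $Z$ is not inside the equimultiple locus; in Hironaka's cited theorem this is supplied by permissibility (normal flatness), i.e., precisely the hypothesis that $g_1,\ldots,g_k$ is a standard basis with respect to the $I_Z$-adic filtration. Your proof silently assumes exactly this by conflating the two filtrations; it is complete as written only when $Z$ is a point, or once ``Macaulay basis'' is read relative to $Z$.
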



\begin{proof} {} (\cite{Hironaka_Annals}, III.2, Lemma 6, p.÷ 216, and III.6, Thm.÷ 5, p.÷ 238) If $I\subset J$ are two ideals of $\K[[x]]$ such that $\inin(I)=\inin(J)$ then they are equal, $I=J$. This holds at least for degree compatible monomial orders, due to the Grauert-Hironaka-Galligo division theorem. Therefore it has to be shown that $\inin(g_1^s),\ldots, \inin(g_k^s)$ generate $\inin(I^s)$. But $\inin(I^s)=(\inin(I))^s$, and the assertion follows. \end{proof} \comment{[Do we have to assume here that the center $Z$ is non-singular?]}


\begin{rem} \label{279} The strict transform of a Macaulay basis at a point $a'$ of $W'$ need no longer be a Macaulay basis. \comment{[insert example]} This is however the case if the Macaulay basis is \textit{reduced} and the sequence of its orders has remained constant at $a'$, cf. \cite{Hironaka_Annals} III.8, Lemma 20, p.÷ 254. More generally, taking on $\K[x]$ instead of the grading by degree a grading so that all homogeneous pieces are one-dimensional and generated by monomials (i.e., a grading induced by a monomial order on $\NNN^n$), the initial form of a polynomial and the initial ideal are both monomial. In this case Macaulay bases are called \textit{standard bases}. A standard basis $g_1,\ldots,g_k$ is \textit{reduced} if no monomial of the tails $g_i-\inin(g_i)$ belongs to $\inin(I)$. If the monomial order is degree compatible, i.e., the induced grading a refinement of the natural grading of $\K[x]$ by degree, the strict transforms of the elements of a standard basis of $I$ generate the strict 
transform of the ideal. \comment{[insert proof]} \end{rem}


\begin{defn} Let $I$ be an ideal on $W$ and let $c\geq 0$ be a natural number less than or equal to the order $d$ of $I$ along the center $Z$, cf.÷ def.÷ \ref{844}. Then $I^*$ has order $\geq c$ along $E$. There exists a unique ideal $I^!$ of $\calo_{W'}$ such that $I^*= I_E^c\cdot I^!$, called the \textit{controlled transform} of $I$ with respect to the \textit{control} $c$. It is not defined for values of $c>d$. In case that $c=d$ attains the maximal value, $I^!$ is denoted by $I^\curlyvee$ and called the \textit{weak transform} of $I$. It is written as $I^\curlyvee =I_E^{-d}\cdot I^*=I_E^{-\ord_ZI}\cdot I^*$.\end{defn}


\begin {rem}The inclusions $I^*\subset I^\curlyvee\subset I^!\subset I^s$ are obvious. The components of $V(I^\curlyvee)$ which are not contained in $V(I^s)$ lie entirely in the exceptional divisor $E$, but can be strictly contained. For principal ideals, $I^\curlyvee$ and $I^s$ coincide. When the transforms are defined scheme-theoretically, the reduction $X^*_{red}$ of the total transform $X^*$ of $X$ consists of the union of $E$ with the strict transform $X^s$.\end{rem}


\comment{[? Let $x_1,\ldots,x_n$ be local coordinates on $W$ at $a$, and let $\pi: W'\too W$ be the blowup of $W$ along $Z$. Let $a'$ be a point of $W'$ above $a$, with local blowup $\pi_{a'}: (W',a') \too (W,a)$ and dual homomorphism $\pi_{a'}^*:\calo_{W,a} \too \calo_{W',a'}$. The images of $x_1,\ldots,x_n$ under $\pi_{a'}^*$ are called the \textit{coordinates at $a'$ induced by $x_1,\ldots,x_n$}.]} 


\begin{defn} A \textit{local flag} $\F$ on $W$ at $a$ is a chain $F_0=\{a\}\subset F_1\subset \ldots \subset  F_n=W$ of regular closed subvarieties, respectively subschemes,  $F_i$  of dimension $i$ of an open neighbourhood $U$ of $a$ in $W$. Local coordinates $x_1,\ldots,x_n$ on $W$ at $a$ are called \textit{subordinate to the flag $\F$} if $F_i=V(x_{i+1},\ldots,x_n)$ locally at $a$. The flag $\F$ at $a$ is \textit{transversal} to a regular subvariety $Z$ of $W$ if each $F_i$ is transversal to $Z$ at $a$ \cite{Ha_Power_Series, Panazzolo}. \end{defn}


\begin{prop} Let $\pi:W'\too W$ be the blowup of $W$ along a center $Z$ transversal to a flag $\F$ at $a\in Z$. Let $x_1,\ldots,x_n$ be local coordinates on $W$ at $a$ subordinate to $\F$. At each point $a'$ of $E$ above $a$ there exists a unique local flag $\F'$ such that the coordinates $x_1',\ldots,x_n'$ on $W'$ at $a'$ induced by $x_1,\ldots,x_n$ as in def.÷ \ref{blowup6} are subordinate to $\F'$ \cite{Ha_Power_Series} Thm.÷ 1.  \end{prop}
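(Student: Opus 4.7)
The plan is to reduce to the concrete chart description of def.~\ref{blowup6} and then read off the flag $\F'$ directly from the induced chart coordinates.

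First, I would exploit the transversality of $\F$ with $Z$ to replace $x_1,\ldots,x_n$ by coordinates (still subordinate to $\F$) in which $Z$ becomes a coordinate subspace $Z=V(x_j:j\in J)$ for some $J\subset\{1,\ldots,n\}$. The argument is inductive along the flag, top down: supposing that $Z\cap F_{i+1},\ldots,Z\cap F_n=Z$ have already been put into coordinate-subspace form, use the normal-crossings description of the transversality of $Z$ with $F_i$ to adjust the coordinates transverse to $F_i$ without disturbing $F_i$ or the coordinates inside $F_i$. After this reduction the hypotheses of def.~\ref{blowup6} apply verbatim, and the fibre $\pi^{-1}(a)$ is covered by affine charts $U_j\isom\AAA^n$ (for $j\in J$) with chart coordinates again denoted $x_1,\ldots,x_n$.

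Second, fix any chart $U_j$ containing $a'$ and let $(a_1',\ldots,a_n')$ be its coordinates there. The functions $x_i':=x_i-a_i'$ form a regular system of parameters of $\calo_{W',a'}$ and are the induced local coordinates at $a'$ in the sense of def.~\ref{blowup6}. Define
\[
F_0'=\{a'\},\qquad F_i':=V(x_{i+1}',\ldots,x_n')\ \text{locally at }a'\ \ (1\le i\le n-1),\qquad F_n'=U_j.
\]
Each $F_i'$ is regular of dimension $i$ and $F_{i-1}'\subset F_i'$, so $\F'=(F_0'\subset\cdots\subset F_n')$ is a local flag at $a'$ to which $x_1',\ldots,x_n'$ are, by construction, subordinate.

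Third, uniqueness is essentially tautological: if $\F''$ is any local flag at $a'$ to which $x_1',\ldots,x_n'$ are subordinate, the definition of subordination forces $F_i''=V(x_{i+1}',\ldots,x_n')$ locally, which equals $F_i'$, so $\F''=\F'$. The main difficulty of the proof lies entirely in the first step: translating the abstract transversality hypothesis (normal crossings of $Z$ with every $F_i$) into an explicit coordinate realization of $Z$ as a coordinate subspace while preserving every member of the flag. Everything afterward is a bookkeeping exercise inside the affine chart provided by def.~\ref{blowup6}; a minor secondary point worth noting is that if $a'$ lies in several charts $U_j$, the induced coordinates and hence $\F'$ depend on the chosen chart, so the uniqueness should be read relative to this choice.
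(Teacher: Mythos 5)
Your steps two and three are precisely the paper's own argument: its proof opens with the single sentence ``It suffices to define $F_i'$ at $a'$ by $x_{i+1}',\ldots,x_n'$'', so existence and uniqueness are, as you say, forced by the definition of subordination once one knows the induced coordinates form a regular system of parameters at $a'$. Where you diverge is in what you identify as the main difficulty. Your step one is largely beside the point: for the phrase ``induced by $x_1,\ldots,x_n$ as in def.~\ref{blowup6}'' to be meaningful at all, $Z$ must already be a coordinate subspace with respect to the \emph{given} coordinates, since def.~\ref{blowup6} is only formulated in that situation. If you replace the given coordinates by adapted ones, you change the induced coordinates and hence prove the statement only for your new system; read for arbitrary subordinate coordinates the statement does not even parse. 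So your reduction really proves that the hypotheses are non-vacuous (adapted subordinate coordinates exist, by transversality of $Z$ with $\F$) --- a legitimate supplement, but note also that normal crossings is a condition on formal neighbourhoods here, so your inductive adjustment naturally produces coordinates in $\wh\calo_{W,a}$ and some care is needed to descend to honest local coordinates.

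What your proposal omits is the part that carries the actual content of the cited Theorem~1 and occupies the bulk of the paper's proof: an intrinsic, coordinate-free description of $\F'$ for point blowups. There $a'$ corresponds to a line $L\subset {\rm T}_aW$, one takes $k$ minimal with $L\subset {\rm T}_aF_k$, chooses auxiliary regular $(i+1)$-folds $H_i$ with ${\rm T}_aH_i=L+{\rm T}_aF_i$, and sets $F_i'=E\cap H_i^s$ for $i<k$ and $F_i'=F_i^s$ for $i\geq k$. This is not decoration: for $i<k$ the strict transform $F_i^s$ does not even pass through $a'$, so the naive guess $F_i'=F_i^s$ fails and the $H_i$ are essential. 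Your closing caveat about chart dependence identifies a genuine phenomenon --- at a point $a'$ lying in several charts, different charts do yield different induced coordinates and different flags $V(x_{i+1}',\ldots,x_n')$ --- but the paper and its reference resolve this not by relativizing uniqueness to a chart choice, as you propose, but by the geometric description above, which shows the transform is canonical and implicitly fixes the correct chart and coordinate convention (compare Prop.~\ref{111}~(5)--(8)). In short: your proof of the literal existence--uniqueness claim is correct and coincides with the paper's one-liner, but the substance of the result, namely the canonicity of $\F'$, is exactly the part you leave out.
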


\begin{defn} The flag $\F'$ is called the \textit{transform of $\F$ under $\pi$}.\end{defn}


\begin{proof} It suffices to define $F_i'$ at $a'$ by $x_{i+1}',\ldots,x_n'$. For point blowups in $W$, the transform of $\F$ is defined as follows. The point $a'\in E$ is determined by a line $L$ in the tangent space T$_aW$ of $W$ at $a$. Let $k\leq n$ be the minimal index for which T$_aF_k$ contains $L$. For $i< k$, choose a regular $(i+1)$-dimensional subvariety $H_i$ of $W$ with tangent space $L$ + T$_aF_i$ at $a$. In particular, T$_aH_{k-1}=$ T$_aF_k$. Let $H_i^s$ be the strict transform of $H_i$ in $W'$. Then set 
\[F_i'=E\cap H_i^s\hskip 1cm \mathrm{ for }\hskip .1cm  i<k,\]
\[F_i'=F_i^s \hskip 1.7cm \mathrm{ for } \hskip .1cm i\geq k,\] 
to get the required flag $\F'$ at $a'$ .\end{proof}


\bigskip
\noindent\textit{\examples}


\begin{eg} \label{6.13} Blow up $\AAA^2$ in $0$ and compute the inverse image of the zerosets of $x^2+y^2=0$, $xy=0$ and $x(x-y^2)=0$, as well as their strict transforms.  \end{eg}


\begin{eg} \label{6.14}$\hint$ Compute the strict transform of $X=V(x^2-y^3,xy-z^3)\subset\AAA^3$ under the blowup of the origin. \end{eg}


\begin{eg} \label{6.15} Determine the total, weak and strict transform of $X=V(x^2-y^3,z^3)\subset\AAA^3$ under the blowup of the origin. Clarify the algebraic and geometric differences between them. \end{eg}


\begin{eg} \label{6.16} Blow up $\AAA^3$ along the curve $y^2-x^3+x=z=0$ and compute the strict transform of the lines $x=z=0$ and $y=z=0$. \end{eg}


\begin{eg} \label{6.17} Let $I_1$ and $I_2$ be ideals of $\K[x]$ of order $c_1$ and $c_2$ at $0$. Take the blowup of $\AAA^n$ at zero. Show that the weak transform of $I_1^{c_2}+I_2^{c_1}$ is the sum of the weak transforms of $I_1^{c_2}$ and $I_2^{c_1}$.\end{eg}


\begin{eg} \label{6.18}$\hint$ For $W=\AAA^3$, a flag at a point $a$ consists of a regular curve $F_1$ through $a$ and contained in a regular surface $F_2$. Blow up the point $a$ in $W$, so that $E\isom \PPP^2$ is the projective plane. The induced flag at a point $a'$ above $a$ depends on the location of $a$ on $E$: At the intersection point $p_1$ of the strict transform $C_1=F_1^s$ of $F_1$ with $E$, the transformed flag $\F'$ is given by $F_1^s\subset F_2^s$. Along the intersection $C_2$ of $F_2^s$ with $E$, the flag $\F'$ is given at each point $p_2$ different from $p_1$ by $C_2\subset F_2^s$. At any point $p_3$ not on $C_2$ the flag $\F'$ is given by  $C_3\subset E$, where $C_3$ is the projective line in $E$ through $p_1$ and $p_3$.  \end{eg}


\begin{eg} \label{6.19}$\hint$ Blowing up a regular curve $Z$ in $W=\AAA^3$ transversal to $\F$ there occur six possible configurations of $Z$ with respect to $\F$. Denoting by $L$ the plane in the tangent space $\mathrm{T}_aW$ of $W$ at $a$ corresponding to the point $a'$ in $E$ above $a$, these are: (1) $Z=F_1$ and $L=\mathrm{T}_aF_2$, (2) $Z=F_1$ and $L\neq\mathrm{T}_aF_2$, (3) $Z\neq F_1$, $Z\subset F_2$ and $L=\mathrm{T}_aF_2$, (4) $Z\neq F_1$, $Z\subset F_2$ and $L\neq\mathrm{T}_aF_2$, (5) $Z\not\subset F_2$ and $\mathrm{T}_aF_1\subset L$, (6) $Z\not\subset F_2$ and $\mathrm{T}_aF_1\not\subset L$. Determine in each case the flag $\F'$.
\end{eg}

\begin{eg} \label{6.20} Let $\wt \AAA^n\too \AAA^n$ be the blowup of $\AAA^n$ in $0$ and let $a'$ be the origin of the $x_n$-chart of $\wt \AAA^n$. Compute the total and strict transforms $g^*$ and $g^s$ for $g=x_1^d+\ldots +x_{n-1}^d+x_n^e$ for $e=d, 2d-1, 2d, 2d+1$ and $g=\prod_{i\neq j} (x_i-x_j)$.\end{eg}


\begin{eg} \label{6.21} Determine the total, weak and strict transform of $X=V(x^2-y^3,z^3)\subset\AAA^3$ under the blowup of $\AAA^3$ at the origin. Point out the geometric differences between the three types of transforms.\end{eg}




\begin{eg} \label{6.22}$\hint$ The inclusion $I^\curlyvee \subset I^s$ can be strict. Blow up $\AAA^2$ at $0$, and consider in the $y$-chart of $\wt \AAA^2$ the transforms of $I=(x^2,y^3)$. Show that $I^\curlyvee =(x^2, y)$, $I^s=(x^2,1)=\K[x,y]$. \end{eg}


\begin{eg} \label{6.23}$\hint$ Let $X=V(f)$ be a hypersurface in $\AAA^n$ and $Z$ a regular closed subvariety which is contained in the locus of points of $X$ where $f$ attains its maximal order. Let $\pi:\wt\AAA^n\to\AAA^n$ be the blowup along $Z$ and let $X'=V(f')$ be the strict transform of $X$. Show that for points $a\in Z$ and $a'\in E$ with $\pi(a')=a$ the inequality $\ord_{a'}f'\leq\ord_a f$ holds.\end{eg}




\section{Lecture VII: Resolution Statements}


\begin{defn} A \textit{non-embedded resolution} of the singularities of a variety $X$ is a non-singular variety $\wt X$ together with a proper birational morphism $\pi: \wt X \too X$ which induces a biregular isomorphism $\pi:\wt X\setminus E\too X\setminus \Sing(X)$ outside $E=\pi\inv(\Sing(X))$. \end{defn}


\begin{rem} Requiring properness excludes trivial cases as e.g.÷ taking for $\wt X$ the locus of regular points of $X$ and for $\pi$ the inclusion map.  One may ask for additional properties: (a) Any automorphism $\varphi:X\too X$ of $X$ shall lift to an automorphism $\wt \varphi:\wt X\too \wt X$ of $\wt X$ which commutes with $\pi$, i.e., $\pi\circ\wt\varphi=\varphi\circ\pi$. (b) If $X$ is defined over $\K$ and $\K\subset \LL$ is a field extension, any resolution of $X_\LL=X\times_\K \Spec(\LL)$ shall induce a resolution of $X=X_\K$.
\end{rem}


\begin{defn} \comment{[?]} A \textit{local non-embedded resolution} of a variety $X$ at a point $a$ is the germ $(\wt X,a')$ of a non-singular variety $\wt X$ together with a local morphism $\pi: (\wt X,a') \too (X,a)$ inducing an isomorphism of the function fields of $\wt X$ and $X$. \end{defn}


\comment{[A \textit{local uniformization} of a variety $X$ at a point $a$ is, for any valuation of the function field of $X$, ...]}


\begin{defn} Let $X$ be an affine irreducible variety with coordinate ring $R =\K[X]$. A (local, ring-theoretic) \textit{non-embedded resolution} of $X$ is a ring extension $R \hookrightarrow \wt R$ of $R$ into a regular ring $\wt R$ having the same quotient field as $R$.\end{defn}


\comment{[{rem}: In order to define a global resolution ring-theoretically, one has to consider several extensions of $R$ glued by transition maps.]}


\begin{defn} Let $X$ be a subvariety of a regular ambient variety $W$. An \textit{embedded resolution} of $X$ consists of a proper birational morphism $\pi: \wt W\too W$ from a regular variety $\wt W$ onto $W$ which is an isomorphism over $W\setminus \Sing(X)$ such that the strict transform $X^s$ of $X$ is regular and the total transform $X^* =\pi^{-1}(X)$ of $X$ has simple normal crossings.
\end{defn}


\begin{defn} A \textit{strong resolution} of a variety $X$ is, for each closed embedding of $X$ into a regular variety $W$, a birational proper morphism $\pi:\wt W\too W$ satisfying the following five properties \cite{EH}:\end{defn}

\textit{Embeddedness}. The variety $\wt W$ and the strict transform $X^s$ of $X$ are regular, and the total transform $X^* =\pi^{-1}(X)$ of $X$ in $\wt W$ has simple normal crossings.\medskip


\textit{Equivariance}. Let $W' \too W$ be a smooth morphism and let $X'$ be the inverse
image of $X$ in $W'$. The morphism $\pi': \wt W'  \too W'$
induced by $\pi: \wt X \too W$ by taking fiber product of $\wt X$ and $W'$ over $W$ is an embedded resolution of $X'$. \medskip


\textit{Excision}. The restriction $\tau:\wt X\too X$ of $\pi$ to $X$ does not depend on the choice of the embedding of $X$ in $W$. \comment{[This property requires that a resolution $\pi$ is associated to any pair $X\subset W$.]}\medskip


\textit{Explicitness}. The morphism $\pi$ is a composition of blowups along regular centers 
which are transversal to the exceptional loci created by the earlier blowups.\medskip


\textit{Effectiveness}. There exists, for all varieties $X$, a local upper semicontinuous invariant $\invv_a(X)$ in a well-ordered set $\Gamma$, depending only and up to isomorphism of the completed local rings of $X$ at $a$, such that: (a) $\invv_a(X)$ attains its minimal value if and only if $X$ is regular at $a$ (or has normal crossings at $a$); (b) the top locus $S$ of $\invv_a(X)$ is closed and regular in $X$; (c) blowing up $X$ along $S$ makes $\invv_a(X)$ drop at all points $a'$ above the points $a$ of $S$. 


\begin{rem} (a) Equivariance implies the \textit{economy} of the resolution, i.e., that $\pi:X^s\too X$ is an isomorphism outside $\Sing(X)$. It also implies that $\pi$ commutes with open immersions, localization, completion,  automorphisms of $W$ stabilizing $X$ and taking cartesian products with regular varieties. 

(b) One may require in addition that the centers of a resolution are transversal to the inverse images of a given normal crossings divisor $D$ in $W$, the \textit{boundary}.\end{rem}


\begin{defn} Let $I$ be an ideal on a regular ambient variety $W$. A \textit{log-resolution} of $I$ is a proper birational morphism $\pi: \wt W\too W$ from a regular variety $\wt W$ onto $W$ which is an isomorphism over $W\setminus \Sing(I)$ such that $I^* =\pi^{-1}(I)$ is a locally monomial ideal on $\wt W$.
\end{defn}

\goodbreak
\medskip 
\noindent\textit{\examples}


\begin{eg} \label{7.9} The blowup of the cusp $X: x^2=y^3$ in $\AAA^2$ at $0$ produces a non-embedded resolution. Further blouwps give an embedded resolution.\end{eg}


\begin{eg} \label{7.10}$\hint$ Determine the geometry at $0$ of the hypersurfaces in $\AAA^4$ defined by the following equations:

(a) $x +x^7 - 3yw + y^7z^2 + 17yzw^5 = 0$,

(b) $x + x^5y^2 - 3yw + y^7z^2 + 17yzw^5 = 0$,

(c) $x^3y^2 + x^5y^2 - 3yw + y^7z^2 + 17yzw^5 = 0$.\end{eg}


\begin{eg} \label{7.11} Let $X$ be the variety in $\AAA_\CCC^3$ given by the equation $(x^2-y^3)^2=(z^2-y^2)^3$. Show that the map $\alpha: \AAA^3 \too \AAA^3: (x,y,z) \too (u^2z^3,u yz^2, uz^2)$, with $u(x,y)=x(y^2-1)+y$, resolves the singularities of $X$. What is the inverse image $\alpha^{-1}(Z)$? Produce instructive pictures of $X$ over $\RRR$.\end{eg}


\begin{eg} \label{7.12} Consider the inverse images of the cusp $X=V(y^2-x^3)$ in $\AAA^2$ under the maps $\pi_x,\pi_y:\AAA^2 \too\AAA^2$, $\pi_x(x,y)=(x,xy)$, $\pi_y(x,y)=(xy,y)$. Factor the maximal power of $x$, respectively $y$, from the equation of the inverse image of $X$ and show that the resulting equation defines in both cases a regular variety. Apply the same process to the variety $E_8=V(x^2+y^3+z^5)\subset\AAA^3$ repeatedly until all resulting equations define non-singular varieties.\end{eg}


\begin{eg} \label{7.13} Let $R$ be the coordinate ring of an irreducible plane algebraic curve $X$. The integral closure $\wt R$ of $R$ in the field of fractions of $R$ is a regular ring and thus resolves $R$. The resulting curve $\wt X$ is the normalization of $X$ \cite{Mumford} III.8, \cite{DeJong_Pfister} 4.4. \end{eg}


\begin{eg} \label{7.14}$\hint$ Consider a cartesian product $X=Y\times Z$ with $Z$ a regular variety. Show that a resolution of $X$ can be obtained from a resolution $Y'$ of $Y$ by taking the cartesian product $Y'\times Z$ of $Y'$ with $Z$.\end{eg}


\begin{eg} \label{7.15} Let $X$ and $Y$ be two varieties (schemes, analytic spaces) with singular loci $\Sing(X)$ and $\Sing(Y)$ respectively. Suppose
that $X'$ and $Y'$ are regular varieties (schemes, analytic spaces, within the same category as $X$ and $Y$) together with proper birational morphisms $\pi : X' \too X$ and $\tau: Y'
\too Y$ which define resolutions of $X$ and $Y$ respectively, and which are isomorphisms outside $\Sing(X)$ and $\Sing(Y)$. There is a naturally defined proper birational morphism $f: X' \times Y' \too X \times Y$ giving rise to a resolution of $X\times Y$.\end{eg}


\goodbreak

\section{Lecture VIII: Invariants of Singularities}


\begin{defn}A \textit{stratification} of an algebraic variety $X$ is a decomposition of $X$ into finitely many disjoint locally closed subvarieties $X_i$, called the \textit{strata},
\[
X = \dot\bigcup_{i}\, X_i,
\]
such that the \textit{boundaries} $\overline X_i\setminus X_i$ of strata are unions of strata. This last property is called the \textit{frontier condition}.  Two strata are called \textit{adjacent} if one lies in the closure of the other.\end{defn}


\begin{defn} A \textit{local invariant} on an algebraic variety $X$ is a function $\invv(X): X\too \Gamma$ from $X$ to a well-ordered set $(\Gamma, \leq)$ which associates to each point $a \in X$ an element $\invv_a(X)$ depending only on the formal isomorphism class of $X$ at $a$: If $(X,a)$ and $(X,b)$ are formally isomorphic, viz $\wh \calo_{X,a}\isom \wh \calo_{X,b}$, then $\invv_a(X)=\invv_b(X)$. Usually, the ordering on $\Gamma$ will also be total: for any $c,d\in\Gamma$ either $c\leq d$ or $d\leq c$ holds. The invariant is \textit{upper semicontinuous along a subvariety $S$} of $X$ if for all $c\in \Gamma$, the sets
\[
\ttop_S(\invv, c)=\{a\in S,\, \invv_a(X)\geq c\}
\] 
are closed in $S$. If $S=X$, the map $\invv(X)$ is called \textit{upper semicontinuous}.\end{defn}

\begin{rem} The upper semicontinuity signifies that the value of $\invv_a(X)$ can only go up or remain the same when $a$ approaches a limit point. In the case of schemes, the value of $\invv(X)$ also has to be defined and taken into account at non-closed points of $X$. \end{rem}


\begin{defn} \label{844} Let $X$ be a subvariety of a not necessarily regular ambient variety $W$ defined by an ideal $I$, and let $Z$ be an irreducible subvariety of $W$ defined by the prime ideal $J$. The \textit{order} of $X$ or $I$ in $W$ along $Z$ or with respect to $J$ is the maximal integer $k=\ord_Z(X)=\ord_Z(I)$ such that $I_Z\subset J_Z^k$, where $I_Z=I\cdot\calo_{W,Z}$ and $J_Z=J\cdot\calo_{W,Z}$ denote the ideals generated by $I$ and $J$ in the localization $\calo_{W,Z}$ of $W$ along $Z$. If $Z=\{a\}$ is a point of $W$, the order of $X$ and $I$ at $a$ is denoted by $\ord_a(X)=\ord_a(I)$ or $\ord_{\mm_a}(X)=\ord_{\mm_a}(I)$.\end{defn}


\begin{defn} Let $R$ be a local ring with maximal ideal $\mm$. Let $k\in\NNN$ be an integer. The $k$-th symbolic power $J^{(k)}$ of a prime ideal $J$ is defined as the ideal generated by all elements $x\in R$ for which there is an element $y\in R\setminus J$ such that $y\cdot x^k\in J^k$. Equivalently, $J^{(k)}=J^k\cdot R_J\cap R$.\end{defn}


\begin{rem} The symbolic power is the smallest $J$-primary ideal containing $J^k$. If $J$ is a complete intersection, the ordinary power $J^k$ and the symbolic power $J^{(k)}$ coincide \cite{Zariski_Samuel} IV, \cite{Hochster_73}, \S12, \cite{Pellikaan}.\end{rem}


\begin{prop} Let $X$ be a subvariety of a not necessarily regular ambient variety $W$ defined by an ideal $I$, and let $Z$ be an irreducible subvariety of $W$ defined by the prime ideal $J$. The order of $X$ along $Z$ is the maximal integer $k$ such that $I\subset J^{(k)}$.\end{prop}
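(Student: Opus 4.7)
The plan is to unwind both definitions down to a single inclusion in the localization $\calo_{W,Z}=R_J$, where $R$ is the coordinate ring of an affine open of $W$ meeting $Z$, and then show that the two conditions on $k$ are literally the same. Passing to such an affine open is harmless since both the order and the symbolic power are defined locally along $Z$ (and are unaffected by shrinking to an open neighbourhood of the generic point of $Z$).

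By Definition \ref{844} we have
\[
\ord_Z(X)=\max\{k\in\NNN : I_Z\subset J_Z^k\}
         =\max\{k\in\NNN : I\cdot R_J\subset J^k\cdot R_J\},
\]
while on the other side, using $J^{(k)}=J^kR_J\cap R$, we want
\[
\max\{k\in\NNN : I\subset J^{(k)}\}
 =\max\{k\in\NNN : I\subset J^kR_J\cap R\}.
\]
So the whole content of the proposition is the equivalence, for each $k\geq 0$,
\[
I\cdot R_J\subset J^k\cdot R_J \ \Longleftrightarrow\ I\subset J^kR_J\cap R.
\]

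First I would do the backward implication: if $I\subset J^kR_J\cap R$, then extending to $R_J$ gives $I\cdot R_J\subset (J^kR_J\cap R)\cdot R_J\subset J^kR_J$, since $J^kR_J$ is already an ideal of $R_J$ containing $J^kR_J\cap R$. For the forward implication, if $I\cdot R_J\subset J^kR_J$, then in particular every element of $I\subset I\cdot R_J$ lies in $J^kR_J$; but elements of $I$ also lie in $R$, so $I\subset J^kR_J\cap R=J^{(k)}$. Taking the supremum over all admissible $k$ on both sides then yields the asserted equality.

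I expect no real obstacle here beyond being careful about the convention on the maximum. To ensure it is finite (and thus an honest maximum rather than a supremum equal to $\infty$), one invokes Krull's intersection theorem in the Noetherian local ring $(R_J, JR_J)$: since $\bigcap_{k\geq 0} J^kR_J = 0$, either $I\cdot R_J=0$ (in which case both sides are $+\infty$ by convention, and the statement is vacuous) or there is a largest $k$ with $I\cdot R_J\subset J^kR_J$, matched on the other side by the largest $k$ with $I\subset J^{(k)}$ thanks to the equivalence established above.
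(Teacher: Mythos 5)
Your proposal is correct and follows essentially the same route as the paper, whose entire proof is the one-line observation that $J^k\cdot R_J=(J\cdot R_J)^k$ — exactly the identity you use (implicitly, when passing from $I_Z\subset J_Z^k$ to $I\cdot R_J\subset J^k R_J$) before comparing with $J^{(k)}=J^kR_J\cap R$. Your extra remarks on the two trivial inclusions and on finiteness of the maximum via Krull's intersection theorem are sound elaborations of the same argument.
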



\begin{proof} This follows from the equality $J^k\cdot R_J =(J\cdot R_J)^k$. \end{proof}


\comment{[Compare the order with the multiplicity at a point, defined as the leading coefficient of the Hilbert-Samuel function divided by $n!$, independently of the embedding.]}


\begin{prop} \label{222} Let $R$ be a noetherian local ring with maximal ideal $\mm$, and let $\wh R$ denote its completion with maximal ideal $\wh m=m\cdot \wh R$. Let $I$ be an ideal of $R$, with completion $\wh I=I\cdot \wh R$. Then $\ord_\mm(I) =\ord_{\wh\mm}({\wh I})$. \end{prop}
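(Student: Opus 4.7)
The plan is to unwind both orders using the definition $\ord_\mm(I)=\max\{k\in\NNN : I\subset \mm^k\}$ and $\ord_{\wh\mm}(\wh I)=\max\{k\in\NNN : \wh I\subset \wh\mm^k\}$, and then establish the two inequalities by transferring the containments between $R$ and $\wh R$ via the results of Lemma \ref{191}.

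First I would prove $\ord_{\wh\mm}(\wh I)\geq \ord_\mm(I)$. Set $k=\ord_\mm(I)$, so $I\subset \mm^k$. Extending ideals to $\wh R$ preserves inclusions, and by Lemma \ref{191}(2) (applied inductively) one has $\widehat{\mm^k}=\wh\mm^k$, while by the definition of the extension $\wh I=I\cdot \wh R$. Hence
\[
\wh I=I\cdot \wh R\subset \mm^k\cdot \wh R=\wh\mm^k,
\]
which yields $\ord_{\wh\mm}(\wh I)\geq k$.

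For the reverse inequality, set $k=\ord_{\wh\mm}(\wh I)$, so $\wh I\subset \wh\mm^k$. Since $R\to \wh R$ is faithfully flat by Lemma \ref{191}(6), the canonical map is injective and $I=\wh I\cap R$ (this uses that extension followed by contraction is the identity on ideals of $R$ under faithful flatness). Applying Lemma \ref{191}(4) to the ideal $\mm^k$ (or directly part (3) iterated, noting that $\bigcap_{i\geq 0}(\mm^k+\mm^i)=\mm^k$ since the sum stabilises at $\mm^k$ for $i\geq k$), one gets $\wh\mm^k\cap R=\mm^k$. Therefore
\[
I=\wh I\cap R\subset \wh\mm^k\cap R=\mm^k,
\]
so $\ord_\mm(I)\geq k$. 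Combining the two inequalities gives the equality.

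The only real subtlety is the identity $\wh\mm^k\cap R=\mm^k$, which is where noetherianity of $R$ enters through Krull's intersection theorem (packaged inside Lemma \ref{191}(4)); everything else is formal manipulation with extensions of ideals. I would handle the boundary case $\ord_\mm(I)=\infty$ (which happens only when $I=0$ by Krull's intersection theorem, since $R$ is noetherian local) separately, noting that $I=0$ if and only if $\wh I=0$ by the injectivity of $R\hookrightarrow \wh R$.
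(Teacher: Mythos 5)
Your proposal is correct and follows essentially the same route as the paper: the easy inequality by extending $I\subset\mm^k$ to $\wh I\subset\wh\mm^k$, and the converse by contracting via $\wh\mm^k\cap R=\mm^k$, which both you and the paper extract from Lemma \ref{191}(4) (Krull intersection, since $\bigcap_{i\geq 0}(\mm^k+\mm^i)=\mm^k$). Your appeal to faithful flatness for the equality $I=\wh I\cap R$ is correct but superfluous --- the trivial inclusion $I\subseteq\wh I\cap R$, which is all the paper uses, already suffices.
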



\begin{proof} {} If $I\subset \mm^k$, then also $\wh I\subset \wh \mm^k$, and hence $\ord_\mm(I) \leq\ord_{\wh\mm}({\wh I})$.  Conversely, $\mm=\wh\mm\cap R$ and $\bigcap_{i\geq0}(I+\mm^i)=\wh I\cap R$ by Lemma \ref{191}. Hence, if $\wh I\subset \wh \mm^k$, then $I\subset \wh I\cap R\subset \wh \mm^k\cap R=\mm^k$, so that $\ord_\mm(I) \geq\ord_{\wh\mm}({\wh I})$.\end{proof}


\begin{defn} Let $X$ be a subvariety of a regular variety $W$, and let $a$ be a point of $W$. The \textit{local top locus} $\ttop_a(X)$ of $X$ at $a$ with respect to the order is the stratum $S$ of points of an open neighborhood $U$ of $a$ in $W$ where the order of $X$ equals the order of $X$ at $a$. The \textit{top locus} $\ttop(X)$ of $X$ with respect to the order is the (global) stratum $S$ of points of $W$ where the order of $X$ attains its maximal value. For $c\in \NNN$, define $\ttop_a(X,c)$ and $\ttop(X,c)$ as the local and global stratum of points of $W$ where the order of $X$ is at least $c$. \end{defn}


\begin{rem} The analogous definition holds for ideals on $W$ and can be made for other local invariants. By the upper semicontinuity of the order, the local top locus of $X$ at $a$ is locally closed in $W$, and the top locus of $X$ is closed in $W$.\end{rem}


\begin{prop} The order of a variety $X$ or an ideal $I$ in a regular variety $W$ at points of $W$ defines an upper semicontinuous local invariant on $W$.\end{prop}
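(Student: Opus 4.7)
The plan is to verify the two required properties separately: first, that $\ord_a(I)$ depends only on the formal isomorphism type of the germ, and second, that the function $a\mapsto\ord_a(I)$ is upper semicontinuous on $W$.

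For the formal invariance, Proposition~\ref{222} applied to the local ring $R=\calo_{W,a}$ with maximal ideal $\mm=\mm_{W,a}$ and ideal $I_a=I\cdot\calo_{W,a}$ gives $\ord_\mm(I_a)=\ord_{\wh\mm}(\wh I_a)$. Since by definition $\ord_a(I)=\ord_\mm(I_a)$, the value is determined by the pair $(\wh\calo_{W,a},\wh I_a)$ and hence only by the formal germ of $(W,I)$ at~$a$.

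For upper semicontinuity I first reduce to the principal case. On an affine open $U\subset W$ the ideal $I$ is generated by finitely many regular functions $f_1,\ldots,f_r$, and for every $a\in U$ one has $\ord_a(I)=\min_j\ord_a(f_j)$: the inequality $\ord_a(\sum g_jf_j)\geq\min_j\ord_a(f_j)$ is immediate, and equality is realized by the minimizing generator. Consequently
\[
\ttop(I,c)\cap U=\bigcap_{j=1}^r\ttop(f_j,c),
\]
so it suffices to prove closedness of $\ttop(f,c)$ for a single regular function $f$. Using regularity of $W$, I then choose, around each point $b\in U$, an \'etale coordinate system $x_1,\ldots,x_n$ on a possibly smaller affine neighborhood $U'\subset U$. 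Writing the Hasse--Schmidt Taylor expansion
\[
f(x+h)=\sum_{\alpha\in\NNN^n}(D^{[\alpha]}f)(x)\,h^{\alpha},
\]
with coefficients $D^{[\alpha]}f\in\calo_W(U')$, the condition $\ord_a(f)\geq c$ at $a\in U'$ is equivalent to the vanishing of all $D^{[\alpha]}f(a)$ with $|\alpha|<c$. This exhibits
\[
\ttop(f,c)\cap U'=\bigcap_{|\alpha|<c}V(D^{[\alpha]}f)
\]
as a Zariski closed subset of $U'$; patching over an affine cover of $W$ yields the closedness of $\ttop(I,c)$ in $W$.

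The main delicate point will be justifying the use of derivatives in positive characteristic: ordinary partial derivatives do not detect the order (for example $\partial_x(x^p)=0$ in characteristic~$p$), so one must work with Hasse--Schmidt (divided power) derivatives, or, equivalently, describe the Taylor coefficients via the powers $\Delta^c$ of the diagonal ideal of $W\times_\K W$. Once this formalism is set up, the fact that the $D^{[\alpha]}f$ are honest regular functions on $U'$ makes the closedness of each stratum essentially automatic, and the remainder of the argument is formal.
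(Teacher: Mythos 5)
Your argument is correct, but it takes a genuinely different route from the paper's. The paper treats the two characteristics asymmetrically: in characteristic zero it derives the claim from the subsequent proposition, which cuts out $\ttop_a(I)$ by the vanishing of all \emph{ordinary} partial derivatives of elements of $I$ up to order $o-1$, while in positive characteristic it gives no argument at all and cites \cite{Hironaka_Annals} III.3, Cor.~1, p.~220. You instead give one characteristic-free proof: the reduction $\ord_a(I)=\min_j\ord_a(f_j)$ over a finite generating system is valid (since $I_a\subset\mm_{W,a}^c$ exactly when every generator lies in $\mm_{W,a}^c$), it brings everything down to the principal case, and the Hasse--Schmidt operators $D^{[\alpha]}$ repair precisely the failure of ordinary derivatives that blocks the paper's characteristic-zero argument in characteristic $p$; since the $D^{[\alpha]}f$ are regular functions, each $\ttop(f,c)$ is Zariski closed. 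Your use of Prop.~\ref{222} for formal invariance is also the right move, and correctly treats the order as an invariant of the pair $(\wh\calo_{W,a},\wh I_a)$ rather than of $X$ alone (cf.~ex.~\ref{8.33}). What your route buys is a self-contained, uniform proof --- essentially the differential-operator argument underlying Encinas--Villamayor and Kawanoue--Matsuki --- at the cost of setting up divided-power calculus. One caveat: \'etale coordinates around every point exist when $W$ is \emph{smooth} over $\K$, which coincides with regularity only over perfect fields (cf.~def.~\ref{147}); over an imperfect ground field a regular variety need not admit them, and that residual case is exactly what the paper's citation of Hironaka covers. With $\K$ perfect (or smoothness assumed), your proof is complete.
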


\begin{proof} {} In characteristic zero, the assertion follows from the next proposition. For the case of positive characteristic, see \cite{Hironaka_Annals} III.3, Cor.÷ 1, p.÷ 220.\end{proof}


\begin{prop} Over fields of zero characteristic, the local top locus $\ttop_a(I)$ of an ideal $I$ at $a$ is defined by the vanishing of all partial derivatives of elements of $I$ up to order $o-1$, where $o$ is the order of $I$ at $a$.\end{prop}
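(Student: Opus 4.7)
The plan is to reduce the statement, via upper semicontinuity of the order, to a pointwise Taylor-expansion characterization of the order of a single function, and then to pass from functions to ideals.

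First, I would fix a small neighborhood $U$ of $a$ in $W$ on which $\ord_b(I)\leq o$ for all $b\in U$; this is available because the order is upper semicontinuous on $W$ (proven in the previous proposition), so that $\{b:\ord_b(I)\geq o+1\}$ is closed and avoids $a$. On $U$, the local top locus $\ttop_a(I)$ therefore coincides with $\{b\in U:\ord_b(I)\geq o\}$, and by the definition of $\ord_b(I)$ this is $\{b\in U: I\subset \mm_b^o\}=\{b\in U:\ord_b(f)\geq o\text{ for every }f\in I\}$. So the task reduces to characterizing, for a single regular function $f$ on $W$ and a point $b\in W$, when $\ord_b(f)\geq o$.

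Next, I would establish the Taylor lemma in characteristic zero: choosing local coordinates $x_1,\ldots,x_n$ of $W$ at $b$, and viewing $f\in\wh\calo_{W,b}\isom \K[[x_1,\ldots,x_n]]$, the formal Taylor expansion reads
\[
f \;=\; \sum_{\alpha\in\NNN^n} \frac{1}{\alpha!}\,\partial^\alpha f(b)\cdot x^\alpha.
\]
By Proposition \ref{222} one has $\ord_b(f)=\ord_{\wh\mm_b}(f)$, so $\ord_b(f)\geq o$ if and only if every coefficient of $x^\alpha$ with $|\alpha|\leq o-1$ vanishes. In characteristic zero the factorials $\alpha!$ are units, hence this is equivalent to $\partial^\alpha f(b)=0$ for all $|\alpha|\leq o-1$. (In positive characteristic this equivalence breaks down because ordinary partials of $p$-th powers vanish identically, which is exactly why the hypothesis is needed; one would have to replace $\partial^\alpha$ by Hasse--Schmidt derivatives.)

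Combining the two steps, for $b\in U$ one has $b\in \ttop_a(I)$ if and only if $\partial^\alpha f(b)=0$ for every $f\in I$ and every $|\alpha|\leq o-1$, which is the claimed set-theoretic description. I would finally observe, as a complement, that if $f_1,\ldots,f_r$ generate $I$ then by the Leibniz rule
\[
\partial^\alpha(gf_i)(b) \;=\; \sum_{\beta\leq\alpha}\binom{\alpha}{\beta}\partial^\beta g(b)\,\partial^{\alpha-\beta}f_i(b),
\]
so vanishing of all $\partial^\gamma f_i(b)$ with $|\gamma|\leq o-1$ already forces vanishing of all $\partial^\alpha f(b)$ with $|\alpha|\leq o-1$ for every $f\in I$; this shows that $\ttop_a(I)$ is cut out locally by finitely many equations and hence is closed. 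The only real subtlety in the plan is the clean invocation of Taylor's formula in the completion and the use of Proposition \ref{222} to transfer the order computation back to $\calo_{W,b}$; everything else is bookkeeping once one accepts that inverting $\alpha!$ is legitimate, which is precisely the char-zero hypothesis.
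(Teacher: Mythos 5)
Your core argument is exactly the paper's: the proof in the text is a single sentence, namely that in characteristic zero a function has order $\geq o$ at a point if and only if all its partials up to order $o-1$ vanish there, which is your Taylor-expansion step with $\alpha!$ invertible; your reduction from ideals to single elements and the Leibniz remark about generators are routine bookkeeping the paper leaves implicit.

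There is, however, one logical misstep you should repair: you invoke upper semicontinuity of the order as "proven in the previous proposition" to shrink to a neighborhood $U$ with $\ord_b(I)\leq o$. In the paper the dependency runs the other way — the semicontinuity proposition's characteristic-zero proof reads "follows from the next proposition," i.e.\ from the very statement you are proving — so your citation is circular as written. The fix is cheap and stays entirely inside your own argument: for each $k$, the set $V_k=\{b:\partial^\alpha f(b)=0 \text{ for all } f\in I,\ |\alpha|\leq k-1\}$ equals $\{b:\ord_b(I)\geq k\}$ by your Taylor step, and is closed as a common zero set; applying this with $k=o+1$ gives the closed set $V_{o+1}$ avoiding $a$, so on $U=W\setminus V_{o+1}$ the top locus is $V_o$, and semicontinuity of the order comes out as a corollary rather than an input. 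With that substitution your proof is complete and matches the paper's intent, including explaining why the paper derives semicontinuity from this proposition in characteristic zero.
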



\begin{proof} In zero characteristic, a polynomial has order $o$ at a point $a$ if and only if all its partial derivatives up to order $o-1$ vanish at $a$. \end{proof}


\begin{prop} \label{899} Let $X$ be a subvariety of a regular ambient variety $W$. Let $Z$ be a non-singular subvariety of $W$ and $a$ a point on $Z$ such that locally at $a$ the order of $X$ is constant along $Z$, say equal to $d=\ord_aX=\ord_Z X$. Consider the blowup $\pi:W'\to W$ of $W$ along $Z$ with exceptional divisor $E=\pi^{-1}(Z)$. Let $a'$ be a point on $E$ mapping under $\pi$ to $a$. Denote by $X^*$, $X^\curlyvee$ and $X^s$ the total, weak and strict transform of $X$ respectively (def.÷ \ref{totaltransform} and  \ref{stricttransform}). Then, locally at $a'$, the order of $X^*$ along $E$ is $d$, and
\[
\ord_{a'}X^s\leq \ord_{a'}X^\curlyvee\leq d.
\]
\end{prop}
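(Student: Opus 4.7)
The plan is to reduce everything to a single explicit monomial calculation in well-chosen local coordinates, using Proposition \ref{111} to make the blowup monomial at $a'$, and then to read off the three assertions from a Taylor expansion of a suitable $f\in I$ with respect to the defining equations of $Z$.

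First, I would apply Proposition \ref{111} (ignoring the auxiliary data $V$ and $D$, or taking $D=\emptyset$) to pick local coordinates $x_1,\ldots,x_n$ on $W$ at $a$ such that $Z=V(x_{n-k+1},\ldots,x_n)$, such that in the $x_n$-chart the blowup has the monomial chart expression $x_i\mapsto x_i$ for $i\leq n-k$ or $i=n$ and $x_i\mapsto x_ix_n$ for $n-k+1\leq i\leq n-1$, and, via the coordinate change of part (8), such that $a'$ is the origin of this chart. Set $x'=(x_1,\ldots,x_{n-k})$ and $x''=(x_{n-k+1},\ldots,x_n)$, and write $J=(x_{n-k+1},\ldots,x_n)$ for the ideal of $Z$ locally at $a$. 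Since $Z$ is regular, $x_{n-k+1},\ldots,x_n$ form a regular sequence, so $J$ is a complete intersection and symbolic powers coincide with ordinary powers.

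Next, since $\ord_Z I=d$, every $f\in I$ lies in $J^d\cdot\calo_{W,a}$ and thus admits an expansion
\[
f=\sum_{|\alpha|\geq d} c_\alpha(x')\,x_{n-k+1}^{\alpha_1}\cdots x_n^{\alpha_k},\qquad \alpha\in\NNN^k,
\]
with $c_\alpha\in\calo_{\AAA^{n-k},0}$ (or its completion, by Prop.~\ref{222}). Pulling back in the $x_n$-chart yields the identity
\[
f^*=\sum_{|\alpha|\geq d}c_\alpha(x')\,x_n^{|\alpha|}\,x_{n-k+1}^{\alpha_1}\cdots x_{n-1}^{\alpha_{k-1}}=x_n^d\cdot f^\curlyvee,
\]
where $f^\curlyvee=\sum_\alpha c_\alpha(x')\,x_n^{|\alpha|-d}\,x_{n-k+1}^{\alpha_1}\cdots x_{n-1}^{\alpha_{k-1}}$. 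Because $I_E=(x_n)$ locally, this immediately gives $I^*\subset I_E^d$. For the reverse, the hypothesis $\ord_Z I=d$ gives some $f\in I$ with an $\alpha$ of $|\alpha|=d$ and $c_\alpha$ not vanishing identically on $Z$; the corresponding monomial in $f^\curlyvee$ is not divisible by $x_n$ generically, so $\ord_E I^*=d$ exactly, proving the first assertion and showing $I^*=I_E^d\cdot I^\curlyvee$ with $I^\curlyvee$ not contained in $I_E$ at $a'$.

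For the bound $\ord_{a'}X^\curlyvee\leq d$, I would now use the sharper hypothesis $\ord_a X=d$ to pick $f\in I$ with $\ord_a f=d$; then at least one $c_\alpha$ with $|\alpha|=d$ satisfies $c_\alpha(0)\neq 0$. The corresponding term of $f^\curlyvee$ evaluated at $a'=0$ is $c_\alpha(0)\cdot x_{n-k+1}^{\alpha_1}\cdots x_{n-1}^{\alpha_{k-1}}$, a nonzero monomial of total degree $\alpha_1+\cdots+\alpha_{k-1}\leq |\alpha|=d$. Hence $\ord_{a'}f^\curlyvee\leq d$, so $\ord_{a'}X^\curlyvee\leq d$. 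Finally, the inclusion $I^\curlyvee\subset I^s$ (noted in the remark after def.~\ref{stricttransform}, since $I^s$ is obtained by dividing out further powers of $I_E$ where possible) reverses for orders to give $\ord_{a'}X^s\leq\ord_{a'}X^\curlyvee$, completing the chain of inequalities.

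The main delicate point is the transition from the abstract order hypothesis to the power-series expansion displayed above: this is where one uses that $Z$ is regular so that $J$ is a complete intersection and $J^{(d)}=J^d$, and that the coordinate change in Prop.~\ref{111}(8) is a linear shift in the $x_{n-k+1},\ldots,x_{n-1}$ by multiples of $x_n$, hence preserves $J$ and the order at $a$, so that an $f$ with $\ord_a f=d$ retains its leading-order monomials after the translation. Everything else is a formal manipulation of the resulting monomial expansion.
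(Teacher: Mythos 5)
Your proof is correct and follows essentially the same route as the paper's: reduce via Prop.~\ref{111} to coordinates in which the blowup is monomial with $a'$ at the chart origin, expand $f\in I$ in the variables defining $Z$, write down $f^*=x_n^d f^\curlyvee$ explicitly, and extract $\ord_E X^*=d$ and $\ord_{a'}X^\curlyvee\leq d$ from a degree-$d$ term with unit coefficient, with $\ord_{a'}X^s\leq\ord_{a'}X^\curlyvee$ following from $I^\curlyvee\subset I^s$. Your only addition is to make explicit the step the paper leaves implicit, namely that regularity of $Z$ gives $J^{(d)}=J^d$ so that $\ord_Z I=d$ really yields the expansion with $|\alpha|\geq d$, which is a welcome clarification rather than a different argument.
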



\begin{proof} By Prop.÷ \ref{111} there exist local coordinates $x_1,\ldots,x_n$ of $W$ at $a$ such that $Z$ is defined locally at $a$ by $x_1=\ldots=x_k=0$ for some $k\leq n$, and such that $a'$ is the origin of the $x_1$-chart of the blowup. Let $I\subset \calo_{W,a}$ be the local ideal of $X$ in $W$ at $a$, and let $f$ be an element of $I$. It has an expansion $f=\sum_{\alpha\in\mathbb{N}^n}c_\alpha x^\alpha$ with respect to the coordinates $x_1,\ldots,x_n$, with coefficients $c_\alpha\in\K$. 

Set $\alpha_+=(\alpha_1,\ldots,\alpha_k)$. Since the order of $X$ is constant along $Z$, the inequality $|\alpha_+|=\alpha_1+\ldots+\alpha_k\geq d$ holds whenever $c_\alpha\neq0$. There is an element $f$ with an exponent $\alpha$ such that $c_\alpha\neq0$ and such that $|\alpha|=|\alpha_+|=d$. The total transform $X^*$ and the weak transform $X^\curlyvee$ are given locally at $a'$ by the ideal generated by all
\[
f^*=\sum_{\alpha\in\NNN^n} c_\alpha x_1^{|\alpha_+|}x_2^{\alpha_2}\ldots x_n^{\alpha_n},
\]
respectively 
\[
f^\curlyvee=\sum_{\alpha\in\NNN^n}c_ \alpha x_1^{|\alpha_+|-d}x_2^{\alpha_2}\ldots x_n^{\alpha_n},
\]
for $f$ varying in $I$. The exceptional divisor $E$ is given locally at $a'$ by the equation $x_1=0$. This implies that, locally at $a'$, $\ord_E f^*\geq d$ for all $f\in I$ and $\ord_E \,f^*=d$ for the special $f$ chosen above with $|\alpha|=|\alpha_+|=d$. Therefore $\ord_E X^*=d$ locally at $a'$. 

Since $\ord_{a'}\ f^\curlyvee\leq |\alpha_-|\leq d$ for the chosen $f$, it follows that $\ord_{a'}X^\curlyvee\leq d$. The ideal $I^s$ of the strict transform $X^s$ contains the ideal $I^\curlyvee$ of the weak transform $X^\curlyvee$, thus also $\ord_{a'}X^s\leq \ord_{a'}X^\curlyvee$.\end{proof}


\begin{cor} If the order of $X$ is globally constant along $Z$, the order of $X^*$ along $E$ is globally equal to $d$. \end{cor}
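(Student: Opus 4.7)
The plan is to reduce the corollary to the pointwise content of Proposition~\ref{899}. The order $\ord_E(X^*)$ of $X^*$ along a component of the exceptional divisor is by definition computed at the generic point of that component. Because $\pi$ is the blowup of a regular variety $W$ along a regular center $Z$, the divisor $E=\pi^{-1}(Z)$ is locally principal in $W'$, hence the generic order along a component of $E$ can equally be read off from the local order $\ord_{E,a'}(X^*)=\max\{k\,:\,I^*_{a'}\subset I_{E,a'}^k\}$ at any closed point $a'$ of that component: the ring $\calo_{W',E}$ is the localization of $\calo_{W',a'}$ at the height-one prime cutting out $E$, and divisibility by a fixed power of a local defining equation of $E$ is preserved under this localization.

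Next I would invoke the hypothesis and the pointwise statement. By assumption $\ord_a(X)=d$ at every $a\in Z$, so Proposition~\ref{899} applies at every such $a$ and every $a'\in\pi^{-1}(a)$. A careful reading of its proof shows not only $\ord_{E,a'}(X^*)\geq d$ but equality: in local coordinates $x_1,\ldots,x_n$ as in Proposition~\ref{111}, every $f\in I$ expands with $|\alpha_+|\geq d$ on its support, so $x_1^d\mid f^*$ in the $x_1$-chart, and the constancy hypothesis guarantees the existence (locally near any $a\in Z$) of some $f\in I$ carrying a monomial with $|\alpha|=|\alpha_+|=d$, which gives $x_1^{d+1}\nmid f^*$.

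Combining the two observations yields $\ord_E(X^*)=d$ on every component of $E$. I do not anticipate any essential obstacle: once the equality $\ord_E(X^*)=\ord_{E,a'}(X^*)$ has been spelled out for a single closed point $a'$ in each component, the rest is a direct citation of Proposition~\ref{899}. The only point deserving care in the write-up is the case where $Z$ (and hence $E$) is reducible, in which the statement is to be interpreted componentwise, with the hypothesis $\ord_Z(X)=d$ applied on each component of $Z$ separately.
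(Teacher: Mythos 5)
Your proposal is correct and takes essentially the same route as the paper, which presents the corollary as an immediate globalization of Proposition~\ref{899}: since the order is constant along all of $Z$, the proposition applies at every $a\in Z$ and every $a'\in\pi^{-1}(a)$, giving $\ord_{E,a'}(X^*)=d$ locally everywhere on $E$, which is exactly the generic statement along each component. The only point where your write-up is slightly one-sided is the identification of the local order at a closed point $a'$ with the generic order along the component of $E$: localization gives the inequality ``local $\leq$ generic'' as you say, while the reverse direction uses that $I_E$ is a principal prime in the regular, hence factorial, local ring $\calo_{W',a'}$, so that its symbolic and ordinary powers coincide --- a fact the paper itself records and which is implicit in the phrase ``locally at $a'$, the order of $X^*$ along $E$ is $d$'' in Proposition~\ref{899}.
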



\comment{[Lemma SP: Let $I$ be an ideal on $W$ and assume that $I$ has order $d$ along the center $Z$. Then the order of the total transform $I^*$ along $E$ is $d$.
Proof: Let $U$ be the set $U=\{a\in Z:\ord_aI=d\}$. By the upper-semicontinuity of the order, this set is open in $Z$. Thus, at every $a\in U$, the order of $I$ is locally constant along $Z$. By Prop.÷ \ref{899}, locally at all $a'\in \pi^{-1}(U)$ the order of $I^*$ along $E$ is $d$. By upper-semicontinuity of the order, the order of $I^*$ along the closure of $\pi^{-1}(U)$ is $d$. If $U$ is dense in $Z$, then also $\pi^{-1}(U)$ is dense in $E$ and we are done. If $U$ is not dense in $Z$, then $Z_1=Z\setminus\overline{U}$ is open in $Z$ and $\ord_{Z_1}I>d$. We can apply the previous argument again to $Z_1$ and find an open subset $U_1\subset Z_1$ such that $\ord_{a'}I^*>d$ for all $a'$ in the closure of $\pi^{-1}(U_1)$. The process terminates after finitely many steps since $\ord_a I$ assumes only finitely many values for $a\in Z$.]}


\begin{defn} Let $X$ be a subvariety of a regular variety $W$, and let $W'\too W$ be a blowup with center $Z$. Denote by $X'$ the strict or weak transform in $W'$. A point $a'\in W'$ above $a\in Z$ is called \textit{infinitesimally near to $a$} or \textit{equiconstant} if $\ord_{a'}X'=\ord_aX$.  \end{defn}


\begin{defn} Let $X$ be a variety defined over a field $\K$, and let $a$ be a point of $W$. The \textit{Hilbert-Samuel function} $\mathrm{HS}_{a}(X):\NNN\too \NNN$ of $X$ at $a$ is defined by
\[
\mathrm{HS}_{a}(X)(k)=\dim_\K(\mm_{X,a}^k/\mm_{X,a}^{k+1}),
\]
where $\mm_{X,a}$ denotes the maximal ideal of the local ring $\calo_{X,a}$ of $X$ at $a$.  If $X$ is a subvariety of a regular variety $W$ defined by an ideal $I$, with local ring $\calo_{X,a} =\calo_{W,a}/I$, one also writes $\mathrm{HS}_{a}(I)$ for $\mathrm{HS}_{a}(X)$.\end{defn}


\begin{rem} The Hilbert-Samuel function does not depend on the embedding of $X$ in $W$. There exists a univariate polynomial $P(t)\in\QQQ[t]$, called the \textit{Hilbert-Samuel polynomial} of $X$ at $a$, such that $HS_a(X)(k) = P(k)$ for all sufficiently large $k$ \cite{Serre}. The Hilbert-Samuel polynomial provides local information on the singularity of $X$ at a point as e.g.÷ the multiplicity and the local dimension.\end{rem}


\begin{thm} The Hilbert-Samuel function of a subvariety $X$ of a regular variety $W$ defines an upper semicontinuous local invariant on $X$ with respect to the lexicographic ordering of integer sequences.\end{thm}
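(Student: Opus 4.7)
The plan is to reduce the lexicographic upper semicontinuity of the Hilbert--Samuel function to the pointwise upper semicontinuity of the cumulative length functions $F_k(a) := \dim_\K \calo_{X,a}/\mm_{X,a}^{k+1}$, and then to conclude by a short order-theoretic argument based on finite truncation and intersection of closed sets.

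First I would observe that $\mathrm{HS}_a(X)(k) = F_k(a) - F_{k-1}(a)$ (with $F_{-1}=0$) and that the lex order on $(\mathrm{HS}_a(k))_k$ coincides with the lex order on $(F_k(a))_k$: the first index at which two sequences differ is the same as the first index at which their partial sums differ, and the strict inequality goes in the same direction. Moreover, by Prop.~\ref{191}(5),(7) and the argument of Prop.~\ref{222}, each $F_k(a)$ depends only on $\wh\calo_{X,a}$, so one may work locally and, if desired, after completion, embedding $X$ in a regular variety $W$ with a regular system of parameters $x_1,\ldots,x_n$ at $a$.

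The first substantive step is to show that, for each fixed $k$, the function $F_k \colon X \to \NNN$ is upper semicontinuous. Let $\mathcal I_\Delta$ be the ideal sheaf of the diagonal $\Delta \subset X \times X$, and let $Y_k = V(\mathcal I_\Delta^{k+1})$ be the $(k+1)$-th infinitesimal neighbourhood of $\Delta$. Then $p_1|_{Y_k}\colon Y_k \to X$ is a finite morphism whose scheme-theoretic fibre at a closed point $a$ is $\Spec(\calo_{X,a}/\mm_{X,a}^{k+1})$, of $\K$-length $F_k(a)$. Thus $F_k$ is the fibre-length function of the coherent $\calo_X$-module $(p_1|_{Y_k})_\ast \calo_{Y_k}$, and its upper semicontinuity follows from the classical semicontinuity of fibre lengths of a coherent sheaf under a finite morphism (\cite{Hartshorne} chap.~II, Ex.~5.8; \cite{Matsumura}).

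The second step is the formal passage from pointwise to lex semicontinuity. Fix a target sequence $s = (s_k)_{k\geq 0} \in \NNN^\NNN$, and for $N \geq 0$ set
\[
T^{\leq N} \;=\; \{a \in X : (F_0(a),\ldots,F_N(a)) \geq_{\mathrm{lex}} (s_0,\ldots,s_N)\}.
\]
By induction on $N$, each $T^{\leq N}$ is closed. The base case $N=0$ is just $\{F_0 \geq s_0\}$, closed by Step~2. For $N>0$ one uses the identity
\[
T^{\leq N} \;=\; \{F_0 \geq s_0\} \,\cap\, \bigl(\{F_0 \geq s_0+1\} \,\cup\, T'^{\leq N-1}\bigr),
\]
where $T'^{\leq N-1}$ is the analogous set attached to the functions $(F_1,\ldots,F_N)$ and the shifted threshold $(s_1,\ldots,s_N)$; all three constituent sets are closed (the first two by Step~2, the last by the inductive hypothesis), and closedness of $T^{\leq N}$ follows. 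Finally, by the equivalence of lex orders on cumulative and differenced sequences,
\[
\{a \in X : \mathrm{HS}_a(X) \geq_{\mathrm{lex}} s\} \;=\; \bigcap_{N\geq 0} T^{\leq N},
\]
which is closed as an intersection of closed sets.

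The main obstacle is Step~2, the pointwise upper semicontinuity of $F_k$. Its structural content is the identification of $F_k(a)$ as the fibre length of the explicit coherent sheaf $(p_1|_{Y_k})_\ast \calo_{Y_k}$ on $X$; once this is in place, the rest of the argument—including the lex comparison and the reduction to an intersection of truncations—is a purely formal manipulation and requires no extra geometric input.
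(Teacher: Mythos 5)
Your argument is correct, and it takes a genuinely different route from the paper, which gives no proof at all but simply cites Bennett's theorem (\cite{Bennett} Thm.~4, p.~82). Your two formal reductions are sound: the lexicographic orders on $(\mathrm{HS}_a(k))_k$ and on the cumulative sequence $F_k(a)=\dim_\K\calo_{X,a}/\mm_{X,a}^{k+1}$ agree exactly (first differing index and sign of the difference coincide), the recursive identity for $T^{\leq N}$ checks out in all cases, and the intersection $\bigcap_N T^{\leq N}$ is closed --- on a noetherian space this descending chain even stabilizes, so the intersection is in fact finite. The geometric kernel is your Step~2, and it is right: the fibre of $(p_1|_{Y_k})_*\calo_{Y_k}$, i.e.\ of the sheaf of principal parts $\mathcal{P}^k_X=(p_1)_*(\calo_{X\times X}/\mathcal{I}_\Delta^{k+1})$, at a $\K$-rational point $a$ is $\Spec(\calo_{X,a}/\mm_{X,a}^{k+1})$, since $f\otimes 1-1\otimes f$ maps to $f(a)-f$ modulo $\mm_a\otimes\calo_X$; the one assertion you should justify in a sentence is finiteness of $p_1|_{Y_k}$, which follows from coherence of $\mathcal{P}^k_X$ via the filtration by $\mathcal{I}_\Delta^j/\mathcal{I}_\Delta^{j+1}$, each graded piece a coherent module on the diagonal. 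The comparison with the paper's route is instructive. Your proof is self-contained, elementary, and --- a small bonus --- never uses the ambient regular variety $W$, so it establishes the theorem exactly as stated, for closed ($\K$-rational) points of a variety; over a non-algebraically closed field one should read the fibre dimensions over $\kappa(a)$. What Bennett's theorem buys beyond this is the scheme case flagged in the paper's remark: at a non-closed point $b$, the fibre of $\mathcal{P}^k_X$ does \emph{not} compute $\dim_{\kappa(b)}\calo_{X,b}/\mm_{X,b}^{k+1}$ --- at the generic point of $\AAA^1$ it has dimension $k+1$, while the local ring is a field --- but rather a dimension-shifted Hilbert--Samuel function, which is precisely why Bennett works with the normalized functions $H^{(i)}$ and why comparing points under specialization is the hard content of his theorem. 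Since the resolution-theoretic applications (localization at generic points of centers, excellent schemes) need that stronger statement, the citation is not made redundant by your argument; but for the theorem as literally stated your proof is complete and arguably cleaner.
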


\begin{proof} {} \cite{Bennett} Thm.÷ 4, p.÷ 82, cf.÷ also \cite{Ha_Power_Series}.\end{proof}


\comment{{defn}: A variety $X$ is called \textit{normally flat} along a subvariety $Z$ if the Hilbert-Samuel function of $X$ is constant along $Z$ \cite{Hironaka_Annals}.}


\begin{thm} Let $\pi: W' \To W$ be the blowup of $W$ along a non-singular center $Z$. Let $I$ be an ideal of $\calo_W$. Assume that the Hilbert-Samuel function of $I$ is constant along $Z$, and denote by $I^s$ the strict transform of $I$ in $W'$. Let $a' \in E$ be a point in the exceptional divisor mapping under $\pi$ to $a$. Then 
\[
\mathrm{HS}_{a'}(I^s)\leq \mathrm{HS}_{a}(I) 
\]
holds with respect to the lexicographic ordering of integer sequences.\end{thm}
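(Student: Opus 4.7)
The plan follows the classical Bennett--Hironaka strategy. As a preliminary reduction I pass to completions: by Lemma \ref{191} the Hilbert--Samuel function of $I$ at $a$ depends only on $\wh\calo_{W,a}$ and $\wh I=I\cdot\wh\calo_{W,a}$, since $\mm_a^k/\mm_a^{k+1}=\wh \mm_a^k/\wh\mm_a^{k+1}$, and likewise at $a'$. Prop.~\ref{111}(9) then lets me choose local coordinates $x_1,\ldots,x_n$ at $a$ (after suitable relabeling so that $Z$ occupies the first $k$ coordinates, $k=\mathrm{codim}_aZ$) such that $Z=V(x_1,\ldots,x_k)$ and $a'$ is the origin of the $x_1$-chart, in which the local blowup is the monomial map
\[
x_1\mapsto x_1,\qquad x_j\mapsto x_1x_j\ (2\le j\le k),\qquad x_j\mapsto x_j\ (j>k).
\]
Everything then becomes a concrete power series computation in $\K[[x_1,\ldots,x_n]]$.

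Next I use the standard reformulation $\mathrm{HS}_a(I)(k)=\dim_\K\bigl(\mathrm{gr}_{\mm_a}(\calo_{W,a})/\inin_{\mm_a}(I)\bigr)_k$, so the claimed inequality is equivalent to a coordinate-wise (hence \textit{a fortiori} lex) comparison of the Hilbert functions of $\K[T_1,\ldots,T_n]/\inin_{\mm_{a'}}(I^s)$ and $\K[T_1,\ldots,T_n]/\inin_{\mm_a}(I)$. The hypothesis that $\mathrm{HS}_b(I)$ is constant for $b$ ranging in a neighbourhood of $a$ in $Z$ is Hironaka's condition of \textit{normal flatness} of $V(I)$ along $Z$, and by \cite{Hironaka_Annals}~III.2 it is equivalent to $I$ admitting a \textit{$Z$-standard basis} $f_1,\ldots,f_r$: the initial forms $\inin_Z(f_i)$ with respect to the $I_Z$-adic filtration generate $\inin_Z(I)\subset\mathrm{gr}_{I_Z}\calo_{W,a}\isom\calo_{Z,a}[T_1,\ldots,T_k]$, and the integers $d_i=\ord_Z f_i$ together with the initial exponents of the $\inin_Z(f_i)$ determine the Hilbert--Samuel function uniformly at every point of $Z$ near $a$.

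Now the direct computation. Writing $f_i=\sum_\alpha c_{i,\alpha}\,x^\alpha$ with $|\alpha_+|:=\alpha_1+\cdots+\alpha_k\ge d_i$ on every nonzero term, the strict transform in the chosen chart is
\[
f_i^s=\sum_\alpha c_{i,\alpha}\,x_1^{|\alpha_+|-d_i}\,x_2^{\alpha_2}\cdots x_k^{\alpha_k}\,x_{k+1}^{\alpha_{k+1}}\cdots x_n^{\alpha_n}.
\]
Inspecting $\inin_{\mm_{a'}}(f_i^s)$ at the origin of the $x_1$-chart shows that it is obtained from $\inin_Z(f_i)\in\calo_{Z,a}[T_1,\ldots,T_k]$ by first substituting $T_1\leftrightarrow x_1$ and $T_j\leftrightarrow x_1x_j$ for $2\le j\le k$ and then reducing the $\calo_{Z,a}$-coefficients modulo $\mm_{a'}$. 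The homogeneous ideal $J\subset\K[T_1,\ldots,T_n]$ generated by these initial forms is contained in $\inin_{\mm_{a'}}(I^s)$, and by the normal flatness reformulation of the previous paragraph the quotient $\K[T]/J$ has Hilbert function equal to $\mathrm{HS}_a(I)$. Since enlarging a homogeneous ideal only decreases the Hilbert function of the quotient graded ring, one obtains $\mathrm{HS}_{a'}(I^s)(k)\le\mathrm{HS}_a(I)(k)$ for every $k$, which implies the lex inequality.

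The main obstacle is the second step: establishing the existence of a $Z$-standard basis and its controlled transformation behaviour, i.e., verifying that the prescribed $T_j\leftrightarrow x_1x_j$ substitution indeed produces $\inin_{\mm_{a'}}(f_i^s)$ and that the resulting ideal $J$ has the same Hilbert function as $\inin_{\mm_a}(I)$. This is the delicate combinatorial core of Hironaka's standard-basis theory and is where the hypothesis of constant Hilbert--Samuel function (rather than mere constancy of the order along $Z$, which by Prop.~\ref{899} would only control the leading integer) is genuinely needed: only normal flatness supplies a full set of initial forms sufficient to bound the initial ideal at $a'$ degree by degree.
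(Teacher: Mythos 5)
The paper itself gives no argument for this theorem: its ``proof'' is a citation to \cite{Bennett}, Thm.~0, and to \cite{Ha_Power_Series}, so what you are really attempting is a reconstruction of Bennett's proof. Your frame is the right one (completion via Lemma~\ref{191}, monomial coordinates via Prop.~\ref{111}(9), normal flatness reformulated through a standard basis along $Z$), but the step you flag as ``the delicate combinatorial core'' is not merely delicate --- as stated it fails. The identification of $\inin_{\mm_{a'}}(f_i^s)$ with the monomial substitution of $\inin_Z(f_i)$ is wrong in general: a term $c_\alpha x^\alpha$ of $f_i$ with $|\alpha_+|>d_i$ but small degree in $x_{k+1},\ldots,x_n$ produces, after factoring out $x_1^{d_i}$, a term of \emph{lower} total degree at $a'$ than anything coming from $\inin_Z(f_i)$, so the $\mm_{a'}$-initial form of $f_i^s$ can be invisible to the $Z$-initial data. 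Moreover your substituted forms, with $\calo_{Z,a}$-coefficients evaluated at $a'$, are not homogeneous (the degree of $x_2^{\beta_2}\cdots x_k^{\beta_k}$ depends on $\beta_1$), so the ideal $J$ they generate is not a homogeneous ideal whose quotient has a well-defined Hilbert function equal to $\mathrm{HS}_a(I)$. Normal flatness controls $\mathrm{gr}_{I_Z}(\calo_{W,a}/I)$ as a module over $\calo_{Z,a}$; converting that bigraded information into a bound on the $\mm_{a'}$-adic graded ring of the strict transform is exactly the content of Bennett's proof, and it is the part your outline defers. (Also, the lex inequality is not ``equivalent'' to the coordinate-wise one, as you write; coordinate-wise is strictly stronger.)

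There is a decisive structural reason the plan cannot be repaired as is: if your argument worked, it would prove the degree-wise inequality $\mathrm{HS}_{a'}(I^s)(k)\leq\mathrm{HS}_a(I)(k)$ for every $k$, and this is known to be false for permissible blowups. This is precisely why Bennett, and Singh in his refinement (Invent.~Math.~1974), prove the inequality only for the sum transforms $H^{(1)}(k)=\sum_{j\leq k}H(j)$ (Singh gave an example where $H^{(0)}$ increases at some index, which is compatible with the lexicographic inequality but not with a degree-wise one), and why the theorem here is stated for the lexicographic order. The route indicated by the paper's second citation avoids the trap: in \cite{Ha_Power_Series} one compares \emph{initial ideals} of $I$ and $I^s$ with respect to a degree-compatible monomial order and shows the (minimized) initial ideal does not increase under passage to the strict transform; since a degree-compatible initial ideal determines the Hilbert--Samuel function, the lex inequality follows. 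So a correct completion of your proposal must either pass to $H^{(1)}$ in Bennett--Singh style or replace your containment $J\subset\inin_{\mm_{a'}}(I^s)$ by the monomial-order comparison of full initial ideals; a termwise bound on Hilbert functions is simply not available.
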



\begin{proof} {} \cite{Bennett} Thm.÷ 0, \cite{Ha_Power_Series}.\end{proof}


\comment{[The minimal initial ideal of $I$ with respect to a degree compatible monomial order on $\NNN^n$ does not increase under blowup when passing to the strict transform \cite{Ha_Power_Series} Thm.÷ 6. Hironaka showed that the sequence $\nu_a(I)$ of orders of a minimal Macaulay basis of $I$ does not increase under blowup when passing to the strict transform \cite{Hironaka_Annals} III.5, Lemma 14, p.÷ 227 and Thm.÷ 3, p.÷ 233.]}


\begin{thm} \label{order_under_localization} (Zariski-Nagata, \cite{Hironaka_Annals} III.3, Thm.÷ 1, p.÷ 218) Let $S\subset T$ be closed irreducible subvarieties of a closed subvariety $X$ of a regular ambient variety $W$. The order of $X$ in $W$ along $T$ is less than or equal to the order of $X$ in $W$ along $S$. \comment{[Moreover: The order of $X$ along $S$ is the minimum of the orders of $X$ at points of $S$.]} \end{thm}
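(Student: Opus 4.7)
The plan is to reduce the statement to a purely local-algebraic claim about a regular local ring, and then analyze it via the associated graded ring and completion. Let $P\subset Q$ be the prime ideals of $\calo_W$ defining $T$ and $S$; the containment reflects $S\subset T$. Localizing at $Q$, set $R=\calo_{W,S}$, a regular local ring with maximal ideal $\mm=QR$; let $J=PR$ be the contained prime and $I\subset R$ the extension of the ideal of $X$. Since $\calo_{W,T}=R_J$, the inequality becomes
\[
\ord_{JR_J}(IR_J)\ \leq\ \ord_\mm(I),
\]
and by the symbolic-power characterization of order established in the proposition just above the theorem, this is equivalent to the ideal-theoretic containment $J^{(d)}\subset\mm^d$ for every $d\geq 0$, to which the proof is reduced.

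Since $R$ and $R_J$ are both regular local, their associated graded rings with respect to the maximal ideals are polynomial rings over the residue fields, so $v_\mm$ and $v_J$ (the $\mm$-adic and $JR_J$-adic order functions) are genuine discrete valuations on $\mathrm{Frac}(R)$ satisfying $v(fg)=v(f)+v(g)$. Writing $\ord_\mathfrak{a}(I)=\min_{f\in I}v_\mathfrak{a}(f)$ for $\mathfrak{a}\in\{\mm,JR_J\}$, it suffices to verify the pointwise inequality $v_J(f)\leq v_\mm(f)$ for every $f\in R$, which is trivial when $f\notin J$. For $f\in J^{(d)}$, I pick $s\in R\setminus J$ with $sf\in J^d\subset\mm^d$; additivity of $v_\mm$ then gives $v_\mm(s)+v_\mm(f)\geq d$, already concluding $v_\mm(f)\geq d$ whenever $s$ can be chosen as a unit of $R$.

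The main obstacle is precisely the case $s\in\mm\setminus J$, which occurs exactly when $J^{(d)}\supsetneq J^d$ (e.g.~when $J$ fails to be locally generated by a regular sequence); here the estimate $v_\mm(f)\geq d-v_\mm(s)$ is a priori too weak. I would overcome this by passing to the $\mm$-adic completion $\widehat R$: by Prop.~\ref{222} the $\mm$-adic order is preserved on $R$, and by Cohen's structure theorem $\widehat R$ is a formal power series ring over a coefficient field or ring, so $\mathrm{gr}_{\widehat\mm}(\widehat R)$ is a polynomial ring in which initial forms may be computed concretely. Using faithful flatness of $R\to\widehat R$ (Lemma~\ref{191}) I would track the primary decomposition of $J\widehat R$ via its minimal primes $\mathfrak{q}_1,\ldots,\mathfrak{q}_r$, each of which contracts to $J$ in $R$, and compare symbolic powers of $J$ with powers of $\widehat\mm$ by initial-form analysis in the polynomial graded ring, forcing the initial form of any $f\in J^{(d)}$ to have degree at least $d$. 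This yields $(J\widehat R)^{(d)}\subset\widehat\mm^d$, and contracting back to $R$ gives $J^{(d)}\subset\mm^d$. The technical heart of this last step is essentially the argument of Hironaka in \cite{Hironaka_Annals}, III.3.
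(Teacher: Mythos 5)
Your reduction is sound and matches the paper's opening moves: passing to the regular local ring $R=\calo_{W,S}$, restating the claim as $\nu(IR_J)\leq\nu(I)$, equivalently $J^{(d)}\subset\mm^d$ for all $d\geq 0$, and correctly isolating the real difficulty, namely that the witness $s\in R\setminus J$ with $sf\in J^d$ may lie in $\mm$, which happens exactly when symbolic and ordinary powers diverge. One remark on your easy case: the paper's regular-quotient argument evades this difficulty with a different valuation than yours. When $R/p$ is regular, $\mathrm{gr}_p(R)$ is a polynomial ring over a domain, so the \emph{$p$-adic} order $\nu_p$ is itself additive and $\nu_p(s)=0$ for any $s\notin p$ annihilates the witness regardless of whether $s\in\mm$; additivity of $v_\mm$, which you invoke, is the wrong tool there and is what forces you into the weak bound $v_\mm(f)\geq d-v_\mm(s)$.

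Your treatment of the hard case, however, is a genuine gap rather than a proof. Passing to $\widehat R$ changes nothing about the obstruction: the witness $s$ may still lie in $\widehat\mm\setminus J\widehat R$, and the step you describe as ``forcing the initial form of any $f\in J^{(d)}$ to have degree at least $d$'' is verbatim the statement $J^{(d)}\subset\widehat\mm^d$ that you are trying to prove. Cohen's structure theorem and faithful flatness (Lemma \ref{191}, Prop.~\ref{222}) give a concrete ambient ring, but nothing in the graded ring by itself distinguishes $J^{(d)}$ from $J^d$, and tracking minimal primes of $J\widehat R$ supplies no degree bound; your closing appeal to ``essentially the argument of Hironaka'' defers exactly the content of the theorem. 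What the paper actually does at this point is geometric: it inducts along a chain of primes to reduce to $\dim(R/p)=1$, completes (this is where Prop.~\ref{222} is genuinely used, to make curve resolution available), and resolves the curve $R/p$ by a finite sequence of local blowups $R=R_0\too R_1\too\cdots\too R_k$ centered at the maximal ideals, chosen so that $(R_{i+1})_{p_{i+1}}=(R_i)_{p_i}$ --- hence $\nu(IR_p)$ is unchanged --- while the orders of the successive weak transforms $I_i$ do not increase by Prop.~\ref{899}; after finitely many steps $R_k/p_k$ is regular and the already-settled case applies. Some such extra input is unavoidable: in characteristic zero one could instead prove $J^{(d)}\subset\mm^d$ via the differential characterization of symbolic powers, but a purely graded-ring-plus-completion argument of the kind you sketch does not close the loop.
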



\begin{rem} In the case of schemes, the assertion says that if $X$ is embedded in a regular ambient scheme $W$, and $a,b $ are points of $X$ such that $a$ lies in in the closure of $b$, then $\ord_b X\leq \ord_a X$.\end{rem}


\begin{proof} The proof goes in several steps and relies on the resolution of curves. Let $R$ be a regular local ring, $m$ its maximal ideal, $p$ a prime ideal in $R$ and $I\neq0$ a non-zero ideal in $R$. Denote by $\nu_p(I)$ the maximal integer $\nu\geq0$ such that $I\subset p^\nu$. Recall that $\ord_p(I)$ is the maximal integer $n$ such that $I R_p\subset p^n R_p$ or, equivalently, $I\subset p^{(\nu)}$, where $p^{(\nu)}=p^\nu R_p\cap R$ denotes the $\nu$-th symbolic power of $p$. Thus $\nu_p(I)\leq \ord_p(I)$ and the inequality can be strict if $p$ is not a complete intersection, in which case the usual and the symbolic powers of $p$ may differ.  Write $\nu(I)=\nu_m(I)$ for the maximal ideal $m$ so that $\nu_p(I)\leq\nu(I)$ and $\nu_p(I)\leq\nu(IR_p)=\ord_p(I)$. 

The assertion of the theorem is equivalent to the inequality $\nu(IR_p)\leq \nu(I)$, taking for $R$ the local ring $\calo_{W,S}$ of $W$ along $S$, for $I$ the ideal of $R$ defining $X$ in $W$ along $S$ and for $p$ the ideal defining $T$ in $W$ along $S$.  


If $R/p$ is regular, then $\nu_p(I)=\nu(IR_p)$ because $p^n R_p\cap R=p^n$: The inclusion $p^n\subseteq p^nR_p\cap R$ is clear, so suppose that $x\in p^nR_p\cap R$. Choose $y\in p^n$ and $s\not\in p$ such that $xs=y$. Then $n\leq \nu_p(y) = \nu_p(xs)=\nu_p(x)+\nu_p(s)=\nu_p(x)$, hence $x\in p^n$. \comment{[where did we use that $R/p$ is regular?]} It follows that $\nu(IR_p)\leq \nu(I)$.


It remains to prove the inequality in the case that $R/p$ is not regular. Since $R$ is regular, there is a chain of prime ideals $p=p_0\subset\ldots\subset p_k=m$ in $R$ with $\dim(R_{p_i}/p_{i-1}R_{p_i})=1$ for $1\leq i\leq n$.  By induction on the dimension of $R/p$, it therefore suffices to prove the inequality in the case $\dim(R/p)=1$. \comment{[this is probably too quick]} Since the order remains constant under completion of a local ring by Prop.÷ \ref{222}, it can be assumed that $R$ is complete. \comment{[where is this used below?]}


By the resolution of curve singularities there exists a sequence of complete regular local rings $R=R_0 \too R_1\too\ldots\too R_k$ with prime ideals $p_0=p$ and $p_i\subset R_i$ with the following properties: \comment{[is $p_{i+1}$ the weak transform of $p_i$?]}
\begin{enumerate}
\item $R_{i+1}$ is the blowup of $R_i$ with center the maximal ideal $m_i$ of $R_i$.
\item $(R_{i+1})_{p_{i+1}}=(R_i)_{p_i}$. 
\item $R_k/p_k$ is regular.
\end{enumerate}
Set $I_0=I$ and let $I_{i+1}$ be the weak transform of $I_i$ under $R_i\to R_{i+1}$. Set $R'=R_k$, $I'=I_k$ and $p'=p_k$. The second condition on the blowups $R_i$ of $R_{i-1}$ implies $\nu(IR_p)=\nu(I'R'_{p'})$. By Prop.÷ \ref{899} one knows that $\nu(I')\leq\nu(I)$. Further, since $R'/p'$ is regular, $\nu(I'R'_{p'})\leq \nu(I')$. Combining these inequalities yields $\nu(IR_p)\leq \nu(I)$. \end{proof}


\comment{\noindent [From Stefan: Order under localization, modified proof]}

\ignore

\begin{thm} \label{order_under_localization}
Let $X$ be a scheme, embedded in a regular ambient scheme $W$. Let $a,b \in X$ be points such that $a$ lies in in the closure of $\{b\}$. Then $\ord_b X\leq \ord_a X$.
\end{thm}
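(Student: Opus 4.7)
The plan is to translate the statement into commutative algebra by setting $R = \calo_{W,a}$, which is a regular local ring with maximal ideal $m$, and letting $p \subset R$ denote the prime ideal corresponding to the point $b$ (which is meaningful since $a$ lies in the closure of $\{b\}$, so $p \subset m$). Write $I \subset R$ for the ideal defining $X$ locally at $a$. Then $\ord_a X = \nu_m(I)$ is the largest $n$ with $I \subset m^n$, while $\ord_b X = \ord_p(I) = \nu(IR_p)$ is the largest $n$ with $I \subset p^{(n)}$. The goal becomes the inequality $\nu(IR_p) \leq \nu_m(I)$.

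The first reduction is to assume $R$ is complete: by Proposition \ref{222} the order is preserved under $m$-adic completion, and the analogous statement for the localization $R_p$ allows us to replace $R$ with $\wh R$ without loss. The next step is the easy special case where $R/p$ is itself regular. In that case, symbolic and ordinary powers of $p$ coincide, i.e.\ $p^n R_p \cap R = p^n$: indeed, if $x \in p^n R_p \cap R$, write $xs = y$ with $y \in p^n$ and $s \notin p$; since $\nu_p$ is additive and $\nu_p(s) = 0$, we get $\nu_p(x) \geq n$. Consequently $\ord_p(I) = \nu_p(I) \leq \nu_m(I)$, as desired.

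The next reduction is to the one-dimensional case. Since $R$ is regular (hence catenary), pick a saturated chain of primes $p = p_0 \subset p_1 \subset \dots \subset p_k = m$ with $\dim(R_{p_i}/p_{i-1}R_{p_i}) = 1$ for each $i$. By induction on $\dim(R/p)$, applied to the localizations at successive $p_i$'s, it is enough to treat the case $\dim(R/p) = 1$, i.e.\ $R/p$ is a one-dimensional complete local domain, a ``formal curve.''

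The main work, and the hardest step, is the one-dimensional case, where one invokes resolution of curve singularities. One constructs a finite sequence of local blowups $R = R_0 \to R_1 \to \dots \to R_k$, each $R_{i+1}$ being (a local ring of) the blowup of $R_i$ along its maximal ideal $m_i$, with transformed primes $p_{i+1}$ chosen so that (i) $(R_{i+1})_{p_{i+1}} = (R_i)_{p_i}$, which holds because the generic point of the curve is untouched by the blowup, and (ii) $R_k/p_k$ is regular, by resolution of curves. Writing $I_i$ for the successive weak transforms, condition (i) forces $\nu(IR_p) = \nu(I_k R_k/p_k R_k)$, while Proposition \ref{899} applied at each step gives $\nu(I_{i+1}) \leq \nu(I_i)$, hence $\nu(I_k) \leq \nu(I)$. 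Since $R_k/p_k$ is regular, the easy case proved above yields $\nu(I_k R_k/p_k R_k) \leq \nu(I_k)$. Chaining these inequalities produces $\ord_b X = \nu(IR_p) \leq \nu_m(I) = \ord_a X$. The principal obstacle is the careful bookkeeping in the curve-resolution step: one must verify that the weak transforms of $p$ can be chosen to remain prime at each stage and that resolution of a formal curve singularity in arbitrary characteristic is available, which is classical but nontrivial.
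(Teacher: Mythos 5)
Your proposal follows essentially the same route as the paper's proof: the same reduction to $\nu(IR_p)\leq\nu(I)$, the same easy case where $R/p$ is regular (via additivity of $\nu_p$, which is precisely where regularity enters), the same chain-of-primes induction down to $\dim(R/p)=1$ after completing via Prop.~\ref{222}, and the same use of resolution of curve singularities producing blowups with $(R_{i+1})_{p_{i+1}}=(R_i)_{p_i}$ and $R_k/p_k$ regular, combined with Prop.~\ref{899} for the weak transforms. Apart from a notational slip ($\nu(I_k R_k/p_k R_k)$ should read $\nu\bigl(I_k (R_k)_{p_k}\bigr)$) and the glossed-over subtlety that $p\wh R$ need not remain prime under completion (a point the paper's own proof also leaves implicit), your argument matches the paper's.
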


We will prove this result in a series of lemmas and propositions. In the following, let $R$ always be a regular local ring, $m$ its maximal ideal, $p$ a prime ideal in $R$ and $J\neq0$ a non-zero ideal in $R$. Denote with $\nu_p(J)$ the maximal integer $\nu\geq0$ such that 
$J\subseteq p^\nu$. Notice that this is different from the order of $J$ in $p$, which would be the maximal integer $n$ such that $J R_p\subseteq p^n R_p$. Write $\nu(J)=\nu_m(J)$ for the maximal ideal $m$.

\begin{lem}
$\nu_p(J)\leq\nu(J)$.
\end{lem}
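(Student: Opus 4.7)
The plan is to exploit the fact that in a local ring $R$ with maximal ideal $m$, every proper ideal is contained in $m$. In particular, the prime ideal $p$ satisfies $p \subseteq m$, so raising to powers gives $p^\nu \subseteq m^\nu$ for every $\nu \geq 0$.

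With this observation, the argument is essentially immediate. Set $\nu = \nu_p(J)$, which by definition of $\nu_p(J)$ as a maximum is a well-defined nonnegative integer (or $+\infty$; if $\nu_p(J) = \infty$, then $J \subseteq \bigcap_{\nu} p^\nu \subseteq \bigcap_\nu m^\nu = 0$ by Krull's intersection theorem in the noetherian local ring $R$, contradicting $J \neq 0$, so the case $\nu_p(J) = \infty$ does not occur). By definition, $J \subseteq p^\nu$. Chaining the two inclusions yields $J \subseteq p^\nu \subseteq m^\nu$, so that $\nu$ is among the integers witnessing the containment $J \subseteq m^\bullet$. By the definition of $\nu(J) = \nu_m(J)$ as the \emph{maximal} such integer, this gives $\nu_p(J) = \nu \leq \nu_m(J) = \nu(J)$.

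There is really no obstacle here: the statement reduces to the monotonicity of the power operation on ideals together with the defining property of the maximal ideal of a local ring. The only subtlety worth mentioning is the finiteness of $\nu_p(J)$, which follows from $J \neq 0$ together with Krull's intersection theorem; without this one has to interpret the inequality in $\NNN \cup \{\infty\}$, in which case it still holds trivially.
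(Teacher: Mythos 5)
Your proof is correct and follows essentially the same route as the paper's: since $p$ is contained in the maximal ideal $m$, the inclusion $J\subseteq p^n$ forces $J\subseteq m^n$ for every $n$, which immediately gives $\nu_p(J)\leq\nu(J)$. Your additional remark on the finiteness of $\nu_p(J)$ via Krull's intersection theorem is a harmless (and reasonable) elaboration that the paper leaves implicit.
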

\begin{proof}
Since $m$ is the maximal ideal, $J\subseteq p^n$ implies $J\subseteq m^n$ for all $n\geq0$.
\end{proof}

\begin{lem}
$\nu_p(J)\leq\nu(JR_p)$.
\end{lem}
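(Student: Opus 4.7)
The plan is essentially a one-line argument: the inequality falls directly out of the definitions combined with the fact that extension of ideals along the localization map $R \to R_p$ preserves inclusions and commutes with taking powers. No resolution of curves, no completion, no regularity assumption on $R/p$ is needed here; this is the easy companion of the previous lemma, which compared $\nu_p(J)$ with $\nu_m(J)$.

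First I would fix $\nu = \nu_p(J)$, so that by the very definition of $\nu_p$ one has $J \subseteq p^\nu$ in $R$. Applying the extension functor $(-)R_p$ to both sides of this inclusion (which is order-preserving, since $R \to R_p$ is a ring homomorphism) yields $JR_p \subseteq p^\nu R_p$. Because extension of ideals commutes with products and hence with taking powers, one has $p^\nu R_p = (pR_p)^\nu$.

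Now $R_p$ is a local ring whose unique maximal ideal is $pR_p$, so in the notational convention of the preceding lemma (where $\nu(\cdot)$ denotes the order with respect to the maximal ideal of the ambient local ring), the quantity $\nu(JR_p)$ is by definition the largest integer $n \geq 0$ with $JR_p \subseteq (pR_p)^n$. The inclusion just established shows that $n = \nu$ is an admissible choice, and therefore $\nu(JR_p) \geq \nu = \nu_p(J)$, which is the claim.

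There is no real obstacle. The only point that deserves attention is the notational one: the symbol $\nu(JR_p)$ has to be read as $\nu_{pR_p}(JR_p)$, i.e., as the instance of the convention $\nu(\cdot) = \nu_{m_{\text{loc}}}(\cdot)$ applied now inside the localized local ring $R_p$ rather than inside $R$. Once this is clarified, the argument is pure bookkeeping about extensions of ideals under localization and does not use the regularity of $R$ at all.
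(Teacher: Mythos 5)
Your proof is correct and is essentially the paper's own argument: the paper likewise observes that $J\subseteq p^n$ implies $JR_p\subseteq p^nR_p=(pR_p)^n$ for all $n\geq0$, and reads $\nu(JR_p)$ as the order with respect to the maximal ideal $pR_p$ of the local ring $R_p$. Your added remark that no regularity of $R$ or of $R/p$ is needed is accurate; the lemma is pure bookkeeping about extension of ideals under localization.
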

\begin{proof}
The result follows from the fact that $J\subseteq p^n$ implies $J R_p\subseteq p^n R_p$ for all $n\geq0$.
\end{proof}

\begin{lem}
If $R/p$ is regular, then $\nu_p(J)=\nu(JR_p)$. 
\end{lem}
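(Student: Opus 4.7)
The plan is to establish the missing inequality $\nu(JR_p) \leq \nu_p(J)$, since the reverse was proved in the preceding lemma. Unfolding the definitions, this amounts to showing that $p^nR_p \cap R \subseteq p^n$ for every $n \geq 0$: indeed, if $J \subseteq p^nR_p$ then $J \subseteq p^nR_p \cap R$, and the desired conclusion $J \subseteq p^n$ would follow. The inclusion $p^n \subseteq p^nR_p \cap R$ is trivial, so the only content is the converse.

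Given $x \in p^nR_p \cap R$, there exist $y \in p^n$ and $s \in R \setminus p$ with $xs = y$. The key step will be to verify that $\nu_p$ is multiplicative on nonzero elements, i.e.\ $\nu_p(ab) = \nu_p(a) + \nu_p(b)$. Granting this, since $s \notin p$ gives $\nu_p(s) = 0$, we get $\nu_p(x) = \nu_p(xs) = \nu_p(y) \geq n$, so $x \in p^n$ and we are done. The multiplicativity of $\nu_p$ is equivalent to the associated graded ring $\mathrm{gr}_p(R) = \bigoplus_{n \geq 0} p^n/p^{n+1}$ being an integral domain.

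So everything reduces to showing that $\mathrm{gr}_p(R)$ is a domain under the hypothesis that both $R$ and $R/p$ are regular. Here one invokes the standard fact: if $R$ is a regular local ring and $R/p$ is regular, then $p$ can be generated by part $x_1, \ldots, x_r$ of a regular system of parameters of $R$, where $r$ is the height of $p$. In particular $(x_1, \ldots, x_r)$ is a regular sequence, which makes the canonical surjection $(R/p)[T_1, \ldots, T_r] \twoheadrightarrow \mathrm{gr}_p(R)$ sending $T_i$ to the class of $x_i$ an isomorphism of graded $(R/p)$-algebras. Since $R/p$ is a regular local ring, it is a domain, and hence so is the polynomial ring $(R/p)[T_1, \ldots, T_r]$, finishing the argument.

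The main obstacle is the structural fact that $\mathrm{gr}_p(R) \cong (R/p)[T_1, \ldots, T_r]$; everything else is bookkeeping. This isomorphism is really the assertion that a regular sequence generating $p$ behaves like a set of independent variables at the level of associated gradeds, and is where the regularity of the quotient is indispensable: without it one only has an inclusion $\mathrm{gr}_p(R) \hookrightarrow$ a larger ring and the equality $p^{(n)} = p^n$ may fail.
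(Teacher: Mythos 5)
Your proof is correct and takes essentially the same route as the paper: both reduce the lemma to the identity $p^nR_p\cap R=p^n$, write $xs=y$ with $y\in p^n$ and $s\notin p$, and conclude from the additivity $\nu_p(xs)=\nu_p(x)+\nu_p(s)$ together with $\nu_p(s)=0$. The only difference is that the paper justifies this additivity tersely by expanding $x,y,s$ with respect to a regular system of parameters $x_1,\ldots,x_k$ of $R$ with $p=(x_1,\ldots,x_m)$, whereas you ground it properly in the isomorphism $\mathrm{gr}_p(R)\cong (R/p)[T_1,\ldots,T_r]$ being a domain --- a complete justification of precisely the step where the regularity of $R/p$ enters, which the paper leaves implicit.
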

\begin{proof}
We will show that $p^n R_p\cap R=p^n$ for all $n\geq0$ by making use of a regular system of parameters $x_1,\ldots,x_k$ of $R$ such that $p=(x_1,\ldots,x_m)$ for an $m\leq k$. The inclusion $p^n\subseteq p^nR_p\cap R$ is clear. Now suppose $x\in p^nR_p\cap R$. There is a $y\in p^n$ and a $s\notin R$ such that $xs=y$. Consider the expansion of the elements $x,y,s$ with respect to the r.s.p. $x_1,\ldots,x_n$. Then we see that $n\leq\ord_{(x_1,\ldots,x_m)}y=\ord_{(x_1,\ldots,x_m)}x+\ord_{(x_1,\ldots,x_m)}s$. Since $s\notin p$, $\ord_{(x_1,\ldots,x_m)}s=0$. Thus, $\ord_{(x_1,\ldots,x_m)}x\geq n$ and $x\in p^n$.
\end{proof}


Combining the lemmas, we gain the following result:

\begin{prop} \label{nu_reg}
If $R/p$ is regular, then $\nu(JR_p)\leq \nu(J)$.
\end{prop}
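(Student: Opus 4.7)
The plan is to observe that Proposition~\ref{nu_reg} follows by chaining together the three preceding lemmas, with no additional work beyond tracking the inequalities carefully. The invariant $\nu_p(J)$ acts as an intermediary quantity that connects $\nu(JR_p)$ with $\nu(J)$: the first lemma bounds it above by $\nu(J)$, while the third lemma (which is where the regularity of $R/p$ enters) identifies it with $\nu(JR_p)$.

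Concretely, I would argue as follows. First, invoke the third lemma, which states that if $R/p$ is regular, then $\nu(JR_p)=\nu_p(J)$. This is the substantial ingredient and the only place the hypothesis on $R/p$ is used; its proof rested on the equality $p^n R_p \cap R = p^n$, which in turn followed from choosing a regular system of parameters $x_1,\ldots,x_k$ of $R$ with $p=(x_1,\ldots,x_m)$ and comparing $x_1,\ldots,x_m$-adic orders on both sides of a relation $xs=y$ with $s\notin p$. Next, apply the first lemma to obtain the elementary inequality $\nu_p(J)\leq \nu(J)$, which holds because $p\subseteq m$ forces $J\subseteq p^n$ to imply $J\subseteq m^n$ for every $n\geq 0$. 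Chaining these together yields
\[
\nu(JR_p) \;=\; \nu_p(J) \;\leq\; \nu(J),
\]
which is the claim.

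There is no real obstacle here, since the only substantive ingredient, the identification $\nu(JR_p)=\nu_p(J)$ under regularity of $R/p$, has already been handled in the third lemma. The mild conceptual point worth emphasizing in the write-up is that without the regularity hypothesis the symbolic and ordinary powers of $p$ may disagree, so one could have $\nu_p(J) < \nu(JR_p)$ in general; regularity of $R/p$ is precisely what closes this gap and lets the easy bound $\nu_p(J)\leq \nu(J)$ be upgraded to a bound on $\nu(JR_p)$. The proof is therefore two lines: cite the relevant lemmas and combine.
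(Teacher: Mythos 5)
Your proof is correct and matches the paper's own argument exactly: the paper likewise obtains Prop.~\ref{nu_reg} by combining the identity $\nu(JR_p)=\nu_p(J)$ (valid when $R/p$ is regular, via $p^nR_p\cap R=p^n$) with the elementary bound $\nu_p(J)\leq\nu(J)$. Nothing further is needed.
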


It remains to prove this result in the case that $R/p$ is not regular. (This is a theorem due to Zariski-Nagata, cf. [Hi1]). We will do this is in several small steps, beginning with complete rings $R$ and prime ideals $p$ with $\dim(R/p)=1$. To prove the result in this case, we will use the resolution of curves. For this, we  introduce yet another notion of blowup: The blowup in the category of regular local rings, called monoidal transform.


\begin{defn}
Let $R$ be a regular local ring with prime ideal $p\neq0$ such that $R/p$ is regular. Let $x\in p$ and let $q$ be a prime ideal of $R[px^{-1}]$ containing the maximal ideal of $R$. Then the localization $R[px^{-1}]_q$ is called monoidal transform of $R$ with center $p$.
\end{defn}


\begin{prop} \label{nu_compl}
Assume that $R$ is complete and $\dim(R/p)=1$. Then $\nu(JR_p)\leq\nu(J)$.
\end{prop}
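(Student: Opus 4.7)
The plan is to reduce to the setting of Proposition \ref{nu_reg} (where the quotient $R/p$ is regular) by resolving the curve singularity $R/p$ inside $R$, using a sequence of monoidal transforms centered at the successive maximal ideals. The key observation is that localization at (the transform of) $p$ is unaffected by such monoidal transforms, while the order at the maximal ideal can only drop when passing to the weak transform.

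First I would invoke the resolution of plane/embedded curve singularities for the one-dimensional complete local domain $R/p$: there exists a finite sequence of monoidal transforms
\[
R = R_0 \too R_1 \too \cdots \too R_k
\]
where each $R_{i+1}$ is the monoidal transform of $R_i$ with center the maximal ideal $m_i$ of $R_i$, and where $p_i \subset R_i$ denotes the strict transform of $p$, with $R_k/p_k$ regular. Since blowing up a regular local ring in its maximal ideal and localizing at a point of the exceptional divisor yields again a regular local ring, each $R_i$ is regular. Moreover, since $p_i$ is the strict transform of $p$, it does not contain the exceptional divisor, so localization at $p_i$ inverts the generator of the exceptional ideal and hence the natural map induces an isomorphism $(R_i)_{p_i} \isom (R_{i+1})_{p_{i+1}}$; in particular $(R_k)_{p_k} \isom R_p$.

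Next I would track the ideal. Set $J_0 = J$ and let $J_{i+1}$ be the weak transform of $J_i$ under $R_i \too R_{i+1}$, i.e.\ $J_{i+1} = m_i^{-d_i} \cdot (J_i \cdot R_{i+1})$ with $d_i = \nu(J_i)$. By Proposition \ref{899} applied to the point blowup at $m_i$, one has $\nu(J_{i+1}) \leq \nu(J_i)$ for each $i$. Passing to the localization at $p_{i+1}$, the generator of the exceptional ideal becomes a unit, so $J_{i+1} \cdot (R_{i+1})_{p_{i+1}}$ and $J_i \cdot (R_i)_{p_i}$ correspond under the isomorphism above. Iterating, $J_k \cdot (R_k)_{p_k}$ corresponds to $J \cdot R_p$, so $\nu(J_k \cdot (R_k)_{p_k}) = \nu(J \cdot R_p)$. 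Since $R_k/p_k$ is regular, Proposition \ref{nu_reg} gives $\nu(J_k \cdot (R_k)_{p_k}) \leq \nu(J_k)$, and combining with the chain of inequalities above yields
\[
\nu(J \cdot R_p) = \nu(J_k \cdot (R_k)_{p_k}) \leq \nu(J_k) \leq \nu(J_{k-1}) \leq \cdots \leq \nu(J_0) = \nu(J),
\]
as desired.

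The main obstacle will be the honest verification of the two compatibility statements: that the sequence of monoidal transforms coming from resolution of the curve $\Spec(R/p)$ can indeed be realized as monoidal transforms of the ambient regular ring $R$ with $R_i$ remaining regular and $p_i$ remaining a prime of height $\dim R - 1$ at each step, and that the weak transform and the localization at $p_i$ commute as claimed (which amounts to checking that $p_{i+1}$ avoids the exceptional divisor, so that the generator of the exceptional ideal is a unit in $(R_{i+1})_{p_{i+1}}$). Both facts are classical but require some care; once granted, the argument is a purely formal induction combining Proposition \ref{899} with Proposition \ref{nu_reg}.
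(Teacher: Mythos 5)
Your argument is correct and coincides in structure with the paper's own proof: both resolve the one-dimensional local ring $R/p$ by a sequence of monoidal transforms centered at the successive maximal ideals (the paper realizes this concretely by climbing up inside the finite normalization $\overline{S}$ of $S=R/p$, which is where completeness enters), identify $(R_i)_{p_i}\isom(R_{i+1})_{p_{i+1}}$ since the strict transform avoids the exceptional divisor, track weak transforms of $J$ with Prop.~\ref{899} giving $\nu(J_{i+1})\leq\nu(J_i)$, and conclude with the regular case Prop.~\ref{nu_reg}. The compatibility checks you flag as the main obstacles are precisely the points the paper also treats (or leaves to the cited classical facts), so nothing essential is missing.
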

\begin{proof}
Let $S=R/p$. Denote the normalization of $S$ with $\overline{S}$. We know that $\overline{S}$ is a finite extension of $S$ in its quotient ring and that every localization of $\overline{S}$ is regular. We will now construct a sequence of regular local rings $R=R_0,R_1,\ldots,R_n$ with prime ideals $p_i\subseteq R_i$ ($p_0=p$) fulfilling the following properties:
\begin{itemize}
\item $R_{i+1}$ is a monoidal transform of $R_i$ with center the maximal ideal $m_i$.
\item $(R_{i+1})_{p_{i+1}}=(R_i)_{p_i}$.
\item $R_i/p_i$ is a finite extension of $S$ in its quotient ring and $R_i/p_i\subsetneq R_{i+1}/p_{i+1}\subseteq \overline{S}$. [Maybe have to pass to localization of $\overline{S}$.]
\end{itemize}
Assume that we have already constructed $R_0,\ldots,R_i$ and $R_i/p_i\neq\overline{S}$. Thus, there is an element $z=\frac{x}{s}\in\overline{S}\setminus R_i/p_i$ with $x\in R_i/p_i$ and $s\in(R_i/p_i)^*$. By abuse of notation, use the same symbols for representatives $x\in R_i$ and $s\in R_i\setminus p_i$. We can assume $s\in m_i$, otherwise $s$ would be a unit and $\frac{x}{s}\in R_i$. Notice that the rings $R_i[m_i s^{-1}]$ and $R_i[m_i \frac{x}{s}]=R_i[m_iz]$ are isomorphic and will be identified with each other. We define $R_{i+1}$ as the monoidal transform $R_i[m_iz]_q$ where $q$ is any prime ideal of $R_i[m_iz]$ that contains $m_i$. Define $p_{i+1}$ as a minimal prime associated to $p_i R_{i+1}$. 

[The following part is not final yet. Determine where we work, common supersets for the $R_i$, $(R_i)_{p_i}$ or $R_i/p_i$.]

By abuse of notation call $p_{i+1}$ the prime ideal of $R_i[m_iz]$ that induces $p_{i+1}$ in $R_{i+1}$. Then $(R_{i+1})_{p_{i+1}}=R_i[m_iz]_{p_{i+1}}$. Now show that $(R_i)_{p_i}=R_i[m_iz]_{p_{i+1}}$ [...].

By induction, $R_i/p_i\subseteq\overline{S}$ is a finite extension of $S$. We have that $R_{i+1}/p_{i+1}\subseteq R_{i+1}/R_{i+1}p_i=(R_i/p_i)[m_iz]_q\subseteq\overline{R_i/p_i}\subseteq\overline{S}$. The inclusion $R_i/p_i\subsetneq R_{i+1}/p_{i+1}$ is strict since $z\in R_{i+1}/p_{i+1}$.

Since $\overline{S}$ is a finite extension of $S$, this procedure has to terminate in a finite number of steps. We end up with a regular local ring $R'=R_n$ with prime ideal $p'=p_n$ such that $R'/p'$ is regular and $R'_{p'}=R_p$.

Let $J_i$ be ideals in $R_i$ such that $J_0=J$ and $J_{i+1}$ is the weak transform of $J_i$ under the blowup $R_i\to R_{i+1}$. Call $J'=J_n$. By [either cite or prove], $\nu(J')\leq\nu(J)$. Further, by Prop.÷ \ref{nu_reg}, $\nu(J')\leq\nu(J'R'_{p'})$. Combining these results yields $\nu(J)\leq \nu(J'R'_{p'})=\nu(JR_p)$.
\end{proof}

It is now relatively easy to generalize this result to arbitrary regular local rings $R$.

\begin{prop} \label{nu_wo_compl}
Let $R$ be any regular local ring and $\dim(R/p)=1$. Then $\nu(JR_p)\leq\nu(J)$.
\end{prop}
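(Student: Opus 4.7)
The plan is to reduce the assertion to the complete case already established in Prop.~\ref{nu_compl} by passing to the $\mm$-adic completion $\wh R$ of $R$. Since $R$ is regular local, $\wh R$ is a complete regular local ring, and by Prop.~\ref{222} orders are preserved under completion: $\nu(J) = \nu(\wh J)$, where $\wh J = J\wh R$. Thus, if one can compare $\nu(JR_p)$ with the order of $\wh J$ localized at a suitable prime of $\wh R$, the result will follow from the complete case.

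First, I would select a minimal prime $P$ of the extended ideal $p\wh R$ satisfying $\dim(\wh R/P) = 1$. Such a $P$ exists because $\wh R/p\wh R$ is the completion of the one-dimensional local ring $R/p$, hence of dimension one, so at least one of its minimal primes has one-dimensional residue ring. Next, I would verify that $P\cap R = p$: faithful flatness of $R\to \wh R$ yields $p\wh R\cap R = p$, and combining going-down with the minimality of $P$ over $p\wh R$ forces equality. Consequently the natural local homomorphism $R_p \to \wh R_P$ is well-defined and sends $pR_p$ into $P\wh R_P$.

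The core estimate is then obtained by extending inclusions across this map. From $JR_p \subset (pR_p)^n$ one derives
\[
\wh J\wh R_P \;=\; (JR_p)\wh R_P \;\subset\; (pR_p)^n\wh R_P \;=\; (p\wh R_P)^n \;\subset\; (P\wh R_P)^n,
\]
which gives $\nu(JR_p) \leq \nu(\wh J\wh R_P)$. Applying Prop.~\ref{nu_compl} to the complete regular local ring $\wh R$, the prime $P$ with $\dim(\wh R/P) = 1$, and the ideal $\wh J$, yields $\nu(\wh J\wh R_P) \leq \nu(\wh J)$. Chaining these with $\nu(\wh J) = \nu(J)$ from Prop.~\ref{222} produces the desired bound $\nu(JR_p) \leq \nu(J)$.

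The main obstacle is the simultaneous arrangement $P\cap R = p$ and $\dim(\wh R/P) = 1$ for a minimal prime $P$ of $p\wh R$: in general $p\wh R$ need not be prime, since the one-dimensional local domain $R/p$ may be analytically reducible or even non-reduced after completion, so one must argue the existence of $P$ by dimension-theoretic means and control its contraction via faithful flatness. Once this selection is in place, the transition from $R$ to $\wh R$ and the resulting chain of order inequalities is routine.
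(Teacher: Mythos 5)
Your proof is correct and follows essentially the same route as the paper's: complete $R$, pick a minimal prime $P$ of $p\wh R$, show $P\cap R=p$ to get a local map $R_p\to \wh R_P$, and chain $\nu(JR_p)\leq\nu(\wh J\wh R_P)\leq\nu(\wh J)=\nu(J)$ using the complete case and invariance of order under completion. In fact your going-down/faithful-flatness argument for $P\cap R=p$, together with the explicit dimension-theoretic selection of $P$ with $\dim(\wh R/P)=1$, cleanly supplies exactly the step the paper's own proof leaves open with its remark ``[Details to clarify]''.
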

\begin{proof}
Let $\wh{R}$ denote the completion of $R$ and $\wh{J}=J\wh{R}$. Let $q$ be a minimal prime associated to $p\wh{R}$. We will now show that there is a canonical local homomorphism $R_p\to \wh{R}_q$. We know that $q\cap R$ is a prime ideal lying between $p$ and $m$. Since $\dim(R/p)=1$, either $q\cap R=p$ or $q\cap R=m$. But $q\cap R=m$ is not possible since $q$ is non-embedded [Details to clarify]. Thus, $q\cap R=p$. It follows that $(\wh{R}_q\cap(R\setminus p))\subseteq (\wh{R}_q)^*$. By the universal property of the localization $R\to R_p$, there is a unique homomorphism $R_p\to \wh{R}_q$, mapping $pR_p$ into $q\wh{R}_q$. Using this homomorphism, we see that $JR_p\subseteq p^nR_p$ implies  $J\wh{R}_q\subseteq q^n\wh{R}_q$ for all $n\geq0$. Since $J\wh{R}_q=\wh{J}\wh{R}_q$, this implies that $\nu(JR_p)\leq\nu(\wh{J}\wh{R}_q)$.

On the other hand, $\nu(\wh{J})=\nu(J)$ since $\nu(\wh{J})\geq n\iff J\wh{R}\subseteq m^n\wh{R} \iff \forall k\geq0: J/m^k\subseteq m^n/m^k \iff J\subseteq m^n \iff \nu(J)\geq n$ for all $n\geq0$.

By Prop.÷ \ref{nu_compl}, we know that $\nu(\wh{J}\wh{R}_q)\leq\nu(\wh{J})$. Putting the three inequalities together, we see that $\nu(JR_p)\leq\nu(J)$.
\end{proof}


\begin{prop} \label{nu_general}
Let $R$ be any regular local ring and $p$ any prime ideal. Then $\nu(JR_p)\leq\nu(J)$.
\end{prop}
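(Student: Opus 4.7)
The plan is to reduce the general case to the dimension-one case (Prop.~\ref{nu_wo_compl}) by means of a saturated chain of prime ideals between $p$ and the maximal ideal $m$ of $R$. Since $R$ is regular, it is Cohen--Macaulay and in particular catenary, so there exists a saturated chain
\[
p=p_0\subsetneq p_1\subsetneq\ldots\subsetneq p_k=m
\]
of prime ideals of $R$ with $\dim\bigl((R_{p_i})/p_{i-1}R_{p_i}\bigr)=1$ for each $i=1,\ldots,k$. It will suffice to prove the inequality $\nu(JR_{p_{i-1}})\leq \nu(JR_{p_i})$ for each $i$ and then chain the resulting inequalities together.

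For a fixed index $i$, consider the regular local ring $S=R_{p_i}$ with maximal ideal $p_iS$, the prime ideal $q=p_{i-1}S\subset S$, and the ideal $I=JS$. By construction $\dim(S/q)=1$, so Prop.~\ref{nu_wo_compl} applies to the triple $(S,q,I)$ and yields $\nu(IS_q)\leq \nu(I)$, where the orders are taken with respect to the respective maximal ideals $qS_q$ and $p_iS$. The key observation is that the iterated localization satisfies $S_q=(R_{p_i})_{p_{i-1}R_{p_i}}=R_{p_{i-1}}$ as local rings, with maximal ideals matching up. Consequently $IS_q=JR_{p_{i-1}}$ and $I=JR_{p_i}$, so the inequality furnished by Prop.~\ref{nu_wo_compl} reads
\[
\nu(JR_{p_{i-1}})\leq \nu(JR_{p_i}).
\]

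Concatenating these inequalities for $i=1,\ldots,k$ gives
\[
\nu(JR_p)=\nu(JR_{p_0})\leq \nu(JR_{p_1})\leq\cdots\leq \nu(JR_{p_k})=\nu(JR_m)=\nu(J),
\]
which is the required conclusion. The only potential subtlety lies in justifying the existence of the saturated chain and the identification $(R_{p_i})_{p_{i-1}R_{p_i}}=R_{p_{i-1}}$; both are standard facts, the first from the catenarity of regular local rings, the second from the universal property of localization applied to the multiplicative set $R\setminus p_{i-1}$. Thus Prop.~\ref{nu_wo_compl} really contains the entire arithmetic content of the statement, and Prop.~\ref{nu_general} follows by a purely formal induction on the codimension $\dim(R/p)$.
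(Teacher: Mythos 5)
Your proof is correct and follows the same route as the paper: take a saturated chain of primes $p=p_0\subset\ldots\subset p_k=m$ with $\dim(R_{p_i}/p_{i-1}R_{p_i})=1$, apply the dimension-one case (Prop.~\ref{nu_wo_compl}) to each localized regular local ring $R_{p_i}$ via the identification $(R_{p_i})_{p_{i-1}R_{p_i}}=R_{p_{i-1}}$, and concatenate the inequalities. Your extra remarks (catenarity, the explicit verification of the iterated localization) only make explicit what the paper leaves implicit; in fact catenarity is not even needed, since any maximal chain between $p$ and $m$ is automatically saturated.
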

\begin{proof}
Since $R$ is regular, there is a chain of prime ideals $p=p_0\subseteq\ldots\subseteq p_n=m$ with $\dim(R_{p_i}/p_{i-1}R_{p_i})=1$ for $1\leq i\leq n$. By Prop.÷ \ref{nu_wo_compl}, $\nu(JR_{p_{i-1}})\leq\nu(JR_{p_i})$ for $1\leq i\leq n$. This now implies that $\nu(JR_p)=\nu(JR_{p_0})\leq\nu(JR_{p_k})=\nu(J)$.
\end{proof}


\begin{proof}[Proof of Theorem \ref{order_under_localization}]
Let $R=\mathcal{O}_{W,a}$ denote the local ring of $W$ in $a$. Let $p$ be the prime ideal in $\mathcal{O}_{W,a}$ defining $b$. Then $\mathcal{O}_{W,b}=R_p$. Let $J$ be the ideal that locally defines $X$ in $\mathcal{O}_{W,a}$. Using Prop.÷ \ref{nu_general}, we see that $\ord_b X=\nu(JR_p)\leq\nu(J)=\ord_a X$.\end{proof}


\recognize


\begin{prop} An upper semicontinuous local invariant $\invv(X):X\too \Gamma$ with values in a totally well ordered set $\Gamma$ induces, up to refinement, a stratification of $X$ with strata $X_c=\{a\in X,\, \invv_a(X)=c\}$, for $c\in \Gamma$.\end{prop}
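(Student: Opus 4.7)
The plan is to verify the three properties of a stratification (local closedness, finiteness, frontier condition) for the subsets $X_c=\{a\in X : \invv_a(X)=c\}$, and then to explain why a refinement is needed only for the frontier condition.

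First I would establish that each non-empty $X_c$ is locally closed. Set $Y_c=\{a\in X : \invv_a(X)\geq c\}$, which is closed by upper semicontinuity. Let $\Gamma_X\subset\Gamma$ denote the set of values actually attained by $\invv$ on $X$. Since $\Gamma$ is totally well-ordered, so is $\Gamma_X$, and every value $c\in\Gamma_X$ has a well-defined successor $c^+\in\Gamma_X\cup\{\infty\}$, namely the minimum of $\{d\in\Gamma_X:d>c\}$ if this set is non-empty. Then
\[
X_c=Y_c\setminus Y_{c^+},
\]
the difference of two closed sets, hence locally closed in $X$.

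Next I would prove that only finitely many strata appear, i.e., that $\Gamma_X$ is finite. If $c_1<c_2<\cdots$ were an infinite strictly increasing sequence in $\Gamma_X$, then $Y_{c_1}\supsetneq Y_{c_2}\supsetneq\cdots$ would be an infinite strictly descending chain of closed subvarieties of $X$, contradicting the noetherianity of $X$ as an algebraic variety. (If $\Gamma_X$ contained an infinite ascending chain interleaved with limits, the same chain of closed sets would still be strictly descending, yielding the same contradiction.) Hence $\Gamma_X=\{c_1<\cdots<c_n\}$ is finite and the partition $X=\dot\bigcup_{i=1}^n X_{c_i}$ is finite and consists of disjoint locally closed subvarieties.

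The possibly non-trivial point is the frontier condition: in general $\overline{X_{c_i}}\setminus X_{c_i}$ need not be a union of the $X_{c_j}$ with $j>i$, because it could meet some $X_{c_j}$ only partially. This is exactly the reason one is allowed to refine. Here I would invoke the standard refinement procedure for finite partitions into locally closed sets: starting from the $n$ closed subvarieties $\overline{X_{c_i}}$, form all intersections and set-theoretic differences
\[
X_{c_i}\cap\overline{X_{c_j}}\quad\text{and}\quad X_{c_i}\setminus\overline{X_{c_j}},\qquad i,j=1,\dots,n,
\]
and iterate finitely often; since only finitely many closed sets are involved and each step produces only finitely many new locally closed pieces, the process stabilizes. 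Each resulting piece is a locally closed subvariety contained in some $X_{c_i}$, and by construction the closure of any piece is a union of such pieces, giving the frontier condition. Replacing each piece by its (finitely many) irreducible components if desired yields the required stratification refining $\{X_c\}_{c\in\Gamma_X}$.

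The main obstacle is really just the frontier condition, which as noted can fail for the naive partition $\{X_c\}$; everything else follows directly from upper semicontinuity, the well-ordering of $\Gamma$, and the noetherianity of $X$.
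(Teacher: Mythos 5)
Your proof is correct, and its core step is the same as the paper's: the local closedness of $X_c$ is obtained in both cases by writing it as a difference of two closed sets, $\{a:\invv_a(X)\geq c\}\setminus\{a:\invv_a(X)\geq c'\}$, where $c'$ is the immediate successor of $c$ --- the existence and uniqueness of this successor being precisely where the total well-ordering of $\Gamma$ enters (you take the successor inside the set $\Gamma_X$ of attained values, the paper inside all of $\Gamma$; both variants work). Beyond that you do two things differently. First, you prove that only finitely many strata occur, via the strictly descending chain $Y_{c_1}\supsetneq Y_{c_2}\supsetneq\cdots$ of closed sets and noetherianity; the paper's proof omits this point entirely, even though its definition of a stratification requires finitely many strata, so this is a genuine improvement. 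Second, for the frontier condition the paper performs a specific one-step repair --- replacing $S'=\{a:\invv_a(X)>c\}$ by $\overline T\setminus T$ and $S'\setminus\overline T$ with $T=X_c$ --- and in fact leaves its verification unfinished, whereas you invoke the generic refinement of a finite partition into locally closed pieces by iterated intersections and differences with closures, which is more robust and makes transparent why the statement says ``up to refinement.'' One caveat: your stated reason for termination of the iteration, that ``only finitely many closed sets are involved,'' is too quick, since each round adjoins the closures of the newly created pieces, so new closed sets keep appearing; termination should instead be justified by noetherian induction --- the union of the boundaries $\overline{T_j}\setminus T_j$ of the current pieces is a proper closed subset to which one restricts and recurses, and the descending chain condition on closed subvarieties stops the recursion. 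With that one sentence repaired, your argument is complete, and indeed more complete than the paper's own sketch.
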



\begin{proof} {} For a given value $c\in\Gamma$, let $S=\{a\in X,\, \invv_a(X)\geq c\}$ and $T=\{a\in X,\, \invv_a(X)= c\}$. The set $S$ is a closed subset of $X$. If $c$ is a maximal value of the invariant on $X$, the set $T$ equals $S$ and is thus a closed stratum. If $c$ is not maximal, let $c'>c$ be an element of $\Gamma$ which is minimal with $c'>c$. Such elements exist since $\Gamma$ is well-ordered. As $\Gamma$ is totally ordered, $c'$ is unique. Therefore 
\[
S'=\{a\in X,\, \invv_a(X)\geq c'\}=\{a\in X,\, \invv_a(X)> c\}= S\setminus T 
\]
is closed in $X$. Therefore $T$ is open in $S$ and hence locally closed in $X$. As $S$ is closed in $X$ and $T\subset S$, the closure $\overline T$ is contained in $S$.  It follows that the boundary $\overline T\setminus T$ is contained in $S'$ and closed in $X$. \comment{[Still to be worked out]} Refine the strata by replacing $S'$ by $\overline T\setminus T$ and $S'\setminus \overline T$ to get a stratification.\end{proof}


\bigskip
\noindent\textit{\examples}


\begin{eg} \label{8.23} The singular locus $S=\Sing(X)$ of a variety is closed and properly contained in $X$. Let $X_1=\Reg(X)$ be the set of regular points of $X$. It is open and dense in $X$.  Repeat the procedure with $X\setminus X_1 =\Sing(X)$. By noetherianity, the process eventually stops, yielding a stratification of $X$ in regular strata. The strata of locally minimal dimension are closed and non-singular. The frontier condition holds, since the regular points of a variety are dense in the variety, and hence $\overline X_1= X =X_1\, \dot\cup\, \Sing(X)$. \end{eg}


\begin{eg} \label{8.24} There exists a threefold $X$ whose singular locus $\Sing(X)$ consists of two components, a non-singular surface and a singular curve meeting the surface at a singular point of the curve. The stratification by iterated singular loci as in the preceding example satisfies the frontier condition.\end{eg}


\begin{eg} \label{8.25}$\hint$ Give an example of a variety whose stratification by the iterated singular loci has four different types of strata. \end{eg}


\begin{eg} \label{8.26} Find interesting stratifications for three three-folds.\end{eg}


\begin{eg} \label{8.27} The stratification of an upper semicontinuous invariant need not be finite. Take for $\Gamma$ the set $X$ underlying a variety $X$ with the trivial (partial) ordering $a\leq b$ if and only if $a=b$. \end{eg}


\begin{eg} \label{8.28} Let $X$ be the non-reduced scheme defined by $xy^2=0$ in $\AAA^2$. The order of $X$ at points of the $y$-axis outside $0$ is $1$, at points of the $x$-axis outside $0$ it is $2$, and at the origin it is $3$. The local embedding dimension equals $1$ at points of the $y$-axis outside $0$, and $2$ at all points of the $x$-axis.\end{eg}


\begin{eg} \label{8.29} Determine the stratification given by the order for the following varieties. If the smallest stratum is regular, blow it up and determine the stratification of the strict transform. Produce pictures and describe the geometric changes. 

(a) Cross: $xyz=0$,

(b) Whitney umbrella: $x^2-yz^2=0$,

(c) Kolibri: $x^3+x^2z^2-y^2=0$,

(d) Xano: $x^4+z^3-yz^2=0$,

(e) Cusp \& Plane: $(y^2-x^3)z=0$. \end{eg}


\begin{eg} \label{8.30}$\hint$ Consider the order $\ord_ZI$ of an ideal $I$ in $\K[x_1,\ldots,x_n]$ along a closed subvariety $Z$ of $\AAA^n$, defined as the order of $I$ in the localization of $\K[x_1,\ldots,x_n]_J$ of $K[x_1,\ldots,x_n]$ at the ideal $J$ defining $Z$ in $\AAA^n$. Express this in terms of the symbolic powers $J^{(k)}= J^k\cdot \K[x_1,\ldots,x_n]_J\cap \K[x_1,\ldots,x_n]$ of $J$. Give an example to show that $\ord_ZI$ need not coincide with the maximal power $k$ such that $I\subset J^k$. If $J$ defines a complete intersection, the ordinary powers $J^k$ and the symbolic powers $J^{(k)}$ coincide. This holds in particular when $Z$ is a coordinate subspace of $\AAA^n$  \cite{Zariski_Samuel} IV, \S12, \cite{Hochster_73},\cite{Pellikaan}. \comment{[does non-singular suffice?]}\end{eg}


\begin{eg} \label{8.31}$\hint$\footnote {{} This example is due to Macaulay and was kindly communicated by M.÷ Hochster.} \comment{[Symbolic powers do not equal ordinary powers]}
The ideal $I=(y^2-xz, yz-x^3, z^2-x^2y)$ of $\K[x,y,z]$ has symbolic square $I^{(2)}$ which strictly contains $I^2$. \comment{[Hochster, Comm.÷ Alg.÷ I, p.÷ 111]}\end{eg}


\begin{eg} \label{8.32}$\hint$ The variety defined by $I=(y^2-xz, yz-x^3, z^2-x^2y)$ in $\mathbb{A}^3$ has parametrization $t\mapsto(t^3,t^4,t^5)$. Let $f=x^5+xy^3+z^3-3x^2yz$. For all maximal ideals $\mm$ which contain $I$, $f\in \mm^2$ and thus, $\ord_\mm f\geq 2$. But $f\not\in I^2$. Since $x f\in I^2$ and $x$ does not belong to $I$, it follows that $f\in I^2R_I$ and thus $\ord_I f\geq2$, in fact, $\ord_I f=2$.\end{eg}


\begin{eg} \label{8.33} The order of an ideal depends on the embedding of $X$ in $W$. If $X$ is not minimally embedded locally at $a$, the order of $X$ at $a$ is $1$ and not significant for measuring the complexity of the singularity of $X$ at $a$.
\end{eg}


\begin{eg} \label{8.34} Associate to a stratification of a variety the so called \textit{Hasse diagram}, i.e., the directed graph whose nodes and edges correspond to strata, respectively to the adjacency of strata. Determine the Hasse diagram for the surface $X$ given in $\AAA^4$ as the cartesian product of the cusp $C:x^2=y^3$ with the node $D:x^2=y^2+y^3$. Then project $X$ to  $\AAA^3$ by means of $\AAA^4\too \AAA^3, (x,y,z,w)\mapsto (x,y+z,w)$ and compute the Hasse diagram of the image $Y$ of $X$ under this projection.\end{eg}


\begin{eg} \label{8.35} Show that the order of a hypersurface, the dimension and the Hilbert-Samuel function of a variety, the embedding-dimension of a variety and the local number of irreducible components are invariant under local formal isomorphisms, and determine whether they are upper or lower semicontinuous. How does each of these invariants behave under localization and completion?\end{eg}


\begin{eg} \label{8.36} Take $\invv_a(X)=\dim_a(X)$, the dimension of $X$ at $a$. It is constant on irreducible varieties, and upper semicontinuous on arbitrary ones, because at an intersection point of several components, $\dim_a(X)$ is defined as the maximum of the dimensions of the components. \end{eg}


\begin{eg} \label{8.37} Take $\invv_a(X)={}$ the number of irreducible components of $X$ passing through $a$. If the components carry multiplicities as e.g. a divisor, one may alternatively take the sum of the multiplicities of the components passing through $a$. Both options produce an upper semicontinuous local invariant.\end{eg}


\begin{eg} \label{8.38} Take $\invv_a(X)=\textrm{embdim}_a(X)=\dim \mathrm{T}_aX$, the local embedding dimension of $X$ at $a$. It is upper semicontinuous. At regular points, it equals the dimension of $X$ at $a$, at singular points it exceeds this dimension.\end{eg}


\begin{eg} \label{8.39}$\hint$ Consider for a given coordinate system  $x,y_1,\ldots, y_n$ on $\AAA^{n+1}$ a polynomial of order $c$ at $0$ of the form  
\[
g(x,y_1,\ldots, y_n)= x^c+\sum_{i=0}^{c-1} g_i(y)\cdot x^i.
\] 
Express the order and the top locus of $g$ nearby $0$ in terms of the orders of the coefficients $g_i$.\end{eg}


\begin{eg} \label{8.40} Take $\invv_a(X)=\textrm{HS}_a(X)$, the Hilbert-Samuel function of $X$ at $a$. The lexicographic order on integer sequences defines a well-ordering on $\Gamma=\{\gamma:\NNN\too\NNN\}$. Find two varieties $X$ and $Y$ with points $a$ and $b$ where $\textrm{HS}_a(X)$ and $\textrm{HS}_b(Y)$ only differ from the fourth entry on.
\end{eg}


\begin{eg} \label{8.41} Take $\invv_a(X)=\nu_a^*(X)$ the increasingly ordered sequence of the orders of a minimal Macaulay basis of the ideal $I$ defining $X$ in $W$ at $a$ \cite{Hironaka_Annals} III.1, def.÷ 1 and Lemma 1, p.÷ 205. It is upper semicontinuous  but does not behave well under specialization \cite{Hironaka_Annals} III.3, Thm.÷ 2, p.÷ 220 and the remark after Cor.÷ 2, p.÷ 220, see also \cite{Ha_Obstacles}, ex.÷ 12. \end{eg}


\begin{eg} \label{8.42} Let a monomial order $<_\varepsilon$ on $\NNN^n$ be given, i.e., a total ordering with minimal element $0$ which is compatible with addition in $\NNN^n$. The \textit{initial ideal} $\inin(I)$ of an ideal $I$ of $\K[[x_1,\ldots,x_n]]$ with respect to $<_\varepsilon$ is the ideal generated by all \textit{initial monomials} of elements $f$ of $I$, i.e., the monomials with minimal exponent with respect to $<_\varepsilon$ in the series expansion of $f$. The initial monomial of $0$ is $0$.

The initial ideal is a monomial ideal in $\K[[x_1,\ldots,x_n]]$ and depends on the choice of coordinates. If the monomial order $<_\varepsilon$ is compatible with degree, $\inin(I)$ determines the Hilbert-Samuel function $\textrm{HS}_a(I)$ of $I$ \cite{Ha_Power_Series}.

Order monomial ideals totally by comparing their increasingly ordered unique minimal monomial generator system lexicographically, where any two monomial generators are compared with respect to $<_\varepsilon$. If two monomial ideals have generator systems of different length, complete the sequences of their exponents by a symbol $\infty$ so as to be able to compare them. This defines a well-order on the set of monomial ideals. 

Take for $\invv_a(X)$ the minimum $\min(I)$ or the maximum $\max(I)$ of the initial ideal of the ideal $I$ of $X$ at $a$, the minimum and maximum being taken over all choices of local coordinates, say regular parameter systems of $\K[[x_1,\ldots,x_n]]$. Both exist and  define local invariants which are upper semicontinuous with respect to localization \cite{Ha_Power_Series} Thms.÷ 3 and 8. 

(a) The minimal initial ideal ${\rm min}(I)={\rm min}_{x}\{{\rm in}(I)\}$ over all choices of regular parameter systems of $\K[[x_1,\ldots,x_n]]$ is achieved for almost all regular parameter systems. \comment{[For this, the set of regular parameter systems has to be equipped with a suitable topology.]}

(b)$\challenge$ Let $I$ be an ideal in $\K[x_1,\ldots,x_n]$. For a point $a=(a_1,\ldots,a_n)\in\AAA^n$  denote by $I_a$ the induced ideal in $\K[[x_1-a_n,\ldots,x_n-a_n]]$. The minimal initial ideal ${\rm min} (I_a)$ is upper semicontinuous when the point $a$ varies.

(c) Compare the induced stratification of $\AAA^n$ with the stratification by the Hilbert-Samuel function of $I$.

(d)  Let $Z$ be a regular center inside a stratum of the stratification induced by the initial ideal ${\rm in}(I)$, and consider the induced blowup $\wt\AAA^n\to \AAA^n$ along $Z$. Let $a\in Z$ and $a'\in W'$ be a point above $a$. Assume that the monomial order $<_\varepsilon$ is compatible with degree. Show that ${\rm min}_a(I)$ and ${\rm max}_a(I)$ do not increase when passing to the strict transform of $I_a$ at $a'$ \cite{Ha_Power_Series} Thm.÷ 6.\end{eg}


\begin{eg} \label{8.43}$\hint$ Let $(W',a')\too (W,a)$ be the composition of two monomial point blowups of $W=\AAA^2$ with respect to coordinates $y,z$, defined as follows. The first is the blowup of $\AAA^2$ with center $0$ considered at the origin of the $y$-chart, the second has as center the origin of the $y$-chart and is considered at the origin of the $z$-chart. Show that the order of the strict transform $g'(y,z)$ at $a'$ of any non zero polynomial $g(y,z)$ in $W$ is at most the half of the order of $g(y,z)$ at $a$.\end{eg}




\begin{eg} \label{8.44} (B.÷ Schober) Let $\K$ be a non perfect field of characteristic $3$, let $t\in \K\setminus \K^2$ be an element which is not a square. Stratify the hypersurface $X: x^2+y(z^2+t w^2)=0$ in $\AAA^4$ according to its singularities.  Show that this stratification does not provide suitable centers for a resolution.\end{eg}


\begin{eg} \label{8.45} Let $I = (x^2 + y^{17})$ be the ideal defining an affine plane curve singularity $X$ with singular locus the origin of $W=\AAA^2$. The order of $X$ at $0$ is $2$. The blowup $\pi: W'\too W$ of $W$ at the origin with exceptional divisor $E$ is covered by two affine charts, the $x$- and the $y$-chart. The total and strict transform of $I$ in the $x$-chart are as follows:
\begin{eqnarray*}
I^* &=& (x^2 + x^{17}y^{17}),\\
I^s &=& (1 + x^{15}y^{17}).
\end{eqnarray*}
At the origin of this chart, the order of $I^s$ has dropped to zero, so the strict transform $X^s$ of $X$ does not contain this point. Therefore it suffices to consider the complement of this point in $E$, which lies entirely in the $y$-chart. There, one obtains the following transforms:
\begin{eqnarray*}
I^* &=& (x^2y^2 + y^{17}),\\
I^s &=& (x^2 + y^{15}).
\end{eqnarray*}
The origin $a'$ of the $y$-chart is the only singular point of $X^s$. The order of $X^s$ at $a'$ has remained constant equal to $2$. Find a local invariant of $X$ which has improved at $a'$. Make sure that it does not depend on any choices. \end{eg}


\begin{eg} \label{8.46} The ideal $I = (x^2 + y^{16})$ has in the $y$-chart of the point blowup of $\AAA^2$ at $0$ the strict transform $I^s = (x^2 + y^{14})$. If the ground field has characteristic $2$, the $y$-exponents $16$ and $14$ are irrelevant to measure an improvement of the singularity because the coordinate change $x\mapsto x + y^8$ transforms $I$ into $(x^2)$.\end{eg}


\begin{eg} \label{8.47} The ideal $I =(x^2 + 2xy^7 + y^{14} + y^{17})$ has in the $y$-chart the strict transform $I^s = (x^2 + 2xy^6 + y^{12} + y^{15})$. Here the drop of the $y$-exponent of the last monomial from $17$ to $15$ is significant, whereas the terms $ 2xy^7 + y^{14}$ can be eliminated in any characteristic by the coordinate change  $x\mapsto x + y^7$. \end{eg}


\begin{eg} \label{8.48} The ideal $I = (x^2 + xy^9)=(x)(x+y^9)$ defines the union of two non-singular curves in $\AAA^2$ which have a common tangent line at their intersection point $0$. The strict transform is $I^s = (x^2 + xy^8)$. The degree of tangency, viz the intersection multiplicity, has decreased. \end{eg}


\begin{eg} \label{8.49} Let $X$ and $Y$ be two non-singular curves in $\AAA^2$, meeting at one point $a$. Show that there exists a sequence of point blowups which separates the two curves, i.e., so that the strict transforms of $X$ and $Y$ do not intersect.\end{eg}


\begin{eg} \label{8.50} Take $I = (x^2 + g(y))$ where $g$ is a polynomial in $y$ with order  at least $3$ at $0$. The strict transform under point blowup in the $y$-chart is $I^s = x^2 + y^{-2}g(y)$, with order at $0$ equal to $2$ again. This suggests to take the order of $g$ as a secondary invariant. In characteristic $2$ it may depend on the choice of coordinates.\end{eg}


\begin{eg} \label{8.51} Take $I = (x^2 + xg(y) + h(y))$ where $g$ and $h$ are polynomials in $y$ of order at least $1$, respectively $2$, at $0$. The order of $I$ at $0$ is $2$. The strict transform equals $I^s = (x^2 + xy^{-1}g(y) + y^{-2}h(y))$ of order $2$ at $0$. Here it is less clear how to detect a secondary invariant which represents an improvement.\end{eg}


\begin{eg} \label{8.52} Take $I = (x^2 + y^3z^3)$ in $\AAA^3$, and apply the blowup of $\AAA^3$ in the origin. The strict transform of $I$ in the $y$-chart equals $I^s=(x^2+y^4z^3)$ and the singularity seems to have gotten worse.  \end{eg}


\begin{eg} \label{8.53}$\hint$ Let $X$ be a surface in three-space, and $S$ its top locus. Assume that $S$ is singular at $a$, and let $X'$ be the blowup of $X$ in $a$. Determine the top locus of $X'$. \end{eg}


\section{Lecture IX: Hypersurfaces of Maximal Contact}


\begin{prop} \label{181} (Zariski) Let $X$ be a subvariety of a regular variety $W$, defined over a field of arbitrary characteristic. Let $W'\too W$ be the blowup of $W$ along a regular center $Z$ contained in the top locus of $X$, and let $a$ be a point of $Z$.
There exists, in a neighbourhood $U$ of $a$, a regular closed hypersurface $V$ of $U$ whose strict transform $V^s$ in $W'$ contains all points $a'$ of $W'$ lying above $a$ where the order of the strict transform $X^s$ of $X$ in $W'$ has remained constant equal to the order of $X$ along $Z$. \end{prop}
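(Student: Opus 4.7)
The plan is to express the equiconstancy condition on $a'$ as a vanishing condition on a suitable initial form of $X$ at $a$, and then to find a linear form on the exceptional fibre vanishing on the resulting subset. The lift of this linear form will cut out the desired hypersurface $V$.

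First, using Prop.~\ref{111} I would pick local coordinates $x_1,\ldots,x_n$ on $W$ at $a$ with $Z=V(x_1,\ldots,x_k)$, where $k=\mathrm{codim}_a Z$. Since $Z$ lies in the top locus, there exists $f\in I$ with $\ord_a f=\ord_Z f=d$. Writing $f=\sum_\alpha c_\alpha x^\alpha$, let
\[
\overline F(X_1,\ldots,X_k)=\sum_{|\alpha_+|=d,\;\alpha_{k+1}=\ldots=\alpha_n=0} c_\alpha(a)\,X_1^{\alpha_1}\cdots X_k^{\alpha_k}\in\kappa(a)[X_1,\ldots,X_k]
\]
be the value at $a$ of the initial form of $f$ along $Z$; it is a nonzero homogeneous polynomial of degree $d$. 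The exceptional fibre $\pi^{-1}(a)$ is naturally identified with $\PPP^{k-1}$. Combining the chart formulas of Prop.~\ref{111} with the explicit description of $f^\curlyvee$ in the proof of Prop.~\ref{899}, I would verify that the order of $f^\curlyvee$ at a point $a'\in\pi^{-1}(a)$ coincides with the multiplicity of $\overline F$ at the corresponding point of $\PPP^{k-1}$. Since equiconstancy forces $\ord_{a'} f^\curlyvee=d$, the equiconstant locus above $a$ is contained in the subset $D\subset\PPP^{k-1}$ of points of multiplicity $d$ of $\overline F$.

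Next I would invoke the key fact that $D$ is contained in a proper linear subspace of $\PPP^{k-1}$. In characteristic zero this is elementary: any nonzero $(d-1)$-st partial derivative $L=\partial_{X_{i_1}}\cdots\partial_{X_{i_{d-1}}}\overline F$ is a nonzero linear form vanishing at every point of multiplicity $\geq d$, and such a derivative exists because $\overline F\neq 0$ and the relevant factorials are invertible. In arbitrary characteristic the same conclusion is provided by Hironaka's directrix theorem, noting that a form of degree $d$ with multiplicity $d$ at every point of $\PPP^{k-1}$ would vanish identically, so $D$ is strictly smaller than $\PPP^{k-1}$. Fix such a nonzero linear form $L=\lambda_1 X_1+\ldots+\lambda_k X_k$ vanishing on $D$.

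Finally, lift $L$ by setting $h=\lambda_1 x_1+\ldots+\lambda_k x_k\in\calo_{W,a}$ and define $V=V(h)$ on a neighbourhood $U$ of $a$. Since $h$ has order one at $a$, $V$ is a regular hypersurface through $a$, and $Z\subset V$ locally because $h\in(x_1,\ldots,x_k)$. A coordinate calculation in the $x_j$-charts using Prop.~\ref{111} yields the clean formula $h^s(a')=L(\text{affine representative of }a')$ for any $a'$ above $a$; by the choice of $L$ this value vanishes whenever $a'$ is equiconstant, so $a'\in V^s$ as required. The main obstacle is the proper linearity of $D$, which is classical in characteristic zero via derivatives but rests on Hironaka's directrix theorem in positive characteristic; the remaining steps are routine calculations in the coordinates produced by Prop.~\ref{111}.
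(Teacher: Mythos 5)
Your strategy is essentially Zariski's classical argument, the same one the paper sketches: both proofs end up taking for $V$ a hyperplane $\lambda_1x_1+\ldots+\lambda_kx_k=0$ containing $Z$ and conclude by the monomial chart computation of Prop.~\ref{111} and ex.~\ref{9.8}. The packaging differs: the paper passes to the full initial ideal $\inin(I)$ at $a$ and normalizes it by a linear change to involve a minimal number of variables (a directrix-type construction, which in addition makes $V$ contain the local top locus of $X$ --- useful later for maximal contact, though not needed for this proposition), whereas you work with a single element $f$ and the multiplicity-$d$ locus $D$ of its fibre initial form $\overline F$ on $\pi^{-1}(a)\isom\PPP^{k-1}$. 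Your version has the advantage of treating an arbitrary equiconstant point $a'$ of the fibre uniformly, where the paper's sketch must first translate $a'$ to a chart origin.

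Two steps need repair, both locally. First, the claimed equality ``$\ord_{a'}f^\curlyvee$ coincides with $\mathrm{mult}_\xi\overline F$'' is false: for $f=x^2+y^3$ with $Z=\{0\}\subset\AAA^2$, at the origin $a'$ of the $y$-chart one has $f^\curlyvee=f^s=x^2+y$ of order $1$, while $\overline F=x^2$ has multiplicity $2$ at $\xi=(0{:}1)$. What your chart computation actually yields is the inequality $\ord_{a'}f^\curlyvee\leq\mathrm{mult}_\xi\overline F$, since $f^\curlyvee$ restricts on the fibre to the dehomogenization of $\overline F$ and restriction can only increase the order. This suffices: $\mathrm{mult}_\xi\overline F\leq d$ automatically for a degree-$d$ form, and equiconstancy gives $\ord_{a'}f^\curlyvee\geq\ord_{a'}X^\curlyvee\geq\ord_{a'}X^s=d$ using Prop.~\ref{899} and $I^\curlyvee\subset I^s$, so $\mathrm{mult}_\xi\overline F=d$ and the containment of the equiconstant locus in $D$ survives. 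Second, in positive characteristic your inline justification is a non sequitur: knowing that $D$ is a proper \emph{closed} subset of $\PPP^{k-1}$ does not produce a linear form vanishing on it. The fact you need --- that $D$ is a proper \emph{linear} subspace --- is true in every characteristic and does not require the directrix machinery: for $v\neq0$ in the affine cone, $\mathrm{mult}_v\overline F=d$ if and only if $\overline F(x+v)=\overline F(x)$ identically (compare degrees), so the set $T$ of such $v$ together with $0$ is a subgroup of $\AAA^k$ under addition and, by homogeneity of $\overline F$, stable under scalar multiplication; a cone closed under addition is a linear subspace, and $T=\AAA^k$ would force $\overline F(v)=\overline F(0)=0$ identically, i.e.\ $\overline F=0$. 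Finally, you should make explicit why $\overline F\neq0$: since the order of $I$ is constant equal to $d$ along $Z$ near $a$, every exponent $\alpha$ occurring in $f$ satisfies $|\alpha_+|\geq d$ (as in the proof of Prop.~\ref{899}), so any $\alpha$ with $|\alpha|=d$ automatically has $\alpha_{k+1}=\ldots=\alpha_n=0$; hence $\overline F$ is exactly the initial form of $f$ at $a$ and is nonzero. With these corrections your proof is complete.
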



\begin{proof} Choose local coordinates $x_1,\ldots,x_n$ of $W$ at $a$. The associated graded ring of $\calo_{W,a}$ can be identified with $\K[x_1,\ldots,x_n]$.  Let $\inin(I)\subset\K[x_1,\ldots,x_n]$ denote the ideal of initial forms of elements of $I$ at $a$. Apply a linear coordinate change so that generators of $\inin(I)$ are expressed with the minimal number of variables, say $x_1,\ldots, x_k$, for some $k\leq n$. Choose any $1\leq i\leq k$ and define $V$ in $W$ at $a$ by $x_i=0$. It follows that the local top locus of $X$ at $a$ is contained in $V$. Hence $Z\subset V$, locally at $a$. Let $a'$ be a point of $W'$ above $a$ where the order of $X$ has remained constant. By Prop.÷ \ref{111} the local blowup $(W',a')\too (W,a)$ can be made monomial by a suitable coordinate change. The assertion then follows by computation, cf.÷ ex.÷ \ref{9.8} and \cite{Zariski_1944}. \comment{[insert complete proof]}\end{proof}




\begin{defn} Let $X$ be a subvariety of a regular variety $W$, and let $a$ be a point of $W$. A \textit{hypersurface of maximal contact for $X$ at $a$} is a regular closed hypersurface $V$ of an open neighborhood $U$ of $a$ in $W$ such that

(1) $V$ contains the local top locus $S$ of $X$ at $a$, i.e., the points of $U$ where the order of $X$ equals the order of $X$ at $a$.

(2) The strict transform $V^s$ of $V$ under any blowup of $U$ along a regular center $Z$ contained in $S$ contains all points $a'$ above $a$ where the order of $X^s$ has remained constant equal to the order of $X$ at $a$.

(3) Property (2) is preserved in any sequence of blowups with regular centers contained in the successive top loci of the strict transforms of $X$ along which the order of $X$ has remained constant. \comment{[Said differently, $V^s$ is a directrix of $X^s$ at all equiconstant points $a'$ above $a$, and this repeats.]}\end{defn}


\hskip 3cm $a'\in E \cap V^s \subset U'\subset W'$\\

\hskip 3cm $\downarrow \hskip .6cm\downarrow\hskip 2.25cm \downarrow \pi$\\

\hskip 3cm  $a\in \, \,Z\, \subset \, V \subset U\subset W$


\begin{defn} Assume that the characteristic of the ground field is zero. Let $X$ be a subvariety of a regular variety $W$ defined locally at a point $a$ of $W$ by the ideal $I$. Let $o$ be the order of $X$ at $a$. An \textit{osculating hypersurface for $X$ at $a$} is a regular closed hypersurface $V$ of a neighbourhood $U$ of $a$ in $W$ defined by a derivative of order $o-1$ of an element $f$ of order $o$ of $I$ \cite{EH}.\end{defn}


\begin{rem} \label{9.4} The element $f$ has necessarily order $o$ at $a$, and its $(o-1)$-st derivative   has order $1$ at $a$, so that it defines a regular hypersurface at $a$. The concept is due to Abhyankar and Zariski \cite{Abhyankar_Zariski}. Abhyankar called the local isomorphism constructing an osculating hypersurface $V$ from a given regular hypersurface $H$ of $W$ \textit{ Tschirnhaus transformation}. \comment{[Add reference to Tschirnhaus]} If $H$ is given by $x_n=0$ for some coordinates $x_1,\ldots, x_n$, this transformation eliminates from $f$ all monomials whose $x_n$-exponent is $o-1$.  The existence of osculating hypersurfaces was exploited systematically by Hironaka in his proof of characteristic zero resolution \cite{Hironaka_Annals}. \end{rem}




 \comment{In characteristic zero, osculating hypersurfaces are directrices.} For each point $a$ in $X$, osculating hypersurfaces contain locally at $a$ the local top locus $S=\ttop_a(X)$, and their strict transform contain the equiconstant points above $a$.


\begin{prop} (Abhyankar, Hironaka) \label{9.5} Let $X$ be a subvariety of a regular variety $W$, and let $a$ be a point of $W$. For ground fields of characteristic zero there exist, locally at $a$ in $W$, hypersurfaces of maximal contact for $X$. Any osculating hypersurface $V$ at $a$ has maximal contact with $X$ at $a$.\end{prop}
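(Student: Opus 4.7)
\medskip
\noindent\textit{Proof proposal.}
The plan is to construct an explicit osculating hypersurface via partial derivatives, then verify the three defining properties of maximal contact by putting $f$ into a Tschirnhaus normal form in suitable coordinates, reducing everything to an explicit monomial computation on the blowup charts.

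First I would fix a local generator $f$ of $I$ with $\ord_a f = o$ (if $I$ is not principal, pick any $f \in I$ realizing the maximal order $o$ of a Macaulay basis, cf.\ Prop.\ \ref{6.6}). In characteristic zero, the initial form $\inin(f) \in \K[x_1,\dots,x_n]$ is a nonzero homogeneous polynomial of degree $o$, and by iterated application of $\partial / \partial x_i$ some multi-index $\beta$ with $|\beta| = o-1$ yields $g := \partial^{\beta} f$ with $\ord_a g = 1$. Set $V = V(g) \subset U$. That $V$ is regular at $a$ is immediate, and property (1) follows by the same trick used in Prop.\ \ref{468}: if $b$ lies in the local top locus $S$ then $\ord_b f \ge o$, so $\ord_b(\partial^{\beta} f) \ge 1$, hence $g(b)=0$ and $b \in V$. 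In particular any regular center $Z \subset S$ satisfies $Z \subset V$.

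For property (2), I would apply Prop.\ \ref{111} to choose local coordinates $x_1,\dots,x_n$ at $a$ so that $Z = V(x_{n-k+1},\dots,x_n)$, $V = V(x_n)$, and the local blowup at $a'$ is monomial in a specified chart. The Tschirnhaus transformation (Rem.\ \ref{9.4}), which in characteristic zero replaces $x_n$ by $x_n - \tfrac{1}{o}\,h(x_1,\dots,x_{n-1})$ for a suitable $h$, puts $f$ into the normal form
\[
f = x_n^{o} + \sum_{i=0}^{o-2} a_i(x_1,\dots,x_{n-1})\, x_n^{i},
\]
and $V = V(x_n)$ is still defined by the appropriate $(o{-}1)$-st derivative of $f$. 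Since $\ord_Z f = o$, each $a_i$ lies in $(x_{n-k+1},\dots,x_{n-1})^{o-i}$. Plugging into the chart formulas of def.\ \ref{blowup6} yields in the $x_j$-chart, for any $j \in \{n-k+1,\dots,n-1\}$, the weak transform
\[
f^{\curlyvee} \;=\; x_n^{o} + \sum_{i=0}^{o-2} \tilde a_i(x_1,\dots,x_{n-1})\, x_n^{i},
\]
so the Tschirnhaus form is preserved; and in the $x_n$-chart the same computation gives $f^{\curlyvee} = 1 + (\text{higher order})$, a unit at the origin, ruling out equiconstant points there. In each remaining chart the operator $\partial^{o-1}/\partial x_n^{o-1}$ annihilates the tail $\sum_{i\le o-2} \tilde a_i x_n^i$ and sends the leading term to $o!\, x_n$, which is nonzero since $\mathrm{char}\,\K = 0$. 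If $a'$ is equiconstant, i.e.\ $\ord_{a'} f^{\curlyvee} = o$, then all derivatives of $f^{\curlyvee}$ of order $<o$ must vanish at $a'$; in particular $o!\,x_n$ vanishes at $a'$, forcing $a' \in V^{s} = V(x_n)$.

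For property (3), note that the normal form of $f^{\curlyvee}$ displayed above is again Tschirnhaus with respect to the induced coordinates at $a'$, so $V^{s}$ is itself an osculating hypersurface for $X^{s}$ at $a'$ in the sense of def.\ \ref{9.4}. Hence the argument of the previous paragraph can be applied verbatim to the next blowup, and property (2) propagates through any sequence of blowups along regular centers contained in the equiconstant top loci. The main obstacle is the bookkeeping in step (2): one must verify that the exponent estimate $a_i \in (x_{n-k+1},\dots,x_{n-1})^{o-i}$ really does force the exact factorization $f^{*} = x_j^{o} f^{\curlyvee}$, and that the Tschirnhaus coordinate change $\varphi$ commutes compatibly with the monomialization coordinate change of Prop.\ \ref{111}(8). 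Both are routine but tedious; once in place, the characteristic-zero hypothesis is used exactly once, in the non-vanishing of the factorial $o!$, which is precisely the known obstruction to this argument in positive characteristic.
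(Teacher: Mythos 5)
Your proposal is correct in substance, but note that the paper gives no in-text proof of Prop.~\ref{9.5}: it simply cites \cite{EH}. The honest comparison is therefore with the apparatus the paper builds around the statement --- Prop.~\ref{181}, Prop.~\ref{111}, Rem.~\ref{9.4}, ex.~\ref{9.8} and Prop.~\ref{899} --- and your argument is essentially that apparatus correctly assembled: Tschirnhaus elimination of the $x_n^{o-1}$-terms, coordinates in which $Z$ is a coordinate subspace and the blowup is given by the standard charts, the estimate $a_i\in(x_{n-k+1},\ldots,x_{n-1})^{o-i}$ forced by $\ord_Z f=o$, and stability of the normal form under the chart substitutions, which gives the induction for property (3) exactly as the remark following the weak-maximal-contact proposition in Lecture X asserts. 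Your one genuinely useful variation is to replace the initial-form computation of ex.~\ref{9.8} by the identity $\partial_{x_n}^{o-1}f^{\curlyvee}=o!\,x_n$ in the $x_j$-charts, $j\neq n$: since $\tilde a_i$ involves only $x_1,\ldots,x_{n-1}$, this identity locates the equiconstant points on $V^s=V(x_n)$ at \emph{every} point of the chart, not merely at chart origins, so you in fact never need the translation of Prop.~\ref{111}(8) --- and your chart case distinction is complete because the only fiber point lying exclusively in the $x_n$-chart is its origin, where $f^{\curlyvee}$ is a unit.

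Two small repairs. First, the element $f$ must be chosen of order equal to $\ord_a I$, which is the \emph{minimal} order occurring in a Macaulay basis, not the maximal one as your parenthetical says: for $f$ of order $o'>o=\ord_a I$ and $b\in\ttop_a(I)$ one only gets $\ord_b(\partial^\beta f)\geq o-o'+1$, which is vacuous, so property (1) would fail. Second, the Tschirnhaus change alone does not make $f$ monic of $x_n$-degree $o$; that requires $x_n$-regularity (ex.~\ref{9.16}) plus Weierstrass preparation in $\wh\calo_{W,a}$, which is legitimate here because orders and equiconstancy are insensitive to completion (Prop.~\ref{222}) and to unit factors. Alternatively, keep a unit coefficient $a_o$ and possible terms of $x_n$-degree $>o$: under the $x_j$-chart substitution every such term acquires a factor $x_j^{i-o}$ with $i-o\geq 1$, and $x_j$ vanishes at each point $a'$ of the exceptional divisor, so $\partial_{x_n}^{o-1}f^{\curlyvee}(a')$ is still $x_n(a')$ times a unit, which is all your argument needs. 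With these adjustments the proof goes through as proposed.
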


\begin{proof} \cite{EH}.\end{proof}


\begin{rem} The assertion of the proposition does not hold over fields of positive characteristic: R.÷ Nara\-simhan, a student of Abhyankar, gave an example of a hypersurface $X$ in $\AAA^4$ over a field of characteristic $2$ whose top locus is not contained at $0$ in any regular local hypersurface, and Kawanoue describes a whole family of such varieties \cite{Narasimhan, Kawanoue}, \cite{Ha_Obstacles}, ex.÷ 8. See also ex.÷ \ref{775} below. In Narasimhan's example, there is a sequence of point blowups for which there is no regular local hypersurface $V$ of $\AAA^4$ at $0$ whose strict transforms contain all points where the transforms of $X$ have order $2$ as at the beginning \cite{Ha_BAMS_1} II.14, ex.÷ 2, p.÷ 388.\end{rem}


\begin{rem} The existence of hypersurfaces of maximal contact in zero characteristic suggests to associate to $X$ locally at a point $a$ a variety $Y$ defined by an ideal $J$ in the hypersurface $V$ and to observe the behaviour of $X$ under blowup by means of the behaviour of $Y$ under the induced blowup: the transform of $Y$ under the blowup of $V$ along a center $Z$ of $W$ locally contained in $V$ should equal the variety $Y'$ which is associated in a similar manner as $Y$ to $X$ to the strict transform $X^s$ of $X$ in $V^s$ at points $a'$ above $a$ where the order of $X^s$ has remained constant. The variety $Y$ or the ideal $J$ and their transforms may then help to measure the improvement of $X^s$ at $a'$ by looking at their respective orders. This is precisely the way how the proof of resolution in zero characteristic proceeds. The reasoning is also known as \textit{descent in dimension}. The main problem in this approach is to define properly the variety $Y$, respectively the ideal $J$, and to show that 
the local construction is independent of the choice of $V$ and patches to give a global resolution algorithm.  \end{rem}


\hskip 3cm $a'\in E\subset W'\hskip .2cm\leadsto \hskip .2cmV'\supset Y'$\\

\hskip 3cm $\downarrow \hskip .6cm \downarrow \hskip .6cm\downarrow \pi \hskip .8cm \downarrow \hskip .6cm \downarrow \pi_{\vert Y'}$\\

\hskip 3cm  $a\in \, \,Z \subset W\hskip .3cm\leadsto \hskip .2cm V\, \,\supset Y$

\goodbreak


\bigskip
\noindent\textit{\examples}


\begin{eg} \label{9.8}$\hint$ Let $\pi:(W',a')\too (W,a)$ be a local blowup and let $x_1,\ldots, x_n$ be local coordinates on $W$ at $a$ such that $\pi$ is monomial. Assume that $x_1$ appears in the initial form of an element $f\in\calo_{W,a}$, and let $V\subset W$ be the local hypersurface at $a$ defined by $x_1=0$. If the order of the strict transform $f^s$ of $f$ at $a'$ has remained constant equal to the order of $f$ at $a$, the point $a'$ belongs to the strict transform $V^s$ of $V$.\end{eg}


\begin{eg} \label{9.9} Let the characteristic of the ground field be different from $3$. Apply the second order differential operator $\partial = \frac{\partial^2}{\partial x^2}$ to $f = x^3 + x^2yz + z^5$ so that $\partial f = 6x + 2yz$. This defines a hypersurface of maximal contact for $f$ at $0$. Replacing in $f$ the variable $x$ by $x - {1\over 3}yz$ gives
\[
g = x^3 - \frac{1}{3}xy^2z^2  + \frac{2}{27}y^3z^3 + z^5.
\]
The term of degree $2$ in $x$ has been eliminated, and $x=0$ defines an osculating hypersurface for $g$ at $0$.\end{eg}


\begin{eg} \label{9.10}$\hint$ Assume that the characteristic is $0$. Let $X\subset \AAA^n$ be a hypersurface defined locally at the origin by a polynomial $f=x_n^d+\sum_{i=0}^{d-1}a_i(y)x_n^i$ where $y=(x_1,\ldots,x_{n-1})$ and $\ord_0 a_i(y)\geq d-i$. Make the change of coordinates $x_n\mapsto x_n-\frac{1}{d}a_{d-1}(y)$. Show that after this change, the hypersurface defined by $x_n=0$ has maximal contact with $X$ at the origin. What prevents this technique from working in positive characteristic?\end{eg}


\begin{eg} \label{9.11}$\hint$ Consider the hypersurface $X\subset\AAA^3$ given by the equation $x^2 y+x y^2-x^2 z+y^2 z-x z^2-y z^2=0$. Show that the hypersurface $V$ given by $x=0$ does not have maximal contact with $X$ at $0$. In particular, consider the blowup of $\AAA^3$ in the origin. Find a point $a'$ on the exceptional divisor that lies in the $x$-chart of the blowup such that the strict transform of $X$ has order $3$ at $a'$. Then show that the strict transform of $V$ does not contain this point.\end{eg}


\begin{eg} \label{9.12} Hypersurfaces of maximal contact are only defined locally. They need not patch to give a globally defined hypersurface of maximal contact on $W$. Find an example for this.\end{eg}


\begin{eg} \label{9.13} Consider $f=x^4+y^4+z^6$, $g=x^4+y^4+z^{10}$ and $h= xy+z^{10}$ under point blowup. Determine, according to the characteristic of the ground field, the points where the orders of $f$, $g$ and $h$ have remained constant.\end{eg}


\begin{eg} \label{9.14}$\hint$ Let $f=x^c+g(y_1,\ldots, y_m)\in K[[x,y_1,\ldots, y_m]]$ be a formal power series with $g$ a series of order $\geq c$ at $0$. Show that there exists in any characteristic a formal coordinate change maximizing the order of $g$.\end{eg}


\begin{eg} \label{9.15} Let $f=x^c+g(y_1,\ldots, y_m)\in K[x,y_1,\ldots, y_m]$ be a polynomial with $g$ a polynomial of order $\geq c$ at $0$. Does there exist a local coordinate change in $\AAA^{1+m}$ at $0$ maximizing the order of $g$? \end{eg}


\begin{eg} \label{9.16} Let $f$ be a polynomial or power series in $n$ variables $x_1,\ldots,x_n$ of order $c$ at $0$. Assume that the ground field is infinite. There exists a linear coordinate change after which $f(0,\ldots, 0, x_n)$ has order $c$ at $0$. Such polynomials and series, called {\it $x_n$-regular of order} $c$ at $0$, appear in the Weierstrass preparation theorem, which was frequently used by Abhyankar in resolution arguments.\end{eg}


\begin{eg} \label{9.17}$\hint$ Let $(W',a')\too (W,a)$ be a composition of local blowups in regular centers such that $a'$ lies in the intersection of $n$ exceptional components where $n$ is the dimension of $W$ at $a$. Let $f\in \calo_{W,a}$ and assume that the characteristic is zero. Show that the order of $f$ has dropped between $a$ and $a'$.\end{eg}


\begin{eg} \label{9.18}$\challenge$ Show the same in positive characteristic.\end{eg}


\section{Lecture X: Coefficient Ideals}


\begin{defn} Let $I$ be an ideal in a regular variety $W$, let $a$ be a point of $W$ with open neighbourhood $U$, and let $V$ be a regular closed hypersurface of $U$ containing $a$. Let $x_1,\ldots, x_n$ be coordinates on $U$ such that $V$ is defined in $U$ by $x_n=0$. The restrictions of $x_1,\ldots, x_{n-1}$ to $V$ form coordinates on $V$ and will be abbreviated by $x'$. For $f\in \calo_U$, denote by $\sum a_{f,i}(x')\cdot x_n^i$ the expansion of $f$ with respect to $x_n$, with coefficients $a_{f,i}=a_{f,i}(x')\in\calo_{V}$. The \textit{coefficient ideal of $I$ in $V$ at $a$} is the ideal $J_V(I)$ on $V$ defined by 
\[
J_V(I)=\sum_{i=0}^{o-1}(a_{f,i}, f\in I)^{o!\over o-i},
\]
where $o$ denotes the order of $I$ at $a$.\end{defn}


\begin{rem} The coefficient ideal is defined on whole $V$. It depends on the choice of the coordinates $x_1,\ldots, x_n$ on $U$, even so the notation only refers to $V$. Actually, $J_V(I)$ depends on the choice of a section $\calo_{V,a}\too \calo_{U,a}$ of the map $\calo_{U,a}\too \calo_{V,a}$ defined by restriction to $V$. The same definition applies to stalks of ideals in $W$ at points $a$, giving rise to an ideal, also denoted by $J_V(I)$, in the local ring $\calo_{V,a}$. The weights ${o!\over o-i}$ in the exponents are chosen so as to obtain a systematic behavior of the coefficient ideal under blowup, cf.÷ Prop.÷ \ref{884} below. The chosen algebraic definition of the coefficient ideal is modelled so as to commute with blowups \cite{EH}, but is less conceptual than definitions through differential operators proposed and used by Encinas-Villamayor, Bierstone-Milman,  W\l odarczyk, Kawanoue-Matsuki and Hironaka \cite{EV_Obergurgl, EV_Good_Points, BM_Canonical_Desing, Wlodarczyk, Kawanoue_Matsuki, Hironaka_Korea}.\end{rem}


\begin{prop} The passage to the coefficient ideal $J_V(I)$ of $I$ in $V$ commutes with taking germs along the local top locus $S=\ttop_a(I)\cap V$ of points of $V$ where the order of $I$ in $W$ is equal to the order of $I$ in $W$ at $a$: Let $x_1,\ldots x_n$ be coordinates of $W$ at $a$, defined on an open neighborhood $U$ of $a$, and let $V$ be closed and regular in $U$, defined by $x_n=0$. The stalks of $J_V(I)$ at points $b$ of $S$ inside $U$ coincide with the coefficient ideals of the stalks of $I$ at $b$.
\end{prop}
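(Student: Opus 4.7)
Since $b\in S$, the order $\ord_b I_b$ equals $o = \ord_a I$, so both $J_V(I)$ and $J_V(I_b)$ are defined using the same weights $c_i = o!/(o-i)$. Setting $K_i = (a_{f,i} : f \in I)\subseteq\calo_V$ and $L_i = (a_{g,i} : g \in I_b)\subseteq\calo_{V,b}$, the goal is the equality of ideals
\[
J_V(I)\cdot \calo_{V,b} \;=\; \sum_{i=0}^{o-1}(K_i\cdot\calo_{V,b})^{c_i} \;=\; \sum_{i=0}^{o-1} L_i^{c_i} \;=\; J_V(I_b)
\]
in $\calo_{V,b}$. The plan is to verify the two inclusions separately.

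The easy direction $J_V(I)\cdot\calo_{V,b}\subseteq J_V(I_b)$ rests only on $I\subseteq I_b$: each $f\in I$ gives $a_{f,i}\in L_i$, so $K_i\subseteq L_i\cap\calo_V$ and $K_i\cdot\calo_{V,b}\subseteq L_i$. Since localization commutes with finite sums, products and powers of ideals, summing the $c_i$-th powers yields the claim.

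For the reverse inclusion, take $g\in I_b$ and write $g=\sum_j f_j s_j$ with $f_j\in I$ and $s_j\in\calo_{U,b}$. Using the section $\calo_{V,b}\to\calo_{U,b}$ supplied by the coordinates, expand $s_j=\sum_k b_{j,k}(x')\, x_n^k$ with $b_{j,k}\in\calo_{V,b}$ and $f_j=\sum_\ell a_{f_j,\ell}(x')\, x_n^\ell$. Collecting coefficients of $x_n^i$ gives
\[
a_{g,i} \;=\; \sum_j\sum_{k+\ell=i} a_{f_j,\ell}\cdot b_{j,k},
\]
which shows $L_i\subseteq\sum_{\ell\leq i} K_\ell\cdot\calo_{V,b}$. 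Note that in general $L_i$ can strictly contain $K_i\cdot\calo_{V,b}$ through the lower-index contributions with $\ell<i$, so the inclusion cannot be reduced to a componentwise equality $L_i=K_i\cdot\calo_{V,b}$.

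The heart of the proof is therefore to verify that this index shift is absorbed after taking the weighted powers. Expanding $L_i^{c_i}\subseteq\bigl(\sum_{\ell\leq i}K_\ell\cdot\calo_{V,b}\bigr)^{c_i}$ produces sums of products $\prod_{k=1}^{c_i} K_{\ell_k}\cdot\calo_{V,b}$ with $\ell_k\leq i$; each such product must be shown to lie in $\sum_j(K_j\cdot\calo_{V,b})^{c_j}$. The harmonic weighting $c_i(o-i)=o!$ gives the budget $\sum_{k=1}^{c_i}(o-\ell_k)\geq c_i(o-i)=o!$, and combined with the order bounds $K_\ell\subseteq\mathfrak{m}_{V,b}^{\,o-\ell}$ forced by $b\in S$, the product should be repackaged via a weighted pigeonhole as an element of some $(K_j\cdot\calo_{V,b})^{c_j}$. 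This absorption step is the main obstacle and is precisely what the specific choice of the weights $c_i=o!/(o-i)$ was designed to guarantee; I expect it to be handled by reducing, via a Macaulay basis of $I$ adapted to the coordinate $x_n$, to the principal case where the $K_\ell$ are generated by a single coefficient and the combinatorics becomes a straightforward comparison of exponents. Once it is in place, summing over $i$ yields $J_V(I_b)\subseteq J_V(I)\cdot\calo_{V,b}$ and completes the proof.
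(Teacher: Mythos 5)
There is a genuine gap, and it sits exactly where you flagged it: the ``absorption'' step is not only left unproven but, in the generality in which you set it up, false. You reduce the hard inclusion to the claim that every product $K_{\ell_1}\cdots K_{\ell_{c_i}}$ with all $\ell_k\leq i$ lies in $\sum_j K_j^{c_j}$, using only the order bounds $\ord_b K_\ell\geq o-\ell$ and the budget $\sum_{k}(o-\ell_k)\geq o!$. This cannot work: take $o=3$, so $c_0=2$, $c_1=3$, $c_2=6$, and $K_0=(u^3)$, $K_1=(v^2)$, $K_2=(w)$ in $\K[u,v,w]$ localized at $0$. The product $K_0K_1^2K_2^3$ has $c_2=6$ factors with indices $\leq 2$ and budget $3+2+2+1+1+1=10\geq 3!$, yet
\[
u^3v^4w^3\notin (u^6)+(v^6)+(w^6)=K_0^{c_0}+K_1^{c_1}+K_2^{c_2},
\]
since no generator of this monomial ideal divides $u^3v^4w^3$. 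The harmonic weights only place such products in the \emph{integral closure} of $\sum_j K_j^{c_j}$, whereas the proposition asserts equality of ideals on the nose; so no argument at the level of abstract ideals satisfying your constraints can close the step, and the Macaulay basis device you invoke is a tool for strict transforms under blowup (Prop.~\ref{6.6}), not for this localization statement.

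What you missed --- and what makes the paper's one-line proof (``clear from the definition'') legitimate --- is that $K_i=(a_{f,i},\, f\in I)$ is generated by the coefficients of \emph{all} elements of the ideal $I$, and $I$ is stable under multiplication by $x_n$. Since $a_{x_n^{i-\ell}f,\,i}=a_{f,\ell}$, one gets $K_\ell\subseteq K_i$ for all $\ell\leq i$, i.e.\ the chain $K_0\subseteq K_1\subseteq\cdots\subseteq K_{o-1}$ is increasing. Your convolution formula then yields $L_i\subseteq\sum_{\ell\leq i}K_\ell\cdot\calo_{V,b}=K_i\cdot\calo_{V,b}$, so the componentwise equality $L_i=K_i\cdot\calo_{V,b}$, which you declared unattainable, does in fact hold; your contrary intuition comes from computing $K_i$ from a fixed generating set of $I$ rather than from the whole ideal (for $I=(x_n^2+u^2)$ one has $a_{x_nf,1}=u^2\in K_1$, not $K_1=0$). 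Once the componentwise equality is in place, the proposition is immediate because localization commutes with finite sums, products and powers of ideals --- no absorption, pigeonhole or Macaulay basis is needed. Your use of $b\in S$ is the correct one: it guarantees $\ord_b I_b=o$, so both sides are formed with the same weights $o!/(o-i)$; the order bounds $\ord_b K_\ell\geq o-\ell$ play no further role.
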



\begin{proof} Clear from the definition of coefficient ideals.\end{proof}


\begin{cor} In the above situation, for any fixed closed hypersurface $V$ in $U\subset W$ open, the order of $J_V(I)$ at points of $S\cap V$ is upper semicontinuous along $S$, locally at $a$. \end{cor}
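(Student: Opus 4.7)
The plan is to reduce the statement directly to the already-established upper semicontinuity of the order of an arbitrary ideal on a regular variety. Observe that $V$ is, by hypothesis, a regular closed hypersurface of the open neighborhood $U\subset W$, and $J_V(I)$ is (by its very construction as a finite sum of powers of ideals generated by the coefficients $a_{f,i}$) a genuine ideal sheaf on the regular variety $V$. Hence we may speak of its order at any point $b\in V$.

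First I would invoke the proposition established just above this corollary, which identifies the stalk of $J_V(I)$ at any $b\in S\cap V$ with the coefficient ideal of the stalk $I_b$ in $\calo_{V,b}$ formed with respect to the same coordinates. This ensures that the function $b\mapsto \ord_b J_V(I)$ on $S\cap V$ is really the one induced by the fixed ambient ideal $J_V(I)$, and not a pointwise recipe that might a priori jump when the order of $I$ drops off $S$.

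Next, I would apply the upper semicontinuity of the order of an ideal on a regular variety, proven earlier (in characteristic zero via the vanishing of partial derivatives, and in general via Hironaka's argument, cf.\ the proposition on upper semicontinuity of the order in Lecture VIII). Applied to the ideal $J_V(I)$ on the regular variety $V$, this says that for every $c\in\NNN$ the set
\[
\{b\in V\,:\,\ord_b J_V(I)\geq c\}
\]
is closed in $V$. Intersecting with $S\cap V$, which is locally closed in $V$ near $a$, gives the required upper semicontinuity along $S$ locally at $a$.

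No serious obstacle arises: the only point that needs care is to make sure that the order of $J_V(I)$ is compared at the same points in $V$ at which one has identified the stalks with the local coefficient ideals; this is exactly what the preceding proposition provides on $S\cap V$. Everything else is a direct appeal to the two standing results, so the proof reduces to a two-line combination of them.
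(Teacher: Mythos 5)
Your proof is correct and is precisely the argument the paper intends: the corollary is stated without separate proof because it follows, exactly as you say, by combining the preceding proposition (identifying the stalks of the fixed ideal $J_V(I)$ at points of $S\cap V$ with the local coefficient ideals) with the general upper semicontinuity of the order of an ideal on a regular variety from Lecture VIII, and then intersecting the closed sets $\{b\in V:\ \ord_b J_V(I)\geq c\}$ with $S$. Your remark about why the stalk identification is needed --- to rule out the pointwise recipe jumping where the order of $I$ changes --- is exactly the point of the preceding proposition.
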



\begin{rem} In general, $V$ need not contain, locally at $a$, the top locus of $I$ in $W$. This can, however, be achieved in zero characteristic by choosing for $V$ an osculating hypersurface, cf.÷ Prop.÷ \ref{414} below. In this case, the order of $J_V(I)$ at points of $S=\ttop_a(I)$ does not depend on the choice of the hypersurface, cf.÷ Prop.÷ \ref{669}. In arbitrary characteristic, a local hypersurface $V$ will be chosen separately at each point $b\in S$ in order to maximize the order of $J_V(I)$ at $b$, cf.÷ Prop.÷ \ref{575}. In this case,  the order of $J_V(I)$ at $b$ does not depend on the choice of $V$, and its upper semicontinuity as $b$ moves along $S$ holds again, but is more difficult to prove \cite{Ha_Power_Series}.\end{rem}


\begin{prop} \label{884} The passage to the coefficient ideal $J_V(I)$ of $I$ at $a$ commutes with blowup: Let $\pi: W'\too W$ be the blowup of $W$ along a regular center $Z$ contained locally at $a$ in $S=\ttop_a(I)$. Let $V$ be a local regular hypersurface of $W$ at $a$ containing $Z$ and such that $V^s$ contains all points $a'$ above $a$ where the order of the weak transform $I^\curlyvee$ has remained constant equal to the order of $I$ at $a$. For any such point $a'$, the coefficient ideal $J_{V^s}(I^\curlyvee)$ of $I^\curlyvee$ equals the controlled transform $J_{V}(I)^!= I_E^{-c}\cdot J_{V}(I)^*$ of $J_{V}(I)$ with respect to the control $c=o!$ with $o=\ord_a(I)$.
\end{prop}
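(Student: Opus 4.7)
The plan is to reduce to an explicit computation in a monomial chart of the blowup and then verify the identity by directly expanding both sides. By Prop.~\ref{111}, I would choose local coordinates $x_1,\ldots,x_n$ at $a$ such that $V=\{x_1=0\}$, the center $Z=\{x_1=\cdots=x_k=0\}$ is a coordinate subspace, and $\pi:(W',a')\to(W,a)$ is monomial. Since $V\supset Z$, the pullback $V^*$ contains $E$; in the $x_1$-chart it coincides with $E$, so $V^s$ is not visible there. Hence $a'$ lies in one of the remaining charts, say the $x_2$-chart, where the chart map is $x_1\mapsto x_1x_2$, $x_2\mapsto x_2$, $x_j\mapsto x_jx_2$ for $3\le j\le k$, and $x_j\mapsto x_j$ otherwise. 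In these coordinates $E=\{x_2=0\}$ and $V^s=\{x_1=0\}$, so both coefficient ideals are given by expansion in powers of $x_1$.

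For each $f\in I$, write $f=\sum_{i\ge0}a_{f,i}(x_2,\ldots,x_n)\,x_1^i$. The constancy of the order $o=\ord_aI$ along $Z$ implies $\ord_Z(a_{f,i})\ge o-i$, which in the $x_2$-chart yields the factorization $\pi^*(a_{f,i})=x_2^{o-i}\cdot b_{f,i}$ for unique $b_{f,i}\in\calo_{W',a'}$, since every monomial in $x_1,\ldots,x_k$ of degree $\ge o-i$ picks up at least the factor $x_2^{o-i}$ under the chart map. Consequently
\[
\pi^*(f)\;=\;\sum_i x_2^{o-i}b_{f,i}(x_1x_2)^i\;=\;x_2^{o}\sum_i b_{f,i}\,x_1^i,
\]
so $f^\curlyvee=\sum_i b_{f,i}\,x_1^i$ is the weak transform of $f$, and its coefficients in powers of $x_1$ are exactly the $b_{f,i}$. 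Under the hypothesis $\ord_{a'}I^\curlyvee=o$, this also identifies the $i$-th coefficient generators of $I^\curlyvee$ with the $b_{f,i}$.

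The two sides now match by a termwise comparison. For each fixed $i$,
\[
(a_{f,i}:f\in I)^{o!/(o-i)}\cdot\calo_{V^s}\;=\;(x_2^{o-i}b_{f,i}:f\in I)^{o!/(o-i)}\;=\;x_2^{\,o!}\cdot(b_{f,i}:f\in I)^{o!/(o-i)},
\]
because raising the common factor $x_2^{o-i}$ to the power $o!/(o-i)$ produces the uniform exponent $o!$. Summing over $0\le i\le o-1$ gives $J_V(I)^*=x_2^{o!}\cdot J_{V^s}(I^\curlyvee)=I_E^{o!}\cdot J_{V^s}(I^\curlyvee)$, and dividing by $I_E^c$ with $c=o!$ yields the asserted $J_V(I)^!=J_{V^s}(I^\curlyvee)$.

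The main obstacle is a careful alignment of exponents: the lower bound $o-i$ on $\ord_Z(a_{f,i})$ forced by the constancy of $\ord_ZI=o$ must match the weight $o!/(o-i)$ in the definition of $J_V(I)$ so that a single common factor $x_2^{o!}$ emerges uniformly. This is precisely the reason those particular weights are built into the coefficient ideal. A secondary point is that the computation is compatible with the definition using all $f\in I$ rather than a fixed generating set; one may if needed reduce to a Macaulay basis via Prop.~\ref{6.6} to avoid accounting for spurious generators introduced by ideal combinations.
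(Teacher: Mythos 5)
Your proposal is correct and follows essentially the same route as the paper's own proof: your factorization $\pi^*(a_{f,i})=x_2^{o-i}\,b_{f,i}$ is exactly the paper's relation $a_{f^\curlyvee,i}=h^{i-o}\cdot(a_{f,i})^*$ with $h$ a local equation of $E\cap V^s$, and both arguments conclude by the same exponent arithmetic $(o-i)\cdot\frac{o!}{o-i}=o!$ producing the uniform exceptional factor, whence $J_{V^s}(I^\curlyvee)=I_E^{-o!}\cdot J_V(I)^*=J_V(I)^!$. Your explicit reduction to the monomial chart via Prop.~\ref{111} and your remark on passing from a generating set $\{f^\curlyvee,\ f\in I\}$ to all elements of $I^\curlyvee$ make explicit two points the paper's computation glosses over, but they do not change the argument.
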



\begin{proof} Write $V'$ for $V^s$, and let $h=0$ be a local equation of $E\cap V'$ in $V'$.  The weak transform $I^\curlyvee$ of $I$ is generated by the elements $f^\curlyvee=h^{-o}\cdot f^*$ for $f$ varying in $I$, where ${}^*$ denotes the total transform. The coefficients $a_{f,i}$ of the monomials $x_n^i$ of the expansion of an element $f$ of $I$ of order $o$ at $a$ in the coordinates $x_1,\ldots, x_n$ satisfy $a_{f^{\curlyvee},i}=h^{i-o} \cdot(a_{f,i})^{*}$. Then \comment{[align vertically]}
\[J_{V'} (I^\curlyvee) =J_{V'} (\sum_i a_{f^\curlyvee,i}\cdot x_n^i,\ f^\curlyvee\in I^\curlyvee)\]  
\[ =J_{V'} (\sum_i a_{h^{-o}\cdot f^*,i}\cdot x_n^i,\ f\in I)\]  

\[=J_{V'} (\sum_i h^{-o}\cdot (a_{f,i}\cdot x_n^{i})^*,\ f\in I)\] 
\[=\sum_{i<o} h^{-o!}\cdot (a_{f,i}^*,\ f\in I)^{o!/(o-i)}\] 
\[=h^{-o!}\cdot (\sum_{i<o} (a_{f,i},\ f\in I)^{o!/(o-i)})^*\] 
\[=h^{-o!}\cdot (J_VI)^*= (J_VI)^!.\]  
This proves the claim.\end{proof}


\begin{rem} The definitions of the coefficient ideal used in \cite{EV_Obergurgl, EV_Good_Points, BM_Canonical_Desing, Wlodarczyk, Kawanoue_Matsuki, Hironaka_Korea} produce a weaker commutativity property with respect to blowups, typically only for the radicals of the coefficient ideals.\end{rem}


\begin{rem} The order of the coefficient ideal $J_V(I)$ of $I$ is not directly suited as a secondary invariant when the order of $I$ remains constant since, by the proposition, the coefficient ideal passes under blowup to its controlled transform, and thus its order may increase. In order to get a practicable secondary invariant it is appropriate to decompose $J_V(I)$ and $(J_V(I))^!$ into products of two ideals: the first factor is a principal monomial ideal supported by the exceptional locus, the second, possibly singular factor, is an ideal called the \textit{residual factor}, and supposed to pass under blowup in the factorization to its weak transform. Choosing suitably the exceptional monomial factor it can be shown that such factorizations always exist \cite{EH}. In this situation the order of the residual factor does not increase under blowup by Prop.÷ \ref{899} and can thus serve as a secondary invariant whenever the order of the ideal $I$ remains constant under blowup.\end{rem}


\begin{prop} \label{414} Assume that the characteristic of the ground field is zero. Let $I$ have order $o$ at a point $a\in W$, and let $V$ be a regular hypersurface for $I$ at $a$, with coefficient ideal $J=J_V(I)$. The locus $\ttop_a(J, o!)$ of points of $V$ where $J$ has order $\geq o!$ coincides with $\ttop_a(I)$,
\[
\ttop_a(J, o!)=\ttop_a(I).\] \end{prop}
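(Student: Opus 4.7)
The plan is to prove both inclusions separately, working in local coordinates $x_1,\ldots,x_n$ at $a$ in which $V=\{x_n=0\}$, and exploiting the $x_n$-adic expansion $f=\sum_{i\geq 0}a_{f,i}(x')\,x_n^i$ of elements $f\in I$. The central technical fact is the following \emph{decoupling lemma}: for a point $b\in V$ near $a$ and any $f\in I$,
\[
\operatorname{ord}_b(f)\geq o \iff \operatorname{ord}_b(a_{f,i})\geq o-i \text{ for every } 0\leq i<o.
\]
Indeed, passing to the completion $\widehat{\mathcal{O}}_{W,b}$ with local coordinates $(x_1-b_1,\ldots,x_{n-1}-b_{n-1},x_n)$, the powers $x_n^i$ are linearly independent over the subring generated by $x'-b'$, so no cancellation can occur between distinct powers of $x_n$. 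Thus $\operatorname{ord}_b(f)=\min_i\bigl(\operatorname{ord}_b(a_{f,i})+i\bigr)$, and the equivalence follows because for $i\geq o$ the summand $a_{f,i}\,x_n^i$ already has order $\geq o$ at $b$.

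For the inclusion $\operatorname{top}_a(I)\subset\operatorname{top}_a(J,o!)$: since the characteristic is zero, Proposition \ref{9.5} allows us to assume $V$ is an osculating (hence maximal contact) hypersurface, so that $\operatorname{top}_a(I)\subset V$ locally at $a$. Let $b\in\operatorname{top}_a(I)$, so $\operatorname{ord}_b(I)=o$. By the decoupling lemma applied to each $f\in I$, we get $\operatorname{ord}_b(a_{f,i})\geq o-i$ for all $f\in I$ and all $0\leq i<o$. Therefore every generator of the ideal $(a_{f,i},\,f\in I)^{o!/(o-i)}$ has order at least $\frac{o!}{o-i}\cdot(o-i)=o!$ at $b$, whence $\operatorname{ord}_b(J_V(I))\geq o!$, i.e.\ $b\in\operatorname{top}_a(J,o!)$.

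For the reverse inclusion $\operatorname{top}_a(J,o!)\subset\operatorname{top}_a(I)$: let $b\in V$ with $\operatorname{ord}_b(J)\geq o!$. Since $J$ is a sum of ideals $(a_{f,i},\,f\in I)^{o!/(o-i)}$ and the order of an ideal equals the minimum order of its generators, the assumption forces $\frac{o!}{o-i}\cdot\operatorname{ord}_b(a_{f,i})\geq o!$, i.e.\ $\operatorname{ord}_b(a_{f,i})\geq o-i$, for all $f\in I$ and every $0\leq i<o$. Applying the decoupling lemma in the reverse direction yields $\operatorname{ord}_b(f)\geq o$ for every $f\in I$, hence $\operatorname{ord}_b(I)\geq o$. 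By the upper semicontinuity of the order and the fact that $o=\operatorname{ord}_a(I)$ is the maximal value taken locally near $a$, we must have $\operatorname{ord}_b(I)=o$, so $b\in\operatorname{top}_a(I)$.

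The main obstacle is the first inclusion, whose proof genuinely requires the inclusion $\operatorname{top}_a(I)\subset V$ locally at $a$; this is precisely where characteristic zero enters through Proposition \ref{9.5}, and is the step that fails in positive characteristic (by Narasimhan's example mentioned in the remark). The decoupling lemma itself is characteristic-free and is the routine computational core of the argument.
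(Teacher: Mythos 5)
Your proof is correct and follows essentially the same route as the paper's: expand the elements of $I$ with respect to $x_n$ and observe that at points $b\in V$ one has $\ord_b f=\min_i(\ord_b a_{f,i}+i)$, so that $\ord_b I\geq o$ is equivalent to $\ord_b a_{f,i}\geq o-i$ for all $i<o$, i.e.\ to $\ord_b J_V(I)\geq o!$ --- the paper compresses both inclusions into exactly this chain of equivalences. Your explicit appeal to Prop.~\ref{9.5} to secure $\ttop_a(I)\subset V$ (so the decoupling applies to every point of $\ttop_a(I)$, which is where characteristic zero enters) makes precise a step the paper's proof leaves implicit.
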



\begin{proof} {} Choose local coordinates $x_1,\ldots ,x_n$ in $W$ at $a$ so that $V$ is defined by $x_n=0$. Expand the elements $f$ of $I$ with respect to $x_n$ with coefficients $a_{f,i}\in\calo_{V,a}$, and choose representatives of them on a suitable neighbourhood of $a$. Let $b$ be a point in a sufficiently small neighborhood of $a$. Then, by the upper semicontinuity of the order, $b$ belongs to $\ttop_a(I)$ if and only if $\ord_b I\geq o$, which is equivalent to $\sum_{i<o}a_{f,i}\cdot x_n^i$ having order $\geq o$ at $b$ for all $f\in I$. This, in turn, holds if and only if $a_{f,i}$ has order $\geq o-i$ at $b$, say $a_{f,i}^{\frac{o!}{o-i}}$ has order $\geq o!$ at $b$. Hence $b\in \ttop_a(I)$ if and only if $b\in\ttop_a(J_V(I),o!).$  \end{proof}


\begin{cor} Assume that the characteristic of the ground field is zero. Let $a$ be a point in $W$ and set $S=\ttop_a(I)$. Let $U$ be a neighbourhood of $a$ on which there exists a closed regular hypersurface $V$ which is osculating for $I$ at all points of $S\cap U$. Let $J_V(I)$ be the coefficient ideal of $I$ in $V$. 

(a) The top locus $\ttop_a(J_V(I))$ of $J_V(I)$ on $V$ is contained in $\ttop_a(I)$. 

(b) The blowup of $U$ along a regular locally closed subvariety $Z$ of $\ttop_a(J_V(I))$ $Z$ commutes with the passage to the coefficient ideals of $I$ and $I^\curlyvee$ in $V$ and $V^s$.
\end{cor}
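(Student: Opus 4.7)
My plan is to reduce both parts to results already established: part (a) to Prop.~\ref{414}, and part (b) to Prop.~\ref{884}, using (a) as the link between them.

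For (a), set $o = \ord_a(I)$. First I would observe that $\ord_a J_V(I) \geq o!$. Indeed, for every $f \in I$ with $\ord_a f \geq o$, the expansion $f = \sum_i a_{f,i}(x')\cdot x_n^i$ yields $\ord_a a_{f,i} \geq o-i$ for $i < o$, so the generator $(a_{f,i})^{o!/(o-i)}$ of $J_V(I)$ has order at least $(o-i)\cdot o!/(o-i) = o!$ at $a$. Consequently, if $b$ belongs to the local top locus $\ttop_a(J_V(I))$, then $\ord_b J_V(I) = \ord_a J_V(I) \geq o!$, whence $b \in \ttop_a(J_V(I), o!)$. Prop.~\ref{414} identifies the latter with $\ttop_a(I) = S$, so $\ttop_a(J_V(I)) \subseteq S$, proving (a).

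For (b), I would verify the hypotheses of Prop.~\ref{884} for the triple $(I, V, Z)$. By (a) applied along $S\cap U$ together with the assumption that $V$ is osculating at every point of $S \cap U$, any regular locally closed $Z \subseteq \ttop_a(J_V(I))$ lies in the local top locus $S$ of $I$. Moreover $Z\subseteq V$ holds automatically since $J_V(I)$ is an ideal on $V$. Finally, in characteristic zero Prop.~\ref{9.5} guarantees that every osculating hypersurface is of maximal contact, so the strict transform $V^s$ contains all points $a'$ above $a$ at which $\ord I^\curlyvee$ has remained constant equal to $o$. All assumptions of Prop.~\ref{884} are thereby met, and one concludes that the passage to the coefficient ideal commutes with the blowup along $Z$, i.e.\ $J_{V^s}(I^\curlyvee)$ equals the controlled transform $(J_V(I))^!$ of $J_V(I)$ with respect to the control $c=o!$.

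The main subtlety is not the algebra but the bookkeeping about where things live: Prop.~\ref{884} requires the center to lie in $\ttop_a(I)$, whereas the corollary only assumes that $Z$ lies in $\ttop_a(J_V(I))$. Part (a) is precisely what bridges this gap, and it is essential that the containment $Z\subseteq \ttop_a(J_V(I))\subseteq S$ propagates along the whole of $Z$, which uses the hypothesis that $V$ is osculating at \emph{all} points of $S \cap U$, not merely at $a$. Once this is observed, both statements follow by direct invocation of the quoted propositions with no further computation.
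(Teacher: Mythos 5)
Your proof is correct and takes essentially the same route as the paper, whose own proof simply states that (a) is immediate from Prop.~\ref{414} and (b) follows from Prop.~\ref{884}. Your write-up merely makes explicit the details left tacit there: the bound $\ord_a J_V(I)\geq o!$ coming from $\ord_a a_{f,i}\geq o-i$, the bridge $Z\subseteq\ttop_a(J_V(I))\subseteq S$ supplied by (a), and the fact (Prop.~\ref{9.5}) that osculating hypersurfaces have maximal contact, which is exactly what Prop.~\ref{884} needs of $V^s$.
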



\begin{proof} Assertion (a) is immediate from the proposition, and (b) follows from Prop.÷ \ref{884}.\end{proof}
 

\begin{prop} \label{575} (Encinas-Hauser) Assume that the characteristic of the ground field is zero. The order of the coefficient ideal $J_V(I)$ of $I$ at $a$ with respect to an osculating hypersurface $V$ at $a$ attains the maximal value of the orders of the coefficient ideals over all local regular hypersurfaces. In particular, it is independent of the choice of $V$. \end{prop}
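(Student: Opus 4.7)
The plan is to fix the osculating hypersurface $V$ at $a$ and to show $\ord_a J_{V'}(I) \leq \ord_a J_V(I)$ for any other regular hypersurface $V'$ through $a$. The argument combines Weierstrass preparation, the Tschirnhaus transformation of Rem.~\ref{9.4}, and an explicit binomial identity, all of which require the characteristic to be zero.

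\emph{Setup.} Choose local coordinates $x_1,\ldots,x_n$ at $a$ with $V = \{x_n = 0\}$ in which some $f \in I$ of order $o$ defining $V$ via an $(o-1)$-st $x_n$-derivative is Weierstrass-prepared, $f = x_n^o + \sum_{i \leq o-2} a_{f,i}(x')\, x_n^i$, the vanishing of the $x_n^{o-1}$-coefficient being exactly the osculating condition. Introduce, for $g \in I$ of order $o$, the weighted order $\mu_V(g) = \min_{i<o} \ord_a(a_{g,i})/(o-i)$; by the definition of the coefficient ideal, and because elements of order strictly greater than $o$ contribute only coefficients of larger weighted order, one has $\ord_a J_V(I) = o!\cdot \mu$ with $\mu := \min_{g} \mu_V(g)$. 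Restricting to the generic case in which $V'$ is transversal to the $x_n$-axis at $a$, one may write $V' = \{x_n + \varphi(x') = 0\}$ for some $\varphi \in \mm_{V,a}$; set $x_n' = x_n + \varphi$.

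\emph{Binomial identity and case distinction.} For any $g = \sum_j a_{g,j}(x')\,x_n^j \in I$, expanding in powers of $x_n'$ gives
$$a'_{g,k} = \sum_{j \geq k} \binom{j}{k}\, a_{g,j}\, (-\varphi)^{j-k},$$
and specialization to $f$ yields $a'_{f,o-1} = -o\,\varphi$, so $\ord_a a'_{f,o-1} = \ord_a \varphi$. Write $\nu = \ord_a \varphi$. If $\nu < \mu$, then $\mu_{V'}(f) \leq \nu < \mu$, whence $\ord_a J_{V'}(I) \leq o!\cdot \mu_{V'}(f) < \ord_a J_V(I)$. If $\nu \geq \mu$, choose $g \in I$ of order $o$ and $k_0 < o$ with $\mu_V(g) = \ord_a a_{g,k_0}/(o-k_0) = \mu$; in the expansion of $a'_{g,k_0}$ the summand $j = k_0$ equals $a_{g,k_0}$, of order $\mu(o-k_0)$, while every $j > k_0$ summand has order $\ord_a a_{g,j} + (j-k_0)\nu \geq \mu(o-j) + (j-k_0)\nu = \mu(o-k_0) + (j-k_0)(\nu - \mu) \geq \mu(o-k_0)$. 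For $\nu > \mu$ the leading summand strictly dominates, so $\ord_a a'_{g,k_0} = \mu(o-k_0)$ and $\mu_{V'}(g) \leq \mu$; in the borderline case $\nu = \mu$ the identity $\ord_a a'_{f,o-1} = \mu$ directly yields $\mu_{V'}(f) \leq \mu$. In every subcase, $\ord_a J_{V'}(I) \leq o!\mu = \ord_a J_V(I)$.

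The delicate point is to guarantee that at least one leading term of some $a'_{g,k}$ survives the binomial expansion without cancellation; this is handled by the careful choice of $(g,k_0)$ in the main subcase and by the distinguished $f$ in the borderline subcase $\nu = \mu$. Characteristic zero enters twice in an essential way: through the invertibility of $o$ that underlies the Tschirnhaus normalization, and through the non-vanishing of the binomial coefficients $\binom{j}{k}$ with $0 \leq k \leq j \leq o$ used throughout the expansion. The independence of $\ord_a J_V(I)$ from the choice of osculating $V$ is then immediate, since each such $V$ realizes the same maximum $o!\mu$.
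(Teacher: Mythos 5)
Your proposal runs on exactly the mechanism of the paper's own proof, merely organized as a direct two-case estimate instead of a contradiction: after normalizing $f$ to $x_n^o+\sum_{i\leq o-2}a_{f,i}(x')x_n^i$, the shift $x_n\mapsto x_n+\varphi(x')$ produces from the monomial $x_n^o$ the coefficient $-o\,\varphi$ in the slot $o-1$, and characteristic zero (invertibility of $o$, non-vanishing of the binomial coefficients) guarantees this term survives; your case distinction $\nu<\mu$, $\nu=\mu$, $\nu>\mu$ is a careful unpacking of the paper's one-line ``$\varphi^*$ produces from $x_n^o$ a non-zero coefficient $a_{g,o-1}$'' argument. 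However, there is a genuine incompleteness: the proposition quantifies over \emph{all} local regular hypersurfaces, while you treat only the ``generic case'' $V'=\{x_n+\varphi(x')=0\}$. A hypersurface $V'$ whose tangent space at $a$ contains the $x_n$-direction is not a graph over $x'$ and is never reached by your binomial expansion, so as written the maximality claim is not proved. This residual case is easy but must be said: if the linear part of the equation $u$ of $V'$ involves no $x_n$, one eliminates on $V'$ some variable $x_m$, $m<n$, by a series whose linear part involves no $x_n$; in the degree-$o$ part of the restriction $f|_{V'}$ the monomial $x_n^o$ then survives, since all other terms have $x_n$-degree $\leq o-2$, so the zeroth coefficient of the $u$-expansion of $f$ has order exactly $o$, whence $\ord_a J_{V'}(I)\leq o!\leq o!\,\mu=\ord_a J_V(I)$.

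A second, smaller flaw is your justification for computing $\mu$ over elements of order $o$ only: the claim that ``elements of order strictly greater than $o$ contribute only coefficients of larger weighted order'' is false as stated --- such an element has all weighted orders $>1$, but not necessarily $>\mu$ once $\mu>1$. The reduction is nevertheless valid and should be repaired as follows: if $g\in I$ of order $>o$ had $\mu_V(g)<\mu$, realized at a slot $k_0$, then $f+g$ has order $o$ and $\ord_a a_{f+g,k_0}=\ord_a a_{g,k_0}$, because $\ord_a a_{f,k_0}\geq\mu(o-k_0)>\ord_a a_{g,k_0}$; hence $\mu_V(f+g)<\mu$, contradicting the definition of $\mu$. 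Finally, two technical points deserve a sentence each: Weierstrass preparation is available only in $\wh\calo_{W,a}$ and multiplies $f$ by a unit, and the Tschirnhaus elimination of the $x_n^{o-1}$-term is performed after preparation --- both are harmless, since orders are insensitive to completion by Prop.~\ref{222} and units leave the family $\{a_{g,i},\, g\in I\}$ unchanged because $I$ is an ideal. With these repairs your argument is correct and in fact supplies the detail the paper's compressed proof omits.
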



\begin{proof} \comment{[If the coefficient ideal is zero, we might have to work in the completion of $\calo_{W,a}$.]} Choose local coordinates $x_1,\ldots, x_n$ in $W$ at $a$ such that the appropriate derivative of the chosen element $f\in I$ is given by $x_n$. Let $o$ be the order of $f$ at $a$. The choice of coordinates implies that the expansion of $f$ with respect to $x_n$ has a monomial $x_n^o$ with coefficient $1$ and no monomial in $x_n$ of degree $o-1$. Any other local regular hypersurface $U$ is obtained from $V$ by a local isomorphism $\varphi$ of $W$ at $a$. Assume that the order of $J_V(\varphi^*(I))$ is larger than the order of $J_V(I)$. Let $g=\varphi^*(f)$. This signifies that the order of all coefficients $a_{g,i}^{o!\over o-i}$ is larger than the order of $J_V(I)$. Therefore $\varphi^*$ must eliminate from $f$ the terms of $a_{f,i}$ for which $a_{f,i}^{o!\over o-i}$ has order equal to the order of $J_V(I)$. But then $\varphi^*$ produces from $x_n^o$ a non-zero coefficient $a_{g,o-1}$ such 
that $a_{g,o-1}^{o!}$ has order equal to the order of $J_V(I)$, contradiction.
\end{proof}


\begin{rem} This result suggests to consider in positive characteristic as a substitute for hypersurfaces of maximal contact local regular hypersurfaces which maximize the order of the associated coefficient ideal. Such hypersurfaces are used in recent approaches to resolution of singularities in positive characteristic \cite{Hironaka_CMI,  Ha_BAMS_2, HW}, relying on the work of Moh on the behaviour of the coefficient ideal in this situation \cite{Moh}. 
\end{rem}


\begin{prop} \label{669} Let the characteristic of the ground field be arbitrary. The supremum in $\NNN\cup\{\infty\}$ of the orders of the coefficient ideals $J_V(I)$ of $I$ in local regular hypersurfaces $V$ in $W$ at $a$ is realized by a formal local regular hypersurface $V$ in $W$ at $a$ (i.e., $V$ is defined by an element of the complete local ring $\wt\calo_{W,a}$). If the supremum is finite, it can be realized by a local regular hypersurface $V$ in $W$ at $a$.\end{prop}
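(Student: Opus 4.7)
The second assertion is immediate: since orders of coefficient ideals lie in $\NNN$, a finite supremum is the maximum of a bounded subset of $\NNN$ and hence attained by some algebraic hypersurface. So I focus on the first assertion. Suppose the supremum equals $\infty$, and pick a sequence $V_k$ of local regular hypersurfaces with $\ord_a J_{V_k}(I)\geq k$. The plan is to construct a formal regular hypersurface $V_\infty$ realizing the supremum by a projective limit argument. First I would fix coordinates $x_1,\ldots,x_n$ on $W$ at $a$ and, after a linear change together with passage to a cofinal subsequence (reducing to a fixed tangent direction using that $\PPP(\mathrm{T}_a W)$ is Noetherian), assume each $V_k$ has tangent transversal to $\{x_n=0\}$ and hence is defined in $\wh\calo_{W,a}$ by an equation $x_n=\phi_k(x')$ with $\phi_k\in\wh\mm_{V,a}=(x_1,\ldots,x_{n-1})\,\K[[x']]$. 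For $f\in I$ with expansion $f=\sum_i a_{f,i}(x')\,x_n^i$ (where $\ord a_{f,i}\geq o-i$ for $i<o=\ord_a I$), direct substitution yields the new coefficients $b_{f,j}(\phi)=\sum_{i\geq j}\binom{i}{j}\,a_{f,i}\,\phi^{i-j}$, from which $J_{V(\phi)}(I)$ is built.

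The central step is a continuity estimate for $\phi\mapsto\ord_a J_{V(\phi)}(I)$. Expanding $\phi^{i-j}-(\phi')^{i-j}=(\phi-\phi')\sum_{k=0}^{i-j-1}\phi^k(\phi')^{i-j-1-k}$ and combining with $\ord a_{f,i}\geq\max(o-i,0)$, one obtains $\ord(b_{f,j}(\phi)-b_{f,j}(\phi'))\geq o-j-1+M$ whenever $\phi\equiv\phi'\pmod{\wh\mm^M}$. For $M(N)$ linear in $N$, the set $\mathcal{C}_N=\{\phi:\ord_a J_{V(\phi)}(I)\geq N\}$ is therefore the preimage under $\pi_{M(N)}:\wh\mm_{V,a}\to\wh\mm_{V,a}/\wh\mm^{M(N)}$ of a Zariski-closed subset $\overline{\mathcal{C}}_N$ of the finite-dimensional affine $\K$-space $\wh\mm_{V,a}/\wh\mm^{M(N)}$, and $\overline{\mathcal{C}}_N$ is non-empty for every $N$ by hypothesis. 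For $N'\geq N$ the images $S_{N,N'}=\pi_{M(N)}(\mathcal{C}_{N'})$ are constructible in $\overline{\mathcal{C}}_N$ by Chevalley's theorem and form a decreasing chain in $N'$; by Noetherianity their Zariski closures stabilize on a non-empty closed set $\overline{\mathcal{C}}_{N,\infty}$, and after refining to a cofinal subsystem the transition maps $\overline{\mathcal{C}}_{N+1,\infty}\to\overline{\mathcal{C}}_{N,\infty}$ become surjective. Iteratively lifting yields a coherent sequence of truncations which, by completeness of $\wh\calo_{W,a}$, assembles into $\phi_\infty\in\bigcap_N\mathcal{C}_N$; then $V_\infty=V(\phi_\infty)$ satisfies $J_{V_\infty}(I)=0$, realizing the supremum.

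The hard part will be the limit extraction in the projective system: the space of formal hypersurfaces is not compact for infinite $\K$, so naive compactness fails, and one must reduce to a projective system of constructible subsets of finite-dimensional affine spaces, where Chevalley and Noetherianity force stabilization. The technically delicate point is the surjectivity of the stabilized transition maps, which I would handle by refining to a sufficiently cofinal subsystem; a supplementary step over the algebraic closure of $\K$ followed by descent ensures the existence of $\K$-rational lifts at each stage.
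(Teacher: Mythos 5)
Your finite case agrees with the paper's, which dismisses it as obvious (the supremum is over a non-empty set of integers bounded above, hence attained); your truncation estimate in fact gives a little more, namely that a maximizing formal hypersurface can be replaced by a polynomial one by truncating $\phi$ at level $M(N)$. The first half of your infinite case is also sound and is the right preparation for the completeness argument the paper invokes: the reduction to graphs $x_n=\phi(x')$ over a fixed coordinate frame along a cofinal subsequence is legitimate (though the reason is a pigeonhole on a basis of ${\rm T}_aW$ -- a vector avoided by infinitely many of the tangent hyperplanes exists over any field -- not Noetherianity of $\PPP({\rm T}_aW)$), the substitution formula $b_{f,j}=\sum_{i\geq j}\binom{i}{j}a_{f,i}\phi^{i-j}$ is correct, and the estimate $\ord\bigl(b_{f,j}(\phi)-b_{f,j}(\phi')\bigr)\geq o-j-1+M$ for $\phi\equiv\phi'$ mod $\wh\mm^M$ does show that $\mathcal{C}_N=\pi_{M(N)}^{-1}(T_N)$ for a Zariski-closed $T_N$ in a finite-dimensional truncation space, with $M(N)$ linear in $N$.

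The gap is exactly where you flagged it, and your proposed repair does not close it. Noetherian stabilization of the closures $\overline{S_{N,N'}}$ only yields transition maps with \emph{dense constructible} image, and density is not surjectivity; no cofinal refinement can fix this, because the defect lives in the $\K$-rational points of fixed maps. A nested system of closed cylinder conditions, non-empty at every finite level, can have empty intersection over a countable field: with $\K=\overline{\FFF}_p=\{a_1,a_2,\ldots\}$, take $T_N=\{c\in\K^N:\ c_{i+1}(c_1-a_i)=1,\ 1\leq i<N\}$; every $T_N$ has $\K$-points and the projections are compatible, yet a coherent infinite sequence would require $c_1\neq a_i$ for all $i$. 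Since the proposition is asserted in arbitrary characteristic, $\overline{\FFF}_p$ is a relevant ground field, so Chevalley plus Noetherianity cannot suffice: they produce at best a point of the inverse limit of the $T_N$ as schemes (a compatible system of generic points), which is not an element $\phi_\infty\in\K[[x']]$, and your descent step is likewise unavailable, since a $\overline{\K}$-point of such a limit does not descend to a $\K$-point. What the paper's cited proofs (\cite{EH, Ha_Power_Series}) actually use, beside completeness of $\wh\calo_{W,a}$, is a coherence property special to the sets $\mathcal{C}_N$: improving the order from $\geq N$ to $>N$ can be achieved by modifying $\phi$ only in degrees that tend to infinity with $N$, so that a suitable sequence $\phi_k$ is $\mm$-adically Cauchy and converges by completeness. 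This rigidity is transparent in the purely inseparable model $f=x_n^{p^e}+F(x')$, where $\ord\bigl((a-b)^{p^e}\bigr)\geq N$ forces $\ord(a-b)\geq N/p^e$, so any two good modifications agree to high order; proving an adequate general form of it (this is the content of \cite{Ha_Power_Series}, Thm.~3) is the genuine work of the infinite case, and it is precisely what your general-position argument leaves unproven.
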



\begin{proof} If the supremum is finite, the existence of some $V$ realizing this value is obvious. If the supremum is infinite, one uses the completeness of $\wh\calo_{W,a}$ to construct $V$, see \cite{EH, Ha_Power_Series}. \end{proof}


\begin{defn} A formal local regular hypersurface $V$ realizing the supremum of the order of the coefficient ideal $J_V(I)$ of $I$ at $a$ is called a \textit{hypersurface of weak maximal contact of $I$ at $a$}. If the supremum is finite, it will always be assumed to be a local hypersurface. \end{defn}


\comment{\begin{prop} \label{887} Assume that the supremum of the orders of the coefficient ideals $J_V(I)$ of $I$ in local regular hypersurfaces $V$ in $W$ at $a$ is finite. Let $V$ be a regular closed hypersurface in an open neighbourhood $U$ of $a$ in $W$ realizing this supremum. The set $T$ of points $b$ of the local top locus $S=\ttop_a(I)$ of $I$ in $U$ where the order of $J_V(I)$ equals the order of $J_V(I)$ at $a$ is closed in $S$. At points $b$ outside $T$ the supremum of the orders of the coefficient ideals $J_V(I)$ of $I$ at $b$ is smaller than the supremum at the points of $T$.\end{prop}
}

\comment{[{rem}: It is not clear whether at points of $S$ outside $T$ the hypersurface $V$ also realizes the maximum of the orders of $J_V(I)$. The proposition probably only holds for closed points, cf. Hironaka's generic up and down examples.]}


\comment{
\begin{proof}{}[work out in detail] By the commutativity of the passage to coefficient ideals in $V$ with taking stalks at points of $S$ [cf.÷ Prop.÷ above] the coefficient ideals of $I$ in $V$ at points $b$ are well defined. If the supremum of the orders of $J_V(I)$ at $b$ were larger than at $a$, this would hold on a whole open neighbourhood of $b$ in $S$. [and then?] The proof of the assertion uses the Artin approximation theorem, see \cite{Ha_Power_Series}  Thm.÷ 3.\end{proof}
}
\comment{
\begin{cor} \label{313} The lexicographic pair $(\ord_a(I), \ord_a(J))$ consisting of the order of $I$ at $a$ and the order of $J=J_V(I)$ at $a$ with respect to a local hypersurface $V$ of weak maximal contact with $I$ at $a$ is upper semicontinuous. \end{cor}
}
\comment{
\begin{proof} If the supremum is finite, this follows from the proposition. If it is infinite, the order of $J_V(I)$ cannot increase at points nearby $a$. It is thus upper semicontinuous in both cases.\end{proof}
}
\comment{
\begin{rem} {} [?] The above assertions probably only hold at closed points of $W$. In any case, Hironaka has shown that the residual order defined by $\ord_a(J_V(I))-\ord_a(M)$ with $M$ the exceptional monomial factor of $J_V(I)$ is not upper semicontinuous at non-closed points over fields of positive characteristic \cite{Hironaka_CMI}.\end{rem}
}

\begin{prop}(Zariski) Let $V$ be a formal local regular hypersurface in $W$ at $a$ of weak maximal contact with $I$ at $a$. Let $\pi : W' \too W$ be the blowup of $W$ along a closed regular center $Z$ contained locally at $a$ in $S=\ttop_a(I)$. The points $a' \in W'$ above $a$ for which the order of the weak transform $I^\curlyvee$ of $I$ at $a'$ has not decreased are contained in the strict transform $V^s$ of $V$. 
\end{prop}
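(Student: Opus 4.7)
The plan is to prove the proposition via an initial-form / directrix argument, following Zariski and Hironaka. Let $o = \ord_a(I)$, and choose formal local coordinates $x_1, \ldots, x_n$ at $a$, identifying the associated graded ring of $\calo_{W,a}$ with $\K[T_1, \ldots, T_n]$. Let $\inin_a(I) \subset \K[T_1, \ldots, T_n]$ denote the initial ideal of $I$, and let $\mathrm{Dir}_a(I) \subset \mathrm{T}_a W$ be the directrix of $I$ at $a$, i.e., the smallest linear subspace such that $\inin_a(I)$ is generated by forms in linear equations vanishing on $\mathrm{Dir}_a(I)$. The fiber $\pi^{-1}(a)$ of the exceptional divisor is canonically $\PPP(\mathrm{T}_a W / \mathrm{T}_a Z)$. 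A direct computation using the monomial chart formulas of Prop.~\ref{111} applied to a Macaulay basis of $I$ establishes two things: first, the hypothesis $Z \subset \ttop_a(I)$ forces $\mathrm{T}_a Z \subset \mathrm{Dir}_a(I)$; and second, a point $a' \in \pi^{-1}(a)$ is equiconstant, i.e.\ $\ord_{a'}(I^\curlyvee) = o$, exactly when the tangent direction of $a'$ in $\mathrm{T}_a W/\mathrm{T}_a Z$ lies in the image of $\mathrm{Dir}_a(I)$. Thus the equiconstant points above $a$ form precisely $\PPP(\mathrm{Dir}_a(I) / \mathrm{T}_a Z) \subset \pi^{-1}(a)$.

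The main step is the inclusion $\mathrm{Dir}_a(I) \subset \mathrm{T}_a V$. In characteristic zero this is standard: by Prop.~\ref{575} an osculating hypersurface is a weak maximal contact hypersurface, and any osculating hypersurface is by construction tangent to a hyperplane containing $\mathrm{Dir}_a(I)$, since the defining equation of an osculating hypersurface is, up to unit, a linear form vanishing on the directrix (this is the content of Prop.~\ref{414}). For an abstract weak maximal contact $V$ with $\mathrm{Dir}_a(I) \not\subset \mathrm{T}_a V$, choose a linear form $\ell$ vanishing on $\mathrm{Dir}_a(I)$ whose initial form is linearly independent from the initial form of the defining equation of $V$; a direct comparison of the expansions of a Macaulay basis of $I$ with respect to $V$ and $\tilde V = \{\ell = 0\}$ yields the strict inequality $\ord_a J_{\tilde V}(I) > \ord_a J_V(I)$, contradicting weak maximal contact.

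Since $\mathrm{T}_a Z \subset \mathrm{Dir}_a(I) \subset \mathrm{T}_a V$, formal regularity yields $Z \subset V$ locally at $a$, so $V^s$ is the strict transform of a regular hypersurface containing the center, and its fiber over $a$ is $\PPP(\mathrm{T}_a V / \mathrm{T}_a Z) \supset \PPP(\mathrm{Dir}_a(I) / \mathrm{T}_a Z)$. Combining with the characterization of equiconstant points from the first paragraph, every equiconstant point $a' \in \pi^{-1}(a)$ lies in $V^s$, as claimed. The main obstacle is the directrix-containment assertion in positive characteristic: Hironaka's directrix is then $p$-closed rather than linear, and the coefficient-ideal comparison that forces $\mathrm{Dir}_a(I)\subset\mathrm{T}_aV$ requires the more refined Moh-Hironaka analysis referenced in the remark following Prop.~\ref{669}. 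In characteristic zero the argument is entirely clean via osculating hypersurfaces.
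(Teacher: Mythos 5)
Your proof does not establish the proposition as stated, because the proposition is asserted in arbitrary characteristic and your central mechanism is a characteristic-zero fact. Weak maximal contact (Prop.~\ref{669} and the remark following Prop.~\ref{575}) is introduced in this paper precisely as the positive-characteristic substitute for maximal contact, so deferring characteristic $p$ to ``the more refined Moh--Hironaka analysis'' concedes exactly the case the statement is designed to cover: in characteristic $p$ the equiconstant points of $\pi^{-1}(a)$ are in general only contained in the projectivized \emph{ridge}, which strictly contains the directrix, and your auxiliary inclusions $\mathrm{T}_aZ\subset\mathrm{Dir}_a(I)$ and $\mathrm{Dir}_a(I)\subset\mathrm{T}_aV$ break down as well. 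Even in characteristic zero there are errors: your ``exactly when'' characterization of equiconstant points is false --- equiconstancy implies the direction lies in the directrix, but not conversely (for $f=x^2+y^3$ under the point blowup of $\AAA^2$, the unique point of $\PPP(\mathrm{Dir}_0(f))$ is the origin of the $y$-chart, where the order drops to $1$) --- and the key strict inequality $\ord_a J_{\tilde V}(I)>\ord_a J_V(I)$ in your second paragraph is asserted rather than proved. The citation of Prop.~\ref{414} is also misplaced: that proposition identifies the top loci $\ttop_a(J,o!)=\ttop_a(I)$ and says nothing about the defining equation of an osculating hypersurface being a linear form vanishing on the directrix.

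The paper's own proof shows you need far less than directrix containment, and what it uses is characteristic-free. From the definition of weak maximal contact one extracts only the single-variable fact of ex.~\ref{10.23}: the variable $x_n$ defining $V$ appears in the initial form of \emph{some} element $f\in I$ of order $o=\ord_a(I)$. Then, exactly as in the proof of Prop.~\ref{181}, one uses Prop.~\ref{111} to make the local blowup $(W',a')\too(W,a)$ monomial in suitable coordinates; any point $a'$ above $a$ lying off $V^s$ can be taken to be the origin of the $x_n$-chart, and the computation of ex.~\ref{9.8} applies: a term $c_\alpha x^\alpha$ of the initial form of $f$ with $\alpha_n>0$ and $|\alpha|=o$ produces in the weak transform $f^\curlyvee$ a monomial of degree $o-\alpha_n<o$, with no cancellation, so $\ord_{a'}I^\curlyvee<o$ and $a'$ is not equiconstant. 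This elementary argument is what makes the proposition valid in all characteristics; note also ex.~\ref{10.24}, which records that weak maximal contact is not even stable under blowup in characteristic $p$ --- a further indication that the directrix formalism cannot be transplanted wholesale. If you wish to keep your approach, you must either restrict the statement to characteristic zero (where it is subsumed by osculating hypersurfaces and Prop.~\ref{575}) or replace the directrix by an argument, like the one above, that uses only the occurrence of $x_n$ in a single initial form.
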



\begin{proof} By definition of weak maximal contact, the variable $x_n$ defining $V$ in $W$ at $a$ appears in the initial form of some element $f$ of $I$ of order $o=\ttop_a(I)$ at $a$, cf.÷ ex.÷ \ref{10.23} below. The argument then goes analogously to the proof of Prop.÷ \ref{181}. \end{proof}


\begin{rem} In characteristic zero and if $V$ has been chosen osculating at $a$, the hypersurface $V'$ is again osculating at points $a'$ above $a$ where $\ord_{a'}(I')=\ord_a(I)$, hence it has again weak maximal contact with $I'$ at such points $a'$. In positive characteristic this is no longer true, cf.÷ ex.÷ \ref{10.24}.\end{rem}


\bigskip
\noindent\textit{\examples}




\begin{eg} \label{10.17} Determine in all characteristics the points of the blowup $\wt\AAA^2$ of $\AAA^2$ at $0$ where the strict transform of $g= x^4+kx^2y^2 + y^4+3y^7 + 5y^8 + 7y^9$ under the blowup of $\AAA^2$ at $0$ has order $4$, for any $k\in \NNN$.\end{eg}


\begin{eg} \label{10.18} Determine for all characteristics the maximal order of the coefficient ideal of $I=(x^3 + 5y^3 + 3(x^2y^2+xy^4) + y^6 + 7y^7 +y^9+y^{10})$ in regular local hypersurfaces $V$ at $0$.\end{eg}


\begin{eg} \label{10.19} Same as before for $I = (xy + y^4+3y^7 + 5y^8 + 7y^9)$.\end{eg}


\begin{eg} \label{10.20}$\hint$ Compute the coefficient ideal of $I=(x^5 + x^2y^4 + y^k)$ in the hypersurfaces $x=0$, respectively $y=0$. According to the value of $k$ and the characteristic, which hypersurface is osculating or has weak maximal contact?\end{eg}


\begin{eg} \label{10.21}$\hint$ Consider $f=x^2+y^3z^3 + y^7+z^7$. Show that $V\subset \AAA^3$ defined by $x=0$ is a local hypersurface of weak maximal contact for $f$. Blow up $\AAA^3$ at the origin. How does the order of the coefficient ideal of $f$ in $V$ behave under these blowups at the points where the order of $f$ has remained constant? Factorize suitably the controlled transform of the coefficient ideal with respect to the exceptional factor and observe the behaviour of the order of the residual factor.\end{eg}


\begin{eg} \label{10.22}$\hint$ Assume that, for a given coordinate system $x_1,\ldots, x_n$ in $W$ at $a$, the hypersurface $V$ defined by $x_n=0$ is osculating for a polynomial $f$ and that the coefficient ideal of $f$ in $V$ is a principal monomial ideal. Show that there is a sequence of blowups in coordinate subspaces of the induced affine charts which eventually makes the order of $f$ drop.\end{eg}


\begin{eg} \label{10.23} The variable $x_n$ defining a hypersurface $V$ in $W$ at $a$ of weak maximal contact with an ideal $I$ appears in the initial form of some element $f$ of $I$ of order $o=\ord_a(I)$ at $a$.\end{eg}


\begin{eg} \label{10.24} In positive characteristic, a hypersurface of weak maximal contact for an ideal $I$ need not have again weak maximal contact after blowup with the weak transform $I^\curlyvee$ at points $a'$ where the order of $I^\curlyvee$ has remained constant.\end{eg}


\begin{eg} \label{10.25} Compute in the following situations the coefficient ideal of $I$ in $W$ at $a$ with respect to the given local coordinates $x$, $y$, $z$ in $\AAA^3$ and the hypersurface $V$. Determine in each case whether the order of the coefficient ideal is maximal. If not, find a coordinate change which maximizes it.  

(a) $a=0\in\AAA^1$, $x$, $V:x=0$, $I=(x)$ and $I=(x+x^2)$.

(b) $a=0\in\AAA^2$, $x,y$, $V:x=0$, $I=(x)$, $I=(x+y^2)$, $I=(y+x^2)$, $I=(xy)$.

(c) $a=(1,0,0)\in\AAA^3$, $x,y,z$, $V:y+z=0$, $I=(x^2)$, $I=(xy)$, $I=(x^3+z^3)$.

(d) $a=0\in\AAA^3$, $x,y,z$, $V:x=0$, $I=(xyz)$, $I=(x^2+y^3+z^5)$.

(e) $a=0\in\AAA^2$, $x,y$, $V:x=0$, $I=(x^2+y^4, y^4+x^2)$. 
\end{eg}


\begin{eg} \label{10.26} Blow up in each of the preceding examples the origin and determine the points of the exceptional divisor $E$ where the order of the weak transform $I^\curlyvee$ of $I$ has remained constant. Check at these points whether commutativity holds for the descent to the coefficient ideal and its controlled transform.\end{eg}


\begin{eg} \label{10.27}$\hint$ Show that the maximum of the order of the coefficient ideal $J_V(I)$ of an ideal $I$ over all regular parameter systems of $\widehat \calo_{\AAA^n,0}$ is attained (it might be $\infty$). \comment{[{\it Hint}: Use the Artin approximation theorem.]}\end{eg}


\begin{eg} \label{10.28} Show that the supremum of the order of the coefficient ideal of an ideal $I$ in $W$ at a point $a$ can be realized, if the supremum is finite, by a regular system of parameters of the local ring $\calo_{W,a}$ without passing to the completion.\end{eg}


\begin{eg} \label{10.29}$\challenge$ Show that this maximum, when taken at any point of the top locus $S$ of $\AAA^n$ where $I$ has maximal order $o$, defines an upper semicontinuous function on $S$.\end{eg}


\begin{eg} \label{10.30}$\hint$ Let $V$ be the hypersurface $x_n=0$ of $\AAA^n$ and let $V'\too V$ be the blowup with regular center $Z$ in $V$. Let $I$ be an ideal of order $o$ at a point $a$ of $Z$, with weak transform $I^\curlyvee$. Assume that $\ord_{a'}I^\curlyvee=\ord_aI$ at the origin $a'$ of an $x_j$-chart for a $j<n$. Compare the controlled transform of the coefficient ideal $J_V(I)$ of $I$ with respect to the control $c=o!$ with the coefficient ideal $J_{V'}(I^\curlyvee)$ of $I^\curlyvee$.\end{eg}


\begin{eg} \label{10.31} Show that, in characteristic zero, the top locus of an ideal $I$ of $W$, when taken locally at a point $a$, is contained in a local regular hypersurface $V$ through $a$. Does this hypersurface maximize the order of the associated coefficient ideal?\end{eg}


\begin{eg} \label{10.32} Consider $f=x^3+y^2z$ in $\AAA^3$ and the point blowup of $\AAA^3$ at the origin. Find, according to the characteristic, at all points of the exceptional divisor a hypersurface of weak maximal contact for the strict transform of $f$.\end{eg}






\begin{eg} \label{10.33} Consider surfaces defined by polynomials $f=x^o + y^az^b\cdot g(y,z)$ where $y^az^b$ is considered as an exceptional monomial factor of the coefficient ideal in the hypersurface $V$ defined by $x=0$ (up to raising the coefficient ideal to the power $c=o!$). Assume that $a+b+\ord_0g\geq o$. Give three examples where the order of $g$ at $0$ is not maximal over all choices of local hypersurfaces at $0$, and indicate the coordinate changes which make it maximal.\end{eg}


\begin{eg} \label{10.34} Consider surfaces defined by polynomials $f=x^o + y^az^b\cdot g(y,z)$ where $y^az^b$ is considered again as an exceptional monomial factor of the coefficient ideal in the hypersurface $V$ defined by $x=0$. Assume that $a+b+\ord_0g\geq o$. Compute the strict transform $f'=x^o + y^{a'}z^{b'}\cdot g'(y,z)$ of $f$ under point blowup at points where the order of $f$ has remained equal to $o$.  Find three examples where the order of $g'$ is not maximal over all local coordinate choices.\end{eg} 


\begin{eg} \label{10.35} For a flag of local regular subvarieties $V_{n-1}\supset\ldots\supset V_1$ at $a$ in $W=\AAA^n$ one gets from an ideal $I=I_n$ in $W$ a chain of coefficient ideals $J_{n-1},\ldots, J_1$ in $V_{n-1},\ldots, V_1$ respectively, defined recursively as follows. Assume that $J_{n-1},\ldots, J_{i+1}$ have been constructed and that $J_{i+1}$ can be decomposed into $J_{i+1}=M_{i+1}\cdot I_{i+1}$ with prescribed monomial factor $M_{i+1}$ and some residual factor $I_{i+1}$. Then set  $J_i=J_{V_i}(I_{i+1})$, the coefficient ideal of $I_{i+1}$ in $V_i$ at $a$. Show that the lexicographic maximum of the vector of orders of the ideals $J_i$ at $a$ over all choices of flags at $a$ admitting the above factorizations of the ideals $J_i$ can be realized stepwise, choosing first a local hypersurface $V_{n-1}$ in $W$ at $a$ maximizing the order of $J_{n-1}$ at $a$, then a local hypersurface $V_{n-2}$ in $V_{n-1}$ at $a$ maximizing the order of $J_{n-2}$, and iterating this process.\end{eg}

\goodbreak

\section{Lecture XI: Resolution in Zero Characteristic}

\noindent The inductive proof of resolution of singularities in characteristic zero requires a more detailed statement about the nature of the resolution:


\begin{thm} \label{strongresolution} Let $W$ be a regular ambient variety and let $E \subset W$ be a (possibly empty) divisor with normal crossings. Assume that the characteristic of the ground field is $0$. Let $J$ be an ideal on $W$, together with a decomposition $J=M\cdot I$ into a principal monomial ideal $M$, the monomial factor of $J$, supported on a normal crossings divisor $D$ transversal to $E$, and an ideal $I$, the residual factor of $J$. Let $c_+ \geq 1$ be a given number, the control of $J$. 

There exists a sequence of blowups of $W$ along regular centers $Z$ transversal to $E$ and $D$ and their total transforms, contained in the locus $\ttop(J,c_+)$ of points where $J$ and its controlled transforms with respect to $c_+$ have order $\geq c_+$, and satisfying the requirements \textit{equivariance} and \textit{excision} of a strong resolution so that the order of the controlled transform of $J$ with respect to $c_+$ drops eventually at all points below $c_+$.\end{thm}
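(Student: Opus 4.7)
The plan is to prove the theorem by induction on the dimension $n = \dim W$, using the technique of descent in dimension via coefficient ideals and osculating hypersurfaces of maximal contact, which is available because the characteristic is zero. The base case $n = 0$ is vacuous; for $n = 1$ the statement reduces to the observation that an ideal on a regular curve can be monomialized by finitely many point blowups. For the inductive step one assumes that the theorem holds on any regular ambient variety of dimension $< n$, together with an arbitrary factorization of the type $J = M \cdot I$ and an arbitrary control, and proves it on $W$.

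Working locally at a point $a \in \mathrm{top}(J, c_+)$, I would first handle the trivial situation where $\mathrm{ord}_a I = 0$: then $J = M \cdot I$ has its order coming entirely from the monomial part $M$, and the problem reduces to the \emph{monomial case}, where a combinatorial procedure on the exponents of $M$ with respect to the components of $D$ (transversal to $E$) produces a sequence of blowups along coordinate intersections that makes $\mathrm{ord}(M)$ drop below $c_+$; this is handled separately and is a standard finite combinatorial argument. Assume now $o \coloneqq \mathrm{ord}_a I \geq 1$. By Prop.~\ref{9.5}, choose an osculating hypersurface $V \subset U$ for $I$ at $a$, which by Prop.~\ref{414} contains the local top locus of $I$ and hence also, locally, the intended centers. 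Form the coefficient ideal $J_V(I)$ on $V$, and decompose it as $J_V(I) = M' \cdot I'$ where $M'$ collects the exceptional monomial contribution of $D \cap V$ (plus the contribution inherited from $M$, raised to $o!$), and $I'$ is the residual factor. This is an ideal with a prescribed factorization on the regular $(n-1)$-dimensional variety $V$, to which the inductive hypothesis applies with control $c_+' = o!$.

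The inductive hypothesis provides a sequence of blowups of $V$ with regular centers, contained in the top locus of $J_V(I)$ and transversal to $(E \cup D)\cap V$ and its accumulated exceptional divisor, that eventually drops the order of the controlled transform of $J_V(I)$ below $o!$. By Prop.~\ref{181} (maximal contact) these centers are permissible in $W$ as well, and the sequence lifts to a sequence of blowups of $W$: because the centers lie in $V$, they are automatically transversal to $V$, and by construction they are transversal to $E$ and $D$. By the commutativity of coefficient ideals with blowup (Prop.~\ref{884}) the coefficient ideal of the weak transform $I^\curlyvee$ along the strict transform $V^s$ agrees with the controlled transform of $J_V(I)$ at every equiconstant point. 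Therefore once the order of $J_V(I)$ has dropped below $o!$ in $V$, by Prop.~\ref{414} the order of $I$ in $W$ has dropped below $o$ at every point above $a$, and consequently the order of $J = M\cdot I$ has dropped as well. Iterating with the strictly smaller value of $o$ as a secondary invariant, and using that $(\mathrm{ord}\,J, \mathrm{ord}\,I)$ is well-ordered lexicographically, the procedure terminates with $\mathrm{ord}(J) < c_+$ everywhere.

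The main obstacles are three. First, one must verify that the local construction does not depend on the auxiliary choices (the osculating hypersurface $V$ and the local coordinates): independence of the order of $J_V(I)$ from the choice of osculating hypersurface is Prop.~\ref{575}, and independence of the iterated construction from the flag of hypersurfaces is obtained by showing that any two osculating hypersurfaces at $a$ are mapped to each other by a local automorphism fixing $I$, so the resulting sequences of centers coincide; this simultaneously delivers the \emph{equivariance} and \emph{excision} properties, since any smooth base change lifts the local picture and any automorphism of $W$ stabilizing $J$ permutes the canonically defined centers. Second, the transversality of the successive centers to the accumulating exceptional divisors must be maintained by including each new exceptional component into the boundary $E$ before the next step, and by enforcing that the center lies in the intersection of the components of $E$ through its generic point (this is built into the invariant). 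Third, and genuinely hard, is the \emph{monomial case} that persists after $I$ has been reduced to order $0$: here the combinatorial resolution of $M$ must be carried out while keeping $M$ transversal to the growing $E$, and the termination argument for this pure monomial problem—an Euclidean-style descent on the exponent vectors—is the technical heart of the argument that requires a carefully chosen well-ordered invariant.
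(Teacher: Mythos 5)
Your proposal follows essentially the same route as the paper's proof: induction on the ambient dimension, descent via an osculating hypersurface (Prop.~\ref{9.5}), the coefficient ideal with control $o!$ and its decomposition into monomial and residual factors, commutativity of coefficient ideals with blowup (Prop.~\ref{884}) combined with Prop.~\ref{414} to transfer the drop of order on $V$ back to a drop of $\ord I$ on $W$, globalization of the locally defined centers through independence of the invariant from the choice of $V$ (Prop.~\ref{575}), and a final combinatorial stage for the residual monomial case. The overall architecture is correct and matches the paper's.

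There are, however, two places where you assert as automatic precisely what the paper flags as delicate. First, your claim that the centers supplied by the inductive hypothesis in $V$ ``are permissible in $W$ as well'' does not follow from Prop.~\ref{181} or Prop.~\ref{414}: Prop.~\ref{414} identifies $\ttop(J_V(I),o!)$ with $\ttop(I)$ at the initial step (and, via commutativity, at equiconstant points), but in the course of the downstairs resolution the controlled transform of $J_V(I)$ can have order $\geq o!$ at points where the order of $I^\curlyvee$ has already dropped below $o$ --- the top locus of the residual factor need not be contained in the top locus of $I$ (cf.~ex.~\ref{11.9}). If a downstairs center leaves $\ttop(I)$, then Prop.~\ref{899} (non-increase of $\ord I$) and Prop.~\ref{884} (whose hypothesis requires the center to lie in $S$) no longer apply; the paper resolves this by a technical modification of the coefficient ideal, the companion ideal, which your sketch omits. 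Second, your termination invariant $(\ord J, \ord I)$ is misordered: the order of the controlled transform of $J$ may \emph{increase} under blowup (this is exactly why the residual factor is introduced as the secondary invariant), so the pair as written is not lexicographically decreasing; the genuine descent is on $\ord I$ first (non-increasing, and forced to drop by the induction), refined by the orders of the residual factors of the iterated coefficient ideals. Since you in fact iterate on the strictly smaller value of $o$, your mechanism is the right one, but the stated pair would not close the argument. Relatedly, your gluing argument via a local automorphism ``fixing $I$'' and mapping one osculating hypersurface to another is stronger than what holds (this failure is the reason for homogenization in W{\l}odarczyk's approach); the paper instead shows that the invariant attached to $J_{n-1}$ is independent of $V$ and upper semicontinuous, so that its maximum locus is intrinsically defined.
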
 


\begin{defn} In the situation of the theorem, with prescribed divisor $D$ and control $c_+$, the ideal $J$ is called \textit{resolved} with respect to $D$ and $c_+$ if the order of $J$ at all points of $W$ is $<c_+$.\end{defn} 


\begin{rem} Once the order of the controlled transform of $J$ has dropped below $c_+$, induction on the order can be applied to find an additional sequence of blowups which makes the order of the controlled transform of  $J$ drop everywhere to $0$. At that stage, the controlled transform of $J$ has become the whole coordinate ring of $W$, and the total transform of $J$, which differs from the controlled transform by a monomial exceptional factor, has become a monomial ideal supported on a normal crossings divisor $D$ transversal to $E$. This establishes the existence of a strong embedded resolution of $J$, respectively of the singular variety $X$ defined by $J$ in $W$. \end{rem}


\begin{proof} The technical details can be found in \cite{EH}, and motivations are given in \cite{Ha_BAMS_1}. The main argument is the following. 

The resolution process has two different stages: In the first, a sequence of blowups is chosen through a local analysis of the singularities of $J$ and by induction on the ambient dimension. The order of the weak transforms of $I$ will be forced to drop eventually below the maximum of the order of $I$ at the points $a$ of $W$. By induction on the order one can then apply additional blowups until the order of the weak transform of $I$ has become equal to $0$. At that moment, the weak transform of $I$ equals the coordinate ring/structure sheaf of the ambient variety, and the controlled transform of $J$ has become a principal monomial ideal supported on a suitable transform of $D$, which will again be a normal crossings divisor meeting the respective transform of $E$ transversally. 

For simplicity, denote this controlled transform again by $J$. It is a principal monomial ideal supported by exceptional components. The second stage of the resolution process makes the order of $J$ drop below $c_+$. The sequence of blowups is now chosen globally according to the multiplicities of the exceptional factors appearing in $J$. This is a completely combinatorial and quite simple procedure which can be found in many places in the literature \cite {Hironaka_Annals, EV_Obergurgl, EH}.

The first part of the resolution process is much more involved and will be described now. The centers of blowup are defined locally at the points where the order of $I$ is maximal. Then it is shown that the definition does not depend on the local choices and thus defines a global, regular and closed center in $W$. Blowing up $W$ along this center will improve the singularities of $I$, and finitely many further blowups will make the order of the weak transform of $I$ drop.

The local definition of the center goes as follows. Let $S=\ttop(I)$ be the stratum of points in $W$ where $I$ has maximal order. Let $a$ be a point of $S$ and denote by $o$ the order of $I$ at $a$. Choose an osculating hypersurface $V$ for $I$ at $a$ in an open neighbourhood $U$ of $a$ in $W$. This is a closed regular hypersurface of $U$ given by a suitable partial derivative of an element of $I$ of order $o-1$. The hypersurface $V$ maximizes the order of the coefficient ideal $J_{n-1}=J_V(I)$ over all choices of local regular hypersurfaces, cf.÷ Prop.÷ \ref{575}. Thus $\ord_a(J_{n-1})$ does not depend on the choice of $V$. There exists \comment{[by Prop.÷ \ref{887},]} a closed stratum $T$ in $S$ at $a$ of points $b\in S$ where the order of $J_{n-1}$ equals the order of $J_{n-1}$ at $a$.

The ideal $J_{n-1}$ on $V$ together with the new control $c=o!$ can now be resolved by induction on the dimension of the ambient space, applying the respective statement of the theorem. The new normal crossings divisor $E_{n-1}$ in $V$ is defined as $V\cap E$. For this it is necessary that the hypersurface $V$ can be chosen transversal to $E$. This is indeed possible, but will not be shown here. Also, it is used that $J_{n-1}$ admits again a decomposition $J_{n-1}=M_{n-1}\cdot I_{n-1}$ with $M_{n-1}$ a principal monomial ideal supported on some normal crossings divisor $D_{n-1}$ on $V$, and $I_{n-1}$ an ideal, the residual factor of $J_{n-1}$.

There thus exists a sequence of blowups of $V$ along regular centers transversal to $E_{n-1}$ and $D_{n-1}$ and their total transforms, contained in the locus $\ttop(J_{n-1},c)$ of points where $J_{n-1}$ and its controlled transforms with respect to $c$ have order $\geq c$, and satisfying the requirements of a strong resolution so that the order of the weak transform of $I_{n-1}$ drops eventually at all points below the maximum of the order of $I_{n-1}$ at points $a$ of $V$.

All this is well defined on the open neighbourhood $U$ of $a$ in $W$, but depends a priori on the choice of the hypersurface $V$, since the centers are chosen locally in each $V$. It is not clear that the local choices patch to give a globally defined center. One can show that this is indeed the case, even though the local ideals $J_{n-1}$ do depend on $V$. 
The argument relies on the fact that the centers of blowup are constructed as the maximum locus of an invariant associated to $J_{n-1}$. By induction on the ambient dimension, such an invariant exists for each $J_{n-1}$: In dimension $1$, it is just the order. In higher dimension, it is a vector of orders of suitably defined coefficient ideals. One then shows that the invariant of $J_{n-1}$ is independent of the choice of $V$ and defines an upper semicontinuous function on the stratum $S$. Its maximum locus is therefore well defined and closed. Again by induction on the ambient dimension, it can be assumed that it is also regular and transversal to the divisors $E_{n-1}$ and $D_{n-1}$.

By the assertion of the theorem in dimension $n-1$, the order of the weak transform of $J_{n-1}$ can be made everywhere smaller than $c=o!$ by a suitable sequence of blowups. 
The sequence of blowups also transforms the original ideal $J=M\cdot I$, producing controlled transforms of $J$ and weak transforms of $I$. The order of $I$ cannot increase in this sequence, if the centers are always chosen inside $S$. To achieve this inclusion a technical adjustment of the definition of coefficient ideals is required which will be omitted here. If the order of $I$ drops, induction applies. So one is left to consider points where possible the order of the weak transform has remained constant. There, one will use the commutativity of blowups with the passage to coefficient ideals, Prop.÷ \ref{884}: The coefficient ideal of the final transform of $I$ will equal the controlled transform of the coefficient ideal $J_{n-1}$ of $I$. But the order of this controlled transform has dropped below $c$, hence, by Prop.÷ \ref{414}, also the order of the weak transform of $I$ must have dropped. This proves the existence of a resolution.\end{proof}


\bigskip
\noindent\textit{\examples}


\begin{eg} \label{11.4} In the situation of the theorem, take $W = \AAA^2$, $E = \emptyset$, and $J = (x^2y^3)=1\cdot I$ with control $c_+=1$. The ideal $J$ is monomial, but not resolved yet, since it is not supported on the exceptional divisor. It has order $5$ at $0$, order $3$ along the $x$-axis, and order $2$ along the $y$-axis. Blow up $\AAA^2$ in $0$. The controlled transform in the $x$-chart is $J^! = (x^4y^3)$ with exceptional factor $I_E = (x^4)$ and residual factor $I_1 = (y^3)$, the strict transform of $J$. The controlled transform in the $y$-chart is $J^! = (x^2y^4)$ with exceptional factor $I_E = (y^4)$ and residual factor $I_1 = (x^2)$, the strict transform of $J$. One additional blowup resolves $J$.
\end{eg}


\begin{eg} \label{11.5} In the situation of the preceding example, replace $E=\emptyset$ by $E = V(x +
y)$, respectively $E=V(x+y^2)$, and resolve the ideal $J$.\end{eg}


\begin{eg} \label{11.6} Take $W = \AAA^3$, $E = V(x + z^2)$, and $J = (x^2y^3)$ with control $c_+=1$. The variety $X$ defined by $J$ is the union of the $xz$- and the $yz$-plane. The top locus of $J$ is the $z$-axis, which is tangent to $E$. Therefore it is not allowed to take it as the center of the first blowup. The only possible center is the origin. Applying this blowup, the top locus of the controlled transform of $J$ and the total transform of $E$ have normal crossings, so that the top locus can now be chosen as the center of the next blowup. Resolve the singularities of $X$.\end{eg}


\begin{rem} The non-transversality of the candidate center of blowup with already existing exceptional components is known as the \textit{transversality problem}. The preceding example gives a first instance of the problem, see \cite{EH,  Ha_BAMS_1} for more details.\end{rem}


\begin{eg} \label{11.7} Take $W = \AAA^3$, $E =D= \emptyset$ and $J = (x^2 + yz)$. In characteristic different from $2$, the origin of $\AAA^3$ is the unique isolated singular point of the cone $X$ defined by $J$. The ideal $J$ has order $2$ at $0$. After blowing up the origin, the singularity is resolved and the strict transform of $J$ defines a regular surface. It is transversal to the exceptional divisor.\end{eg}


\begin{eg} \label{11.8} Take $W = \AAA^3$, $E=D = \emptyset$, $J = (x^2 + y^az^b)$ with $a,b\in\NNN$. According to the values of $a$ and $b$, the top locus of $J$ is either the origin, the $y$- or $z$-axis, or the union of the $y$- and the $z$-axis. If $a+b\geq 2$, the $yz$-plane is a hypersurface of maximal contact for $J$ at $0$. The coefficient ideal is $J_1 = (y^az^b)$ (up to raising it to the square). This is a monomial ideal, but not supported yet on exceptional components. Resolve $J$. \end{eg}


\begin{eg} \label{11.9} Take $W = \AAA^4$, $E=D = V(yz)$, $J = (x^2 + y^2z^2(y + w))$. The order of $J$ at $0$ is $2$. The $yzw$-hyperplane has maximal contact with $J$ at $0$, with coefficient ideal $J_1 = y^2z^2(y + w)$, in which the factor $M_1=(y^2z^2)$ is exceptional and $I_1=(y + w)$ is residual. Resolve $J$. The top locus of $I_1$ is the plane $V(y + w) \subset \AAA^3$, hence not contained in the top locus of $J$. This technical complication is handled by introducing an intermediate ideal, the \textit{companion ideal}  \cite{Villamayor_Hypersurface, EH}.\end{eg}


\begin{eg} \label{11.10} Compute the first few steps of the resolution process for the three surfaces defined in $\AAA^3$ by the polynomials $x^2+y^2z$, $x^2+y^3+z^5$ and $x^3 + y^4z^5+z^{11}$.\end{eg}


\begin{eg} \label{11.11} Prove with all details the embedded resolution of plane curves in characteristic zero according to the above description.\end{eg}






\begin{eg} \label{11.12} Resolve the following items according to Thm.÷ \ref{strongresolution}, taking into account different characteristics of the ground field.  

(a) $W=\AAA^2$, $E=\emptyset$, $J=I=(x^2y^3)$, with control $c_+=1$.

(b)   $W=\AAA^2$, $E=V(x)$, $J=(x^2y^3)$, $c_+=1$.

(c)   $W=\AAA^2$, $E=V(xy)$, $J=(x^2y^3)$, $c_+=1$.

(d)    $W=\AAA^3$, $E=V(x+z^2)$, $J=(x^2y^3)$, $c_+=1$.

(e)    $W=\AAA^3$, $E=V(y+z)$, $J=(x^3+(y+z)z^2)$, $c_+=3$.

(f)    $W=\AAA^3$, $E=\emptyset$, $J=I=(x^2+yz)$, $c_+=2$.

(g)   $W=\AAA^3$, $E=\emptyset$, $J=I=(x^3+y^2z^2)$, $c_+=3$.

(h)    $W=\AAA^3$, $E=\emptyset$, $J=I=(x^2+xy^2+y^5)$, $c_+=2$.

(i)    $W=\AAA^3$, $E=\emptyset$, $J=I=(x^2+xy^3+y^5)$, $c_+=2$.

(j)    $W=\AAA^3$, $E=V(x+y^2)$, $J=(x^3+(x+y^2)y^4z^5)$, $c_+=3$.

(k)    $W=\AAA^3$, $E=\emptyset$, $J=I=(x^2+y^2z^2(y+z))$, $c_+=12$.

(l)    $W=\AAA^3$, $E=V(yz)$, $J=(x^2+y^2z^2(y+z))$, $c_+=2$.

(m)   $W=\AAA^4$, $E=V(yz)$, $J=(x^2+y^2z^2(y+w))$, $c_+=2$. 
\end{eg}


\begin{eg} \label{11.13} Consider $J=I=(x^2+y^7+yz^4)$ in $\AAA^3$, with $E=\emptyset$ and $c_+=2$. Consider the sequence of three point blowups with the following centers. First the origin of $\AAA^3$, then the origin of the $y$-chart, then the origin of the $z$-chart. On the last blowup, consider the midpoint between the origin of the $y$- and the $z$-chart. Show that $J$ is resolved at this point if the characteristic is zero or $>2$.\end{eg}


\begin{eg} \label{11.14}$\hint$ What happens in the preceding example if the characteristic is equal to $2$? Describe in detail the behaviour of the first two components of the resolution invariant under the three local blowups.\end{eg} 


\begin{eg} \label{11.15}$\hint$ A combinatorial version of resolution is known as Hironaka's polyhedral game: Let $N$ be an integral convex polyhedron in ${\Bbb R}^n_+$, i.e., the positive convex hull $N={\rm conv} (S+{\Bbb R}^n_+)$ of a finite set $S$ of points in $\NNN^n$. Player $A$ chooses a subset $J$ of $\{1,\ldots, n\}$, then player $B$ chooses an element $j\in J$. After these moves, $S$ is replaced by the set $S'$ of points $\alpha$ defined by $\alpha_i'=\alpha_i$ if $i\neq j$ and $\alpha_j'=\sum_{k\in J}\alpha_k$, giving rise to a new polyhedron $N'$. Player $A$ has won if after finitely many rounds the polyhedron has become an orthant $\alpha + {\Bbb R}^n_+$. Player $B$ can never win, but only prevent player $A$ from winning. Show that player $A$ has a winning strategy, first with and then without using induction on $n$ \cite{Spivakovsky, Zeillinger_Polyhedra_Game}. \end{eg}

\ignore

\begin{eg} \label{} (Bernd Schober's first example)  For an algebraic variety $X$, consider the following stratification
$$
X_1 := Reg(X),\; Y_1 := X \setminus X_1,\; X_2 := Reg(Y_1); \ldots .
$$
This defines a stratification and the smallest strata is closed and regular.
Blowing up this smallest strata does in general not give a resolution of singularities.

{\it First example (complicated)}

Let $K$ be a non-perfect field of characteristic $p > 0$.
Then there exists an $\lambda \in K \setminus K^p$.
Consider the hypersurface (over $K$) given by
$$
f = x^{p-1} + y (z^{p-1} + \lambda w^{p-1} ) = 0.
$$
Then the singular locus $Y_1$ is $V(x,w,z)$ and $V(x,y,z^{p-2} + \lambda w^{p-2})$ 
(since the Jacobian ideal is $< x^{p-2}, z^{p-1} + \lambda w^{p-1}, y z^{p-2}, y w^{p-2}>$).
But $Y_1$ has a singularity in the origin $0$ 
(either you consider the last component or just take the intersection of both components).
Thus $X_2 = Y_1 \setminus \{0\}$ and $X_3 = \{0\}$.
Following the strategy the center of the blow up is the origin $0 = V(x,y,z,w)$.
Look in the $Y$-chart
(i.e. apply the coordinate tranformation $(x,y,z,w) \mapsto (x'y,y,yz',yw')$; here the exceptional component is given by $y=0$)
where the strict transform is given by
$$
f'= y^{-(p-1)} \cdot f(x'y,y,yz',yw') = x'^{p-1} + y (z'^{p-1} + \lambda w'^{p-1} ).
$$
This is the same variety and the process has made a cycle. Hence the claim is true. 

After I constructed this example Franklin explained me in a conversation the following easier example which is valid for all fields.
\end{eg}


\begin{eg} \label{} (Bernd Schober's second example)

Consider the variety given by 
$$
x^n  + y^m z^n = 0 
$$
over an {\it arbitrary} field, for some $m,n\in \NNN, m\geq 3, n\geq 2$ and if the characteristic of the ground field is $p>0$, then further require either $n \neq p$ or $m\neq p$ (e.g. $m=3$, $n=2$).

(If you don't like this general case, just consider the case $m=3$, $n=2$, i.e. the variety given by $x^2 + y^3 z^2=0$).

One can easily compute that the singular locus consists of two curves, namely $V(x,y)$ and $V(x,z)$ (no matter what the charateristic is). 
One can also express the singular locus as $V(x,yz)$.
It follows that the singular locus has a singularity in the origin $V(x,y,z)$.
Blow up and look at the $y$-chart: The strict transform is the same equation as before. So again a cycle has occured.\end{eg}

\recognize

\section{Lecture XII: Positive Characteristic Phenomena}

\noindent The existence of the resolution of varieties of arbitrary dimension over a field of positive characteristic is still an open problem. For curves, there exist various proofs \cite{Kollar_Book} chap.÷ 1. For surfaces, the first proof of non-embedded resolution was given by Abhyankar in his thesis \cite{Abhyankar_56}. Later, he proved embedded resolution for surfaces, but the proof is scattered over several papers which sum up to over 500 pages  \cite{Abhyankar_Book_59, Abhyankar_64, Abhyankar_66, Abhyankar_66b, Abhyankar_67, Abhyankar_Book_98}. Cutkosky was able to shorten and simplify the argumens substantially \cite{Cutkosky_Skeleton}. An invariant similar to the one used by Abhyankar was developed independently by Zeillinger and Wagner \cite{Zeillinger_Thesis, Wagner_Thesis, HW}. Lipman gave an elegent proof of non-embedded resolution for arbitrary two-dimensional schemes \cite{Lipman_Surfaces, Artin}. Hironaka proposed an invariant based on the Newton polyhedron which allows to prove embedded 
resolution of surfaces which are hypersurfaces \cite{Hironaka_Bowdoin, Cossart_Excellent, Ha_Excellent}. Hironaka's invariant seems to be restricted to work only for surfaces. The case of higher codimensional surfaces was settled by Cossart-Jannsen-Saito, extending Hironaka's invariant \cite{CJS}. Different proofs were recently proposed by Benito-Villamayor and Kawanoue-Matsuki \cite{Benito_Villamayor_Math_Ann, Kawanoue_Matsuki_Surfaces}.

For three-folds, Abhyankar and Cossart gave partial results. Quite recently, Cossart-Piltant proved non-embedded resolution of three-folds by a long case-by-case study \cite{Cossart_Piltant_1, Cossart_Piltant_2,  Cossart_Piltant_3,  Cossart_Piltant_4}. See also \cite{Cutkosky_3-Folds}.

Programs and techniques for resolution in arbitrary dimension and characteristic have been developed quite recently, among others, by Hironaka, Teissier, Kuhlmann, Kawanoue-Matsuki, Benito-Bravo-Villamayor, Hauser-Schicho, Cossart \cite{Hironaka_CMI, Teissier, Kuhlmann, Kawanoue, Kawanoue_Matsuki, Benito_Villamayor_Math_Ann, Benito_Villamayor_Comp, Bravo_Villamayor_10, Bravo_Villamayor_11,  Ha_Wild, HS_Game, Cossart_WMC}.  


\begin{rem} Let $\K$ be an algebraically closed field of prime characteristic $p>0$, and let $X$ be an affine variety defined in $\AAA^n$. The main problems already appear in the hypersurface case. Let $f=0$ be an equation for $X$ in $\AAA^n$. Various properties of singularities used in the characteristic zero proof fail in positive characteristic:

(a) \label{775} The top locus of $f$ of points of maximal order need not be contained locally in a regular hypersurface. Take the variety in $\AAA^4$ defined by $f = x^2 + yz^3 + zw^3 + y^7w$ over a field of characteristic $2$  \cite{Narasimhan, Mulay, Kawanoue, Ha_Obstacles}.

(b) There exist sequences of blowups for which the sequence of points above a given point $a$ where the order of the strict transforms of $f$ has remained constant are eventually not contained  in the strict transforms of any regular local hypersurface passing through $a$. \comment{[Take the same example]}

(c) Derivatives cannot be used to construct hypersurfaces of maximal contact. \comment{[Take a polynomial which is a $p$-th power]}

(d) The characteristic zero invariant is no longer upper semicontinuous when translated to positive characteristic. \comment{[$x^p+y^pz$, insert Hironaka's example of generic going up]}
\end{rem}


\begin{eg} \label{12.2} A typical situation where the characteristic zero resolution invariant does not work in positive characteristic is as follows. Take the polynomial $f = x^2 + y^7 + yz^4$ over a field of characteristic $2$. There exists a sequence of blowups along which the order of the strict transforms of $f$ remains constant equal to $2$ but where eventually the order of the residual factor of the coefficient ideal of $f$ with respect to a hypersurface of weak maximal contact increases.

The hypersurface $V$ of $W=\AAA^3$ defined by $x = 0$ produces -- up to raising the ideal to the square -- as coefficient ideal of $f$ the ideal on $\AAA^2$ generated by $y^7 + yz^4$. Its order at the origin is $5$. 

Blow up $\AAA^3$ at the origin. In the $y$-chart $W'$ of the blowup, the total transform of $f$ is given by
\[
f^* =  y^2 (x^2 + y^5 + y^3z^4),
\]
with $f' =x^2 + y^3(y^2 + z^4)$ the strict transform of $f$. It has order $2$ at the origin of $W'$. The generator of the coefficient ideal of $f'$ in $V': x= 0$ decomposes into a monomial factor $y^3$ and a residual factor $y^2 + z^4$.  The order of the residual factor at the origin of $W'$ is $2$. Blow up $W'$ along the $z$-axis, and consider the $y$-chart $W''$, with strict transform 
\[
f'' = x^2 + y(y^2 + z^4),
\]
of $f$. The residual factor of the coefficient ideal in $V'': x=0$ equals again $y^2 + z^4$. Blow up $W''$ at the origin and consider the $z$-chart $W'''$, with strict transform 
\[
f''' = x^2 + yz(y^2 + z^2).
\]
The origin of $W'''$ is the intersection point of the two exceptional components $y=0$ and $z=0$. The residual factor of the coefficient ideal in $V''': x=0$ equals $y^2 + z^2$, of order $2$ at the origin of $W'''$. 

Blow up the origin of $W'''$ and consider the affine chart $W^{(iv)}$ given by the coordinate transformation 
\[
x \mapsto xz, y \mapsto yz + z, \textrm{ and } z \mapsto z.
\]
The origin of this chart is the midpoint of the new exceptional component. The strict transform of $f'''$ equals
\[
f^{(iv)} =x^2 + (y + 1)z^2y^2,
\]
which, after the coordinate change $x \mapsto x + yz$, becomes 
\[
f^{(iv)} = x^2 + y^3z^2 + y^2z^2.
\]
The order of the strict transforms of $f$ has remained constant equal to $2$ along the sequence of local blowups. The order of the residual factor of the associated coefficient ideal has decreased from $5$ to $2$ in the first blowup, then remained constant until the last blowup, where it increased from $2$ to $3$.
\end{eg}


\begin{rem} The preceding example shows that the order of the residual factor of the coefficient ideal of the defining ideal of a singularity with respect to a local hypersurface of weak maximal contact is not directly suited for an induction argument as in the case of zero characteristic. For surfaces, practicable modifications of this invariant are studied in \cite{HW}.\end{rem}


\begin{defn} A hypersurface singularity $X$ at a point $a$ of affine space $W=\AAA^n_\K$ over a field $\K$ of characteristic $p$ is called \textit{purely inseparable of order $p^e$ at $a$} if there exist local coordinates $x_1,\ldots,x_n$ on $W$ at $a$ such that $a=0$ and such that the local equation $f$ of $X$ at $a$ is of the form
\[
f=x_n^{p^e} + F(x_1,\ldots,x_{n-1})
\]
for some $e\geq 1$ and a polynomial $F\in \K[x_1,\ldots,x_{n-1}]$ of order $\geq p^e$ at $a$. \end{defn}


\begin{prop} For a purely inseparable hypersurface singularity $X$ at $a$, the polynomial $F$ is unique up to multiplication by units in the local ring $\calo_{W,a}$ and the addition of $p^e$-th powers in $\K[x_1,\ldots,x_{n-1}]$. \end{prop}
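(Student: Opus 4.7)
The plan is to reduce the uniqueness statement to a direct calculation that exploits the Frobenius identity $(A+B)^{p^e} = A^{p^e} + B^{p^e}$, which holds in characteristic $p$. Let
\[
f = x_n^{p^e} + F(x_1,\ldots,x_{n-1}) \qquad\text{and}\qquad g = \tilde x_n^{p^e} + \tilde F(x_1,\ldots,x_{n-1})
\]
be two presentations as in the definition, written in regular parameter systems of $\calo_{W,a}$ that share the first $n-1$ coordinates $x' := (x_1,\ldots,x_{n-1})$ (the reduction to this case is commented on at the end). Since both generate the principal ideal of $X$ at $a$, there is a unit $u \in \calo_{W,a}^*$ with $g = u f$.

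First I would expand $\tilde x_n = \sum_{k\geq 0} a_k(x')\,x_n^k$ with coefficients $a_k$ in the local ring of the hypersurface $V=\{x_n=0\}$. The requirement that $(x',\tilde x_n)$ is again a regular parameter system forces $a_0(0) = 0$ and that $a_1$ is a unit. Applying Frobenius gives
\[
\tilde x_n^{p^e} \;=\; \sum_{k\geq 0} a_k(x')^{p^e}\, x_n^{k p^e},
\]
so the only $x_n$-free contribution to $\tilde x_n^{p^e}$ is $a_0(x')^{p^e}$. Taking the residue of the identity $g=uf$ modulo the ideal $(x_n)$ — that is, restricting to $V$ — kills all positive powers of $x_n$ on both sides and yields the clean equation
\[
a_0^{p^e} + \tilde F \;=\; u_0\cdot F \qquad \text{in } \calo_V,
\]
where $u_0 := u|_{x_n=0}$ is a unit. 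Rearranging and using the elementary identity $-c^{p^e}=(-c)^{p^e}$ in characteristic $p$ (trivial for $p=2$; and for odd $p$ because $p^e$ is odd), one obtains
\[
\tilde F \;=\; u_0\cdot F \,+\, (-a_0)^{p^e},
\]
exhibiting $\tilde F$ as the sum of a unit multiple of $F$ and a $p^e$-th power, which is precisely the claimed equivalence.

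The main obstacle is the preparatory reduction to a common choice of coordinates $x_1,\ldots,x_{n-1}$ on the hypersurface $V$. In general the two presentations may live in regular parameter systems $(x_1,\ldots,x_n)$ and $(y_1,\ldots,y_n)$ differing in every variable, so one must first transport one presentation onto the other by an automorphism of $\calo_{W,a}$ identifying $V$ with $V'=\{y_n=0\}$; this is admissible because both are regular local hypersurfaces through $a$, and the induced change of coordinates on $\K[x_1,\ldots,x_{n-1}]$ is tacitly absorbed in the statement. A secondary technical point is that one must verify the element $a_0$ obtained above can be chosen in the polynomial ring $\K[x_1,\ldots,x_{n-1}]$ rather than merely in its completion; this is routine by Artin approximation once the uniqueness class has been pinned down at the formal level, and is standard in the local analysis of hypersurface singularities.
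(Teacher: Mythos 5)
Your proposal is correct in substance, but it travels a genuinely different route than the paper. The paper's proof is the easy direction only: it observes that multiplying $f$ by a unit does not change the local geometry of $X$, and that the translation $x_n\mapsto x_n+a(x_1,\ldots,x_{n-1})$ turns $f=x_n^{p^e}+F$ into $x_n^{p^e}+a^{p^e}+F$ by the Frobenius identity $(x_n+a)^{p^e}=x_n^{p^e}+a^{p^e}$ --- i.e., it checks that the two named operations change $F$ exactly as claimed. You instead attack the completeness direction: given two presentations $g=uf$ sharing the transversal coordinates $x'$, you expand $\tilde x_n=\sum_k a_k(x')x_n^k$, use Frobenius to annihilate all cross terms in $\tilde x_n^{p^e}$, and reduce modulo $(x_n)$ to extract $\tilde F=u_0F+(-a_0)^{p^e}$. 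Both arguments run on the same engine, the additivity of the $p^e$-th power map in characteristic $p$, but your reduction mod $(x_n)$ shows that there is \emph{no further} ambiguity beyond the two listed operations, which the paper asserts without argument. What the paper's version buys is brevity and the fact that it never leaves polynomial data (the translation $a$ is a polynomial by fiat); what yours buys is the actual uniqueness claim. Incidentally, the regularity of the parameter system ($a_1$ a unit) plays no role in your computation --- only $a_0\in\mm_{W,a}$ is used.

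Two loose ends should be stated more honestly. First, the expansion of $\tilde x_n$ in powers of $x_n$ with coefficients in the $x'$-variables is only available after passing to the completion $\wh\calo_{W,a}\isom\K[[x_1,\ldots,x_n]]$, so your $u_0$ and $a_0$ are a priori formal; and since $\tilde F-u_0F$ need not be a polynomial when $u_0$ is a formal unit, $a_0$ is genuinely formal in general. The appeal to Artin approximation is asserted, not carried out; what your computation literally proves is uniqueness of $F$ in $\K[[x']]$ modulo formal units and formal $p^e$-th powers, which is the version relevant for the paper's later use (the statement itself already mixes rings, taking units in $\calo_{W,a}$ but powers in $\K[x_1,\ldots,x_{n-1}]$). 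Second, your reduction to a common system $x'$ applies an ambient automorphism which changes $F$ by a substitution in the $x'$-variables --- an operation \emph{not} among the two named in the proposition. Since the paper's own proof also fixes $x'$ and only moves $x_n$, absorbing this is a fair reading of the intended statement, but it should be said explicitly rather than ``tacitly.'' Neither point invalidates your core calculation, which is sound and sharper than the printed proof.
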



\begin{proof} Multiplication by units does not change the local geometry of $X$ at $a$. A 
coordinate change in $x_n$ of the form $x_n\mapsto x_n+a(x_1,\ldots, x_{n-1})$ with $a\in \K[x_1,\ldots,x_{n-1}]$ transforms $f$ into $f=x_n^{p^e} + a(x_1,\ldots,x_{n-1})^{p^e}+F(x_1,\ldots,x_{n-1})$. This implies the assertion.\end{proof}


\begin{defn} Let affine space $W=\AAA^n$ be equipped with an exceptional normal crossings divisor $E$ produced by earlier blowups with multiplicities $r_1,\ldots,r_n$. Let $x_1,\ldots,x_n$ be local coordinates at a point $a$ of $W$ such that $E$ is defined at $a$ by $x_1^{r_1}\cdots x_n^{r_n}=0$. Let $f=x_n^{p^e} + F(x_1,\ldots,x_{n-1})$ define a purely inseparable singularity $X$ of order $p^e$ at the origin $a=0$ of $\AAA^n$ such that $F$ factorizes into
\[
F(x_1,\ldots,x_{n-1})=x_1^{r_1}\cdots x_{n-1}^{r_{n-1}}\cdot G(x_1,\ldots,x_{n-1}).
\] 
The \textit{residual order of $X$ at $a$ with respect to $E$} is the maximum of the orders of the polynomials $G$ at $a$ over all choices of local coordinates such that $f$ has the above form \cite{Hironaka_CMI, Ha_Wild}.
\end{defn}


\begin{rem} The residual order can be defined for arbitrary singularities \cite{ Ha_Wild}. In characteristic zero, the definition coincides with the second component of the local resolution invariant, defined by the choice of an osculating hypersurface or, more generally, of a hypersurface of weak maximal contact and the factorization of the associated coefficient ideal.\end{rem}


\begin{rem} In view of the preceding example, one is led to investigate the behaviour of the residual order under blowup at points where the order of the singularity remains constant. Moh showed that it can increase at most by $p^{e-1}$ \cite{Moh}. Abhyankar seems to have observed already this bound in the case of surfaces. He defines a correction term $\varepsilon$ taking values equal to $0$ or $p^{e-1}$ which is added to the residual order according to the situation in order to make up for the occasional increases of the residual order \cite{Abhyankar_67, Cutkosky_Skeleton}. A similar construction has been proposed by Zeillinger and Hauser-Wagner \cite{Zeillinger_Thesis, Wagner_Thesis, HW}. This allows, at least for surfaces, to define a secondary invariant after the order of the singularity, the \textit{modified residual order of the coefficient ideal}, which does not increase under blowup. The problem then is to handle the case where the order of the singularity and the modified residual order remain 
constant. It is not clear how to define a third  invariant which manifests the improvement of the singularity. \end{rem}


\begin{rem} Following ideas of Giraud, Cossart has studied the behaviour of the order of the Jacobian ideal of $f$, defined by certain partial derivatives of $f$. Again, it seems that hypersurfaces of maximal contact do not exist for this invariant \cite{Giraud, Cossart_WMC}. There appeared promising recent approaches by Hironaka, using the machinery of differential operators in positive characteristic, by Villamayor and collaborators using instead of the restriction to hypersurfaces of maximal contact projections to regular hypersurfaces via elimination algebras, and by Kawanoue-Matsuki using their theory of idealistic filtrations and differential closures. None of these proposals has been able to produce an invariant or a resolution strategy which works in positive characteristic for all dimensions.\end{rem}


\begin{rem} Another approach consists in analyzing the singularities and blowups for which the residual order increases under blowup. This leads to the notion of kangaroo singularities:\end{rem}


\begin{defn} A hypersurface singularity $X$ defined at a point $a$ of affine space $W=\AAA^n_\K$ over a field $\K$ of characteristic $p$ by a polynomial equation $f=0$ is called \textit{a kangaroo singularity} if there exists a local blowup $\pi: (\wt W,a')\too (W,a)$ of $W$ along a regular center $Z$ contained in the top locus of $X$ and transversal to an already existing exceptional normal crossings divisor $E$ such that the order of the strict transform of $X$ remains constant at $a'$ but the residual order of the strict transform of $f$ increases at $a'$.  The point $a'$ is then called \textit{kangaroo point of $X$ above $a$}.\end{defn}


\begin{rem} Kangaroo singularities can be defined for arbitrary singularities. They have been characterized in all dimensions by Hauser \cite{Ha_BAMS_2, Ha_Wild}. However, the knowledge of the algebraic structure of these singularities did not yet give any hint how to overcome the obstruction caused by the increase of the residual order.\end{rem}


\begin{prop} If a polynomial $f=x_n^{p^e}+F(x_1,\ldots,x_{n-1})= x_n^{p^e}+ x_1^{r_1}\cdots x_{n-1}^{r_{n-1}}\cdot G(x_1,\ldots,x_{n-1})$ defines a kangaroo singularity of order $p^e$ at $0$ the following conditions must hold: (a) the sum of the $r_i$ and of the order of $G$ at $0$ is divisible by $p^e$; (b) the sum of the residues of the exceptional multiplicities $r_i$ modulo $p^e$ is bounded by $(m-1)\cdot p^e$ with $m$ the number of exceptional multiplicities not divisible by $p^e$; (c)  the initial form of $F$ equals a specific homogeneous polynomial which can be explicitly described. Any kangaroo point $a'$ of $X$ above $a$ lies outside the strict transform of the components of the exceptional divisor at $a$ whose multiplicities are not a multiple of $p^e$.\end{prop}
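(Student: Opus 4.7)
The plan is to translate the kangaroo condition into the existence, at $a'$, of a characteristic-$p$ Tschirnhaus absorption that is not available at $a$, and to derive the arithmetic constraints (a)--(c) together with the location statement from that absorption identity. First I would reduce to a monomial local blowup via Proposition~\ref{111}: choose coordinates $x_1,\dots,x_n$ at $a$ in which $Z$ is a coordinate subspace, $E=\{x_1^{r_1}\cdots x_n^{r_n}=0\}$, and, after a translation placing $a'$ at the origin of one affine chart (say the $x_j$-chart with $j\ne n$), the local blowup $\pi\colon(W',a')\to(W,a)$ is monomial. Because $V=\{x_n=0\}$ has weak maximal contact with $f$ at $a$ and $a'$ is by assumption equiconstant for the order $p^e$ of $f$, the analogue of Proposition~\ref{181} for weak maximal contact forces $V^s$ to pass through $a'$, so $f^s$ keeps a purely inseparable shape. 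A direct monomial calculation then gives
\[
 f^s = x_n^{p^e} + \Big(\prod_{i\in T} x_i^{r_i}\Big)\cdot x_j^{\tau-p^e}\cdot U\cdot G^s,
\]
where $\tau=r_1+\cdots+r_{n-1}+s$ is the total order of $F$ at $a$, $T\subset\{1,\dots,n-1\}\setminus\{j\}$ is the set of indices $i$ for which $a'$ lies on the strict transform $E_i^s$, $U$ is a local unit at $a'$ coming from the factors $(x_i-x_i(a'))^{r_i}$ for $i\notin T\cup\{j,n\}$, and $G^s$ is the pull-back of $G$ to $V^s$.

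The second step is to formalise the Tschirnhaus absorption criterion for the kangaroo property. In characteristic $p$, the only local automorphism of $\widehat{\calo}_{W',a'}$ preserving the purely inseparable form that can raise the residual order strictly above $\ord_{a'}G^s$ is a substitution $x_n\mapsto x_n+P(x_1,\dots,x_{n-1})$; this shifts $x_n^{p^e}$ by $P^{p^e}$ and so can cancel from the residual factor exactly those pieces that are genuine $p^e$-th powers in the local coordinates. Hence $a'$ is kangaroo if and only if the initial form of the exceptional monomial times $G^s$ in the expression above is a nonzero $p^e$-th power modulo terms of strictly higher weight, while the maximality of $s$ at $a$ guarantees that no such absorption was possible already at $a$.

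From this equivalence I extract the four assertions. The initial form of the pure monomial $\prod_{i\in T}x_i^{r_i}\cdot x_j^{\tau-p^e}$ must appear as a factor of a $p^e$-th power, so every exponent is divisible by $p^e$. This forces $p^e\mid r_i$ for every $i\in T$, which, since $T$ is precisely the set of old exceptional components whose strict transforms pass through $a'$, yields the location statement that $a'\notin E_i^s$ whenever $p^e\nmid r_i$; it also forces $p^e\mid\tau-p^e$, i.e., $p^e\mid\tau$, which is condition~(a). Condition~(c) then follows because, once (a) and the location claim are in force, the initial form of $F$ at $a$ is pinned down (up to scalar) as the unique homogeneous polynomial of degree $\tau$ whose pull-back under the monomial chart substitution, divided by the new exceptional factor $x_j^{\tau-p^e}$, gives the prescribed $p^e$-th power on $V^s$; this polynomial can be written down explicitly in terms of the $r_i$, $s$, and the absorbing series $P$. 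Finally, writing $r_i=q_ip^e+\rho_i$ with $0\le\rho_i<p^e$ and $m=\#\{i:\rho_i\ne 0\}$, the location claim puts every index with $\rho_i\ne 0$ outside $T\cup\{n\}$, so the sum of the residues $\rho_i$ is absorbed into the new exceptional exponent $\tau-p^e$; matching this against the divisibility in (a) and accounting for the single $-p^e$ consumed by the blowup itself yields the sharp bound $\sum\rho_i\le(m-1)\,p^e$, which is condition~(b).

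The main obstacle will be to verify rigorously that no more subtle Tschirnhaus absorption---for example, one involving nested $p$-th power substitutions or coordinate changes in $x_1,\dots,x_{n-1}$ mixing the exceptional monomial with the residual factor---can produce a strict residual increase without forcing the same arithmetic constraints, and to obtain the sharp constant $(m-1)p^e$ in (b) rather than the weaker estimate $m(p^e-1)$. This requires a Giraud-style analysis of $p$-th power substitutions, combined with Moh's bound $s'-s\le p^{e-1}$ on the jump of the residual order under a single blowup.
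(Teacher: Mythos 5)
You should first be aware that the paper contains no proof of this proposition: it is stated as a quoted result and deferred to \cite{Ha_BAMS_2, Ha_Wild}, so your proposal can only be measured against the argument in those references and against the supporting material the paper does contain (ex.~\ref{12.25}--\ref{12.30}). Against that benchmark, your overall mechanism is the correct one: monomialize the local blowup via Prop.~\ref{111}, use the weak-maximal-contact proposition of Zariski type to keep the purely inseparable normal form $x_n^{p^e}+F$ at the equiconstant point $a'$, and observe that in characteristic $p$ the residual order can only jump through an absorption $x_n\mapsto x_n+P$, which removes from the transformed $F$ exactly the part that is a $p^e$-th power. Your chart computation of $f^s$, and the extraction of condition (a) and of the location statement from divisibility of the exceptional exponents, are essentially the right moves --- modulo one unproved point you should flag: you need an argument that the initial form of the residual factor cannot supply compensating powers of $x_i$ for an index $i\in T$ with $p^e\nmid r_i$, which requires invoking the maximality of $\ord_a G$ over coordinate choices at $a$, not just inspection of the monomial factor.

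The genuine gaps are in (b) and (c). Your central equivalence --- $a'$ is kangaroo if and only if the initial part of the exceptional monomial times $G^s$ is a nonzero $p^e$-th power modulo higher weight --- is asserted rather than proved, and you yourself defer to the ``main obstacle'' the exclusion of coordinate changes in $x_1,\ldots,x_{n-1}$ and of nested $p$-th power substitutions; that exclusion is precisely the hard content of Moh's bound and of the analysis in \cite{Ha_Wild}, not a routine verification, so the proposal's logical spine is incomplete exactly where the difficulty sits. More concretely, your derivation of (b) does not go through as written: placing the indices with $\rho_i\neq0$ outside $T$ and ``accounting for the single $-p^e$ consumed by the blowup'' yields only the trivial estimate $\sum\rho_i\le m(p^e-1)$, which for $m<p^e$ is strictly weaker than the claimed $(m-1)p^e$. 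The sharp constant, and likewise the explicit initial form in (c), come out of a classification of the homogeneous polynomials whose translate gains order modulo $p^e$-th powers --- this is the binomial-coefficient analysis sketched in ex.~\ref{12.27}--\ref{12.30} --- so (b) and (c) cannot be obtained independently and in the order you propose; one must first establish that classification and then read off both the inequality and the initial form from it. As it stands, your text is a correct road map for (a) and the location statement, and an honest but incomplete sketch for (b) and (c).
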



\begin{rem} A more detailed description of kangaroo singularities and a further discussion of typical characteristic $p$ phenomena can be found in \cite{Ha_BAMS_2, Ha_Wild}.\end{rem}

\goodbreak
\bigskip
\noindent\textit{\examples}


\begin{eg} \label{12.15} Prove the resolution of plane curves in arbitary characteristic by using the order and the residual order as the resolution invariants.\end{eg}


\begin{eg} \label{12.16} Let $\K$ be an algebraically closed field of characteristic $p>0$. Develop a significant notion of resolution for elements of the quotient of rings $\K[x,y]/\K[x^p,y^p]$. Then prove that such a resolution always exists.\end{eg}


\begin{eg} \label{12.17}$\hint$ Consider the polynomial $f = x^2 + yz^3 + zw^3 + y^7w$ on $\AAA^4$ over a ground field of characteristic $2$. Its maximal order is $2$, and the respective top locus is the image of the monomial curve $(t^{32}, t^7, t^{19}, t^{15}), t \in \K$. The image curve has embedding dimension $4$ at $0$ and cannot be embedded locally at $0$ into a regular hypersurface of $\AAA^4$.  Hence there is no hypersurface of maximal contact with $f$ locally at the origin.\end{eg}


\begin{eg} \label{12.18}$\hint$ Find a surface $X$ in positive characteristic and a sequence of point blowups starting at $a\in X$ so that some of the points above $a$ where the order of the weak transforms of $X$ remains constant eventually leave the transforms of any local regular hypersurface passing through $a$. \end{eg}


\begin{eg} \label{12.19}$\hint$ Show that $f=x^2+yz^3+zw^3+y^7w$ has in characteristic $2$ top locus $\ttop(f)$ equal to the parametrized curve $(t^{32}, t^7,t^{19}, t^{15})$ in $\AAA^4$ \cite{Narasimhan, Mulay, Kawanoue}. \end{eg}


\begin{eg} \label{12.20}$\hint$ Show that $f$ is not contained in the square of the ideal defining the parametrized curve $(t^{32}, t^7,t^{19}, t^{15})$. \comment{[This signifies that the order of $f$ along this curve has to be really computed in the localization of $K[x,y,z,w]$ with respect to the ideal of the curve. This phenomenon is related to the difference between powers of ideals and symbolic powers of ideals \cite{Zariski_Samuel, Hochster_73, Pellikaan}.]}\end{eg}


\begin{eg} \label{12.21}$\hint$ Find the defining ideal for the image of the monomial curve $(t^{32}, t^7,t^{19}, t^{15})$ in $\AAA^4$. What is the local embedding dimension at $0$?
\end{eg}


\begin{eg} \label{12.22}$\hint$ Show that $f=x^2+yz^3+zw^3+y^7w$ admits in characteristic $2$ at the point $0$ no local regular hypersurface of permanent maximal contact (i.e., whose successive strict transforms contain all points where the order of $f$ has remained constant in any sequence of blowups with regular centers inside the top locus).\end{eg}


\begin{eg} \label{12.23} Consider $f=x^2+y^7+yz^4$ in characteristic $2$. Show that there exists a sequence of point blowups for which $f$ admits at the point $0$ no local regular hypersurface whose transforms have weak maximal contact with the transforms of $f$ as long as the order of $f$ remains equal to $2$.\end{eg}


\comment{[\begin{eg} \label{} Let $c$ be an integer $\geq 2$, and $[a,b]\subset\NNN$ an interval. Determine the number of points in $[a,b]\cap c\cdot\NNN$ according to the residues $\overline a$ and $\overline b$ of $a$ and $b$ modulo $c$.\end{eg}]}


\comment{[\begin{eg} \label{} For $r,s,k\in\NNN$, let $\Delta$ be the subset of $\NNN^2$ defined by $\Delta=((r,s)+\NNN^2)\cap \{(a,b)\in\NNN^2,\, a+b\leq k\}$. For $c\geq 2$, evaluate the cardinality of $\Delta\cap c\cdot\NNN^2$ according to the values of $r,s,k$.\end{eg}]}


\begin{eg} \label{12.24}$\hint$ Define the $p$-th order derivative of polynomials in $\K[x_1,\ldots,x_n]$ for $\K$ a field of characteristic $p$.\end{eg}


\begin{eg} \label{12.25}$\hint$ Construct a surface of order $p^5$ in $\AAA^3$ over a field of characteristic $p$ for which the residual order increases under blowup.\end{eg}


\begin{eg} \label{12.26}$\hint$ Show that for the polynomial $f=x^p+y^pz$ over a field of characteristic $p$ and taking $E=\emptyset$ the residual order of $f$ along the (closed) points of the $z$-axis is not equal to its value at the generic point.\end{eg}


\begin{eg} \label{12.27}$\hint$ Let $y_1,\ldots, y_m$ be fixed coordinates, and consider a homogeneous polynomial $G(y) = y^r\cdot g(y)$ with $r\in \NNN^m$ and $g(y)$ homogeneous of degree $k$. Let $G^+(y)$ be the polynomial obtained from $G$ by the linear coordinate change $y_i\too y_i+y_m$ for $i=1,\ldots, m-1$. Show that the order of $G^+$ along the $y_m$-axis is at most $k$. \end{eg}


\begin{eg} \label{12.28} Express the assertion of the preceding example through the invertibility of a matrix of multinomial coefficients.\end{eg}


\begin{eg} \label{12.29}$\hint$ Consider $G(y,z)= y^rz^s\sum_{i=0}^k { k+r\choose i+r} y^i (tz-y)^{k-i}$. Compute for $t\in \K^*$ the polynomial $G^+(y,z)=G(y+tz,z)$ and its order with respect to $y$ modulo $p$-th power polynomials.\end{eg}


\begin{eg} \label{12.30}$\hint$ Determine all homogeneous polynomials $G(y,z)=y^rz^s g(y,z)$  so that $G^+(y,z)$ has order $k+1$ with respect to $y$ modulo $p$-th power polynomials.\end{eg}


\begin{eg} \label{12.31}$\challenge$ Find a new  systematic proof for the embedded resolution of surfaces in three-space in arbitrary characteristic.\end{eg}


\begin{eg} \label{12.32} Let $G(x)$ be a polynomial in one variable over a field $\K$ of characteristic $p$, of degree $d$ and order $k$ at $0$. Let $t\in K$, and consider the equivalence class $\overline G$ of $G(x+t)$ in $K[x]/K[x^p]$ (i.e., consider $K(x+t)$ modulo $p$-th power polynomials). What is the maximal order of $\overline G$ at $0$? Describe all examples where this maximum is achieved.\end{eg}




\section{Discussion of selected examples}

The comments and hints below were compiled by Stefan Perlega and Valerie Roitner.\\



{\sc Ex.÷ \ref{1.1}.} Let $X$ be defined by $27x^2y^3z^2+(x^2+y^3-z^2)^3=0$ and let $Y=C\times C$ be the cartesian product of the cusp $C$ defined by $x^3-y^2=0$  with itself. The surface $Y$ can be parametrized by $(s,t)\too (s^3,s^2,t^3,t^2)$.
Composing this map with the projection $(x,y,z,w)\mapsto(x,-y+w,z)$ from $\AAA^4$ to $\AAA^3$ gives $(s,t)\mapsto(s^3,t^2-s^2,t^3)$. Substitution into the equation of $X$ gives $0$,
$$27s^6t^6(t^2-s^2)^3+(s^6+(t^2-s^2)^3-t^6)^3=$$
$$=27(s^6t^{12}-3s^8t^{10}+3s^{10}t^8-s^{12}t^6)+27(s^{12}t^6-3s^{10}t^8+3s^8t^{10}-s^6t^{12})= 0.$$
Therefore the image of $Y$ under the projection lies inside $X$. It remains to show that every point in $X$ is obtained in this way. The restriction $Y\too X$ of the projection $\AAA^4\too\AAA^3$ is a finite map (as can e.g.÷ be checked by using a computer algebra program), hence the image of $Y$ is closed in $X$. As it has dimension two and $X$ is an irreducible surface, the image of $Y$ is whole $X$.\\


{\sc Ex.÷ \ref{1.2}.} Asides from the symmetries in the text, replacing $x$ with $\pm\sqrt{-1}\cdot x$ and $z$ with $\pm\sqrt{-1}\cdot z$ gives a new symmetry.

In characteristic 2 the defining polynomial of $X$ equals $f=x^2y^3z^3+(x^2+y^3+z^3)^3$ and there appears an additional symmetry which is given by interchanging $x$ with $z$.\\


{\sc Ex.÷ \ref{1.3}.} Replacing $z$ by $\sqrt{-1}\cdot z$ gives the equation $g=-27x^2y^3z^2+(x^2+y^3+z^2)^3 =0$, cf. Figure 3.
Symmetries of this surface are e.g. given by replacing $x$ by $-x$ or $z$ by $-z$ or by interchanging $x$ with $z$. Also sending $(x,y,z)$ to $(t^3x,t^2y, t^3z)$ with $t\in\K$ gives a symmetry.

\noindent
The partial derivatives of $g$ give the equations for $\Sing(X)$,
\[
x\cdot (-9y^3z^2+(x^2+y^3+z^2)^3)=0,
\]
\[
 y^2\cdot (-9x^2z^2+(x^2+y^3+z^2)^3)=0,
\]
\[
 z\cdot (-9y^3x^2+(x^2+y^3+z^2)^3)=0.
\]
Therefore the singular locus of $X$ has six components, given by the equations
\[
 x=y=z=0,
\]
\[
 x=y^3+z^2=0,
\]
\[
 z=x^2+y^3=0,
\]
\[
 y=x^2+z^2=0,
\]
\[
 z-x=y^3-z^2=0,
\]
\[
 z+x=y^3-z^2=0.
 \]

\begin{center}
   \includegraphics[scale=0.17]{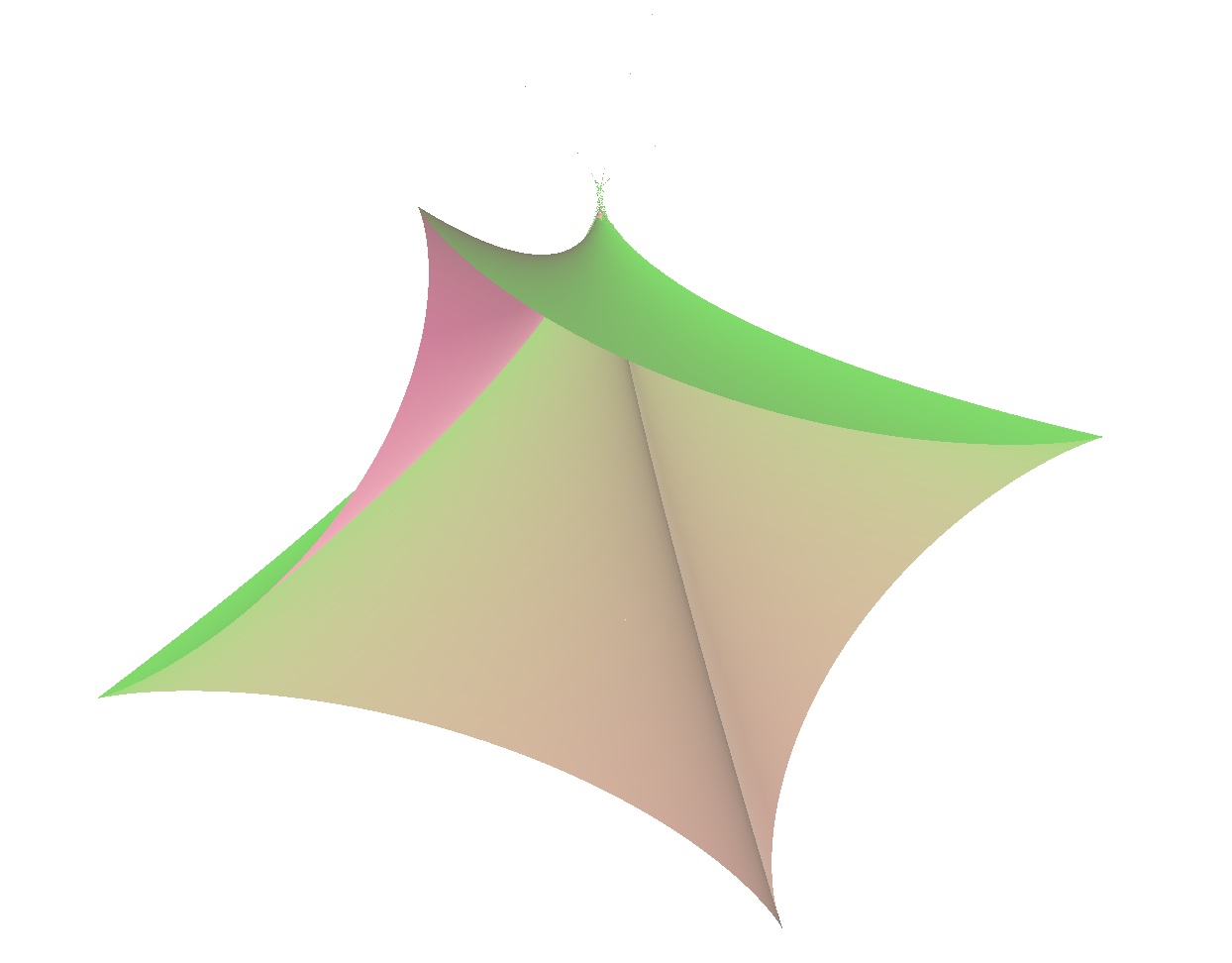}
   \\
Figure 3: The surface of equation $27x^2y^3z^2=(x^2+y^3+z^2)^3 $.
\end{center}\medskip

\medskip


{\sc Ex.÷ \ref{1.4}.} The point $a=(0,1,1)$ lies in the component $S$  of $\Sing(X)$ defined by $g=x-y^3+z^2=0$. Since $(\partial_x g(a), \partial_y g(a), \partial_z g(a))=(1,3,2)$, the component $S$ is regular at $a$. The curve given by the parametrization $(0,t^2,t^3)$ lies in $S$ and passes through $a$ for $t=1$. Its tangent vector at $t=1$ is $(0,2,3)$, which is a normal vector to the plane $P$. Therefore $P$ intersects $S$ transversally at $a$.

The intersection $X\cap P$ is given by the equations $y=\frac{1}{2}(5-3z)$ and $h(x,z)=f(x, \frac{1}{2}(5-3z),z)=27x^2z^2 \frac{1}{8}(5-3z)^3+(x^2+\frac{1}{8}(5-3z)^3-z^2)^3=0$.
Computing the points where $\partial_x h=0$ and $\partial_z h=0$ gives (among others) the solution $x=0$, $z=1$, whence $y=1$. Therefore $X\cap P$ has a singularity at $(0,1,1)$.
The Taylor expansion of $h$ at $(x,z)=(0,1)$ is given by
$$\left(-\frac{2197}{8}w^3 + O(w^4) \right)+\left(27-\frac{135w}{2}+93w^2+O(w^3)\right)x^2+O(x^4),$$
where $w=z-1$.\\


{\sc Ex.÷ \ref{1.6}.} The blowup $\wt{X}$ of $X$ in the origin of $\AAA^3$ is defined in $X\times\PPP^2$ by the equations
$$xu_2-yu_1=xu_3-zu_1=yu_3-zu_2=0,$$
where $(u_1:u_2:u_3)$ are projective coordinates on $\PPP^2$. The blowup map $\pi:\wt X\to X$ is the restriction to $\wt X$ of the projection $X\times\PPP^2\too X$.

The equation of the $x$-chart of the blowup of $X$ is obtained by replacing $(x,y,z)$ with $(x,xy,xz)$ in the equation defining $X$ and by factoring the appropriate power of the polynomial $x$ defining the exceptional divisor. This gives the equation 
$$C_x: 27x^7y^3z^2+(x^2+x^3y^3-x^2z^2)=x^6\cdot (27xy^3z^2+(1+xy^3-z^2)^3)=0,$$
so that the patch of $\wt X$ in the $x$-chart is defined by $27xy^3z^2+(1+xy^3-z^2)^3=0$ in $\AAA^3$. See Figure 4.
Similarly, the $y$- and the $z$-chart are obtained by replacing $(x,y,z)$ with $(xy,y,yz)$ and $(xz,yz,z)$ respectively and give the equations
$$C_y: y^6\cdot (27x^2yz^2+(x^2+y-z^2)^3)=0$$
and
$$C_z: z^6\cdot (27x^2y^3z+(x^2+y^3z-1)^3)=0.$$
The blowup $\wt Y$ of $Y=C\times C$ in the origin of $\AAA^4$ is defined in
$Y\times\PPP^3$ by 
$$xu_2-yu_1=xu_3-zu_1=xu_4-wu_1=yu_3-zu_2=yu_4-wu_2=zu_4-wu_3=0.$$
The $x$-chart of $\wt Y$ is obtained by replacing $(x,y,z,w)$ with $(x,xy,xz,xw)$ in the equations defining $Y$ and by then factoring the appropriate powers of $x$. It is hence given in $\AAA^4$ by the equations $1-xy^3=z^2-xw^3=0.$
The other charts are obtained similarly: in the $y$-chart, $x^2-y=z^2-yw^3=0$;
in the $z$-chart, $x^2-y^3z=1-zw^3=0$; in the $w$-chart, $x^2-y^3w=z^2-w=0$. 


\begin{center}
   \includegraphics[scale=0.17]{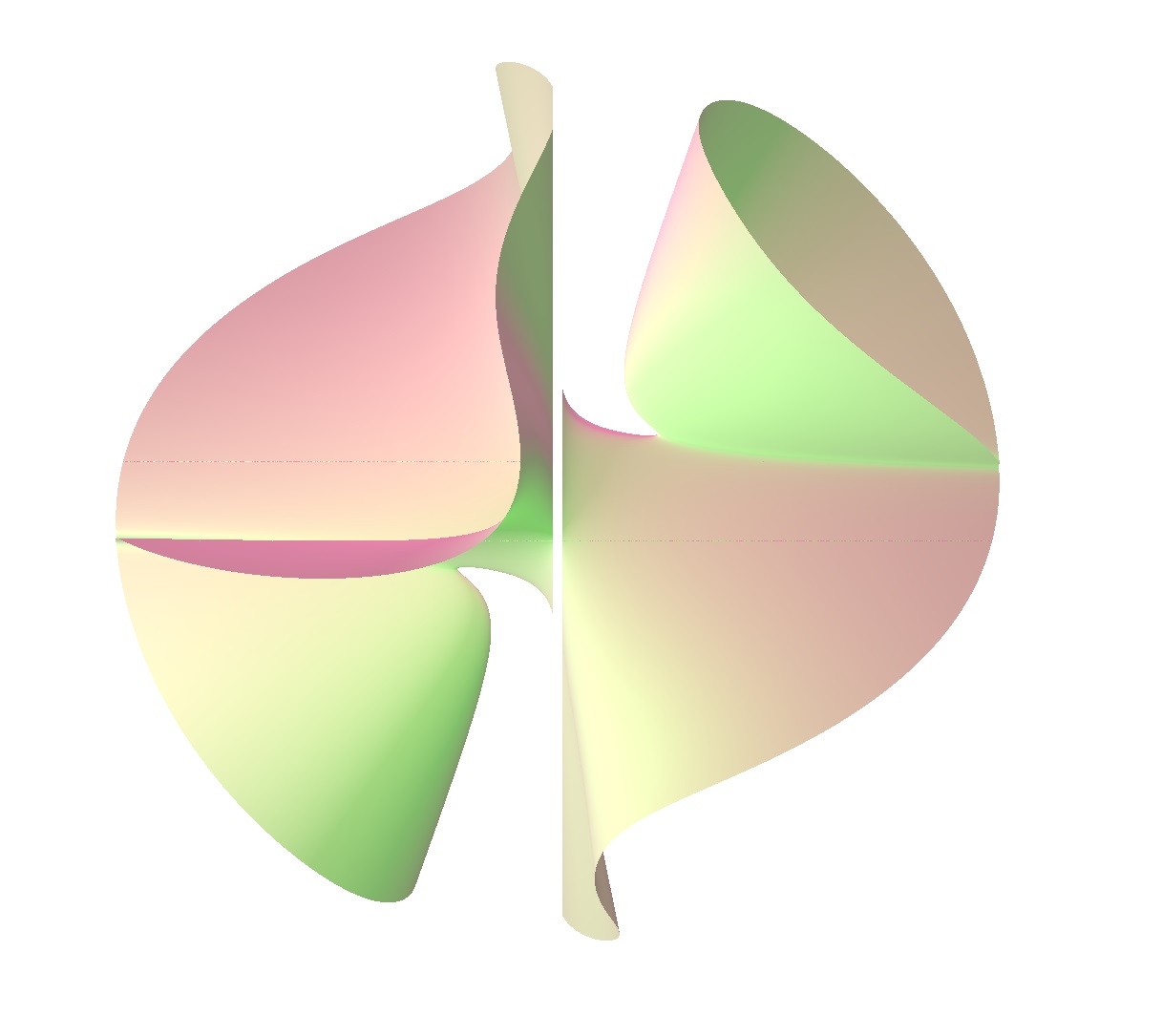}
   \includegraphics[scale=0.15]{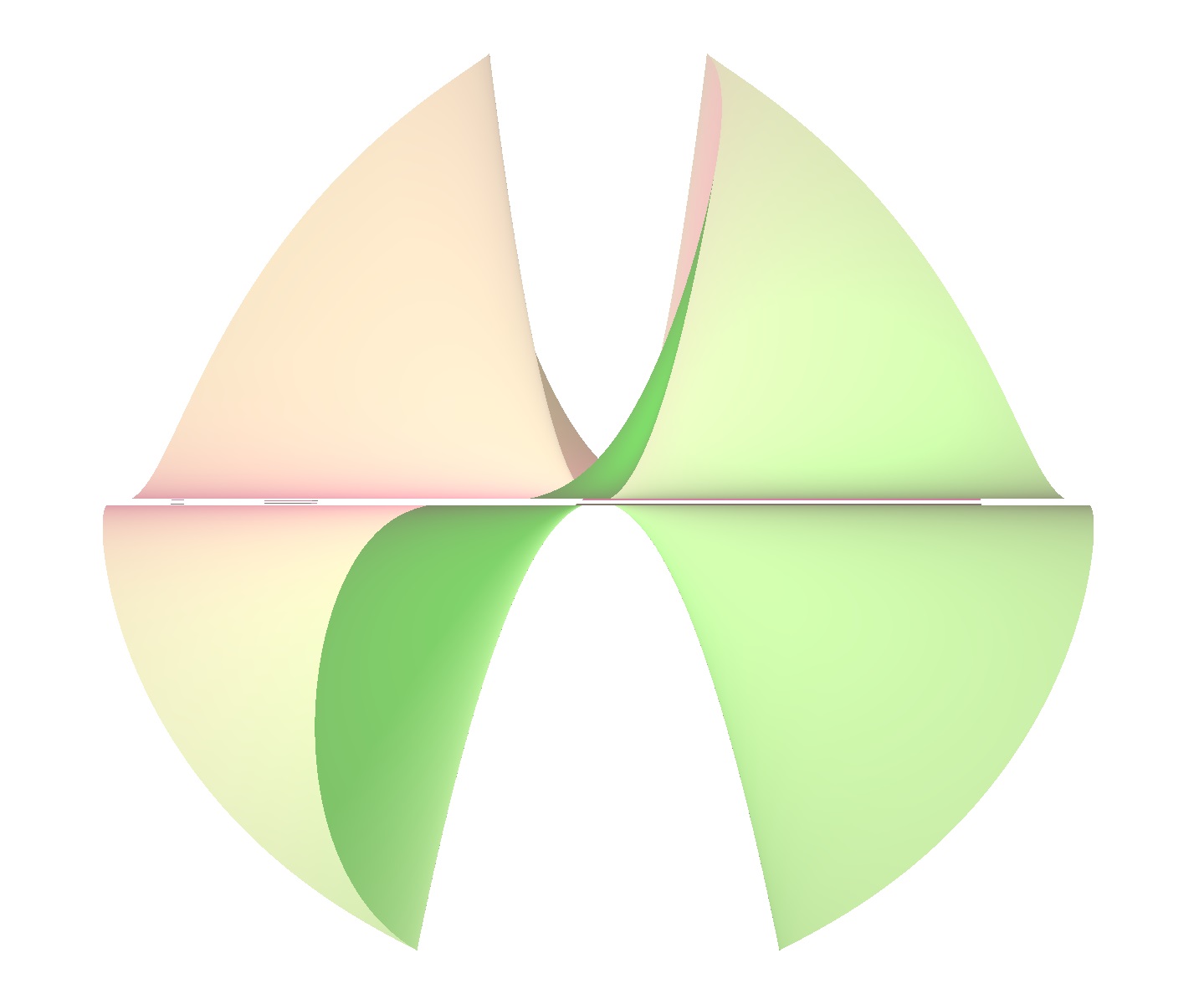}
   \includegraphics[scale=0.2]{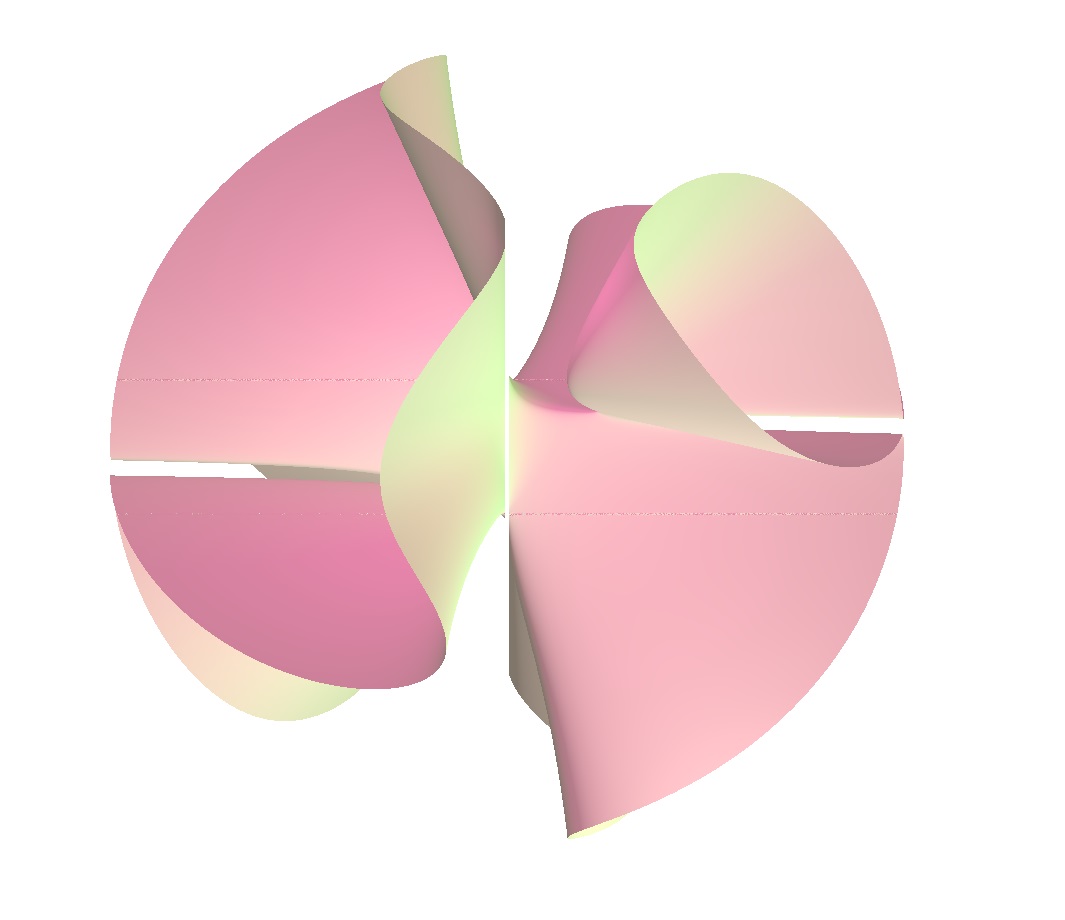}\\
Figure 4: Chart expressions of the blowup of the surface Camelia.
\end{center}\medskip


{\sc Ex.÷ \ref{2.34}.} Since $\mathcal{O}_{X,a}\subseteq\wh{\mathcal{O}}_{X,a}$ and $\wh\mm_{X,a}=\mm_{X,a}\cdot\wh {\calo}_{X,a}$ every regular system of parameters $x_1,\dots, x_n$ of $\mathcal{O}_{X,a}$ is also a generator system of $\wh \mm_{X,a}$ in $\wh{\mathcal{O}}_{X,a}$. But there exists a subset $S$ of $\{x_1,\dots,x_n\}$ such that $S$ is a regular system of parameters of $\wh{\calo}_{X,a}$. The converse does not hold, since in general a regular system of parameters of $\wh{\calo}_{X,a}$ need not belong to $\calo_{X,a}$.\\



{\sc Ex.÷ \ref{2.38}.} The image of $f:(x,y)\mapsto (xy,y)$ is $f(\AAA^2)=(\AAA^2\setminus (\AAA^1\times \{0\}))\cup \{(0,0)\}$. This is a constructible and dense subset of $\AAA^2$. The inverse $f\inv:(x,y)\mapsto(\frac{x}{y},{y})$ of $f$ is defined on the complement of the $x$-axis. It cannot be extended to the origin $0$ of $\AAA^2$, since $f$ contracts the $x$-axis of $\AAA^2$ onto $0$.\\


{\sc Ex.÷ \ref{2.40}.}  It is easily checked that the inverse of $\varphi_{ij}$ is $\varphi_{ji}$:
\[
[\varphi_{ji}\circ\varphi_{ij}(x_1,\dots,x_n)]_k=
\begin{cases}
 \frac{x_k}{x_j}\cdot{x_j}=x_k, &\hskip 1cm  k\not= i,j, \\
 \frac{1}{x_j}\cdot x_ix_j=x_i, & \hskip 1cm k=i, \\
 \frac{1}{1/x_j}=x_j, & \hskip 1cm k=j,\\
\end{cases}
\]
where $[-]_k$ denotes the $k$-th component. Therefore $\varphi_{ji}\circ\varphi_{ij}$ is the identity. The domain $U_{ij}$ of $\varphi_{ij}$ is $\AAA^n\setminus V(x_j)$, a dense open subset of $\AAA^n$. The image $\varphi_{ij}(U_{ij})$ equals $\AAA^n\setminus V(x_i)$. It is open, too. Since all components of $\varphi_{ij}$ are rational and the denominators do not vanish on $U_{ij}$, the maps $\varphi_{ij}$ induce biregular maps $U_{ij}\too U_{ji}$.\\


{\sc Ex.÷ \ref{2.41}.} Elliptic curves admit an additive group structure. It therefore suffices to restrict to the origin $a=0\in\AAA^2$ and to show that the curve is formally isomorphic at $0$ to $(\wh{\AAA}^1,0)$. But $\K[[x,y]]/(y^2-x^3+x)\isom \K[[y]]$ since, by the implicit function theorem for formal power series, the residue class of $x$ in $\K[[x,y]]/(y^2-x^3+x)$ can be expressed as a series in $y$. To show that $X$ is nowhere locally biregular to the affine line is much harder.\\


{\sc Ex.÷ \ref{2.42}.} A morphism $f:X\to Y$ of algebraic varieties is proper if it is separated and universally closed, i.e., if for any variety $Z$ and any morphism $h:Z\to Y$ the induced morphism $g:X\times_Y Z\to Y, g(x,z)=f(x)=h(z)$ is closed. In the example, already the image $\pi(X)=\AAA^1\setminus\{0\}$ of $X$ is not closed.\\


{\sc Ex.÷ \ref{2.43}.} A formal isomorphism between $(\wh X,0)$ and $(\wh Y,0)$ would send the the jacobian ideals generated by the partial derivatives of $x^3-y^2$ and $x^5-y^2$ to each other. This is impossible. On the other hand, the formal isomorphism defined by $(x,y)\mapsto (x\sqrt[3]{1+x^2},y)$ sends $(\wh X,0)$ onto $(\wh Z,0)$. It is an isomorphism by the inverse function theorem for formal power series. \\


{\sc Ex.÷ \ref{3.27}.} The maximal ideal of $\K[[x]]$ is generated by $x$. Since $\sqrt{1+x}$ is a unit in $\K[[x]]$ (its inverse is 
$(1+x)^{-1/2}=\sum_{k\geq0}\binom{-1/2}{k}(-1)^k x^k$), also $x\cdot \sqrt{1+x}$ generates this ideal. Therefore $x\cdot \sqrt{1+x}$ is a regular system of parameters in $\K[[x]]$. It does not stem from a regular parameter system in $\K[x]_{(x)}$ since $\sqrt{1+x}$ cannot be written as a quotient of polynomials.\\


{\sc Ex.÷ \ref{3.28}.} The maximal ideal of the local ring $\calo_{\AAA^2,0}=\K[x,y]_{(x,y)}$ is generated by $x$ and $y$.  As $x^2+1$ is invertible in $\K[x,y]_{(x,y)}$, also $y^2-x^3-x=y^2-x(x^2+1)$ and $y$ generate this ideal. They hence form a regular system of parameters of $\calo_{\AAA^2,0}$. For the remaining assertions, see ex. \ref{2.41}.\\


{\sc Ex.÷ \ref{3.31}.} Computing the derivatives of $x^2-y^2z$ with respect to $x$, $y$ and $z$ and setting them zero shows that the singular locus of $X$ is the $z$-axis.
Let $a=(0,0,t)$ with $t\not=0$ be a point of the $z$-axis outside the origin. Then $\wh{\calo}_{X,a}=\CCC[[x,y,z]]/(x^2-y^2(z-t))$. Since $\sqrt{z-t}$ is for $t\not=0$ a formal power series in $z$ the quotient can be rewritten as $\CCC[[x,y,z]]/(x-y\sqrt{z-t})(x+y\sqrt{z-t})$. The product $(x-y\sqrt{z-t})(x+y\sqrt{z-t})=0$ defines two smooth formal surfaces intersecting each other transversally. They are formally isomorphic to the union of the two planes defined by $(x-y\sqrt{-t})(x+y\sqrt{-t})=0$. Hence $a$ is a normal crossings point of $X$. But it is not a simple normal crossings point since, globally, $X$ consists of only one component which is moreover singular at $a$.

The formal neighborhood of $0$ is $\wh{\calo}_{X,0}=K[[x,y,z]]/(x^2-y^2z)$. Since $x^2-y^2z$ is irreducible in $K[[x,y,z]]$ and $X$ is singular at $0$ the origin is not a normal crossings point of $X$.\\


{\sc Ex.÷ \ref{3.34}.} Over $\CCC$ or any finite field $\FFF_q$ with $q$ congruent to $1$ modulo $4$ there exists a square root of $-1$, hence the variety defined by $x^2+y^2=(x+\sqrt{-1}y)(x-\sqrt{-1}y)=0$ has normal crossings at the origin. Since both components are regular at $0$ it also has simple normal crossings.\\\goodbreak 

The surface defined by $x^2+y^2+z^2=0$ does not have normal crossings over $\RRR$ or $\CCC$, but it has normal crossings over a field of characteristic $2$, since $x^2+y^2+z^2=(x+y+z)^2$ in characteristic $2$ so that the variety is a double plane.  The same holds for the variety defined by $x^2+y^2+z^2+w^2=0$. The variety $xy(x-y)=0$ has three components passing through $0$, but as $\AAA^2$ has only two coordinate subspaces the variety does not have normal crossings at $0$. The same argument works for $xy(x^2-y)=0$.  The variety defined by $(x-y)z(z-x)=0$ has both normal and simple normal crossings at the origin over any field.\\


{\sc Ex.÷ \ref{3.35}.} The visualization of the surface looks as follows, cf.÷ figure 5.\\


\begin{center}
  \includegraphics[width=6cm]{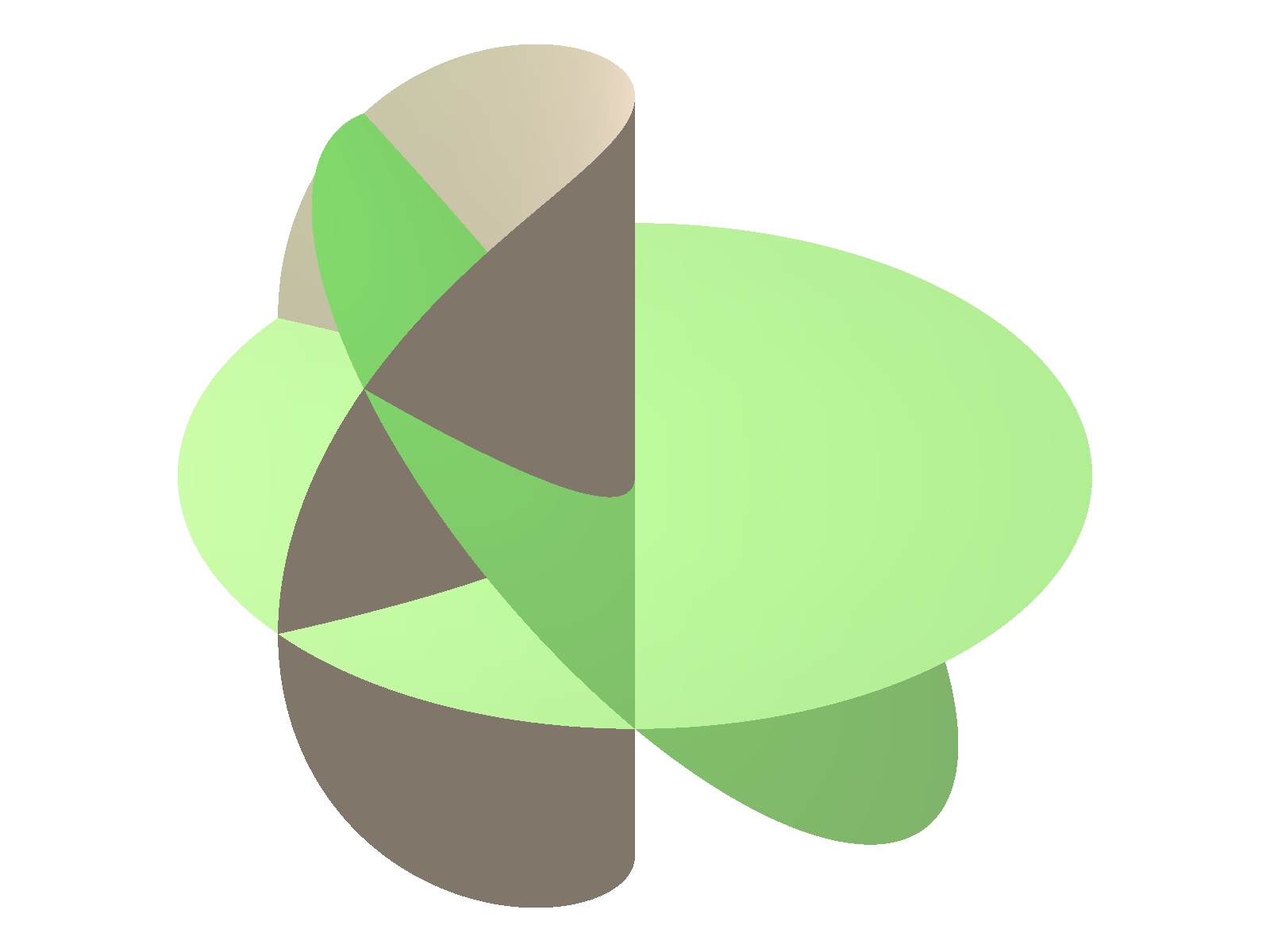}\\
 Figure 5: The zeroset of $(x-y^2)(x-z)z=0$ in $\AAA^3$.
\end{center}\medskip


{\sc Ex.÷ \ref{3.38}.} A point $a$ is a regular point of a variety if the local ring is regular, i.e., if its maximal ideal can be generated by as many elements as the Krull dimension indicates. For a cartesian product $X\times Y$, the local ring at a point $(a,b)$ is the tensor product of the local rings of the two factors,
$$\calo_{X\times Y,(a,b)}=\K[X\times Y]_{m_{(a,b)}}\isom \K[X]_{m_a}\otimes \K[Y]_{ m_b}=\calo_{X,a}\otimes\calo_{Y,b}$$
and the same holds for the respective maximal ideals,
$$ m_{(a,b)}/ m_{(a,b)}^2\isom m_a/m_a^2\otimes m_b/ m_b^2.$$
Therefore $(a,b)\in X\times Y$ is regular if and only if $a$ is regular in $X$ and $b$ is regular in $Y$. Consequently, $\Sing(X\times Y)=(\Sing(X)\times Y)\cup (X\times\Sing(Y)).$\\


{\sc Ex.÷ \ref{3.39}.} A rather simple example for such a variety is the surface {\sl T\"ulle} defined by $xz(x+z-y^2)=0$ in $\AAA^3$. The pairwise intersections of its three components are the $y$-axis, respectively the two parabolas defined by $x=z-y^2=0$ and $z=x-y^2=0$, and are therefore regular. The parabolas are tangent to the $y$-axis at $0$. Said differently, the intersection of all three components, which is set-theoretically just the origin $0\in\AAA^3$, is scheme-theoretically singular (i.e., the sum of the three ideals $(x)$, $(z)$ and $(x+z-y^2)$ is not the ideal $(x,y,z)$ defining the origin in $\AAA^3$). Because of this, the surface is not mikado at $0$. \\



{\sc Ex.÷ \ref{4.25}.} For $X=V(xy,x^2)$ and $a$ the origin, the local ring is $\calo_{X,a}=\K[x,y]/(xy,x^2)_{(x,y)}\isom\K[x,y]_{(x,y)}/(xy,x^2)$. As a scheme, $X$ equals the $y$-axis together with an embedded point at $0$, since $(xy,x^2)$ has the primary decomposition $(xy,x^2)=(x)\cap (x^2,y)$. As $Z=\{0\}$ cannot be defined  in $X$ by a single equation, it is not a Cartier divisor.\\


{\sc Ex.÷ \ref{4.26}.} The local ring $\calo_{X,a}$ of $X=\AAA^1$ at the origin is  $\calo_{X,a}=K[x]_{(x)}.$ The monomial $x^2$ is not a zero-divisor in $\calo_{X,a}$, hence $Z=V(x^2)$ is a Cartier divisor in $\AAA^1$. Similarly, $x^2y$ is not a zero divisor in $\calo_{\AAA^2,a} =K[x,y]_{(x-a_1,y-a_2)}$ for any $a=(a_1,a_2)\in\AAA^2$ and $Z=V(x^2y)$ is a Cartier divisor in $\AAA^2$.\\


{\sc Ex.÷ \ref{4.27}.} As $Z=V(x^2,y)$ is defined in $X=V(x^2,xy)$ by $\overline y=0$ where the residue class $\overline y$ of $y$ in $\calo_{X,0}=K[x,y]_{(x,y)}/(x^2,xy)$ is a zero divisor, it follows that $Z$ is not a Cartier divisor in $X$.\\ 


{\sc Ex.÷ \ref{4.38}.} The Rees algebra of the ideal $I=(x,2y)$ in $\ZZZ_{20}[x,y]$ is given by 
$$\wt{R}=\bigoplus_{k\geq 0} I^kt^k=\bigoplus_{k\geq 0}(xt,2yt)^k=\ZZZ_{20}[x,y,xt,2yt]$$
with $\deg t=1$. It is isomorphic to $\ZZZ_{20}[u,v,w,z]/(uz-2vw)$ with $\deg u=\deg v=0$ and $\deg w=\deg z=1$.\\



{\sc Ex.÷ \ref{4.46}.} The blowup of $\AAA^2$ in $Z=\{(0,1)\}$ is given by the closure $\overline{\Gamma}$ of the graph of
$\gamma:\AAA^2\setminus\{(0,1)\}\to\PPP^1, (a,b)\mapsto (a:b-1)$ in $\AAA^2\times\PPP^1$ together with the projection $\pi:\overline{\Gamma}\to X, (a,b,(c:d))\mapsto (a,b)$. More explicitly, 
$$\overline{\Gamma}=\{(a,b, (a:b-1)),\, (a,b)\in\AAA^2\}\cup\{(0,1)\}\times\PPP^1.$$
The line $L$ in $\AAA^2$ defined by $x+y=0$ does not contain the point $(0,1)$, therefore its preimage in $\overline \Gamma$ is $\pi\inv(L)=\{((a,-a),(a:-a-1)),\, a\in\AAA^1\}.$

The line $L'$ in $\AAA^2$ defined by $x+y=1$ contains the point $(0,1)$, therefore its preimage in $\overline \Gamma$ is $\pi\inv(L')=\{(a,1-a,(a:-a)),\, a\in\AAA^1\setminus\{0\}\}\cup\{(0,1)\}\times\PPP^1$.\\


{\sc Ex.÷ \ref{4.47}.} The chart expressions of the blowup map of $\AAA^3$ along the $z$-axis are given by $\pi_1(x,y,z)=(x,xy,z)$ and $\pi_2(x,y,z)=(xy,y,z)$. Their inverses are 
$\pi_1^{-1}(x,y,z)=(x,\frac{y}{x},z)$ and $\pi_2^{-1}(x,y,z)=(\frac{x}{y},y,z)$. The chart transition maps are given by the compositions
$$\pi_1^{-1}\circ\pi_2(x,y,z)=\left(xy,\frac{1}{x},z\right)$$
and
$$\pi_2^{-1}\circ\pi_1(x,y,z)=\left(\frac{1}{y},xy,z\right).$$


{\sc Ex.÷ \ref{4.48}.} Let the chosen line $Z$ in the cone $X$ be defined by $x=y-z=0$. It requires two equations, therefore it is not a Cartier divisor. The polynomials $x$ and $y-z$ form a regular sequence in $\K[X] =\K[x,y,z]/(x^2+y^2-z^2)$. Therefore the blowup $\wt{X}$ of $X$ along $Z$ is given in $X\times\PPP^1$ by the equation $xv-(y-z)u=$ where $(u:v)$ are projective coordinates in $\PPP^1$.

An alternate way to compute the blowup is by applying first the linear coordinate change $u=x, w=y+z$ and $t=y-z$. The equation $x^2+y^2=z^2$ of $X$ transforms into $Y: u^2+2wt=0$ and $x=y-z=0$ becomes $u=t=0$. The latter equations define the $w$-axis, which is now the center of blowup. The resulting chart expressions of the blowup map are given by $\pi_1(u,w,t)=(ut,w,t)$ and $\pi_2(u,w,t)=(u,w,wt)$. This gives the chart descriptions of the total transform $Y^*$ of $Y$ via
$$Y_1^*=V(u^2t^2+2wt)=V(t)\cup V(u^2t+2w),$$
$$Y_2^*=V(u^2+2uwt)=V(u)\cup V(u+2wt),$$
respectively. Substituting backwards gives
$$X_1^*=V(y-z)\cup V(x^2(y-z)+2(y+z)),$$
$$X_2^*=V(x)\cup V(x+2(y^2-z^2)),$$
where the second components denote the chart expressions of the strict transform $X^s$ of $X$ in the blowup $\wt\AAA^3$ of $\AAA^3$ along $Z$, i.e., of the blowup $\wt X$ of $X$ along $Z$, cf.÷ def.÷ \ref{stricttransform} as well as Prop.÷ \ref{basechange} together with its corollary.\\


{\sc Ex.÷ \ref{4.49}.} The polynomials $z$ and $x^2+(y+2)^2-1$ defining the circle in $\AAA^3$ form a regular sequence. Hence, if $(u:v)$ denote projective coordinates in $\PPP^1$, the blowup $\wt{X}$ of $X$ along the circle is given in $X\times \PPP^1$ by the equation $uz-v(x^2+(y+2)^2-1)=0$ together with the projection $\pi:\wt{X}\to X$ on the first factor.\\
The polynomials $z$ and $y^2-x^3-x$ defining the elliptic curve in $\AAA^3$ form a regular sequence, too. The blowup $\wt{X}$ of $X$ along this curve is given in $X\times \PPP^1$ by the equation $uz-v(y^2-x^3-x)=0$ together with the projection $\pi:\wt{X}\to X$ on the first factor.\\


{\sc Ex.÷ \ref{4.52}.} The ideals $I=(x_1,\dots, x_n)$ and $J =(x_1,\dots,x_n)^m=I^m$ induce isomorphic Rees algebras $\wt R=\oplus_{k\geq 0}\, I^k$ and $\wt S=\oplus_{k\geq 0}\, J^k =\oplus_{k\geq 0}\, I^{mk}$, hence define the same blowups of $\AAA^n$. See \cite{Moody} for more details on the characterization of ideals producing the same blowup. \\\goodbreak


{\sc Ex.÷ \ref{4.57}.} Since the centers of the blowups are given by coordinate subspaces, the definition of blowup via affine charts can be used.
For the blowup in the origin the three chart expressions of the blowup map are given by $\pi_x(x,y,z)=(x,xy,xz)$, $\pi_y(x,y,z)=(xy,y,yz)$ and $\pi_z(x,y,z)=(xz,yz,z)$.
This gives for the total transforms of $X$ the expressions
$$X^*_x=V(x^2-x^3y^2z)=V(x^2)\cup V(1-xy^2z),$$
$$X^*_y=V(x^2y^2-y^3z)=V(y^2)\cup V(x^2-yz),$$
$$X^*_z=V(x^2y^2-y^2z^3)=V(z^2)\cup V(1-xy^2z).$$
The blowup $\wt X$ of $X$ is given by gluing the three charts $V(1-xy^2z)$, $V(x^2-yz)$, and $V(1-xy^2z)$ of the strict transform $X^s$ of $X$ according to def.÷ \ref{blowup6}. Observe that the origin of the $y$-chart $V(x^2-yz)$ has the same singularity as $X$ at $0$. 

The blowup of $X$ along the $x$-axis yields for the blowup map the chart expressions $\pi_y(x,y,z)=(x,y,yz)$ and $\pi_z(x,y,z)=(x,yz,z)$. As the $x$-axis is not contained in $X$, the total transform $X^*$ and the strict transform $X^s=\wt X$ of $X$ coincide, cf.÷ def.÷ \ref{stricttransform}. This gives for $\wt X$ the chart expressions
$$\wt{X}_y=V(x^2-y^3z),$$
$$\wt{X}_z=V(x^2-y^2z^3).$$
For the blowup of $X$ along the $y$-axis, the total transform has chart expressions
$$X^*_x=V(x)\cup V(x-y^2z),$$
$$X^*_z=V(z)\cup V(x^2z-y^2),$$
where the secondly listed components are the charts of the strict transform. Again, the chart $V(x^2z-y^2)$ has, up to permutation of the variables, the same singularity as $X$ at $0$. The blowup of $X$ along the $z$-axis gives accordingly
$$X^*_x=V(x^2)\cup V(1-y^2z),$$
$$X^*_y=V(y^2)\cup V(x^2-z).$$


{\sc Ex.÷ \ref{4.64}.} The blowup of $X=\Spec(\ZZZ[x])$ along $I=(x,p)$ is covered by two charts with coordinate rings $\ZZZ[x, x/p]\isom \ZZZ[x,u]/(x-pu)\isom \ZZZ[u]$ and $\ZZZ[x,p/x]\isom\ZZZ[x,u]/(p-xu)$, respectively. Observe that the second chart is not equal to the affine line $\AAA^1_\ZZZ$ over $\ZZZ$, see also \cite{Eisenbud_Harris}.

For the blowup of $X$ along $I=(px,pq)$ one cannot use the equations of def.÷ \ref{blowup5} since $px$ and $pq$ do not form a regular sequence in $\ZZZ[x]$. Similarly as before, the affine charts have coordinate rings $\ZZZ[x, x/q]\isom \ZZZ[x,u]/(x-qu)\isom \ZZZ[u]$ and $\ZZZ[x,q/x]\isom\ZZZ[x,u]/(q-xu)$, respectively. \\

\ignore    

Let $Z$ be the scheme $\Spec(\ZZZ[x]/(x,p))$ and set $I=(x,p)$. The Rees algebra of $I$ equals 
$$\wt{R}=\bigoplus_{k\geq 0}\, (x,p)^kt^k.$$
The definition of blowups via Rees algebras yields $\wt{X}=\Proj(\wt{R})$ together with the map $\pi:\wt{X}\to X$ given via the homomorphism
$\alpha: R\to \wt{R}, r\mapsto r$ and $\pi(\mathfrak{p})=\alpha^{-1}(\mathfrak{p})$. The irrelevant ideal of $\wt{R}$ is 
$$\wt{R}_+=\bigoplus_{k\geq 1}(x,p)^kt^k=(xt,pt).$$
Therefore
$$\wt{X}=\Proj(\wt{R})=\{\mathfrak{p}\in\Spec(\ZZZ[x,t]), \mathfrak{p}\text{ prime, homogeneous}, \mathfrak{p}\not\supseteq (xt,pt)\}$$
$$=\{\mathfrak{p}\in\Spec(\ZZZ[x]), \mathfrak{p}\text{ prime, homogeneous}, \mathfrak{p}\not= (x,p)\}.$$

Similarly, the computation for $I=(x,pq)$ with $p$ and $q$ primes yields
$$\wt{X}=\{\mathfrak{p}\in\Spec(\ZZZ[x]), \mathfrak{p}\text{ prime, homogeneous}, \mathfrak{p}\not= (x,p) \text{ or } (x,q)\}.$$
\\
\recognize


{\sc Ex.÷ \ref{5.12}.} The equations $g_1=y^2-xz, g_2=yz-x^3$ and $g_3=z^2-x^2y$ do not form a regular sequence, since they admit the non-trivial linear relations $z\cdot g_1 -y\cdot g_2+ x\cdot g_3=0$ and $x^2\cdot g_1-z\cdot g_2+ y\cdot g_3=0$. Therefore one cannot use the equations of def.÷ \ref{blowup5} to describe the blowup of $\AAA^3$ along the curve $Z$ defined by $g_1=g_2=g_3=0$.\\


{\sc Ex.÷ \ref{5.13}.} Consider the map $\gamma: X\setminus Z\to\PPP^{1}, (x,y,z)\mapsto (x:y)$. For $x=y=0$ and $z\not=0$ there lies only one point in the closure $\overline\Gamma$ of the graph of $\gamma$ in $X\times \PPP^1$, while for $z=0$ the set of limit points forms a projective line $\PPP^1$. The blowup is a local isomorphism outside $0$ since $Z$ is locally a Cartier divisor in $X$ at these points (being locally a regular curve in a regular surface). Above $0\in X$ the blowup map $\pi:\wt X\too X$ is not a local isomorphism, since $\pi$ contracts all limit points to $0$, or, alternatively, because the blowup $\wt X$ is regular, while $X$ is singular at $0$.\\


{\sc Ex.÷ \ref{5.14}.} The blowup map $\pi:\wt \AAA^2\too\AAA^2$ with center the origin has the chart expressions $(x,y)\mapsto (x,xy)$ and $(x,y)\mapsto(xy,y)$. The total transform $X^*=\pi\inv(X)$ of $X=V(x^2)$ in $\wt \AAA^2$ has therefore charts defined in $\AAA^2$ by $x^2=0$, respectively $x^2y^2=0$, with exceptional divisors given by $x=0$ and $y=0$. Hence the strict transform $X^s=\wt X$ of $X$ lies only in the $y$-chart and is defined there by $x^2=0$.\\


{\sc Ex.÷ \ref{5.15}.} The same computation as in the preceding example applies and shows  that $\wt X$ lies entirely in the $y$ chart. It is defined there by the ideal $(x^2, x)=(x)$, equals hence the $y$-axis of this chart.\\


{\sc Ex.÷ \ref{5.17}.}  The blowup of $\AAA^3$ along the union of two coordinate axes is discussed in ex.÷ \ref{4.36} and \ref{4.66}.

The blowup $\wt \AAA^3$ of $\AAA^3$ along the cusp $(x^3-y^2,z)$ is defined in $\AAA^3\times \PPP^1$ by $uz-v(x^3-y^2)=0$, since $x^3-y^2$ and $z$ form a regular sequence. It follows that $\wt \AAA^3$ is singular at $0$.\\




{\sc Ex.÷ \ref{6.14}.} By Prop.÷ \ref{6.6}, the defining ideal of the strict transform of $X$ is generated by the strict transforms of the elements of a Macaulay basis of the ideal $(x^2-y^3,xy-z^3)$. Notice that the given generators are not a Macaulay basis since their initial forms are $x^2$ and $xy$, which do not generate the initial form of $y(x^2-y^3)-x(xy-z^3)=xz^3-y^4$. By adding this element, the Macaulay basis $x^2-y^3,xy-z^3,xz^3-y^4$ is obtained.

The chart expressions of the strict transform of $X$ can now be computed from this Macaulay basis,
$$X^s_x=V(1-xy^3,y-xz^3,z^3-y^4),$$
$$X^s_y=V(x^2-y,x-yz^3,xz^3-1),$$
$$X^s_z=V(x^2-y^3z,xy-z,x-y^4).$$\\


{\sc Ex.÷ \ref{6.18} and \ref{6.19}.} The transformation of flags under blowups is explicitly described in \cite{Ha_Power_Series} p.÷ 5--8.\\


{\sc Ex.÷ \ref{6.22}.} In the $y$-chart, the total transform of $I=(x^2,y^3)$ is given by the ideal $I^*=(x^2y^2,y^3)$. Factoring out the maximal power of the monomial defining the exceptional divisor, $I^*=(y^2)(x^2,y)$ is obtained. Thus, the weak transform of $I$ is given by the ideal $I^\curlyvee=(x^2,y)$. On the other hand, it is easy to see that $x^2,y^3$ is a Macaulay basis for $I$. Thus, by Prop.÷ \ref{6.6}, the strict transform of $I$ is generated by the strict transforms of these generators. Therefore, $I^s=(x^2,1)=\mathbb{K}[x,y]$.
\\


{\sc Ex.÷ \ref{6.23}.} This result is proved in \cite{Ha_BAMS_1}, p.÷ 345.
\\




{\sc Ex.÷ \ref{7.10}.} For the first two equations, the implicit function theorem shows that the zerosets are regular at $0$. This gives a parametrization by formal power series. The zeroset of the third equation is singular at $0$ and one cannot use the implicit function theorem to describe it at $0$. To give a parametrization requires to construct first a resolution, which, in the present case, is very tedious.\\


{\sc Ex.÷ \ref{7.14}.} Consider the case that the resolution of $Y$ is achieved by a sequence of blowups:
 \[Y'=Y_n\overset{\pi_{n-1}}{\longrightarrow}Y_{n-1}\overset{\pi_{n-2}}{\longrightarrow}\cdots \overset{\pi_{0}}{\longrightarrow}Y_0=Y.\]
Denote by $Z_i\subseteq Y_i$ the center of the blowup $\pi_i:Y_{i+1}\to Y_i$. Since $Z$ is regular, the singular locus of $Y\times Z$ is $\textnormal{Sing}(Y)\times Z$. By the base change property for blowups, Prop.÷ 5.1, the blowup of $Y_i\times Z$ along the center $Z_i\times Z$ equals $Y_{i+1}\times Z$. This gives a new sequence
 \[Y'\times Z=Y_n\times Z\overset{\wt\pi_{n-1}}{\longrightarrow}Y_{n-1}\times Z\overset{\wt\pi_{n-2}}{\longrightarrow}\cdots \overset{\wt\pi_{0}}{\longrightarrow}Y_0\times Z=Y\times Z\]
 where $\wt\pi_i:Y_{i+1}\times Z\to Y_i\times Z$ is the blowup along the center $Z_i\times Z\subseteq Y_i\times Z$. It is checked that the morphism $Y'\times Z\to Y\times Z$ is a resolution of the singularities of $Y\times Z$.\\


{\sc Ex.÷ \ref{8.25}.} Consider the variety $X=V((x^2-y^3)(z^2-w^3))\subseteq\mathbb{A}^4$ over a field of characteristic zero. The stratification of $X$ by the iterated singular loci is as follows.
$$\Sing(X)=V(x,y)\cup V(z,w)\cup V(x^2-y^3,z^2-w^3),$$
$$\Sing^2(X)=V(x,y,z^2-w^3)\cup V(z,w,x^2-y^3),$$
$$\Sing^3(X)=V(x,y,z,w).$$


{\sc Ex.÷ \ref{8.30}--\ref{8.32}.} Let $J\subseteq R$ be an ideal and $I\subseteq R$ a prime ideal. The order of $J$ along $I$ is defined as
 \[\ord_IJ=\max\{k\in\mathbb{N},\, JR_I\subseteq I^kR_I\}\]
 where $R_I$ is the localization of $R$ in $I$. If $I^{(k)}=I^kR_I\cap R$ denotes the $k$-th symbolic power of $I$, the order can also be expressed as
 \[\ord_IJ=\max\{k\in\mathbb{N},\, J\subseteq I^{(k)}\}\]
 without making explicit use of the localization \cite{Zariski_Samuel}.
 
Now consider the example $R=\mathbb{K}[x,y,z]$, $I=(y^2-xz,yz-x^3,z^2-x^2y)$. It can be checked that $I$ is a prime ideal of $R$ but not a complete intersection (i.e., not generated by a regular sequence, cf.÷ ex.÷ \ref{5.12}). Also consider the principal ideal $J$ in $R$ that is generated by $f=x^5+xy^3+z^3-3x^2yz$. The order of $J$ along $I$ can be determined as follows. First notice that $f\notin I^2$ since $f$ has order $3$ at the origin, but all elements in $I^2$ have at least order $4$ at the origin. On the other hand,
 \[xf=x^6+x^2y^3+xz^3-3x^3yz=(x^3-yz)^2-(y^2-xz)(z^2-x^2y)\in I^2.\]
 Since $x\notin I$, this implies that $J\cdot R_I\subseteq I^2R_I$, and in particular, $\ord_I J\geq 2$. Thus, $f$ is an example for an element in $R$ that is contained in the symbolic power $I^{(2)}$, but not in $I^2$. It remains to show that $\ord_I J=2$. By Thm.÷ \ref{order_under_localization} it suffices to find a point $a$ that lies on the curve $V(I)$ for which $\ord_a f=2$. Such a point is for instance $a=(1,1,1)\in V(I)$.  \\


{\sc Ex.÷ \ref{8.39}.} Consider the polynomial $g=x^c+\sum_{i=0}^{c-1}g_i(y)\cdot x^i$ at the origin $a=0$ of $\AAA^{1+n}$. The order of $g$ at $a$ equals the minimum of $c$ and all values $\ord_a g_i+i$, for $0\leq i<c$. Assume that $\ord_0g=c$ and also that $g_{c-1}=0$. If the characteristic of the ground field is zero, this can be achieved by a change of coordinates $x\mapsto x+\frac{1}{c}\cdot g_{c-1}(y)$, compare with rem.÷ \ref{9.4} and ex.÷ \ref{9.10}. The defining ideal $I$ of the top locus of $V(g)$ is generated by the derivatives $\frac{\partial^{i+|\alpha|}}{\partial x^i\partial y^\alpha}g$ where $i\in\mathbb{N}$, $\alpha\in\mathbb{N}^n$ and $i+|\alpha|<c$. In particular, if the characteristic of the ground field is zero, $\frac{\partial^{c-1}}{\partial x^{c-1}}g=c!\cdot x\in I$ and thus $x\in I$. This allows to express $I$ in the form:
 \[I=\bigg(x,\frac{\partial^{|\alpha|}}{\partial y^\alpha}g_i(y),\, \text{ for } \alpha\in\mathbb{N}^n, i<c, |\alpha|<c-i\bigg).\]


{\sc Ex.÷ \ref{8.43}.} Assume that $\ord\, g=d$ and $g=\sum_{i,j}c_{ij}y^iz^j$. The order will always be taken at the origin of the respective charts. Let $g'$ be the strict transform of $g$ under the first blowup. Then
 \[g'=\sum_{i,j}c_{ij}y^{i+j-d}z^j.\]
 Set $d'=\ord\, g'$ and notice that $d'=\min\{i+2j-d,\, c_{ij}\neq0\}$. Let $g''$ be the strict transform of $g'$ under the second blowup. Then
 \[g''=\sum_{i,j}c_{ij}y^{i+j-d}z^{i+2j-d-d'}.\]
 Set $d''=\ord\, g''$ and notice that $d''=\min\{2i+3j-2d-d',\,c_{ij}\neq0\}$. It is clear that $d''\leq d'\leq d$. If $d'\leq \frac{d}{2}$, then $d''\leq \frac{d}{2}$ follows. So assume that $d'>\frac{d}{2}$. By assumption, there exists a pair $(i,j)\in\mathbb{N}^2$ such that $i+j=d$ and $c_{ij}\neq0$. Thus,
 \[d''\leq \underbrace{2i+3j}_{=2d+j}-2d\underbrace{-d'}_{<-\frac{d}{2}}<\underbrace{j}_{\leq d}-\frac{d}{2}\leq\frac{d}{2}.\]
\\


{\sc Ex.÷ \ref{8.53}.} The situation is described in detail in \cite{Ha_Excellent}, Prop.÷ 4.5, p.÷ 354, and \cite{Zariski_1944}, Thm.÷ 1 and Lemma 3.2, p.÷ 479. If the center is a smooth curve, see \cite{Ha_Excellent}, Prop.÷ 4.6, p.÷ 354, and \cite{Zariski_1944}, Thm.÷ 2, p.÷ 484 and its corollary, p.÷ 485.\\ \goodbreak


{\sc Ex.÷ \ref{9.8}.} Let $a=0$ and pass to the completion $\wh \calo_{W,a}\isom\mathbb{K}[[x_1,\ldots,x_n]]$. Assume that $\ord_a f=c$ and that $f$ has the expansion $f=\sum_{\alpha\in\mathbb{N}^n}c_\alpha x^\alpha$. The initial form of $f$ is $\sum_{|\alpha|=c}c_\alpha x^\alpha$. It is assumed that the blowup is monomial; thus, $a'$ is the origin of the $x_i$-chart for some $i\leq n$. If $i>1$, then $a'$ is contained in the strict transform of $V^s$. It remains to show that, if $a'$ is the origin of the $x_1$-chart, the order of the strict transform of $f$ is smaller than $c$ .
 
For this, notice that the strict transform of $f$ at the origin of the $x_1$-chart is given by $f'=\sum_{\alpha\in\mathbb{N}^n}c_\alpha x_1^{|\alpha|-c-\alpha_1}x^\alpha$ where $\alpha_1$ is the first component of $\alpha$. The monomials of this expansion are distinct, so there can be no cancellation between them. By assumption, $x_1$ appears in the initial form of $f$; thus, there is an exponent $\alpha'\in\mathbb{N}^n$ such that $|\alpha'|=c$, $\alpha'_1>0$ and $c_{\alpha'}\neq0$. This implies that  the expansion of $f'$ contains the non-zero monomial $c_{\alpha'}x_1^{-\alpha_1'}x^\alpha$. Therefore $\ord_{a'}f'<c=\ord_af$.
\\


{\sc Ex.÷ \ref{9.10}.} Assume that the characteristic of the ground field is zero. The coordinate change $x_n\mapsto x_n-\frac{1}{d}\cdot a_{d-1}(y)$ transforms $f$ into
 \[f\mapsto \bigg(x_n-\frac{1}{d}\cdot a_{d-1}(y)\bigg)^d+\sum_{i=0}^{d-1}a_i(y)\bigg(x_n-\frac{1}{d}\cdot a_{d-1}(y)\bigg)^i.\]
 Notice that the coefficient of $x_n^d$ in this new expansion is $1$ and the coefficient of $x_n^{d-1}$ is $-\binom{d}{1}\frac{1}{d}a_{d-1}(y)+a_{d-1}(y)=0$. Thus, there are polynomials $\widetilde a_i(y)$ such that, in the new coordinates, 
 \[f=x_n^d+\sum_{i=0}^{d-2}\widetilde a_i(y)x_n^i.\]
 Consequently, $\frac{\partial^{d-1}}{\partial x_n^{d-1}}(f)=d!\cdot x_n$. Thus, the variety defined by $x_n=0$ defines an osculating hypersurface for $X$ at the origin. By Prop.÷ \ref{9.5}, every osculating hypersurface has maximal contact.\\


{\sc Ex.÷ \ref{9.11}.}
 The defining equation for the strict transform of $X$ in the $x$-chart is given by
 \[f'=y+y^2-z+y^2z-z^2-yz^2 =((y+1)-(z+1))(y+1)(z+1).\]
The order of $f'$ at the point $a'=(0,-1,-1)$ is $3$. Since the exceptional divisor is given by the equation $x=0$ in the $x$-chart, $a'$ lies on it. The strict transform $V^s$ of $V$ coincides with the complement of the $x$-chart, so $a'$ cannot lie on $V^s$.\\




{\sc Ex.÷ \ref{9.14}.} The relevant case appears when $c=p>0$ where $p$ is the characteristic of the ground field. Let $g(y)=\sum_{\alpha\in\mathbb{N}^m}g_\alpha y^\alpha$ be the expansion of $g$ with respect to the coordinates $y_1,\ldots,y_m$. It decomposes into
 \[g(y)=\underbrace{\sum_{\substack{\alpha\in p\cdot\mathbb{N}^m}}g_\alpha y^\alpha}_{g_1(y)}+\underbrace{\sum_{\alpha\in\mathbb{N}^m\setminus p\cdot\mathbb{N}^m}g_\alpha y^\alpha}_{g_2(y)}.\]
 If the ground field is assumed to be perfect, $g_1(y)$ is a $p$-th power. Thus, there is a formal power series $\widetilde g(y)$  with $\ord\, \widetilde g\geq1$ such that $\wt g^p=g_1$. Apply the change of coordinates $x\mapsto x-\widetilde g$. This transforms $f$  into 
 \[f=(x-\widetilde g)^p+g_1+g_2=x^p-\widetilde g^p+g_1+g_2=x^p+g_2.\]
 Notice that $\ord\, g_2\geq\ord\, g$. To show that this order is maximal, it suffices to consider an arbitrary change of coordinates $z\mapsto z+h(y)$ where $h\in \mathbb{K}[[y_1,\ldots,y_m]]$ is any power series with $\ord\, h\geq1$. This transforms the equation into
 \[f=x^p+h^p+g_2.\]
 Notice that there can be no cancellation between the terms in the expansions of $h^p$ and $g_2$. Thus, $\ord(h^p+g_2)\leq \ord\, g_2$. 
\\

{\sc Ex.÷ \ref{9.17}.} Denote the sequence of local blowups by 
 \[(W',a')=(W_m,a_m)\overset{\pi_{m-1}}{\longrightarrow}\cdots\overset{\pi_0}{\longrightarrow}(W_0,a_0)=(W,a).\]
Set $f^{(0)}=f$ and let $f^{(i)}$ be the strict transform of $f$ after $i$ blowups. Assume that the center of each local blowup $\pi_i:(W_{i+1},a_{i+1})\to(W_i,a_i)$ is contained in the top locus of $f^{(i)}$ and has normal crossings with the exceptional divisors produced by previous blowups. Let $Z$ be a local hypersurface of maximal contact for $f$ at $a$. Assume that $\ord_{a_i} f^{(i)}=\ord_a f$ for $i=1,\ldots,m$. Then, by definition of maximal contact, $a_i\in Z_i$ for $i=0,\ldots,m$, where $Z_i$ denotes the strict transform of $Z$ after $i$ blowups. Since all centers are contained in the top locus of $f^{(i)}$, they are also contained in $Z_i$. By repeatedly applying Prop.÷ \ref{199}, the union of all exceptional divisors produced by the blowups $\pi_{m-1},\ldots,\pi_0$ with $Z'=Z_m$ is a normal crossings divisor. Thus, $a'$ is contained in $n+1$ hypersurfaces that form a normal crossings divisor. But this is impossible in an ambient space of dimension $n$.\\


{\sc Ex.÷ \ref{10.20}.} Set $f=x^5+x^2y^4+y^k$. Denote by $V_x$ and $V_y$ the regular local hypersurfaces defined by $x=0$, respectively $y=0$. If $k\geq5$, then the order of $f$ at the origin is $5$ and the derivative $\partial^4_xf=5!\cdot x$ defines the hypersurface $V_x$. Thus, $V_x$ is osculating. If $k=4$, then the order of $f$ at the origin is $4$. The derivative $\partial^3_yf=4!\cdot y(1+x^2)$ defines a regular parameter. Since $1+x^2$ is a unit in the local ring $\mathcal{O}_{\mathbb{A}^2,0}$, the hypersurface $V_y$ is osculating. If $k<4$, taking the differential with respect to $y$ shows again that $V_y$ is an osculating hypersurface.
 
 If $k\geq5$, then $J_{V_x}(I)=(y^{160},y^{24k})$ and $J_{V_y}(I)=(x^{240},x^{120})=(x^{120})$. Thus, both $V_x$ and $V_y$ have weak maximal contact if $k=5$, while only $V_x$ has weak maximal contact if $k>5$. If $2\leq k\leq4$, then $J_{V_x}(I)=(y^{4\frac{k!}{k-2}},y^{k!})$ and $J_{V_y}(I)=(x^{5(k-1)!})$. Since $k!<5(k-1)!$, only $V_y$ has weak maximal contact in this case.\\


{\sc Ex.÷ \ref{10.21}.} Assume that the characteristic of the ground field is not equal to $2$. Then $\partial_xf=2x$ and the hypersurface $V$ has weak maximal contact with $f$ by Prop.÷ 10.11. The coefficient ideal $J_V(f)$ of $f$ with respect to $V$ is generated by $y^3z^3+y^7+z^7$ and has order $6$ (up to raising the generator to the required power).
 
Restrict to the case of the blowup of $\AAA^2$ at the origin and the study of $f$ at the origin of the $y$-chart. The strict transform of $f$ is given there by $f'=x^2+y^4(z^3+y+yz^7)$. The points where the order of $f'$ has remained constant are exactly the points of the $z$-axis. Let $V'$ denote the strict transform of $V$. Then $J_{V'}(f')$ is generated by $y^4(z^3+y+yz^7)$ where $y^4$ is the exceptional factor (again, up to taking powers). The order of the residual factor has dropped to $1$ at the origin and to $0$ at all other points of the $z$-axis.\\


{\sc Ex.÷ \ref{10.22}.} Assume that $a=0$, $\ord_af=c$ and that $J_V(f)=(y^\alpha)$ where $y=(x_1,\ldots,x_{n-1})$ and $\alpha\in\mathbb{N}^{n-1}$. Now let $L\subseteq\{1,\ldots,n-1\}$ be a subset such that $\sum_{i\in L}\alpha_i\geq c!$ but  $\sum_{i\in L\setminus\{j\}}\alpha_i<c!$ for all $j\in L$. This is possible since $\ord_aJ_V(f)=|\alpha|\geq c!$ by the definition of coefficient ideals.
 
Blow up $\AAA^n$ in the center given by $x_n=0$ and $x_i=0$ for all $i\in L$. Let $a'$ be a point over $a$ at which the order of the strict transform of $f$ has remained constant. Since $V$ is osculating, $a'$ is not contained in the $x_n$-chart. So let $j\in L$ be such that $a'$ is contained in the $x_j$-chart. The transform of the coefficient ideal in this chart is 
 \[J_{V'}(f')=J_V(f)^!=(x_j^{\sum_{i\in L\setminus\{j\}}\alpha_i-c!}y^\alpha).\]
 But since $\sum_{i\in L\setminus\{j\}}\alpha_i-c!<0$ by assumption, it follows that $\ord_{a'} J_{V'}(f')<\ord_a J_V(f)$. But the order of the coefficient ideal is at least $c!$ as long as the order of the strict transform of $f$ remains equal to $c$. Thus, by iterating this procedure, after finitely many steps the order of the strict transform of $f$ has to drop. \\


{\sc Ex.÷ \ref{10.27}.} Assume that $\ord\, I=c$. Let $x_1,\ldots,x_n$ be a regular system of parameters for $\widehat{\mathcal{O}}_{\mathbb{A}^n,0}=\mathbb{K}[[x_1,\ldots,x_n]]$. Define a map $\pi:\mathbb{N}^n\to\mathbb{N}\cup\{\infty\}$ in the following way:
 \[\pi(\alpha_1,\ldots,\alpha_n)=\begin{cases}
                                  \frac{c!}{c-\alpha_n}\sum_{i=1}^{n-1}\alpha_i & \text{ if $\alpha_n<c,$} \\
                                  \infty & \text{ if $\alpha_n\geq c.$}
                                 \end{cases}
\]
Let $V$ be the regular local hypersurface defined by $x_n=0$. Elements $f\in I$ have expansions $f=\sum_{\alpha\in\mathbb{N}^n}c_{f,\alpha}x^\alpha$. Then, by the definition of coefficient ideals,
 \[\ord(J_V(I))=\min\{\pi(\alpha),\alpha\in\mathbb{N}^n:\text{there is an $f\in I$ such that $c_{f,\alpha}\neq0$}\}.\]
Now define a monomial order $<_\varepsilon$ on $\mathbb{K}[x_1,\ldots,x_n]$ in the following way: Set $x^\alpha<_\varepsilon x^\beta$ if and only if $\pi(\alpha)<\pi(\beta)$ or $\pi(\alpha)=\pi(\beta)$ and $\alpha<_{lex}\beta$ where $<_{lex}$ denotes the lexicographic order on $\mathbb{N}^n$.  

By \cite{Ha_Power_Series}, Thm.÷ 3, p.÷ 10, there exists a regular system of parameters $x_1,\ldots,x_n$ for $\widehat{\mathcal{O}}_{\mathbb{A}^n,0}$ such that the initial ideal of $I$ with respect to $<_\varepsilon$ is maximal (again, with respect to $<_\varepsilon$) over all choices of regular systems of parameters for $\widehat{\mathcal{O}}_{\mathbb{A}^n,0}$. In particular, let $y_1,\ldots,y_n$ be another regular system of parameters and let each $f\in I$ have the expansion $f=\sum_{\alpha\in\mathbb{N}^n}c'_{f,\alpha}y^\alpha$ with respect to these parameters. Then there exist elements $g\in I$ and  $\wt\alpha\in\mathbb{N}^n$ such that $c'_{g,\wt\alpha}\neq0$ and
\[\pi(\wt\alpha)\leq \min\{\pi(\alpha),\alpha\in\mathbb{N}^n,\, \text{there is an $f\in I$ such that $c_{f,\alpha}\neq0$}\}.\]
Let $V'$ be the regular local hypersurface defined by $y_n=0$. Then the last statement implies that $\ord(J_{V'}(I))\leq\ord(J_V(I))$. Thus, the regular system of parameters $x_1,\ldots,x_n$ maximizes the order of the coefficient ideal.\\


{\sc Ex.÷ \ref{10.30}.} Let $I$ be an ideal with $\ord_aI=\ord_{a'}I^\curlyvee$ where $a'$ is the origin of the $x_j$-chart for some $j<n$ and $I^\curlyvee$ denotes the weak transform of $I$. Elements $f\in I$ have  expansions $f=\sum_{i\geq0}f_i(y)x_n^i$ where $y=(x_1,\ldots,x_{n-1})$. Then 
\[J_V(I)=(f_i^\frac{o!}{o-i},\, i<o,f\in I).\]
The weak transform of $I$ is given by $I^\curlyvee=(f^\curlyvee,f\in I)$ where 
\[f^\curlyvee=x_j^{-o}f^*=x_j^{-o}\sum_{i\geq0}f_i^*x_n^ix_j^i=\sum_{i\geq0}\underbrace{(f_i^*x_j^{i-o})}_{=:f_i'}x_n^i.\]
Here, $f^*$ and $f_i^*$ denote the total transforms. Thus, the coefficient ideal of $I^\curlyvee$ with respect to the strict transform $V'$ of $V$ is
\[J_{V'}(I^\curlyvee)=(f_i'^{\frac{o!}{o-i}}:i<o,f\in I)=({f_i^*}^{\frac{o!}{o-i}}x_j^{(i-o)\frac{o!}{o-i}}:i<o,f\in I)\]
\[x_j^{-o!}({f_i^*}^{\frac{o!}{o-i}}:i<o,f\in I)=x_j^{-o!}(J_V(I))^*=J_V(I)^!\]
where $J_V(I)^!$ denotes the controlled transform with control $o!$.
\\


{\sc Ex.÷ \ref{11.13} and \ref{11.14}.}
 These examples are discussed in detail in Lecture XII.
\\


{\sc Ex.÷ \ref{12.17}--\ref{12.22}.}
 Let $\mathbb{K}$ be a field of characteristic $2$. Consider the ring homomorphism $\phi:\mathbb{K}[x,y,z,w]\to \mathbb{K}[t]$ given by $\phi(x)=t^{32}$, $\phi(y)=t^7$, $\phi(z)=t^{19}$, $\phi(w)=t^{15}$. Let $I\subseteq\mathbb{K}[x,y,z,w]$ be its kernel. Its zeroset is an irreducible curve $C$ in $\mathbb{A}^4$.
 
 Now let $f$ be the polynomial $f=x^2+yz^3+zw^3+y^7w$. The partial derivatives of $f$ have the form:
 \[\frac{\partial}{\partial x}f=0,\]
 \[\frac{\partial}{\partial y}f=z^3+y^6w,\]
 \[\frac{\partial}{\partial z}f=yz^2+w^3,\]
 \[\frac{\partial}{\partial w}f=zw^2+y^7.\]
 It is easy to check that $f$ and all of its first derivatives are contained in $I$. Thus, by the Jacobian criterion, $\ord_If\geq 2$. Since the order of $f$ in the origin is $2$, it is possible to conclude that $\ord_If=2$. In other words, the top locus of $f$ contains the curve $C$ that is parametrized by $t\mapsto(t^{32},t^7,t^{19},t^{15})$. It can be checked, e.g. via any computer algebra system, that the top locus of $f$ is itself an irreducible curve. Thus, it coincides with the curve $C$.
 
Now suppose that there is a regular local hypersurface at the origin that contains the top locus of $f$ and hence the curve $C$. This hypersurface has an equation $h=0$ in which at least one variable must appear linearly. But since $h\in I$, this is only possible if one of the numbers $32,7,19,15$ can be written as an $\mathbb{N}$-linear combination of the others. This is not the case so that there is no regular local hypersurface containing the top locus of $f$.
 
Let $V$ be any regular local hypersurface at the origin. Since it does not contain the curve $C$, there is a sequence of point blowups that separates the strict transforms of $V$ and $C$. Since the variety $X$ defined by $f=0$ has order $2$ along the curve $C$ and the point blowups are isomorphisms over all but one point of $C$, the order of the strict transform of $X$ under these blowups will again be $2$ along the strict transform of $C$. So there eventually is a point $a'$ at which the strict transform of $X$ has order $2$, but which is not contained in the strict transform of $V$. Thus, $V$ does not have maximal contact with $X$. Since $V$ was chosen arbitrarily, there can be no hypersurface that has maximal contact with $X$. \\


{\sc Ex.÷ \ref{12.24}.} Let $\alpha\in\mathbb{N}^n$. Define the differential operator $\partial_{x^\alpha}$ on $\mathbb{K}[x_1,\ldots,x_n]$ as the linear extension of $\partial_{x^\alpha} x^\beta=\binom{\beta}{\alpha}x^{\beta-\alpha}$.
 
 In particular, consider the differential operator $\partial_{x_i^p}$ for $i\in\{1,\ldots,n\}$. If $n\in\mathbb{N}$ has the $p$-adic expansion $n=\sum_{i\geq0}n_i p^i$ with $0\leq n_i<p$, then $\partial_{x_i^p}x_i^n=\binom{n}{p}x_i^{n-p}=n_1x_i^{n-p}$. Similarly, $\partial_{x_i^{p^k}}x_i^n=n_kx_i^{n-p^k}$ for any $k\in\mathbb{N}$.
 
 Notice that $\partial_{x_i^p}$ is not a derivation since it does not fulfill the Leibniz rule: $\partial_{x_i^p}(x_i^p)=1$, but
 \[x_i\underbrace{\partial_{x_i^p}x_i^{p-1}}_0+x_i^{p-1}\underbrace{\partial_{x_i^p}x_i}_0=0.\]
 
For more details on differential operators in positive characteristic, see \cite{Kawanoue_IF_1} Chap.÷ 1, pp.÷ 838--851.
\\


{\sc Ex.÷ \ref{12.25}.} Consider $f=z^{3^5}+x^{4\cdot3^4}y^{3^4}(x^{3^4}+y^{3^4}+x^{300})$ as a polynomial over a field of characteristic $3$. Assume that the exceptional locus is given locally by $V(xy)$. Then its residual order with respect to the local hypersurface defined by $z=0$ is $3^4$. Now consider the blowup of $\AAA^2$ at the origin and let $a'$ be the point $(0,1,0)$ in the $x$-chart. The strict transform of $f$ has the equation
 \[f'=z^{3^5}+x^{3^5}(y^{3^4}+1)(y^{3^4}-1+x^{219})=z^{3^5}+x^{3^5}(y^{2\cdot 3^4}-1+x^{219}(1+y^{3^4})).\]
 By making the change of coordinates $z\mapsto z+x$, this transforms into
 \[f'=z^{3^5}+x^{3^5}(y^{2\cdot 3^4}+x^{219}(1+y^{3^4})).\]
 The exceptional divisor is given locally by $V(x)$. Thus, the residual order of $f'$ with respect to the hypersurface defined by $z=0$ is $2\cdot 3^4>3^4$.\\


{\sc Ex.÷ \ref{12.26}.} The equation of $f$ in local coordinates $x,y,z$ at a point $(0,0,t)$ on the $z$-axis is obtained by making a translation $z\mapsto z+t$. Thus, the equation $x^p+y^p(z+t)=x^p+y^pz+y^pt$ is obtained. Assume that the ground field is perfect and let $\lambda$ be the $p$-th root of $t$. Then one can write $x^p+y^pz+y^pt=(x+\lambda y)^p+y^pz$. Thus, the residual order of $f$ at every point of the $z$-axis is $p+1$.
 
Now consider the generic point $P=(x,y)$ of the $z$-axis. The order of the coefficient ideal along $P$ is $p$.\\


{\sc Ex.÷ \ref{12.27}.} Observe that
 \[G^+(y)=\prod_{i=1}^{m-1}(y_i+y_m)^{r_i}y_m^{r_m}g^+(y).\]
Let $I$ denote the ideal $(y_1,\ldots,y_{m-1})$. Notice that $I$ defines a complete intersection. By \cite{Hochster_73} 2.1, p.÷ 57, \cite{Pellikaan}, Prop.÷ 1.8, p.÷ 359, the order of a polynomial $f$ along $I$ can be expressed as the largest power of $I$ that contains $f$. In particular, it fulfills that $\ord_I(f\cdot g)=\ord_If+\ord_Ig$ for polynomials $f,g$. Now compution yields
 \[\ord_IG^+(y)=\sum_{i=1}^{m-1}r_i\cdot\underbrace{\ord_I(y_i+y_m)}_0+r_m\cdot\underbrace{\ord_Iy_m}_0+\ord_Ig^+(y)\]
 \[\leq\ord\, g^+(y)=\ord\, g(y)=k.\]


{\sc Ex.÷ \ref{12.29} and  \ref{12.30}.}
 This is explained in detail in \cite{Ha_BAMS_2}.
\\
\goodbreak


\comment{[[D] E. C. Dade, Multiplicity and monoidal transformations, Thesis, Princeton University, 1960. \cite{Abhyankar_Book_59,
Abhyankar_Book_98,
Abhyankar_56,
Abhyankar_64,
Abhyankar_66,
Abhyankar_88,
Abhyankar_Moh,
Abhyankar_66b,
Abhyankar_67,
Artin,
Benito_Villamayor,
BM_Canonical_Desing,
BM_Functoriality,
Bravo_Villamayor_10,
Bravo_Villamayor_11,
Cossart_Polyedre,
Cossart_Excellent,
Cossart_WMC,
CJS,
Cossart_Piltant_1,
Cossart_Piltant_2,
Cutkosky_3-Folds,
EV_Good_Points,
Giraud,
Ha_Wild,
HS_Game,
HW,
Hironaka_Bowdoin,
Kawanoue_IF_1,
Kawanoue_Matsuki_Surfaces,
Kuhlmann,
Levine,
Lipman_Surfaces,
Mulay,
Spivakovsky,
Szabo,
Teissier,
Temkin,
Villamayor_Constructiveness,
Villamayor_Hypersurface,
Wagner_Thesis,
Wlodarczyk,
Zariski_Local_Uniformization,
Zeillinger_Thesis,
Zeillinger_Polyhedra_Game} ]}


\nocite{*}

\bibliographystyle{amsalpha}
\bibliography{bibliography}


\vskip .6cm
\noindent  Fakult\"at f\"ur Mathematik, 

\noindent  University of Vienna, and

\noindent Institut f\"ur Mathematik, 

\noindent  University of Innsbruck, 

\noindent Austria

\noindent herwig.hauser@univie.ac.at
\end{document}